\newcommand{\eqnref}[1]{(\ref{#1})}
\newcommand{\figref}[1]{Figure~\ref{#1}}
\newcommand{\thmref}[1]{Theorem~\ref{#1}}
\newcommand{\etal}[0]{{\em et~al.\@}\xspace}
\newcommand{\eg}[0]{{e.g.\@}\xspace}
\newcommand{\ie}[0]{{i.e.\@}\xspace}
\newcommand{\norm}[1]{\left\lVert#1\right\rVert}
\newcommand{\abs}[1]{\left\lvert#1\right\rvert}
\newcommand{\T}[0]{\mathrm{T}} 
\DeclareMathOperator{\diag}{diag}
\newcommand{\bsym}[1]{\ensuremath{\boldsymbol{#1}}}
\newcommand{\fnc}[1]{\ensuremath{\mathcal{#1}}}
\newcommand{\vecfnc}[1]{\ensuremath{\bsym{\mathcal{#1}}}} 
\renewcommand{\vec}[1]{\ensuremath{\bsym{#1}}}
\newcommand{\mat}[1]{\ensuremath{\mathsf{#1}}}
\newcommand{\U}[0]{\ensuremath{\fnc{U}}}
\newcommand{\zeros}[0]{\ensuremath{\vec{0}}}
\newcommand{\ones}[0]{\ensuremath{\vec{1}}}
\newcommand{\bu}[0]{\ensuremath{\vec{u}}}
\newcommand{\Hnrm}[0]{\mat{H}}
\newcommand{\HI}[0]{\mat{H}^{-1}}
\newcommand{\pder}[2][]{\dfrac{\partial #1}{\partial #2}} 
\newcommand{\der}[2][]{\dfrac{\mathrm{d} #1}{\mathrm{d} #2}} 
\spnewtheorem{assumption}{Assumption}{\bfseries}{\itshape}
\renewcommand{\@seccntformat}[1]{%
  \ifcsname format@#1\endcsname
    \csname format@#1\endcsname
  \else
    \csname the#1\endcsname\quad 
  \fi
}
\g@addto@macro\appendix{%
  \def\format@section{Appendix \thesection: }%
}
\begin{document}
\begingroup
\renewcommand*{\arraystretch}{1.3}

\title{Stable Volume Dissipation for High-Order Finite-Difference and Spectral-Element Methods with the Summation-by-Parts Property\thanks{Portions of this paper previously appeared in the second author's PhD thesis~\cite{CraigPenner2022_thesis}.}
}

\titlerunning{Stable Volume Dissipation for FD and SD Methods with the SBP Property} 

\author{Alex Bercik            \and
        David A.\ Craig Penner \and 
        David W.\ Zingg 
}


\institute{A.\ Bercik \at
              University of Toronto Institute for Aerospace Studies, Toronto, Canada \\
              \email{alex.bercik@mail.utoronto.ca}
           \and
           D.\ A.\ Craig Penner \at
              University of Toronto Institute for Aerospace Studies, Toronto, Canada \\
              \email{david.a.craigpenner@nasa.gov}  \\
              \emph{Present address:} Analytical Mechanics Associates, NASA Ames Research Center, Moffett Field, United States
           \and
           D.\ W.\ Zingg \at
              University of Toronto Institute for Aerospace Studies, Toronto, Canada \\
              \email{david.zingg@utoronto.ca}
}

\date{Received: date / Accepted: date}

\maketitle

\begin{abstract}
The construction of stable, conservative, and accurate volume dissipation is extended to discretizations that possess a generalized summation-by-parts (SBP) property within a tensor-product framework. The dissipation operators can be applied to any finite-difference or spectral-element scheme that uses the SBP framework, including high-order entropy-stable schemes. Additionally, we clarify the incorporation of a variable coefficient within the operator structure and analyze the impact of a boundary correction matrix on operator structure and accuracy. Following the theoretical development and construction of novel dissipation operators, we relate the presented volume dissipation to the use of upwind SBP operators. When applied to spectral-element methods, the presented approach yields unique dissipation operators that can also be derived through alternative approaches involving orthogonal polynomials. Numerical examples featuring the linear convection, Burgers, and Euler equations verify the properties of the constructed dissipation operators and assess their performance compared to existing upwind SBP schemes, including linear stability behaviour. When applied to entropy-stable schemes, the presented approach results in accurate and robust methods that can solve a broader range of problems where comparable existing methods fail.

\keywords{Numerical artificial dissipation \and Summation-by-parts \and Finite-difference methods \and
Spectral-element methods \and High-order methods \and Computational fluid dynamics}
\subclass{65M06 \and 65M12 \and 65M70 \and 65N06 \and 65N12 \and 65N35 \and 76M20 \and 76M22}
\end{abstract}


\section{Introduction} \label{sec:introduction}

State-of-the-art simulation methods in science and engineering are often constrained by scale and computational cost. High-order discretizations offer the potential to increase computational efficiency; however, their widespread adoption for nonlinear partial differential equations (PDEs) is limited in part due to stability concerns, especially in the presence of under-resolved scales or discontinuities that can lead to numerical instabilities~\cite{wang_high-order_2013, gassner_accuracy_2013}. Stabilization techniques such as artificial dissipation are commonly employed to ensure robustness~\cite{pulliam2014}, but the resulting schemes often lack rigorous stability proofs and require problem-specific tuning.

Summation-by-parts (SBP) operators, often combined with simultaneous approximation terms (SATs), provide a framework for constructing provably stable high-order discretizations. Further benefits of the SBP-SAT framework include discrete conservation, $C^0$ mesh continuity requirements between blocks, compatibility with adaptive mesh refinement, and suitability for parallelization~\cite{DelReyFernandez2014_review, Svard_review}. Initially developed for finite-difference (FD) schemes~\cite{Kreiss_1974, Strand_1994}, the SBP framework can also be applied to finite-volume~\cite{nordstrom_finite_2001, nordstrom_finite_2003, Chandrashekar_finite_2016} and spectral-element (SE) methods, including continuous Galerkin~\cite{Hicken2020, hicken_multidimensional_2016, Abgrall_cg1, Abgrall_cg2}, discontinuous Galerkin~\cite{Carpenter_entropy_2014, Chan_discretely_2018, gassner_accuracy_2013}, and flux reconstruction~\cite{Ranocha_fr, Montoya_fr}. Classical SBP (CSBP) operators are FD operators characterized by uniform nodal distributions that include boundary nodes. More broadly, however, FD SBP operators allow for a fixed number of non-equispaced nodes near boundaries, including hybrid Gauss-trapezoidal-Lobatto (HGTL) operators~\cite{ddrf_operators} and the optimal diagonal-norm `accurate' operators of Mattsson~\etal~\cite{Mattsson2014,Mattsson2018} (henceforth referred to as Mattsson operators). FD SBP operators are also not required to include boundary nodes, such as hybrid Gauss-trapezoidal (HGT) operators~\cite{ddrf_operators}. SE SBP operators are characterized by a fixed number of degrees of freedom per element, with mesh refinement performed by increasing the number of elements. Examples include Legendre-Gauss-Lobatto (LGL) and Legendre-Gauss (LG) operators~\cite{DelReyFernandez2014_generalized}. A recent comparison in~\cite{Boom2024} showed that Mattsson operators are often the most efficient in minimizing solution error when applied using either FD or SE refinement strategies.

Provably entropy-stable schemes were first proposed in~\cite{Tadmor_1987, Tadmor_2003} to alleviate the stability issues associated with high-order methods for nonlinear problems. Subsequently, the SBP framework enabled the construction of high-order entropy-stable schemes on finite domains~\cite{Fisher_2013}, which have since shown promise across various physical problems~\cite{chen_review}. Although volume dissipation is not required for provable stability within the context of these schemes, the damping of unresolved high-frequency modes it provides can nevertheless improve accuracy, accelerate convergence to steady state, and help preserve positivity of thermodynamic variables \cite{pulliam2014}. Moreover, central `energy-stable' schemes---such as in~\cite{hicken2008}---that lack provable stability for nonlinear problems remain widely used, and for these schemes, volume dissipation can be essential for stabilization when these schemes are applied to nonlinear problems.

Concerns have also been raised regarding the local linear stability of entropy-stable schemes, wherein unphysical perturbation growth can lead to positivity violations of thermodynamic variables in the absence of positivity-preserving mechanisms~\cite{Gassner2022}, potentially impacting the convergence of such schemes~\cite{Strang1964}. Nonlinear stability refers to an integral bound on the energy or entropy of the system, such as those found in entropy-stability proofs. Local linear stability, however, refers to whether the linearized discretization shares the same stability properties as the linearized continuous PDE~\cite{Gassner2022}. For the Burgers equation with a solution that does not change sign, local linear stability can be characterized by eigenvalues of the semi-discrete Jacobian having non-positive real parts. For linear problems and discretizations, these two stability concepts coincide. For nonlinear problems, however, they are distinct, and many nonlinearly stable schemes demonstrate weaker local linear stability properties than nonlinearly unstable central schemes \cite{Gassner2022}. It is not yet understood to what degree adding volume dissipation can mitigate this issue.

Although many methods exist for removing under-resolved high-frequency modes, implementing a volume dissipation model has proven to be a simple and effective strategy for SBP discretizations~\cite{Mattsson2004, Hicken_2014, Kord2023}. Similarly, upwind operators introduce volume dissipation through flux splitting~\cite{Steger_flux, vanLeer_flux-vector}, and have been applied to SBP discretizations~\cite{Mattsson2017, Ranocha2024, Glaubitz2024}. Recently in~\cite{duru2024dual}, volume dissipation inspired by the upwind operators of~\cite{Mattsson2017} was applied to the skew-symmetric form of the Euler equations of~\cite{NordstromSkewForm}, resulting in a scheme less likely to crash due to positivity issues than typical flux splitting or entropy-conserving methods. An entropy-stable volume dissipation model was also proposed in~\cite{Ranocha_dissipation}, where the dissipation operator is diagonalizable with respect to a Legendre polynomial basis, enabling its use as a filter in SE schemes. 

This paper has two objectives. The first objective is to develop high-order volume dissipation that is dimensionally consistent, can maintain design order, and preserve nonlinear stability when combined with existing entropy-stable SBP schemes. This work extends the approach of Mattsson~\etal~\cite{Mattsson2004} for constructing volume dissipation for CSBP operators to generalized FD diagonal-norm SBP operators, including nonuniform nodal distributions, no repeating interior operator, and potential absence of boundary nodes. We adapt the dissipation structure from~\cite{Mattsson2004} to allow for the inclusion of a variable coefficient and to ensure dimensional consistency, as well as introduce an optional modification for use with entropy-stable schemes. The approach presented applies to both FD and SE SBP methods. The second objective is to reassess the role of volume dissipation in the context of provably stable schemes, motivated by the possibility that dissipation may remain beneficial even when not required for stabilization.

The framework is first introduced in \S \ref{sec:art_diss_scal}, then extended to systems of conservation laws in \S \ref{sec:systems}. Novel artificial dissipation operators are constructed in \S \ref{sec:construction}, where all coefficients are provided in the supplementary material. We end the section with proofs of stability, conservation, and accuracy in \S \ref{sec:proofs}. In \S \ref{sec:implement_vol_diss_for_sbp}, practical implementation details are addressed. First, the inclusion of a boundary correction matrix is discussed in \S\ref{sec:effect_B}, followed by a discussion on defining the variable coefficient at half-nodes in \S \ref{sec:variable_coeffient}. The extension to multiple dimensions is clarified in \S\ref{sec:multiple_dim}. In \S \ref{sec:upwind_and_spectral}, we address recent approaches for incorporating volume dissipation through the use of upwind operators. We first examine connections to upwind schemes in \S \ref{sec:relation_to_upwind}, then in \S \ref{sec:gsbp_uniqueness} show that SE dissipation operators are unique. Finally, in \S \ref{sec:results}, numerical experiments are used to verify the stability, conservation, and accuracy properties of the constructed dissipation operators.

\section{Volume Dissipation for Summation-by-Parts Schemes}\label{sec:vol_diss_for_sbp}

\subsection{Scalar Conservation Laws} \label{sec:art_diss_scal}

To motivate the structure of the volume dissipation operators used in this work, consider the linear convection equation in one spatial dimension. Taking inspiration from~\cite{Mattsson2004}, suppose we include an additional continuous operator on the right-hand side as follows,
\begin{equation}\label{eq:1d_conv}
    \frac{\partial \U}{\partial t} + a(x) \frac{\partial \U}{\partial x} = \varepsilon (-1)^{s+1} \Delta x^{2s-1} \frac{\partial^s}{\partial x^s} \left( \abs{a(x)} \frac{\partial^s \fnc{U}}{\partial x^s} \right), 
\end{equation}
where $\U(x,t)$ is the solution, $a(x)$ is a (possibly spatially varying) wave speed, and appropriate initial and boundary conditions are assumed. In the right-hand-side operator, $s \in \mathbb{Z}_{> 0}$ and $\varepsilon \in \mathbb{R}_{\geq 0}$ are parameters, while the coefficient~$a(x)$ and characteristic spacing $\Delta x$ ensure dimensional consistency.

Assuming stability of the unmodified equation (\ie $\varepsilon=0$), by linearity it is sufficient to examine only the right-hand-side operator. Using the energy method, multiplying \eqnref{eq:1d_conv} by $\fnc{U}$ and integrating over the domain $\Omega$ gives
\begin{equation}\label{eq:cont_stab}
    \int_\Omega \fnc{U}\frac{\partial \fnc{U}}{\partial t} \, \mathrm{d} x = \varepsilon (-1)^{s+1}\Delta x^{2s-1}\int_\Omega \fnc{U}\frac{\partial^s}{\partial x^s}\left( \abs{a(x)} \frac{\partial^s\fnc{U}}{\partial x^s} \right) \mathrm{d} x.
\end{equation}
Using Leibniz's rule, the definition of an $L^2$ norm, and expanding the right-hand side of \eqnref{eq:cont_stab} using integration-by-parts gives
\begin{align}
\frac{1}{2} \frac{\mathrm{d}\norm{\fnc{U}}^2}{\mathrm{d}t} ={}& \underbrace{\varepsilon \Delta x^{2s-1} \left[ \sum_{i=1}^s (-1)^{s+i} \left( \frac{\partial^{i-1} \fnc{U}}{\partial x^{i-1} } \right) \left(\frac{\partial^{s-i}}{\partial x^{s-i}} \left( \abs{a(x)} \frac{\partial^s \mathcal{U}}{\partial x^s} \right) \right) \right] \Bigg\rvert_{\partial \Omega}}_{\text{Term A}} \notag \\
&\underbrace{{} - \varepsilon \Delta x^{2s-1} \int_{\Omega} \abs{a(x)} \left( \frac{\partial^s \fnc{U}}{\partial x^s} \right)^2 \mathrm{d} x}_{\text{Term B}}, \label{eq:cont_stab_final}
\end{align}
where $\partial\Omega$ denotes the surface of $\Omega$. While the sign of Term~A in \eqnref{eq:cont_stab_final} is not known in general, Term~B is always negative. Term~B is \textit{dissipative} because it acts to decrease the time derivative of the energy norm. Therefore, we choose to design discrete artificial volume dissipation operators that mimic Term~B when evaluated using the energy method.

Suppose now that \eqnref{eq:1d_conv} is discretized using a typical SBP scheme (\eg, \cite{DelReyFernandez2014_review}). For simplicity, here and throughout the remainder of this paper, we assume homogeneous boundary conditions, since stability of the homogeneous problem implies stability of the inhomogeneous problem~\cite{Svard2019}. Consider
\begin{equation}\label{eq:scalar}
    \frac{\mathrm{d}\bu}{\mathrm{d}t} = \mat{P}\bu + \mat{A}_\mat{D}\bu,
\end{equation}
where $\mat{P}\bu$ denotes an energy-stable and conservative SBP semi-discretization of the unmodified equation (\ie $\varepsilon=0$) that implicitly enforces the homogeneous boundary conditions, and $\mat{A}_\mat{D}$ is an artificial volume dissipation operator. We refer to~\cite{CraigPenner2022_thesis} for a general discussion on the inclusion of artificial dissipation through both interface and volume dissipation. Here, we focus on the volume dissipation operator $\mat{A}_\mat{D}$. A semi-discrete approximation of Term~B in \eqnref{eq:cont_stab_final} that preserves the order of accuracy of the scheme can be constructed as
\begin{equation} \label{eq:ap_diss_sca}
- \varepsilon \Delta x^{2s-1} (\mat{D}^s \bu)^{\T} \Hnrm \mat{A} (\mat{D}^s \bu) \approx - \varepsilon \Delta x^{2s-1} \int_\Omega \abs{a(x)} \left( \frac{\partial^s \fnc{U}}{\partial x^s} \right)^2 \, \mathrm{d}x,
\end{equation}
where $\Delta x$ is the spatial mesh size, $\mat{A} = \diag(\abs{a(x_1)},\abs{a(x_2)},\dots,\abs{a(x_n)})$ is the projection of the variable coefficient onto the diagonal, and
\begin{equation}\label{eq:Ds}
\mat{D}^s = \underbrace{\mat{D}\mat{D}\dots\mathsf{D}}_{s \text{ terms}},
\end{equation}
where $\mat{D}$ is an SBP operator approximating the first derivative. The dissipation operator that results in \eqnref{eq:ap_diss_sca} following a semi-discrete energy analysis is
\begin{equation}\label{eq:basic_diss_form}
    \mat{A}_\mat{D} = - \varepsilon \Delta x^{2s-1} \HI (\mat{D}^s )^{\T} \Hnrm \mat{A} \mat{D}^s.
\end{equation}
Using the SBP property, $\mat{D}^\T = \left( \mat{E} - \mat{Q} \right) \mat{H}^{-\T}$, and the symmetry of $\Hnrm$, we find
\begin{equation}
\begin{split}
    \mat{A}_\mat{D} &= \varepsilon \left( -1 \right)^{s+1} \Delta x^{2s-1} \mat{D}^s \mat{A} \mat{D}^s \\
    &- \sum_{i=1}^s \varepsilon \left( -1 \right)^{s+i} \Delta x^{2s-1} \HI \left( \mat{D}^{i-1} \right)^\T \mat{E} \mat{D}^{s-i} \mat{A} \mat{D}^s,
\end{split}
\end{equation}
which demonstrates that the artificial dissipation is constructed to mimic \eqnref{eq:1d_conv} while removing the unwanted surface contributions of Term~A in \eqnref{eq:cont_stab_final}.

More generally, a volume dissipation operator can be expressed as
\begin{equation} \label{eq:diss_main_form}
    \mat{A}_\mat{D} = - \varepsilon \HI \tilde{\mat{D}}_s^\T \mat{C} \tilde{\mat{D}}_s \ , \quad \tilde{\mat{D}}_s \approx \Delta x^s \der[^s]{x^s}
\end{equation}
where $\mat{C}$ is a symmetric positive semi-definite matrix, and $\tilde{\cdot}$ indicates an \emph{undivided} operator, allowing us to remove the explicit dependence on $\Delta x$ in~\eqnref{eq:basic_diss_form}.  Choosing $\tilde{\mat{D}}_s = \Delta x^s \mat{D}^s$ and $\mat{C}=\tilde{\Hnrm}\mat{A}$ recovers~\eqnref{eq:basic_diss_form}, where $\tilde{\Hnrm} = \Hnrm / \Delta x$ is the undivided norm. However, this leads to dissipation operators whose interior stencils are wider than necessary for a given order of interior stencil accuracy, which unnecessarily increases computational cost through additional floating-point operations~\cite{CraigPenner2018}. In this work, we set $\tilde{\mat{D}}_s$ to be a difference operator of minimum width. The $\tilde{\mat{D}}_s$ matrices are constructed in \S \ref{sec:construction} for various nodal distributions, and different possible choices of $\mat{C}$ are discussed in \S \ref{sec:variable_coeffient} and \S \ref{sec:effect_B}.

\subsection{Extension to Nonlinear Systems of Conservation Laws} \label{sec:systems}

Consider a system of $n$ equations in one spatial dimension,
\begin{equation} \label{eq:cons_law}
    \pder[\vecfnc{U}(x,t)]{t} +  \pder[\vecfnc{F}(x,t)]{x} = 0 \ , \quad \forall \ (x,t) \in \Omega \times \mathbb{R}_{\geq 0}
\end{equation}
where $\vecfnc{U} , \vecfnc{F} \in \mathbb{R}^n$. Many systems of conservation laws are also endowed with a convex entropy $\fnc{S} \in \mathbb{R}$ such that transformation to the entropy variables $\vecfnc{W} = \partial \fnc{S} / \partial \vecfnc{U}$ symmetrizes the conservation law \eqnref{eq:cons_law} \cite{Friedrichs1971, Harten1983, Tadmor_2003}. For the 1D Euler equations,
\begin{equation} \label{eq:1D_euler}
    \vecfnc{U} = \begin{bmatrix}
        \rho \\
        \rho v \\
        e
    \end{bmatrix} , \quad 
    \vecfnc{F} = \begin{bmatrix}
        \rho v \\
        \rho v^2 + p \\
        \rho (e + p)
    \end{bmatrix} , \quad
    \fnc{S} = - \frac{\rho s}{\gamma - 1} , \quad
    \vecfnc{W} = \begin{bmatrix}
        \frac{\gamma-s}{\gamma-1} - \frac{1}{2} \frac{\rho v^2}{p} \\
        \frac{\rho v}{p} \\
        - \frac{\rho}{p}
    \end{bmatrix},
\end{equation}
where $s=\ln\left(\frac{p}{\rho^\gamma}\right)$ is the physical entropy \cite{Hughes1986}, and the equation of state $p = (\gamma-1)\left( e - \frac{1}{2} \rho v^2 \right)$ fully specifies the system. Only two modifications need to be made to extend \eqnref{eq:diss_main_form} to nonlinear systems. The first is to replace $\tilde{\mat{D}}_s$ and $\Hnrm$ with $\tilde{\mat{D}}_s \otimes \mat{I}_n$ and $\Hnrm \otimes \mat{I}_n$, respectively, where $\mat{I}_n$ is the identity matrix of size $n$, and $\otimes$ is a Kronecker product. The second modification is to replace the variable coefficient matrix $\mat{A}$ in \eqnref{eq:ap_diss_sca} with a block-diagonal matrix that is dimensionally consistent with the
flux Jacobian and constructed in the spirit of characteristic-based
dissipation~\cite{harten_upwinding_1983}. Depending on whether the dissipation operator is applied to conservative or entropy variables, and whether a scalar or matrix dissipation scheme is desired, the definition of $\mat{A}$ will differ.

In a direct generalization of \eqnref{eq:ap_diss_sca}, dissipation applied to the conservative variables can take the following form after premultiplication by $\bu^\T \Hnrm$:
\begin{equation} \label{eq:ap_diss_cons}
- (\tilde{\mat{D}}^s \bu)^{\T} \tilde{\Hnrm} \mat{A} (\tilde{\mat{D}} \bu) \approx - \Delta x^{2s-1} \int_\Omega \left( \frac{\partial^s \fnc{U}}{\partial x^s} \right)^\T \abs{\pder[\vecfnc{F}]{\vecfnc{U}}} \left( \frac{\partial^s \fnc{U}}{\partial x^s} \right) \, \mathrm{d}x,
\end{equation}
where $\mat{A} = \diag \left(\mat{A}_1, \dots, \mat{A}_N \right) $ is a block diagonal matrix containing blocks $\mat{A}_j \in \mathbb{R}^{n \times n}$ that are some approximation to the flux Jacobian $\partial \vecfnc{F} / \partial \vecfnc{U}$ at node~$j$ with eigenvalues replaced by their absolute values. We require $\mat{A}_j$ to be symmetric positive semi-definite to ensure that \eqnref{eq:ap_diss_cons} remains negative, preserving numerical stability. A straightforward way to achieve this is via a scalar dissipation scheme,
\begin{equation} \label{eq:scalar_diss}
    \mat{A}_j = \abs{\lambda_\text{max}} \cdot \mat{I}_{n}
\end{equation}
where $\lambda_\text{max}$ is the maximum eigenvalue of the flux Jacobian at node $j$. A matrix dissipation scheme can also be considered,
\begin{equation} \label{eq:matrix_diss}
    \mat{A}_j = \mat{X}_j \abs{\mat{\Lambda}_j} \mat{X}_j^{-1}
\end{equation}
where $\mat{X}_j$ and $\mat{\Lambda}_j$ are the eigenvector and eigenvalue matrices of the flux Jacobian at node $j$. However, unless the conservation law is symmetric, the above matrix dissipation will not be provably stable. An alternative therefore is to apply dissipation to the entropy variables $\vec{w}$. Premultiplying by $\vec{w}^\T \Hnrm$ gives 
\begin{equation} \label{eq:ap_diss_ent}
- (\tilde{\mat{D}}^s \vec{w})^{\T} \tilde{\Hnrm} \mat{A} (\tilde{\mat{D}}^s \vec{w}) \approx - \Delta x^{2s-1} \int_\Omega \left( \frac{\partial^s \fnc{W}}{\partial x^s} \right)^\T \abs{\pder[\vecfnc{F}]{\vecfnc{U}}} \pder[\vecfnc{U}]{\vecfnc{W}} \left( \frac{\partial^s \fnc{W}}{\partial x^s} \right) \, \mathrm{d}x,
\end{equation}
where in order to ensure consistent dimensions, $\mat{A}_j$ must now approximate the absolute flux Jacobian multiplied by the symmetric and positive-definite change of variable matrix $\partial \vecfnc{U} / \partial \vecfnc{W}$ that symmetrizes the system. For example, a scalar-matrix dissipation scheme can be considered as
\begin{equation} \label{eq:scalarscalar_diss}
    \mat{A}_j = \abs{\lambda_\text{max}} \pder[\vec{u}_j]{\vec{w}_j}
\end{equation}
and a matrix-matrix dissipation scheme can be considered as
\begin{equation} \label{eq:scalarmatrix_diss}
    \mat{A}_j = \mat{X}_j \abs{\mat{\Lambda}_j} \mat{X}_j^{\T} , \quad \mat{X}_j \mat{X}_j^\T = \pder[\vec{u}_j]{\vec{w}_j}
\end{equation}
where the eigenvectors $\mat{X}_j$ are computed analytically using the scaling theorem of \cite{Barth1999} to ensure the relation on the right of \eqnref{eq:scalarmatrix_diss}. Scalar-scalar dissipation can be applied using the spectral radius of the change of variable matrix, but we have found this to be overly dissipative in numerical experiments. Therefore, scalar-scalar entropy dissipation is not considered is this work.

\subsection{Construction Procedure for \texorpdfstring{$\tilde{\mat{D}}_s$}{Ds} Operators}\label{sec:construction}

In \cite{Mattsson2004}, $\tilde{\mat{D}}_s$ operators were constructed for use with classical SBP operators. We now extend this approach to nonuniform nodal distributions and those without boundary nodes, allowing the volume dissipation~\eqnref{eq:diss_main_form} developed in the previous sections to be used with generalized FD and SE SBP operators. This is accomplished by introducing unknowns near the boundaries of the $\tilde{\mat{D}}_s$ matrices and solving the following accuracy equations, which ensure that $\tilde{\mat{D}}_s$ approximates an undivided derivative of degree $s$:
\begin{align} \label{eq:accuracy_conditions}
\begin{split}
    \tilde{\mat{D}}_s \tilde{\vec{x}}^i &= \zeros , \quad i \in [0,s-1] \\
    \tilde{\mat{D}}_s \tilde{\vec{x}}^s &= \ones \times s!,
\end{split}
\end{align}
where $\tilde{\vec{x}}=[\tilde{x}_1, \tilde{x}_2, \dots, \tilde{x}_N]^\T$ is the nodal distribution for an undivided reference domain $[0, N-1]$ that obeys the natural ordering $0 \leq \tilde{x}_1 < \tilde{x}_2 < \cdots < \tilde{x}_n \leq N-1$. The general form for each operator is provided in the supplementary material, where we adopt the minimum number of unknowns necessary at the boundaries to solve the accuracy conditions \eqnref{eq:accuracy_conditions}, leading to unique solutions. Operators constructed for nodal distributions associated with HGTL, HGT, Mattsson, LGL, and LG operators are also provided in the supplementary material. The resulting FD dissipation operators could be further optimized by adding more unknowns at the boundaries (\eg as in \cite{Diener2007}), but we defer this to future work. Since SE operators have a fixed number of $N=p+1$ nodes, the dissipation order is restricted to $s=p$, yielding unique operators and precluding further optimization, as discussed in \S \ref{sec:gsbp_uniqueness}.

This approach yields volume dissipation operators $\mat{A}_\mat{D}$ that are naturally of minimum-width. Alternatively, in \cite{CraigPenner2018} minimum-width dissipation operators were constructed by adding correction terms to the wide-stencil dissipation \eqnref{eq:basic_diss_form}. In the supplementary material we show that the two approaches are largely equivalent.

\subsection{Stability, Conservation, and Accuracy Proofs of Volume Dissipation} \label{sec:proofs}

In this section we discuss the stability, conservation, and accuracy properties of SBP schemes augmented with the volume dissipation \eqnref{eq:diss_main_form}. We assume that the underlying semi-discrete system of equations is stable and conservative so that it is sufficient to examine only the properties of the dissipation model~\cite{DelReyFernandez2014_review}. Such proofs for the underlying system can be found for example in \cite{Crean2018, DelReyFernandez2014_review, chen_review}.

\begin{theorem}
    Augmenting a conservative semi-discretization \eqnref{eq:scalar} with the artificial dissipation operator \eqnref{eq:diss_main_form} results in a conservative semi-discrete scheme.
\end{theorem}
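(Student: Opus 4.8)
The plan is to recall what conservation means for a semi-discrete SBP scheme and then verify that the dissipation term $\mat{A}_\mat{D}\bu$ does not disturb it. For a one-dimensional scheme of the form \eqnref{eq:scalar}, discrete conservation is the statement that the rate of change of the total discrete quantity $\ones^\T \Hnrm \bu$ equals only boundary contributions (which vanish under the homogeneous assumption adopted here, or equivalently telescopes through interface terms for a multi-block discretization). By hypothesis, $\mat{P}\bu$ already satisfies this, so it suffices to show that the contribution of the dissipation operator, namely $\ones^\T \Hnrm \mat{A}_\mat{D}\bu$, is itself either zero or purely a boundary term.

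First I would substitute the definition \eqnref{eq:diss_main_form}, $\mat{A}_\mat{D} = -\varepsilon \HI \tilde{\mat{D}}_s^\T \mat{C} \tilde{\mat{D}}_s$, into $\ones^\T \Hnrm \mat{A}_\mat{D}\bu$. The factor $\Hnrm \HI = \Idty$ collapses immediately, leaving
\begin{equation*}
\ones^\T \Hnrm \mat{A}_\mat{D}\bu = -\varepsilon\, \ones^\T \tilde{\mat{D}}_s^\T \mat{C}\, \tilde{\mat{D}}_s \bu = -\varepsilon\, \bigl(\tilde{\mat{D}}_s \ones\bigr)^\T \mat{C}\, \tilde{\mat{D}}_s \bu .
\end{equation*}
The key step is then the observation that $\tilde{\mat{D}}_s$ annihilates constants: since $s \in \mathbb{Z}_{>0}$, the accuracy conditions \eqnref{eq:accuracy_conditions} with $i=0$ give exactly $\tilde{\mat{D}}_s \ones = \tilde{\mat{D}}_s \tilde{\vec{x}}^{\,0} = \zeros$. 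Hence $\ones^\T \Hnrm \mat{A}_\mat{D}\bu = 0$ identically, so $\der[]{t}\bigl(\ones^\T \Hnrm \bu\bigr)$ picks up no new contribution from the dissipation, and the augmented scheme \eqnref{eq:scalar} inherits the conservation property of the underlying scheme $\mat{P}\bu$.

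I do not anticipate a genuine obstacle here — the result is essentially a one-line consequence of the structure of \eqnref{eq:diss_main_form} together with the zeroth accuracy condition. The only points requiring mild care are bookkeeping ones: stating precisely the notion of conservation being used (so the claim is not vacuous), and noting that the argument is unaffected by the choice of $\mat{C}$ (symmetric positive semi-definite, possibly incorporating the variable coefficient $\mat{A}$ or a boundary-correction matrix as in \S\ref{sec:variable_coeffient} and \S\ref{sec:effect_B}), since $\mat{C}$ is only ever hit from the left by $\tilde{\mat{D}}_s\ones = \zeros$. For the systems case of \S\ref{sec:systems} the same computation goes through componentwise after replacing $\tilde{\mat{D}}_s$ and $\Hnrm$ by $\tilde{\mat{D}}_s \otimes \mat{I}_n$ and $\Hnrm \otimes \mat{I}_n$, using $(\tilde{\mat{D}}_s \otimes \mat{I}_n)(\ones \otimes \vec{c}) = (\tilde{\mat{D}}_s\ones)\otimes \vec{c} = \zeros$ for any constant state $\vec{c}\in\mathbb{R}^n$.
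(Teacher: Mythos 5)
Your proposal is correct and follows exactly the paper's argument: left-multiply by $\ones^\T \Hnrm$, cancel $\Hnrm \HI$, and invoke $\tilde{\mat{D}}_s \ones = \zeros$ from the zeroth accuracy condition \eqnref{eq:accuracy_conditions}. The additional remarks on the role of $\mat{C}$ and the Kronecker-product extension to systems are consistent with the paper but not part of its one-line proof.
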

\begin{proof}
    Substituting \eqnref{eq:diss_main_form} into \eqnref{eq:scalar}, ignoring $\mat{P} \bu$, and multiplying both sides by $\vec{1}^\T \Hnrm$ gives
    \begin{equation*}
        \vec{1}^\T \Hnrm \der[\bu]{t} = - \varepsilon  \left( \tilde{\mat{D}}_s \vec{1} \right)^\T \mat{C} \left( \tilde{\mat{D}}_s \bu \right) = \vec{0} \quad \text{since} \quad \tilde{\mat{D}}_s \vec{1} = \vec{0} .
        \tag*{\qed}
    \end{equation*}
\end{proof}

The following theorems consider two distinct notions of stability: energy and entropy stability. If the underlying scheme (with $\varepsilon = 0$) is energy-stable, then applying volume dissipation to the conservative variables preserves energy stability. Conversely, if the scheme is entropy-stable, then applying the volume dissipation to the entropy variables preserves entropy stability.

\begin{theorem} \label{thm:energy_stability}
    Augmenting an energy-stable semi-discretization \eqnref{eq:scalar} with the artificial dissipation operator \eqnref{eq:diss_main_form} acting on conservative variables $\bu$ results in an energy-stable semi-discrete scheme for a positive semi-definite $\mat{C}$.
\end{theorem}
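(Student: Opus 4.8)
The plan is to run a semi-discrete energy estimate that mirrors the continuous argument producing Term~B in \eqnref{eq:cont_stab_final}, exploiting the factored structure of the dissipation operator \eqnref{eq:diss_main_form}. Starting from the augmented scheme \eqnref{eq:scalar} with $\mat{A}_\mat{D} = -\varepsilon\HI\tilde{\mat{D}}_s^\T\mat{C}\tilde{\mat{D}}_s$, I would premultiply by $2\bu^\T\Hnrm$. Since $\Hnrm$ is symmetric positive definite, $\bu^\T\Hnrm\bu$ is a legitimate discrete energy and $2\bu^\T\Hnrm\der[\bu]{t} = \der{t}\!\left(\bu^\T\Hnrm\bu\right) = \der{t}\norm{\bu}_\Hnrm^2$, which gives the left-hand side of the estimate.

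The right-hand side splits into two pieces. The first, $2\bu^\T\Hnrm\mat{P}\bu = \bu^\T\!\left(\Hnrm\mat{P}+\mat{P}^\T\Hnrm\right)\!\bu$, is non-positive by the assumed energy stability of the underlying discretization (this is precisely the hypothesis we are permitted to invoke, with the homogeneous boundary conditions absorbed into $\mat{P}$, so no boundary terms survive). The second piece is the dissipation contribution; substituting \eqnref{eq:diss_main_form}, the product $\Hnrm\HI$ collapses to the identity, leaving
\[
2\bu^\T\Hnrm\mat{A}_\mat{D}\bu = -2\varepsilon\,\bu^\T\tilde{\mat{D}}_s^\T\mat{C}\tilde{\mat{D}}_s\bu = -2\varepsilon\left(\tilde{\mat{D}}_s\bu\right)^\T\mat{C}\left(\tilde{\mat{D}}_s\bu\right).
\]
Because $\varepsilon\ge 0$ and $\mat{C}$ is positive semi-definite by assumption, this term is $\le 0$, so that $\der{t}\norm{\bu}_\Hnrm^2 \le 0$ and the energy is non-increasing; the term $-2\varepsilon(\tilde{\mat{D}}_s\bu)^\T\mat{C}(\tilde{\mat{D}}_s\bu)$ is the discrete counterpart of the dissipative Term~B.

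I do not anticipate a genuine obstacle: the statement is structural, essentially a consequence of having written $\mat{A}_\mat{D}$ in the form $-\varepsilon\HI(\cdot)^\T\mat{C}(\cdot)$. The only points needing care are (i) using that $\Hnrm$ is symmetric positive definite, both to form the energy and to transfer $\der{t}$ through the quadratic form; (ii) checking that $\mat{A}_\mat{D}$ produces no spurious surface contributions, which is exactly why the $\HI$ factor and the Term~A-type pieces were arranged to cancel in the derivation of \eqnref{eq:diss_main_form}; and (iii) keeping the hypotheses aligned, namely that this proof uses \emph{energy} stability of $\mat{P}$ and that the matrix $\mat{C}$ here stands for $\tilde{\Hnrm}\mat{A}$ with $\mat{A}$ symmetric positive semi-definite in the sense of \S\ref{sec:systems} (for scalar problems this holds automatically when $a(x)\ge 0$, and for systems it follows from the assumed semi-definiteness of each block $\mat{A}_j$).
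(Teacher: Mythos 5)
Your argument is correct and matches the paper's proof essentially verbatim: premultiply by $\bu^\T\Hnrm$, let $\Hnrm\HI$ collapse, and conclude $\der{t}\norm{\bu}_\Hnrm^2 \leq 0$ from $\varepsilon \geq 0$ and the positive semi-definiteness of $\mat{C}$. The only cosmetic difference is that the paper simply ``ignores'' the $\mat{P}\bu$ term by the assumed energy stability of the underlying scheme, whereas you spell out that its quadratic form is non-positive.
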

\begin{proof}
    Substituting \eqnref{eq:diss_main_form} into \eqnref{eq:scalar}, ignoring $\mat{P} \bu$, and multiplying both sides by $\bu^\T \Hnrm$ gives
    \begin{equation*}
        \frac{1}{2} \der[\norm{\bu}_\Hnrm^2]{t} = 
        \bu^\T \Hnrm \der[\bu]{t} = - \varepsilon \left( \tilde{\mat{D}}_s \bu \right)^\T \mat{C} \left( \tilde{\mat{D}}_s \bu \right) \leq 0 
    \end{equation*}
    since $\varepsilon \geq 0$ and $\mat{C}$ is positive semi-definite. \qed
\end{proof}

\begin{theorem}
\label{thm:entropy_stability}
    Augmenting an entropy-stable semi-discretization,
    \begin{equation}
    \frac{\mathrm{d}\bu}{\mathrm{d}t} = \mat{P}\bu + \mat{A}_\mat{D}\vec{w},
    \end{equation}
    with the artificial dissipation operator \eqnref{eq:diss_main_form} acting on entropy variables $\vec{w}$ results in an entropy-stable semi-discrete scheme for a positive semi-definite $\mat{C}$.
\end{theorem}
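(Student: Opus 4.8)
The plan is to mirror the proof of \thmref{thm:energy_stability}, replacing the discrete energy $\tfrac{1}{2}\norm{\bu}_\Hnrm^2$ by the discrete total entropy and the left-multiplier $\bu^\T\Hnrm$ by $\vec{w}^\T(\Hnrm\otimes\mat{I}_n)$. First I would introduce the discrete total entropy $\bar{\fnc{S}} \defn \sum_{j=1}^N H_{jj}\,\fnc{S}(\vec{u}_j)$, where $H_{jj}>0$ are the diagonal entries of $\Hnrm$ and $\vec{u}_j\in\mathbb{R}^n$ is the solution at node $j$; this is the natural discrete surrogate for $\int_\Omega\fnc{S}\,\mathrm{d}x$, and the paper already restricts attention to diagonal-norm SBP operators, which is exactly what makes this surrogate behave well.

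Next I would differentiate $\bar{\fnc{S}}$ in time. Applying the chain rule node-by-node and using the defining relation $\vec{w}_j = \partial\fnc{S}/\partial\vecfnc{U}\big|_{\vec{u}_j}$ for the entropy variables yields $\der[\bar{\fnc{S}}]{t} = \sum_{j=1}^N H_{jj}\,\vec{w}_j^\T\der[\vec{u}_j]{t} = \vec{w}^\T(\Hnrm\otimes\mat{I}_n)\der[\bu]{t}$, where $\vec{w}$ and $\bu$ denote the stacked vectors of length $Nn$. Establishing this identity — in particular that the sum collapses to a single matrix–vector contraction precisely because $\Hnrm$ is diagonal — is the one place where the structural hypotheses genuinely enter, and I expect carefully reconciling the pointwise chain rule with the Kronecker-product bookkeeping to be the main (modest) obstacle; everything afterward is algebra already rehearsed in \thmref{thm:energy_stability}.

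With this identity in hand I would substitute the evolution equation and discard the $\mat{P}\bu$ term, whose contribution $\vec{w}^\T(\Hnrm\otimes\mat{I}_n)\mat{P}\bu$ is non-positive by the assumed entropy stability of the underlying scheme under homogeneous boundary conditions. For the dissipation term I would insert the systems form of \eqnref{eq:diss_main_form}, $\mat{A}_\mat{D} = -\varepsilon(\Hnrm\otimes\mat{I}_n)^{-1}(\tilde{\mat{D}}_s\otimes\mat{I}_n)^\T\mat{C}(\tilde{\mat{D}}_s\otimes\mat{I}_n)$, so that $(\Hnrm\otimes\mat{I}_n)$ cancels its inverse and
\begin{equation*}
\der[\bar{\fnc{S}}]{t} \;\le\; \vec{w}^\T(\Hnrm\otimes\mat{I}_n)\mat{A}_\mat{D}\vec{w} \;=\; -\,\varepsilon\,\big((\tilde{\mat{D}}_s\otimes\mat{I}_n)\vec{w}\big)^\T\mat{C}\,\big((\tilde{\mat{D}}_s\otimes\mat{I}_n)\vec{w}\big) \;\le\; 0,
\end{equation*}
since $\varepsilon\ge 0$ and $\mat{C}$ is positive semi-definite by hypothesis. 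Hence $\der[\bar{\fnc{S}}]{t}\le 0$, i.e.\ the augmented scheme is entropy-stable, which completes the proof. I would close with the remark that the argument is insensitive to whether the underlying bound is the homogeneous $\le 0$ bound or one carrying boundary data, because the dissipation contribution is non-positive in either case.
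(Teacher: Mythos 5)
Your proposal is correct and follows essentially the same route as the paper: the paper's proof likewise multiplies by $\vec{w}^\T \Hnrm$, identifies $\vec{w}^\T \Hnrm \,\mathrm{d}\bu/\mathrm{d}t$ with $\vec{1}^\T \Hnrm \,\mathrm{d}\vec{s}/\mathrm{d}t$ (your chain-rule identity for the discrete total entropy), and concludes from the non-positivity of $-\varepsilon(\tilde{\mat{D}}_s\vec{w})^\T\mat{C}(\tilde{\mat{D}}_s\vec{w})$. You simply make explicit the Kronecker-product bookkeeping and the diagonal-norm chain-rule step that the paper leaves implicit.
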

\begin{proof}
    Substituting \eqnref{eq:diss_main_form} into \eqnref{eq:scalar}, where we now have $\mat{A}_\mat{D} \vec{w}$ instead of $\mat{A}_\mat{D} \vec{u}$, ignoring $\mat{P} \bu$, and multiplying both sides by $\vec{w}^\T \Hnrm$ gives
    \begin{equation*}
        \vec{1}^\T \Hnrm \der[\vec{s}]{t} = 
        \vec{w}^\T \Hnrm \der[\bu]{t} = - \varepsilon \left( \tilde{\mat{D}}_s \vec{w} \right)^\T \mat{C} \left( \tilde{\mat{D}}_s \vec{w} \right) \leq 0 .
        \tag*{\qed}
    \end{equation*}
\end{proof}

We now seek to formalize the convergence rates of FD SBP schemes augmented with volume dissipation by using the theory developed in~\cite{Svard2019}, for which it necessary to introduce the following concepts. Discussion regarding SE schemes is deferred to \S \ref{sec:relation_to_upwind}. Consider a semi-discrete operator~$\mat{P}$ approximating a continuous differential operator~$\fnc{P}$. A \emph{nullmode}~$\vec{w}$ of~$\mat{P}$ is an eigenvector corresponding to the nullspace of~$\mat{P}$, \ie $\mat{P} \vec{w} = \vec{0}$. We say~$\mat{P}$ is \emph{nullspace consistent} if it has the same nullspace as~$\fnc{P}$, \ie~$\vec{w}$ are restrictions of the nullmodes~$\fnc{W}$ of~$\fnc{P}$ (including boundary conditions). An SBP scheme with quadrature~$\Hnrm$ is \emph{nullspace invariant} if at any given time, the nullmodes~$\vec{w}$ and the numerical solution~$\vec{u}$ satisfy~$\vec{w}^\T \Hnrm \mat{P} \vec{u} = 0$. We refer to \cite{Svard2019} for details.

\begin{theorem}[4.6 of \cite{Svard2019}] \label{thm:svard}
    Consider a linear, constant-coefficient, scalar PDE with highest spatial derivative of order $m$. Let $\mat{P}$ be the linear semi-discrete operator approximating the initial-boundary value problem (IBVP). If $r$ is the order of the boundary truncation error, $q$ the order of the internal truncation error with $q>r$, and
    \begin{itemize}
        \item the IBVP is energy well-posed, with smooth and compatible data such that the exact solution is smooth.
        \item all nullmodes of the IBVP are polynomials.
        \item the scheme is nullspace consisent and nullspace invariant.
        \item the discrete inner product is a $q$th-order quadrature rule.
    \end{itemize}
    then the convergence rate of the solution error is $r+m$ if the nullspace of $\mat{P}$ is trivial, and $\min{\left(q, r+m\right)}$ if the nullspace of $\mat{P}$ is non-trivial.
\end{theorem}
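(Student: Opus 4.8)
The plan is to estimate the discretization error with a discrete energy argument, splitting both the truncation error and the error itself into a part associated with the nullspace of $\mat{P}$ and its $\Hnrm$-orthogonal complement, and to treat the two parts with different tools: a resolvent-type estimate on the complement and the nullspace-invariance identity on the nullspace component.

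First I would set up the error equation. Writing $\vec{u}^\ast(t)$ for the grid restriction of the exact smooth solution and $\vec{e} = \vec{u}^\ast - \vec{u}$, consistency gives $\tfrac{d\vec{e}}{dt} = \mat{P}\vec{e} + \vec{T}$ with $\vec{e}(0)=\vec{0}$, where the truncation error $\vec{T}$ decomposes into an interior part $\vec{T}_I = O(h^q)$ that is uniform over the grid and a boundary part $\vec{T}_B = O(h^r)$ supported on the $O(1)$ non-standard stencil rows. In the quadrature norm $\norm{\vec{T}_I}_\Hnrm = O(h^q)$, while $\norm{\vec{T}_B}_\Hnrm = O(h^{r+1/2})$ because the $O(1)$ affected entries are weighted by $O(h)$ in $\Hnrm$; the crude energy estimate $\norm{\vec{e}(t)}_\Hnrm \le C\int_0^t \norm{\vec{T}(\tau)}_\Hnrm\,d\tau$, which follows from energy well-posedness and the stability of $\mat{P}$, therefore only yields the rate $r+\tfrac12$, and the work is to recover the remaining $m-\tfrac12$ orders.

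Next I would use that the highest spatial derivative has order $m$. On the $\Hnrm$-orthogonal complement of $\ker\mat{P}$, well-posedness of the IBVP provides a coercivity/resolvent estimate for $\mat{P}$ showing that inverting the operator against the localized source $\vec{T}_B$ gains $m$ powers of $h$ --- the discrete analogue, in the first-order case, of the classical fact that a boundary forcing of size $h^r$ confined to an $O(h)$-neighbourhood of the boundary propagates along characteristics to a global error of size $O(h^{r+1})$. For the component of $\vec{e}$ lying in $\ker\mat{P}$ this argument fails; here nullspace consistency together with the polynomial-nullmode hypothesis lets one express the obstruction through the quadrature error of polynomials, while nullspace invariance, $\vec{w}^\T\Hnrm\mat{P}\vec{u}=0$ for every nullmode $\vec{w}$, shows this component evolves under a forcing of size only $O(h^q)$ --- the accuracy of the $q$th-order quadrature applied to the smooth exact solution tested against a nullmode. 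Combining the two estimates gives $\norm{\vec{e}(t)}_\Hnrm = O(h^{r+m})$ when $\ker\mat{P}=\{\vec{0}\}$, with no $q$ appearing because there is then no nullspace component, and $O\!\left(h^{\min(q,\,r+m)}\right)$ when the nullspace is non-trivial. A last step passes from the $\Hnrm$-norm error bound to the stated convergence rate of the solution error by norm equivalence on the grid.

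The step I expect to be the main obstacle is making the ``$+m$ gain'' rigorous: one needs a discrete resolvent (or normal-mode / Laplace-transform) estimate for $\mat{P}$, uniform in $h$, that quantifies the smoothing of the $O(h)$-localized boundary truncation error through the $m$th-order operator, and this estimate must respect the splitting into $\ker\mat{P}$ and its complement so that the quadrature order $q$ enters exactly where the theorem places it. Relative to this, the energy estimate and the truncation-error decomposition are routine.
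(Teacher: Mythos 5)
This statement is Theorem~4.6 of \cite{Svard2019}, which the paper imports verbatim and does not prove; there is therefore no in-paper proof to compare against, and your proposal has to be judged against the argument in the cited reference. At the level of architecture, your outline matches that argument well: the error equation with the truncation error split into an interior $O(h^q)$ part and a boundary-localized $O(h^r)$ part, the observation that the quadrature norm converts the localized part into $O(h^{r+1/2})$ so that a bare energy estimate stalls at $r+\tfrac12$, the decomposition of the error into $\ker\mat{P}$ and its $\Hnrm$-orthogonal complement, the use of nullspace invariance plus the $q$th-order quadrature to show the nullmode projection is forced only at $O(h^q)$, and a Laplace-transform/resolvent argument on the complement to recover the remaining orders. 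This is, in outline, how Sv\"ard and Nordstr\"om proceed.

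The difficulty is that what you defer as ``the main obstacle'' is not a technical loose end but the entire content of the theorem. The claim that inverting $\mat{P}$ (or $(s\mat{I}-\mat{P})$) against a boundary-localized source of pointwise size $h^r$ yields a global solution of size $O(h^{r+m})$, uniformly in $h$ and with constants controlled by energy well-posedness alone, is precisely the quantitative statement that distinguishes this result from the classical $r+\tfrac12$ energy bound; it requires a careful construction (in the reference, estimates on the pseudo-inverse of $\mat{P}$ restricted to the complement of its nullspace, combined with the structure of the boundary closure), and it is also where the hypotheses on nullspace consistency and polynomial nullmodes actually do work --- without them $\mat{P}$ need not be invertible on the relevant subspace and the ``gain of $m$'' argument collapses. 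Your sketch names the right lemma but supplies no mechanism for it, and the heuristic you offer (boundary forcing propagating along characteristics) only motivates the $m=1$ hyperbolic case. Two smaller points: the passage from the $\Hnrm$-norm bound to the stated ``convergence rate of the solution error'' is not mere norm equivalence (the $\Hnrm$-norm is the natural one here, and pointwise statements would cost another $h^{1/2}$), and the interaction between the time dependence of $\vec{T}$ and the resolvent estimate (i.e., why the Laplace-transform bound can be undone without loss) also needs to be addressed. As it stands the proposal is a correct road map with the central bridge missing.
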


We refer the reader to~\cite{Svard2019} for definitions of terms used in the above theorem, as well as detailed discussions of the necessary assumptions. As explained in~\cite{Svard2019}, most SBP schemes satisfy the above conditions. It will be necessary, however, to add the following assumption to those included in \thmref{thm:svard}.
\begin{assumption} \label{assumption1}
Consider a linear SBP semi-discretization operator $\mat{P}$ with only polynomial nullmodes, augmented as in \eqnref{eq:scalar} with a dissipation model $\mat{A}_\mat{D}$. The nullspace of $\mat{P} + \mat{A}_\mat{D}$ is the same as the nullspace of $\mat{P}$.
\end{assumption}
It is straightforward to prove that ${\text{null}\left(\mat{P}\right)\subseteq\text{null}\left(\mat{P}+\mat{A}_\mat{D}\right)}$, since by construction, any polynomial nullmodes of $\mat{P}$ will also be in the nullspace of $\mat{A}_\mat{D}$ (see \S \ref{sec:construction}). However, showing that $\mat{A}_\mat{D}$ does not introduce any new nullmodes to the discretization, \ie, ${\text{null}\left(\mat{P}+\mat{A}_\mat{D}\right)\subseteq\text{null}\left(\mat{P}\right)}$, is difficult to prove in general. Nevertheless, we have manually checked that Assumption \ref{assumption1} holds for all linear SBP discretizations presented in this work. It is now possible to state the following theorem, which has the practical implication that choosing dissipation operators with $s=p+1$ results in solution error convergence rates of $p+1$. Error estimates for upwind SBP schemes in~\cite{Jiang2024} suggest that a sharper convergence rate of $p+1.5$ may also be provable.

\begin{theorem}
    Consider a linear, constant-coefficient, scalar SBP semi-\linebreak discretization~\eqnref{eq:scalar}. Assume the SBP semi-discretization $\mat{P}$ satisfies the assumptions of \thmref{thm:svard} with $r=p$ and $q=2p$, Assumption \ref{assumption1}, and the dissipation model $\mat{A}_\mat{D}$ is constructed using \eqnref{eq:diss_main_form} with $s\geq p+1$. Then the convergence rate of the solution error is $p+m$ if the nullspace of $\mat{P}$ is trivial, and $\min{\left(2p, p+m\right)}$ if the nullspace of $\mat{P}$ is non-trivial.
\end{theorem}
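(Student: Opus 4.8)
The plan is to invoke \thmref{thm:svard} for the augmented operator $\tilde{\mat{P}} \defn \mat{P} + \mat{A}_\mat{D}$. Since $\mat{A}_\mat{D} = -\varepsilon\HI\tilde{\mat{D}}_s^\T\mat{C}\tilde{\mat{D}}_s$ is a consistent approximation of the zero operator, $\tilde{\mat{P}}$ approximates the \emph{same} linear, constant-coefficient, scalar IBVP, with unchanged highest derivative order $m$. It therefore suffices to verify that every hypothesis of \thmref{thm:svard} that $\mat{P}$ satisfies with $r = p$ and $q = 2p$ is inherited by $\tilde{\mat{P}}$ with the \emph{same} $r$, $q$, $m$; the conclusion then follows verbatim, giving $r+m = p+m$ when the nullspace is trivial and $\min(q, r+m) = \min(2p, p+m)$ otherwise.

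Most hypotheses are immediate. Energy well-posedness, smoothness and compatibility of the data, and the requirement that all nullmodes of the IBVP be polynomial are properties of the continuous problem, which $\mat{A}_\mat{D}$ leaves untouched; and since $\mat{A}_\mat{D}$ does not alter $\Hnrm$, the discrete inner product is still a quadrature rule of order $q = 2p$. Nullspace consistency carries over because \assref{assumption1} gives $\text{null}(\tilde{\mat{P}}) = \text{null}(\mat{P})$, so $\tilde{\mat{P}}$ has the same (trivial or non-trivial) nullspace as $\fnc{P}$. For nullspace invariance, let $\vec{w}$ be a polynomial nullmode and $\vec{u}$ the numerical solution; substituting \eqnref{eq:diss_main_form},
\begin{equation*}
\vec{w}^\T\Hnrm\tilde{\mat{P}}\vec{u} = \vec{w}^\T\Hnrm\mat{P}\vec{u} - \varepsilon\left(\tilde{\mat{D}}_s\vec{w}\right)^\T\mat{C}\left(\tilde{\mat{D}}_s\vec{u}\right),
\end{equation*}
where the first term vanishes by nullspace invariance of $\mat{P}$ and the second vanishes since $\tilde{\mat{D}}_s\vec{w} = \vec{0}$: the polynomial nullmodes lie in the kernel of $\tilde{\mat{D}}_s$ by construction (\S \ref{sec:construction}).

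The remaining point is that adjoining $\mat{A}_\mat{D}$ does not lower the truncation-error orders. The truncation error of $\tilde{\mat{P}}$ on the exact solution $\U$ is that of $\mat{P}$ plus $\mat{A}_\mat{D}$ applied to the grid restriction of $\U$, and I would bound the latter using the equivalent form of $\mat{A}_\mat{D}$ displayed just after \eqnref{eq:basic_diss_form}: its interior piece $\varepsilon(-1)^{s+1}\Delta x^{2s-1}\mat{D}^s\mat{A}\mat{D}^s$ contributes $O(\Delta x^{2s-1})$ everywhere, while the $i$-th boundary-correction piece carries $i$ additional factors of $\HI = O(\Delta x^{-1})$ in its one-sided closure and contributes $O(\Delta x^{2s-1-i})$ at the boundary nodes, worst at $i = s$. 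Hence this residual is $O(\Delta x^{2s-1})$ in the interior and $O(\Delta x^{s-1})$ at the boundary, so with $s \ge p+1$ it is $O(\Delta x^{2p+1})$ and $O(\Delta x^{p})$ respectively; therefore $\tilde{\mat{P}}$ keeps internal order $q = 2p$ and boundary order $r = p$ with $q > r$, and \thmref{thm:svard} applied to $\tilde{\mat{P}}$ yields the stated convergence rates.

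The main obstacle is this boundary truncation estimate: one must show precisely that the repeated $\HI$ factors in the one-sided closure of $\mat{A}_\mat{D}$ degrade its $O(\Delta x^{2s-1})$ interior behaviour by no more than $s-1$ orders, which identifies $s = p+1$ as the sharp threshold --- for $s = p$ the boundary residual would be $O(\Delta x^{p-1})$ and the rate would drop. The only other genuinely non-trivial ingredient, that $\mat{A}_\mat{D}$ introduces no new nullmodes, is precisely \assref{assumption1}, which is assumed here and has been checked case by case; every remaining hypothesis of \thmref{thm:svard} either concerns the continuous IBVP, which $\mat{A}_\mat{D}$ does not change, or has been verified above.
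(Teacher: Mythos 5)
Your proposal is correct and follows essentially the same route as the paper: apply \thmref{thm:svard} to $\mat{P}+\mat{A}_\mat{D}$, obtaining nullspace consistency from \assref{assumption1}, nullspace invariance from the $\Hnrm$-symmetry of $\mat{A}_\mat{D}$ together with $\tilde{\mat{D}}_s\vec{w}=\vec{0}$, and checking that the truncation orders $q=2p$ and $r=p$ survive because the interior of $\mat{A}_\mat{D}$ contributes at order $2s-1\geq 2p+1$ while its boundary closure contributes at order $s-1=p$. The one place your justification diverges is the boundary estimate: you lean on the wide-stencil decomposition displayed after \eqnref{eq:basic_diss_form}, which strictly pertains to the particular choice $\tilde{\mat{D}}_s=\Delta x^s\mat{D}^s$, $\mat{C}=\tilde{\Hnrm}\mat{A}$, whereas the paper argues directly from polynomial annihilation of $\tilde{\mat{D}}_s$ plus dimensional consistency of \eqnref{eq:diss_main_form}, which covers the general minimum-width operators the theorem actually concerns; your count nevertheless lands on the same threshold $s-1=p$.
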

\begin{proof}
     Nullspace consistency follows immediately from Assumption \ref{assumption1} and the assumption that $\mat{P}$ is nullspace consistent. Nullspace invariance is also ensured by the symmetry of the dissipation operator with respect to the discrete norm~$\Hnrm$. Since the scheme is linear, nullspace invariance of the dissipation operator~$\mat{A}_\mat{D}$ is sufficient to ensure nullspace invariance of the overall discretization.

    It remains to be shown that the boundary and internal truncation errors of the resulting scheme are also $r=p$ and $q=2p$, respectively. Once again by linearity, it suffices to show this is true for $\mat{A}_\mat{D}$ alone. Since $\tilde{\mat{D}}_s \approx \Delta x^s \mathrm{d}^s/\mathrm{d}x^s$ contains a repeating interior stencil, which has the general property 
    $$\left[ \tilde{\mat{D}}^\T_s \right]_{i,i+j} = \left[ \tilde{\mat{D}}_s \right]_{i+j,i} = \left[ \tilde{\mat{D}}_s \right]_{i,i-j} ,$$ 
    the interior of $\tilde{\mat{D}}^\T$ approximates $(-1)^{s+1} \Delta x^s \mathrm{d}^s/\mathrm{d}x^s$. Considering the contribution of $\HI$, the interior of $\mat{A}_\mat{D}$ therefore approximates $\Delta x^{2s-1} \mathrm{d}^{2s}/\mathrm{d}x^{2s}$ multiplied by some constant. A Taylor expansion of $\mat{A}_\mat{D}$ will therefore result in a leading-order term of order $2s-1 = 2p+1 > q$. Considering the boundaries, since $\mat{A}_\mat{D}$ annihilates polynomials up to degree $s-1$ by construction, a Taylor expansion of $\mat{A}_\mat{D}$ must contain only derivatives of degree $\geq s$. Dimensional consistency of \eqnref{eq:diss_main_form} again ensures that Taylor expansions at the boundary of $\mat{A}_\mat{D}$ contain leading-order terms of order at least $s - 1 = p = r$. Therefore, to leading order, the Taylor expansions of \eqnref{eq:scalar} remain unaffected. The expected convergence rates then follow immediately from \thmref{thm:svard}. \qed
\end{proof}

\section{Implementation Details} \label{sec:implement_vol_diss_for_sbp}

In the previous section, we introduced volume dissipation in the form \eqnref{eq:diss_main_form} with an unspecified positive semi-definite matrix $\mat{C}$. A natural choice, motivated by \S\ref{sec:art_diss_scal}, is $\mat{C} = \tilde{\Hnrm} \mat{A}$, where $\tilde{\Hnrm}$ is an undivided norm, and $\mat{A}$ represents the variable coefficient. In the following sections, we will instead justify the choice $\mat{C} = \mat{B} \mat{A}$, where $\mat{B}$ is a diagonal boundary correction matrix consisting of a fixed number of zeros at the boundaries, and ones in the interior. 
For scalar problems, $\mat{C}$ is positive semi-definite if both $\mat{B}$ and $\mat{A}$ are diagonal and non-negative. For systems of equations, $\mat{A}$ is assumed to be a symmetric positive semi-definite block-diagonal matrix. Using the same $\mat{B}$ (and $\tilde{\Hnrm}$, if it is included) for each equation ensures that $\mat{C}$ is also positive semi-definite. 

As will be explained in \S\ref{sec:gsbp_uniqueness}, all possible choices of $\mat{C}$ are equivalent up to a multiplicative constant for SE operators. Therefore, we exclusively consider FD operators in the following sections \S\ref{sec:effect_B} and \S\ref{sec:variable_coeffient}.

\subsection{Effect of the Boundary Correction Matrix \texorpdfstring{$\mat{B}$}{B}} \label{sec:effect_B}

To understand the effect of including a boundary correction matrix $\mat{B}$, we will illustrate via example for the linear convection equation with $a = 1$. Consider the third-order accurate dissipation operator from \cite{Mattsson2004}, given by
\begin{equation}
    \mat{A}_{\mat{D},2} = -\HI\tilde{\mat{D}}_2^\T\mat{B} \tilde{\mat{D}}_2,
\end{equation}
where
\begin{equation*}
    \tilde{\mat{D}}_2= \scalebox{0.8}{$\begin{bmatrix}
    -1 & 2 & -1 & & & & \\
    -1 & 2 & -1 & & & & \\
    & -1 & 2 & -1 & & & \\
    & & \ddots & \ddots & \ddots
    \end{bmatrix}$} \quad \text{and} \quad \mat{B} = \diag(0,1,\dots,1,0).
\end{equation*}
 Note that in \cite{Mattsson2004}, this operator is referred to as fourth-order accurate since an undivided $\tilde{\mat{H}}^{-1}$ is used, whereas in this work a dimensional $\HI$ is used to ensure that the dissipation operator has dimensions that are consistent with the overall discretization.
Applying the dissipation operator $\mat{A}_{\mat{D},2}$ to the solution vector $\vec{u}$ leads to the following Taylor series expansion at each node:
\begin{equation}\label{eq:diss_2_b1}
    \mat{A}_{\mat{D},2}\vec{u} =\frac{1}{\Delta x} 
    \scalebox{0.8}{$
    \begin{bmatrix}
-2 & 4 & -2 & & &  \\
2 & -5 & 4 & -1 & &  \\
-1 & 4 & -6 & 4 & -1 &  \\
& \ddots & \ddots & \ddots & \ddots & \ddots \end{bmatrix}$} \vec{u} = \renewcommand\arraystretch{1.8}\scalebox{0.9}{$\begin{bmatrix}
    -2 \Delta x \left(\frac{\partial^2 u}{\partial x^2}\right)_1 - 2\Delta x^2 \left(\frac{\partial^3 u}{\partial x^3}\right)_1 + \fnc{O}(\Delta x^3) \\
    \Delta x \left(\frac{\partial^2 u}{\partial x^2}\right)_2 - \Delta x^2 \left(\frac{\partial^3 u}{\partial x^3}\right)_2 + \fnc{O}(\Delta x^3) \\
    -\Delta x^3 \left(\frac{\partial^4 u}{\partial x^4}\right)_3 - \frac{\Delta x^5}{6} \left(\frac{\partial^6 u}{\partial x^6}\right)_3 + \fnc{O}(\Delta x^7) \\
    \vdots
\end{bmatrix}$}.
\end{equation}
The first two rows have first-order leading terms, while the interior rows have third-order leading terms. Instead choosing $\mat{B}$ as the identity matrix leads to the following expansion:
\begin{equation}\label{eq:diss_2_b2}
    \mat{A}_{\mat{D},2}\vec{u} =\frac{1}{\Delta x} \scalebox{0.8}{$\begin{bmatrix}
-4 & 8 & -4 & & & & \\
4 & -9 & 6 & -1 & & & \\
-2 & 6 & -7 & 4 & -1 & & \\
& -1 & 4 & -6 & 4 & -1 & \\
& & \ddots & \ddots & \ddots & \ddots & \ddots \end{bmatrix}$}\vec{u} = \renewcommand\arraystretch{1.8}\scalebox{0.85}{$\begin{bmatrix}
    -4 \Delta x \left(\frac{\partial^2 u}{\partial x^2}\right)_1 - 4\Delta x^2 \left(\frac{\partial^3 u}{\partial x^3}\right)_1 + \fnc{O}(\Delta x^3) \\
    3\Delta x \left(\frac{\partial^2 u}{\partial x^2}\right)_2 - \Delta x^2 \left(\frac{\partial^3 u}{\partial x^3}\right)_2 + \fnc{O}(\Delta x^3) \\
    -\Delta x \left(\frac{\partial^2 u}{\partial x^2}\right)_3 + \Delta x^2 \left(\frac{\partial^3 u}{\partial x^3}\right)_3 + \fnc{O}(\Delta x^3) \\
    -\Delta x^3 \left(\frac{\partial^4 u}{\partial x^4}\right)_4 - \frac{\Delta x^5}{6} \left(\frac{\partial^6 u}{\partial x^6}\right)_4 + \fnc{O}(\Delta x^7) \\
    \vdots
\end{bmatrix}$}.
\end{equation}
In contrast to \eqnref{eq:diss_2_b1}, the first three rows now have first-order leading terms. Moreover, the coefficients are larger near the boundary. Therefore, introducing zeros at the boundaries of $\mat{B}$ reduces the amount of dissipation applied near the block boundaries by reducing both the number of low-order terms, as well as their coefficients. In \S \ref{sec:LCE} this is shown to have a beneficial effect on the spectral radii of the semi-discretization.

It is also worthwhile to mention the impact of including an undivided norm via $\mat{C} = \tilde{\Hnrm} \mat{B} \mat{A}$. For FD operators, the interior of $\mat{A}_\mat{D}$ is unaffected by the inclusion of $\tilde{\Hnrm}$ since $\tilde{\Hnrm}_{ii} = 1$ there. At the boundaries, however, the above discussion indicates that the dissipation may be either proportionally increased or decreased, depending on the value of $\tilde{\Hnrm}$ (assuming $\mat{B} \neq 0$ there). To prevent the dissipation from being increased at boundary nodes, we argue that it is likely preferable to omit $\tilde{\Hnrm}$ via $\mat{C} = \mat{B} \mat{A}$. This statement will be supported by numerical results in \S \ref{sec:LCE}.

\begin{remark}
Although \S\ref{sec:gsbp_uniqueness} demonstrates that including $\mat{B}$ has no effect on the SE operators used in this work (with $s=p$), similar boundary corrections were included in the alternative SE dissipation operators of~\cite{Ranocha_dissipation, wei_jump_2025}. In a continuous framework, these corrections ensure that terms arising from integration by parts vanish at the boundaries so that no viscous coupling terms (which are often included in artificial-viscosity approaches~\cite{persson_viscosity_2006, barter_shock_2010, zingan_entropy_2013, yu_viscosity_2020}) are required~\cite{Ranocha_dissipation}. This may provide additional intuition for the role of~$\mat{B}$.
\end{remark}

\subsection{Options for Volume Dissipation with a Variable Coefficient} \label{sec:variable_coeffient}

In the following, we assume a scalar variable coefficient $a(x)$. However, the same concepts can be directly extended to systems of conservation laws by generalizing $a(x)$ to flux Jacobian matrices as discussed in \S \ref{sec:systems}.

Consider the volume dissipation model \eqnref{eq:diss_main_form} with $\mat{C} = \mat{B} \mat{A}$. The simplest choice for the variable coefficient matrix $\mat{A}$ is 
\begin{equation} \label{eq:A_simple}
    \mat{A} = \diag \left( a_1,\dots,a_N  \right),
\end{equation}
where $a_j = \abs{a(x_j)}$, $j=1,\dots,N$ are the absolute-valued variable coefficients at the nodes, ensuring only positive contributions to the dissipation. For odd~$s$, since the minimum-width FD stencils for odd derivatives are inherently asymmetric, it may be preferable to define the variable coefficient at the half-nodes $a_{j-1/2}$ between $a_{j-1}$ and $a_j$,
\begin{equation}  \label{eq:A_halfnodes}
    \mat{A} = \diag \left( 0 , \frac{a_1 + a_2}{2} , \dots , \frac{a_{N-1} + a_N}{2} \right).
\end{equation}
This ensures that the nodal variable coefficients appear consistently when reflected about the anti-diagonal of the dissipation operator, such that dissipation remains directionally unbiased. The initial value of $0$ arises from the inability to define a half-node at $a_{1/2}$. Note that we have adopted the convention of using backwards-biased stencils, although an equivalent problem appears using forward-biased stencils.

To illustrate this concept via example, consider the constant-coefficient first-order accurate dissipation operator from \cite{Mattsson2004}, where
\begin{equation}
    \tilde{\mat{D}}_1= \scalebox{0.8}{$\begin{bmatrix}
    -1 & 1 & & \\
    -1 & 1 & & \\
    & \ddots & \ddots \\
    & & -1 & 1  \\
    & & & -1 & 1  \\
    \end{bmatrix}$} \quad \text{and} \quad \mat{B} = \diag(0,1,\dots,1).
\end{equation}
Defining the variable coefficient directly at the nodes as in \eqnref{eq:A_simple} results in the following directionally-biased dissipation operator
\begin{align}\label{eq:vol_diss_1st_biased}
    \mat{A}_{\mat{D},1} &= -\HI\tilde{\mat{D}}_1^\T\mat{B} \mat{A} \tilde{\mat{D}}_1 
    = \frac{1}{\Delta x}\scalebox{0.9}{$\begin{bmatrix}
- 2 a_2 & 2 a_2 & & & \\
a_2 &  - \left( a_2 + a_3 \right) & a_3 & & \\
& \ddots & \ddots & \ddots & \\
& & a_{N-1} & - \left(a_{N-1} + a_N \right) & a_N \\
& & & 2 a_{N} & - 2 a_{N} \\
\end{bmatrix}$}.
\end{align}
Note the absence of the coefficient $a_1$. Alternatively, defining the variable coefficient at the half-nodes as in \eqnref{eq:A_halfnodes} results in the following unbiased operator,
\begin{align}\label{eq:vol_diss_1st}
\begin{split}
    \mat{A}_{\mat{D},1} &= -\HI\tilde{\mat{D}}_1^\T\mat{B} \mat{A} \tilde{\mat{D}}_1 \\
    &= \frac{1}{\Delta x}\scalebox{0.9}{$\begin{bmatrix}
- \left( a_1 + a_2 \right) &  a_1 + a_2 & & & \\
\frac{a_1}{2} + \frac{a_2}{2} & - \left( \frac{a_1}{2} + a_2 + \frac{a_3}{2} \right) & \frac{a_2}{2} + \frac{a_3}{2} & & \\
\qquad \ddots & \qquad \ddots & \qquad \ddots & \\
& \frac{a_{N-2}}{2} + \frac{a_{N-1}}{2} & -\left( \frac{a_{N-2}}{2} + a_{N-1} + \frac{a_N}{2} \right) & \frac{a_{N-1}}{2} + \frac{a_N}{2} \\
& &  a_{N-1} + a_{N}  & - \left( a_{N-1} + a_{N} \right) \\
\end{bmatrix}$}.
\end{split}
\end{align}
Therefore, for the remainder of this work, whenever $s$ is odd we define the variable coefficients at half-nodes as in \eqnref{eq:A_halfnodes}. If including the undivided norm $\tilde{\mat{H}}$ in the dissipation operator, \ie, $\mat{C} = \tilde{\mat{H}} \mat{B} \mat{A}$, it may also be desirable to define the entries of  $\tilde{\mat{H}}$ at half-nodes for odd $s$ to ensure symmetry about the anti-diagonal. For simplicity, we defer investigation of the impact of defining $\tilde{\mat{H}}$ at half-nodes to further work, and instead choose to exclude the norm matrix through the choice $\mat{C} = \mat{B} \mat{A}$.

Another option to construct unbiased dissipation operators while avoiding the definition of the variable coefficient at half-nodes is to adopt the correction procedure of \cite{CraigPenner2018}. However, this perspective is largely equivalent to the current approach, and is therefore only discussed in the supplementary material.

\subsection{Extension to Multiple Dimensions} \label{sec:multiple_dim}

The dissipation operators presented thus far can be extended to multiple dimensions using a tensor-product formulation, as presented in \cite{DelReyFernandez2019_extension}. Consider a system of $n$ equations in two spatial dimensions with computational coordinates $(\xi,\eta)$ and a curvilinear transformation to physical coordinates $(x,y)$, characterized by metric terms and a metric Jacobian $\fnc{J}$. By defining
\begin{align*}
    \tilde{\mat{D}}_{s,\xi}  &= \tilde{\mat{D}}_{s,\xi}^\text{(1D)} \otimes \mat{I}_\eta \otimes \mat{I}_n,
    & \mat{B}_{\xi}     &= \mat{B}_\xi^\text{(1D)} \otimes \mat{I}_\eta \otimes \mat{I}_n, \\
    \mat{H}_{\eta}   &= \mat{I}_\xi \otimes \mat{H}_\eta^\text{(1D)} \otimes \mat{I}_n,
    & \mat{H}        &= \mat{H}_\xi^\text{(1D)} \otimes \mat{H}_\eta^\text{(1D)} \otimes \mat{I}_n,
\end{align*}
dissipation in the $\xi$ direction can be applied using
\begin{align} \label{eq:2d_diss}
     \mat{A}_{\mat{D}, \xi} &= -\diag\left(\fnc{J}^{-1}\right)\mat{H}^{-1} \tilde{\mat{D}}_{s,\xi}^\T \mat{H}_\eta \mat{B}_{\xi} \mat{A}_\xi \tilde{\mat{D}}_{s,\xi},
\end{align}
where $\mat{A}_\xi$ is the flux Jacobian in the $\xi$ direction that implicitly includes metric terms, ensuring proper scaling of $\Delta x$ and $\Delta y$. The above form generalizes the choice that excludes $\tilde{\Hnrm}$, \ie, $\mat{C} = \mat{B} \mat{A}$. Regardless, note the presence of the divided $\mat{H}_\eta$, which is necessary to ensure proper scaling in terms of $\Delta \eta$ (unless the computational space is defined such that $\Delta \eta = 1$, in which case $\mat{H}_\eta = \tilde{\mat{H}}_\eta$). As discussed in \S \ref{sec:variable_coeffient}, for odd $s$ we choose to define $\mat{A}_\xi$ at the half-nodes.  The proofs of stability and conservation in \S \ref{sec:proofs} follow immediately by recognizing that integration is performed by left-multiplying by $\bu^\T \Hnrm \fnc{J}$ and $\vec{1}^\T \Hnrm \fnc{J}$ as opposed to $\bu^\T \Hnrm$ and $\vec{1}^\T \Hnrm$. 

Free-stream preservation (alternatively referred to as satisfaction of metric identities) is an important property of a numerical method when generalizing to curvilinear coordinates  \cite{pulliam2014}. This can be verified by considering a constant solution $\vecfnc{U} = \vec{1}$ and flux $\vecfnc{F} = \vec{1}$. By construction, $\tilde{\mat{D}}_s^\text{(1D)} \vec{1} = \vec{0}$, and therefore $\mat{A}_{\mat{D}} \vec{1} = \vec{0}$ always. As a consequence, as long as the underlying scheme is free-stream preserving, the scheme augmented with volume dissipation in the form given above will also be free-stream preserving.

\section{Connection to Upwinding and Existing Spectral-Element Dissipation}\label{sec:upwind_and_spectral}

\subsection{Relationship between Volume Dissipation and Upwind SBP Operators} \label{sec:relation_to_upwind}

Rather than adding a volume dissipation operator, it is possible to include volume dissipation by employing difference operators with upwind and downwind stencils. This was first applied to SBP schemes in~\cite{Mattsson2004}, followed by the construction of novel upwind finite-difference (UFD) SBP operators in~\cite{Mattsson2017} with minimized spectral radii. Upwind spectral-element (USE) SBP operators were subsequently constructed in~\cite{Glaubitz2024}. Following~\cite{Ranocha2024} for general nonlinear hyperbolic conservation laws, a flux-vector splitting of the flux $\vec{f} = \vecfnc{F}(\bu)$ with
\begin{equation*}
    \vec{f} =\vec{f}^+ + \vec{f}^-
\end{equation*}
can be performed so that the upwind semi-discretization
\begin{equation} \label{eq:upwind}
    \frac{\mathrm{d}\bu}{\mathrm{d}t} + \mat{D}_+ \vec{f}^- + \mat{D}_- \vec{f}^+ = \textbf{SAT}
\end{equation}
introduces the desired volume dissipation, where $\vec{f}^\pm$ and $\mat{D}_{\pm}$ are as defined in~\cite{Ranocha2024} and the Kronecker product $\mat{D}_{\pm} = \mat{D}_{\pm}^{(1\text{D})} \otimes \mat{I}_{n}$ is implicit for systems of $n$ equations. To make the connection to the current volume dissipation clear, flux-vector splittings can be rewritten in the form
\begin{equation*}
    \vec{f}^+ = \frac{1}{2} \left( \vec{f} + \vec{f}_\mat{D}  \right), \quad 
    \vec{f}^- = \frac{1}{2} \left( \vec{f} - \vec{f}_\mat{D} \right), \quad  
    \vec{f}_\mat{D} = \vec{f}^+ - \vec{f}^-,
\end{equation*}
where $\vec{f}_\mat{D}$ is the dissipative portion resulting from the flux-vector splitting. Using properties described in~\cite{Mattsson2017}, we can also rewrite upwind operators as
\begin{equation*}
    \mat{D} = \frac{1}{2} \left( \mat{D}_+ + \mat{D}_- \right), \quad 
    \varepsilon \mat{H}^{-1} \mat{S} = - \frac{1}{2} \left( \mat{D}_+ - \mat{D}_- \right),
\end{equation*}
where $\mat{D}$ is a central-difference SBP operator with norm matrix $\mat{H}$, and $\mat{S}$ is symmetric and positive semi-definite. Although it is typically implicit in $\mat{S}$, we include a constant $\varepsilon \in \mathbb{R}_{\geq 0}$ in the above to emphasize that the dissipative portion can be modified by a constant depending on the construction of $\mat{D}_{\pm}$, such as in~\cite{Glaubitz2024}. Notably, $\mat{D}$ will not coincide with the more typically-used narrow stencil SBP operators (\eg those found in~\cite{DelReyFernandez2014_review}). Instead, $\mat{D}$ will often have wider, more accurate stencils in the interior~\cite{Mattsson2017}. A comparison between the operators can be found in Table~\ref{tab:comparison}, which relates the orders of accuracy and expected error convergence rate (typically one greater than the order of accuracy at the boundary \cite{gustafsson}) to the upwind degree $p_u$ as defined in~\cite{Mattsson2017}.

\begin{table}[t]
\centering
\caption{A comparison of the central $\mat{D}$ and dissipation $\mat{A}_\mat{D}$ operators resulting from the use of upwind schemes in the form \eqnref{eq:upwind_equiv} and central SBP schemes \eqnref{eq:scalar} augmented with dissipation \eqnref{eq:diss_main_form} described in this work. Orders of accuracy (\eg resulting from a Taylor expansion) are reported both in the interior and at block boundaries along with the expected error convergence rate. The limiting orders of accuracy for the discretizations are highlighted in red. $p_\text{u}$ refers to the degree of the upwind operator as defined in \cite{Mattsson2017}.}
\label{tab:comparison}
\begin{threeparttable}
\renewcommand{\arraystretch}{1.2} 
\begin{tabular}{lccccc}
\hline
\textbf{Operator} 
& \begin{tabular}{@{}c@{}}$\mat{D}$ \textbf{Int.} \end{tabular} 
& \begin{tabular}{@{}c@{}}$\mat{D}$ \textbf{Bdy.} \end{tabular}
& \begin{tabular}{@{}c@{}}$\mat{A}_\mat{D}$ \textbf{Int.} \end{tabular}
& \begin{tabular}{@{}c@{}}$\mat{A}_\mat{D}$ \textbf{Bdy.} \end{tabular} 
& \begin{tabular}{@{}c@{}} \textbf{Convergence} \end{tabular}\\ \hline
UFD even $p_\text{u} = 2p$ 
& $2p $ & ${\color{BrickRed}p}$ & $2p + 1$ & ${\color{BrickRed}p}$   & $\geq p+1.5$ \cite{Jiang2024} \\
UFD odd $p_\text{u} = 2p+1$ 
& $2p + 2 $ & ${\color{BrickRed}p}$ & $2 p + 1$ & ${\color{BrickRed}p}$ & $\geq p+1.5$ \cite{Jiang2024}   \\
FD $s = p$ 
& $2p$ & $p$ & $2p - 1$ & ${\color{BrickRed}p-1}$ & $\geq p$  \\
FD $s = p+1$  
& $2 p $ & ${\color{BrickRed}p}$ & $2p + 1$ & ${\color{BrickRed}p}$  & $\geq p+1$ \\
\tnote{*} SE $s=p$
& $p $ & $p$ & ${\color{BrickRed}p-1}$ & ${\color{BrickRed}p-1}$  & $\geq p$ \\ \hline
\end{tabular}
\begin{tablenotes}\footnotesize
\item[*] Alternatively could be labeled as `USE $p_u = p-1$' by the equivalence shown in \S\ref{sec:gsbp_uniqueness}.
\end{tablenotes}
\end{threeparttable}
\end{table}

We can now rewrite the upwind flux-vector splitting scheme \eqnref{eq:upwind} as
\begin{equation} \label{eq:upwind_equiv}
    \frac{\mathrm{d}\bu}{\mathrm{d}t} + \mat{D} \vec{f} = \textbf{SAT} - \varepsilon \mat{H}^{-1} \mat{S} \vec{f}_\mat{D}, 
\end{equation}
which is in the form of a central discretization plus a dissipation model. For linear symmetric constant-coefficient problems, where we can write $\vec{f}_\mat{D} = \mat{A} \bu$ for some block-diagonal matrix $\mat{A}$ with constant and symmetric positive semi-definite blocks $\mat{A}_n \geq 0$, the equivalence with \eqnref{eq:scalar} is immediately clear by defining $\mat{A}_\mat{D} = - \varepsilon \mat{H}^{-1} \mat{S} \mat{A}$. Stability also follows immediately since 
\begin{equation} \label{eq:upwind_stability}
    \mat{S} \mat{A} = \left( \mat{S} \mat{A} \right)^\T =  \mat{S}^{(1 \text{D})} \otimes \mat{A}_{n} \geq 0 .
\end{equation}
As noted in~\cite{Mattsson2017}, for constant-coefficient linear problems on Cartesian grids, the upwind discretization \eqnref{eq:upwind} or \eqnref{eq:upwind_equiv} is equivalent to the volume dissipation introduced in \S \ref{sec:art_diss_scal}. For more general cases, however, we will now describe the differences between \eqnref{eq:upwind} and the dissipation presented in this work.

One notable disadvantage of the flux-vector splitting approach is application to curvilinear coordinates. Because the variable coefficient is effectively applied \emph{outside} the dissipation operator by acting on the contravariant flux $\vec{f}_\mat{D}$, the curvilinear grid mapping must be restricted to a sufficiently low degree polynomial in order to satisfy free-stream preservation~\cite{Ranocha2024}. For practical simulations, this can limit the performance of a high-order method~\cite{DelReyFernandez2019_extension,CraigPenner2022}. By contrast, as explained in \S \ref{sec:multiple_dim}, the dissipation operators presented in this work exactly preserve free-stream by construction, allowing for more straightforward application to high-order polynomial or even nonpolynomial grids.

In \cite{Ranocha2024} and \cite{Glaubitz2024}, it was argued that upwind SBP discretizations possess desirable local linear stability properties. Periodic upwind SBP semi-discretizations of scalar conservation laws were shown to have Jacobians with non-positive real parts when the baseflow has everywhere positive (or negative) wave speeds, \ie, when $ \fnc{F}'(\bu) > 0$ \cite{Ranocha2024}. To the knowledge of the authors, such a proof is not possible for central SBP semi-discretizations augmented with dissipation~\eqnref{eq:diss_main_form}. Although nonlinear stability follows immediately from the positive semi-definiteness of $\mat{A}_\mat{D}$, linearizations of $\mat{A}_\mat{D}$ no longer have this property due to the presence of the nonlinear variable coefficient inside the operator. Regardless, numerical experiments in \S \ref{sec:burgers} show that the volume dissipation presented in this work is locally linearly stabilizing. Furthermore, results in \S \ref{sec:1deuler} suggest that entropy-stable schemes with volume dissipation are in fact more locally linearly stable than upwind SBP discretizations when considering more general problems. A further advantage of applying the dissipation \eqnref{eq:diss_main_form}, is that one can retain nonlinear stability properties of an underlying scheme while adjusting the value of the coefficient $\varepsilon$ to achieve local linear stability, if this is desired. This is demonstrated through numerical examples in \S \ref{sec:1deuler}.

Perhaps most importantly, upwind semi-discretizations \eqnref{eq:upwind} are not provably stable for nonlinear problems. The presence of a nonlinear $\vec{f}_\mat{D}$ outside of $\mat{S}$ makes it difficult to reproduce the proofs in \S \ref{sec:proofs} in general. For example, consider the Burgers equation with the splitting $f^\pm = \frac{1}{2} \left( \frac{u^2}{2} \pm \abs{u} u \right)$ such that $\fnc{F}(u) = \frac{u^2}{2}$. A lack of commutativity between $\mat{S}$ and $\mat{A} = \text{diag} \left( \abs{\bu} \right)$ means that \eqnref{eq:upwind_stability} no longer holds; $\mat{S} \mat{A}$ is not symmetric positive semi-definite. $\mat{S} \mat{A} + \left( \mat{S} \mat{A} \right)^\T$ can have negative eigenvalues, even though $\mat{S} \mat{A}$ will not. To make this concrete, consider a single block $x \in [0,1]$ with 5 equispaced nodes. Using the $p_\text{u}=2$ UFD operators\footnote{Using $p_\text{u}=3$ operators will similarly result in antidissipation of $\bu^T \mat{H} \mat{A}_\mat{D} \bu \approx 0.06 > 0.$ Also, note that $\left[ \mat{D}_- \right]_{i,j} = - \left[ \mat{D}_+ \right]_{N+1-j,N+1-i}$.} from \cite{Mattsson2017} (where $p_\text{u}$ is as defined in ~\cite{Mattsson2017}), 
\begin{equation*}
    \mat{D}_+ = \scalebox{0.8}{$\begin{bmatrix}
        -12 & 20 & -8 \\
        -0.8 & -4 & 6.4 & -1.6 \\
        & & -6 & 8 & -2 \\
        & & & -4 & 4 \\
        & & & -4 & 4
    \end{bmatrix}$} , \quad
    \mat{H} = \frac{1}{16} \scalebox{0.8}{$\begin{bmatrix}
        1 \\
        & 5 \\
        & & 4 \\
        & & & 5 \\
        & & & & 1
    \end{bmatrix}$} ,
\end{equation*}
the same splitting as above, and a solution $\vec{u} = 2 + 6 \vec{x} - \vec{x}^2$, then
\begin{equation*}
    \bu^T \mat{H} \mat{A}_\mat{D} \bu = - \bu^T \mat{S} \mat{U} \bu \approx 0.185 > 0 \quad , \quad \mat{U} = \text{diag}(\abs{\bu}) .
\end{equation*}
In other words, the dissipation introduced by the upwind vector splitting is in fact \emph{antidissipative} in the $\mat{H}$ energy norm despite a smooth, nearly linear,
and positive solution $u \geq 2$ on $x \in [0,1]$. It is clear therefore that such upwind formulations do not in general lead to provably stable nonlinear dissipation.

\subsection{Uniqueness of Volume Dissipation for Spectral-Element Operators} \label{sec:gsbp_uniqueness}

While the FD dissipation operators presented in this work differ from those recovered from the UFD operators of~\cite{Mattsson2017}, applying the construction procedure of \S \ref{sec:construction} to SE operators results in the same 1D dissipation operators as those in~\cite{Hicken2020} and recovered from the USE operators of~\cite{Ranocha2024}, up to a multiplicative constant. Enforcing design order accuracy through $\mat{A}_\mat{D} \vec{x}^k = \vec{0} \ \forall k \in \lbrace 0 , 1 \dots p \rbrace$ results in the trivial operator $\mat{A}_\mat{D} = 0$~\cite{Hicken2020, Ranocha2024}. Alternatively, we can reduce the accuracy by one order to obtain viable dissipation matrices, based on the following theorem, which is similar to Theorem 1 of~\cite{Hicken2020}.
\begin{theorem} \label{thm:element_equiv}
    Consider a symmetric volume dissipation operator $\mat{A}_\mat{D} \in \mathbb{R}^{N \times N}$ defined on a 1D reference element with $N=p+1 \geq 2$ unique nodes $\lbrace \vec{x} \rbrace \in [-1,1]$. If $\mat{A}_\mat{D} \vec{x}^k = \vec{0} \ \forall k \in \lbrace 0 , 1 \dots p-1 \rbrace$, then $\mat{A}_\mat{D}$ is uniquely defined up to a multiplicative constant.
\end{theorem}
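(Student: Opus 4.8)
The plan is to read the accuracy conditions $\mat{A}_\mat{D}\vec{x}^k = \zeros$ as a statement about the rank and kernel of $\mat{A}_\mat{D}$, and then to use the symmetry hypothesis to reconstruct the operator from its range.

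First I would observe that, since the $N = p+1$ nodes $\vec{x}$ are distinct, the monomial vectors $\vec{x}^0, \vec{x}^1, \dots, \vec{x}^{p}$ form a basis of $\mathbb{R}^{N}$, because the associated $N\times N$ Vandermonde matrix is nonsingular. In particular, $V \defn \operatorname{span}\{\vec{x}^0, \dots, \vec{x}^{p-1}\}$ is a $p$-dimensional subspace of $\mathbb{R}^{N}$, so its Euclidean orthogonal complement $V^{\perp}$ is one-dimensional. The hypothesis that $\mat{A}_\mat{D}\vec{x}^k = \zeros$ for all $k \in \{0, \dots, p-1\}$ says precisely that $V \subseteq \operatorname{null}(\mat{A}_\mat{D})$, and hence, by rank--nullity, $\operatorname{rank}(\mat{A}_\mat{D}) \le N - p = 1$.

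Next I would handle the (at most) rank-one matrix $\mat{A}_\mat{D}$ via its real symmetry. If $\operatorname{rank}(\mat{A}_\mat{D}) = 0$, then $\mat{A}_\mat{D} = \zeros$, which is simply the zero multiple of any candidate operator. Otherwise $\operatorname{rank}(\mat{A}_\mat{D}) = 1$, and the spectral theorem for real symmetric matrices gives $\mat{A}_\mat{D} = c\,\vec{v}\vec{v}^{\T}$ for a unit eigenvector $\vec{v}$ and a nonzero scalar $c$. Symmetry also yields $\operatorname{range}(\mat{A}_\mat{D}) = \operatorname{range}(\mat{A}_\mat{D}^{\T}) = \operatorname{null}(\mat{A}_\mat{D})^{\perp}$; combined with $V \subseteq \operatorname{null}(\mat{A}_\mat{D})$ and $\dim V = \dim \operatorname{null}(\mat{A}_\mat{D}) = p$, this forces $\operatorname{null}(\mat{A}_\mat{D}) = V$ and therefore $\operatorname{range}(\mat{A}_\mat{D}) = \operatorname{span}\{\vec{v}\} = V^{\perp}$. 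Since $V^{\perp}$ is one-dimensional, $\vec{v}$ is determined up to sign and normalization; absorbing that freedom into $c$, the operator $\mat{A}_\mat{D} = c\,\vec{v}\vec{v}^{\T}$ is fixed up to the single multiplicative constant $c$, which is the claim. Conversely, any such $c\,\vec{v}\vec{v}^{\T}$ does satisfy the accuracy conditions, since its kernel is $(V^{\perp})^{\perp} = V \ni \vec{x}^k$; the existence of a nonzero representative is guaranteed by the construction of \S\ref{sec:construction}.

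The step carrying the actual content---and where symmetry is essential---is the passage from ``rank at most one'' to ``unique up to a constant'': a general rank-one matrix has the form $\vec{v}\vec{w}^{\T}$ with only $\vec{w}$ constrained to lie in $V^{\perp}$, so $\vec{v}$ would remain free and uniqueness would fail. Beyond invoking these standard linear-algebra facts there is no genuine obstacle. It is worth recording an explicit representative for later use in \S\ref{sec:gsbp_uniqueness}: one may take $\mat{A}_\mat{D} \propto \vec{v}_0\vec{v}_0^{\T}$ with $\vec{v}_0$ any nonzero vector satisfying $\sum_{j}(v_0)_j\, x_j^{k} = 0$ for $k = 0, \dots, p-1$, equivalently the vector of $p$-th divided-difference weights $(v_0)_j = 1/\prod_{i\ne j}(x_j - x_i)$; note $\vec{v}_0^{\T}\vec{x}^p = 1 \neq 0$, confirming that the operator is genuinely nonzero and loses exactly one order of accuracy, and one then checks it matches the operators of \cite{Hicken2020, Ranocha2024} up to scaling.
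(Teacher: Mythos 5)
Your proof is correct and follows essentially the same route as the paper's: both deduce from the annihilation of the degree-$(p-1)$ polynomial subspace that $\mat{A}_\mat{D}$ has rank at most one, invoke symmetry to write it as $\lambda\vec{v}\vec{v}^\T$, and identify $\vec{v}$ (up to sign and scale) as spanning the one-dimensional orthogonal complement of that subspace. The only additions beyond the paper's argument are cosmetic refinements—the explicit Vandermonde justification, the explicit treatment of the rank-zero case, and the divided-difference representative—none of which change the substance.
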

\begin{proof}
    The case where $\mat{A}_\mat{D} \vec{x}^p = \vec{0}$ results in the trivial operator $\mat{A}_\mat{D} = 0$ \cite{Hicken2020, Ranocha2024}. Consider instead the more general case, $\mat{A}_\mat{D} \vec{x}^p \neq \vec{0}$.
    The assumption that $\mat{A}_\mat{D}$ annihilates polynomials up to degree $p-1$ is equivalent to saying that the polynomial subspace $\fnc{P}_{p-1} = \text{span}\left( \lbrace \vec{x}^k \rbrace_{k=0}^{p-1} \right) \subset \mathbb{R}^N$ lies in the null-space of $\mat{A}_\mat{D}$. Since $\fnc{P}_{p-1}$ is of dimension $p=N-1$, $\text{dim}\left( \text{Nul} \left( \mat{A}_\mat{D} \right) \right) \geq N-1 $.
    Furthermore, since $\vec{x}^p$ does not lie in the null-space of $\mat{A}_\mat{D}$, $\text{dim}\left( \text{Col} \left( \mat{A}_\mat{D} \right) \right) \geq 1 $. Therefore, $\mat{A}_\mat{D}$ is rank 1, and since it is also symmetric, it is diagonalizable and has exactly one non-zero eigenvalue $\lambda$ with corresponding eigenvector $\vec{v}$ where $\norm{\vec{v}} = 1$.
	As a consequence, $\mat{A}_\mat{D}$ can be expressed as $\mat{A}_\mat{D} = \lambda \vec{v} \vec{v}^\T$. Taking an orthogonal basis $\lbrace \vec{\phi}_k \rbrace_{k=1}^{N-1}$ for $\fnc{P}_{p-1}$, by considering the inner products $\lambda \vec{v} \vec{v}^\T \vec{\phi}_k = 0$, we observe that $\vec{v}$ is orthogonal to $\fnc{P}_{p-1}$, and since the orthogonal complement to this subspace is unique and of dimension 1, we conclude that $\vec{v}$ is unique. Therefore, the decomposition $\mat{A}_\mat{D} = \lambda \vec{v} \vec{v}^\T$ is uniquely determined by the polynomial annihilation condition, up to the value of $\lambda$. \qed
\end{proof}

Examination of the SE operators presented in this work and those of \cite{Hicken2020} and \cite{Ranocha2024} reveals that they satisfy the conditions of \thmref{thm:element_equiv}. Intuitively, this uniqueness can be explained by the fact that SE operators with minimal degrees of freedom are unique and nondissipative. If we relax the discretization's polynomial exactness by one degree to introduce dissipation, $\mat{A}_{\mat{D}}$ must then be uniquely specified up to a constant. Consequently, for SE operators, the boundary correction matrix $\mat{B}$ and the variable coefficient averaging at half-nodes for odd~$s$ are irrelevant, since the resulting dissipation operators will only be modified by a multiplicative constant. However, the inclusion of variable coefficients either inside or outside the dissipation operator, as discussed in \S \ref{sec:relation_to_upwind}, as well as extensions to systems of conservation laws discussed in \S \ref{sec:systems} remain important, and will result in different stability properties of the numerical schemes. It is possible to obtain more general SE dissipation operators by reducing polynomial exactness even further (\eg~\cite{Ranocha_dissipation}), however alternative strategies to maintain design-order accuracy and dimensional consistency are then required, such as scaling $\varepsilon$ by some power of $h$ or a nonlinear design-order functional, as is commonly done in artificial-viscosity approaches \cite{bassi_accurate_1995,zingan_entropy_2013,yu_viscosity_2020,wei_jump_2025}.

\section{Numerical Results and Discussion} \label{sec:results}

All numerical results are reproducible via the open-source repository \url{https://github.com/alexbercik/ESSBP/tree/main/Paper-StableVolumeDissipation}. Unless otherwise specified, time-marching is performed using the eighth-order adaptive Dormand–Prince algorithm \cite{Hairer1993} with relative and absolute tolerances taken below $10^{-12}$ such that temporal errors are negligible.

\subsection{Linear Convection Equation} \label{sec:LCE}

We first examine the spectra of central SBP semi-discretizations of the one-dimensional linear convection equation on a periodic domain $x \in [0,1]$, described for example in \cite{DelReyFernandez2014_review}, augmented with different variants of the FD volume dissipation \eqnref{eq:diss_main_form}. To ensure design order, we use $s=p+1$. We compare these to the spectra of UFD SBP semi-discretizations described in \cite{Mattsson2017}. The normalized spectra for CSBP $p=3$ and $p=4$ operators are plotted in \figref{fig:LCEeigs_initial} using 80 nodes and upwind SATs (Lax-Friedrichs numerical fluxes). Experimentally, we find that a value of $\varepsilon = 3.125 \times 5^{-s}$ applies a comparable amount of dissipation to UFD schemes without impacting the spectral radius or time step restriction. More generally, however, this coefficient may be problem-dependent, and lower values may be preferable. Therefore, for the remainder of this work, we also use a smaller value of $\varepsilon = 0.625 \times 5^{-s}$. Further discussion on selecting $\varepsilon$ can be found in the supplementary material. Following the discussions of \S\ref{sec:effect_B} and \S\ref{sec:variable_coeffient}, we compare four variants of the dissipation operators by optionally including the boundary correction $\mat{B}$ and undivided norm $\tilde{\mat{H}}$ (\emph{not} applied at half-nodes for odd $s$) in the dissipation operators. While including $\tilde{\mat{H}}$ has relatively little impact on the spectra, the inclusion of $\mat{B}$ is observed to have a beneficial effect on the spectral radius, particularly for higher degrees $p$ and values of $\varepsilon$. Eigenvalues with large negative real parts correspond to eigenvectors with large oscillations near the block boundaries, consistent with the observation in~\S \ref{sec:effect_B} that $\mat{B}$ reduces the dissipation applied at the boundaries. Additional spectra for varying degrees $p$ of CSBP, HGTL, HGT, and Mattsson operators exhibit similar behaviour, and are shown in Appendix~\ref{sec:additional_figures}. For SE operators, we use the formulation $\mat{A}_\mat{D} = -\varepsilon \HI \tilde{\mat{D}}_s^\T \tilde{\mat{D}}_s$ (since it was shown in \S\ref{sec:gsbp_uniqueness} that the choice of $\mat{C}$ is irrelevant) with $s=p$, and set dissipation coefficients $\varepsilon \approx 0.1 \times 2.25^{-p}$ such that the addition of volume dissipation increases the spectral radius of the semi-discretizations by approximately $20 \%$. Spectra for LGL and LG operators are included in Appendix~\ref{sec:additional_figures}.

\begin{figure}[t] 
    \centering
    \begin{subfigure}[t]{0.32\textwidth}
        \centering
        \includegraphics[width=\textwidth, trim={6 10 6 6}, clip]{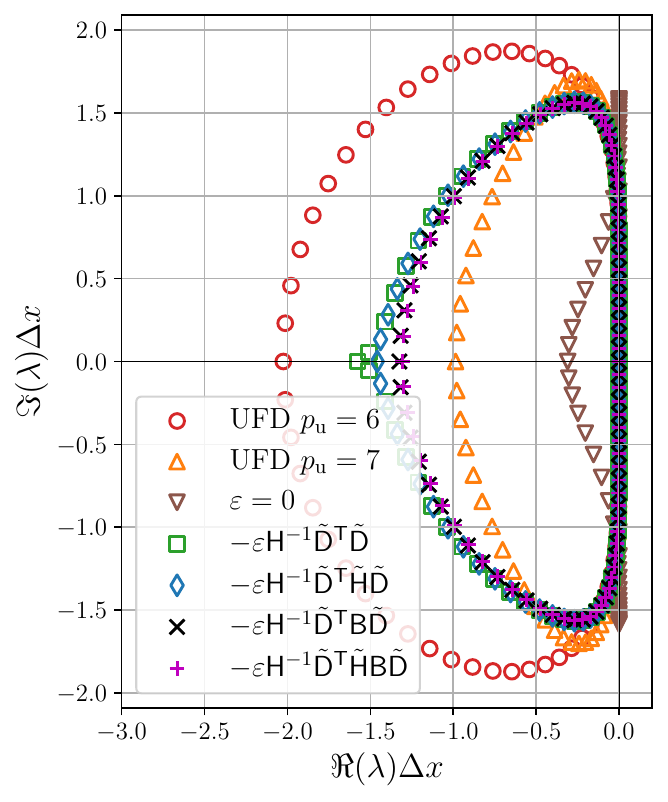}
        \caption{CSBP $p=3$, $\varepsilon = 0.005$}
    \end{subfigure}
    \hfill
    \begin{subfigure}[t]{0.32\textwidth}
        \centering
        \includegraphics[width=\textwidth, trim={6 10 6 6}, clip]{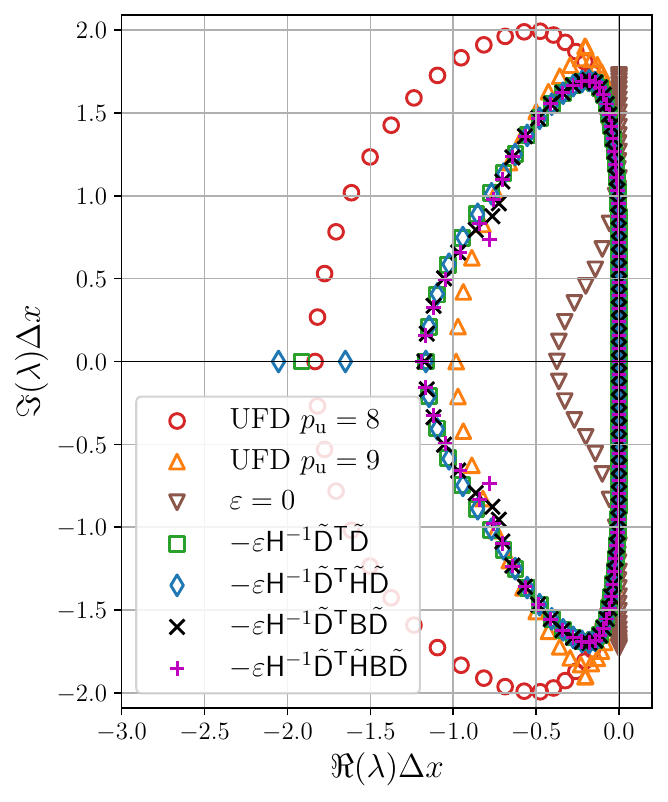}
        \caption{CSBP $p=4$, $\varepsilon = 0.001$}
    \end{subfigure}
    \hfill
    \begin{subfigure}[t]{0.32\textwidth}
        \centering
        \includegraphics[width=\textwidth, trim={6 10 6 6}, clip]{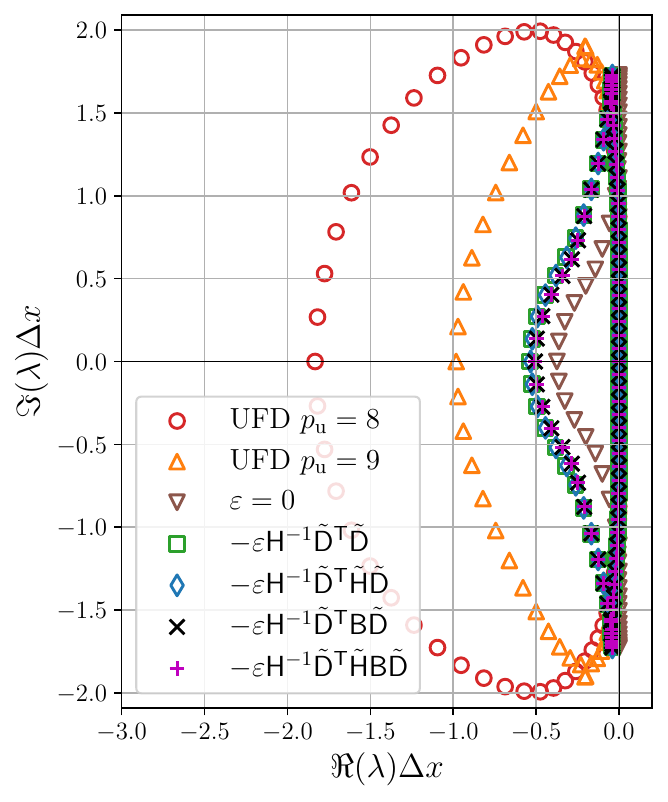}
        \caption{CSBP $p=4$, $\varepsilon = 0.0002$}
    \end{subfigure}
    \caption{Eigenspectra for the linear convection equation of the UFD SBP semi-discretizations of \cite{Mattsson2017} and central CSBP semi-discretizations with 80 nodes, upwind (Lax-Friedrichs) SATs, augmented with dissipation operators $s=p+1$.}
    \label{fig:LCEeigs_initial}
\end{figure}

\begin{figure}[!t] 
    \centering
    \begin{subfigure}[t]{0.32\textwidth}
        \centering
        \includegraphics[width=\textwidth, trim={5 10 5 5}, clip]{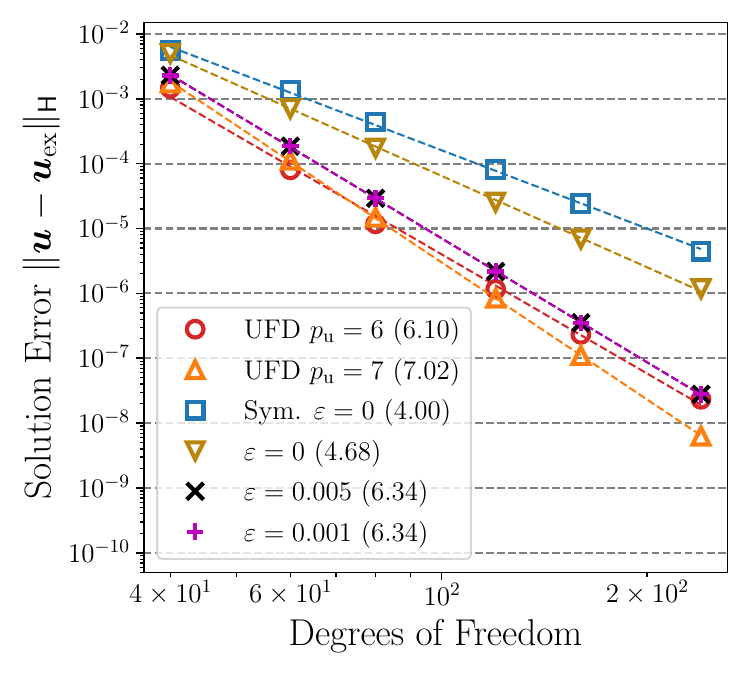}
        \caption{CSBP $p=3$, $t=1$}
    \end{subfigure}
    \hfill
    \begin{subfigure}[t]{0.32\textwidth}
        \centering
        \includegraphics[width=\textwidth, trim={5 10 5 5}, clip]{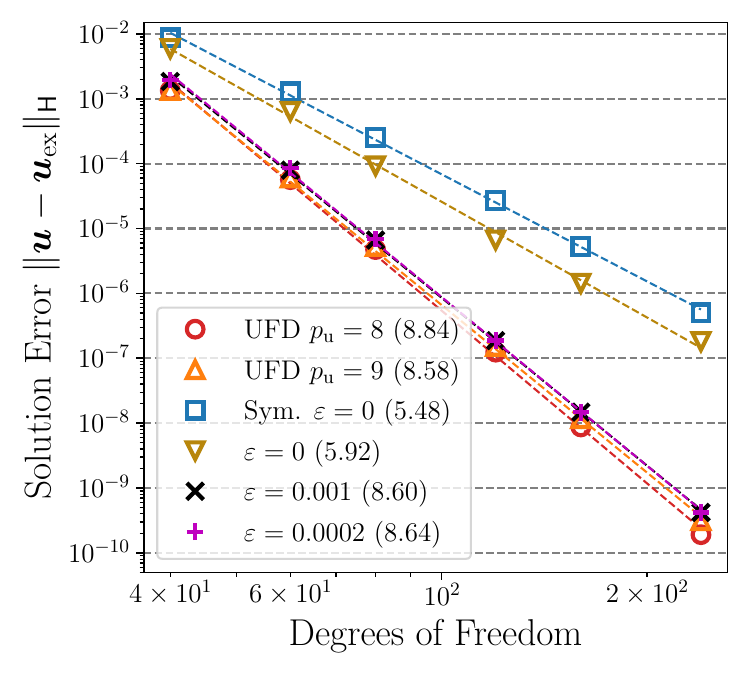}
        \caption{CSBP $p=4$, $t=1$}
    \end{subfigure}
    \hfill
    \begin{subfigure}[t]{0.32\textwidth}
        \centering
        \includegraphics[width=\textwidth, trim={5 10 5 5}, clip]{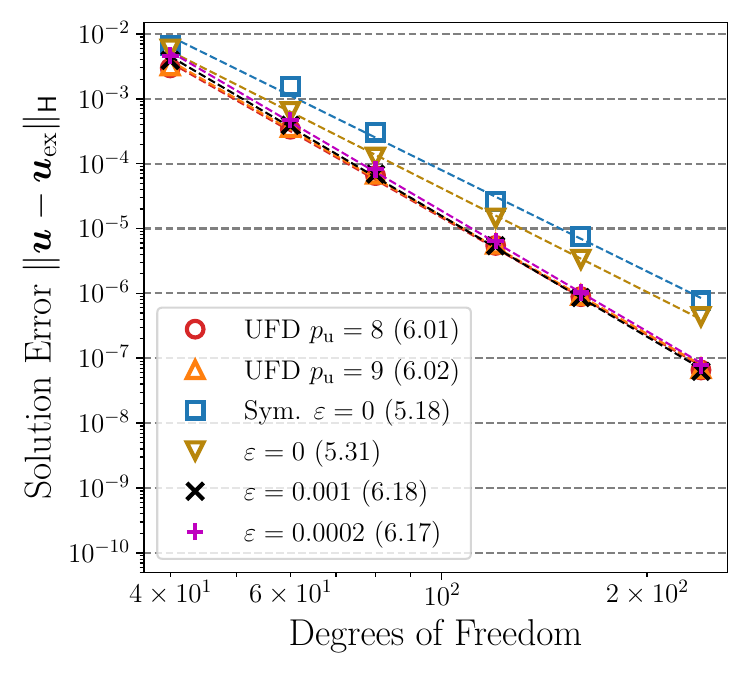}
        \caption{CSBP $p=4$, $t=1.5$}
    \end{subfigure}
    \caption{Solution error convergence after 1 or 1.5 periods of the linear convection equation for UFD and central SBP schemes with either symmetric (\ie, no dissipation) or upwind SATs, and volume dissipation $\mat{A}_\mat{D} = -\varepsilon \HI \tilde{\mat{D}}_s^\T \mat{B} \tilde{\mat{D}}_s$ with $s=p+1$. Convergence rates are given in the legends.}
    \label{fig:LCEconv}

\bigskip

    \begin{subfigure}[t]{0.32\textwidth}
        \centering
        \includegraphics[width=\textwidth, trim={5 10 5 5}, clip]{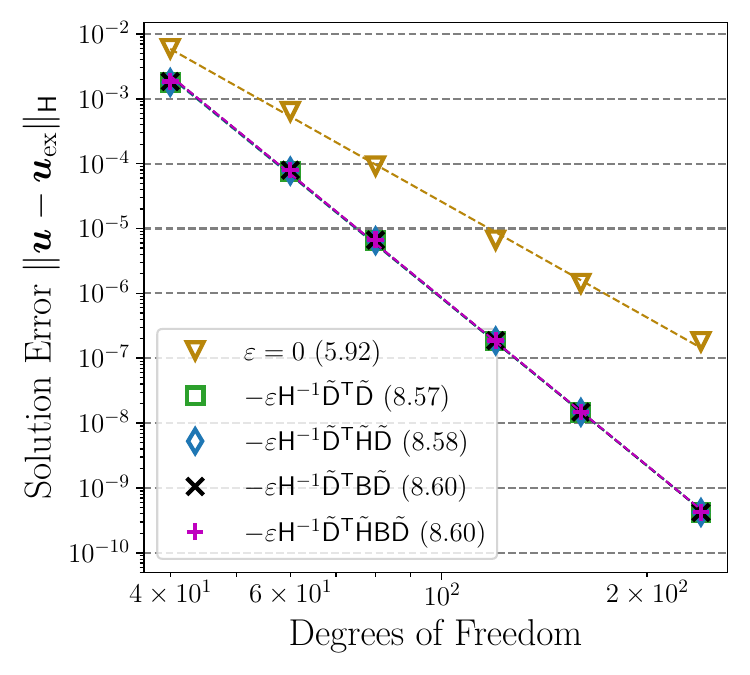}
        \caption{CSBP $p=4$, $t=1$}
        \label{fig:LCEconv2_a}
    \end{subfigure}
    \hfill
    \begin{subfigure}[t]{0.32\textwidth}
        \centering
        \includegraphics[width=\textwidth, trim={5 10 5 5}, clip]{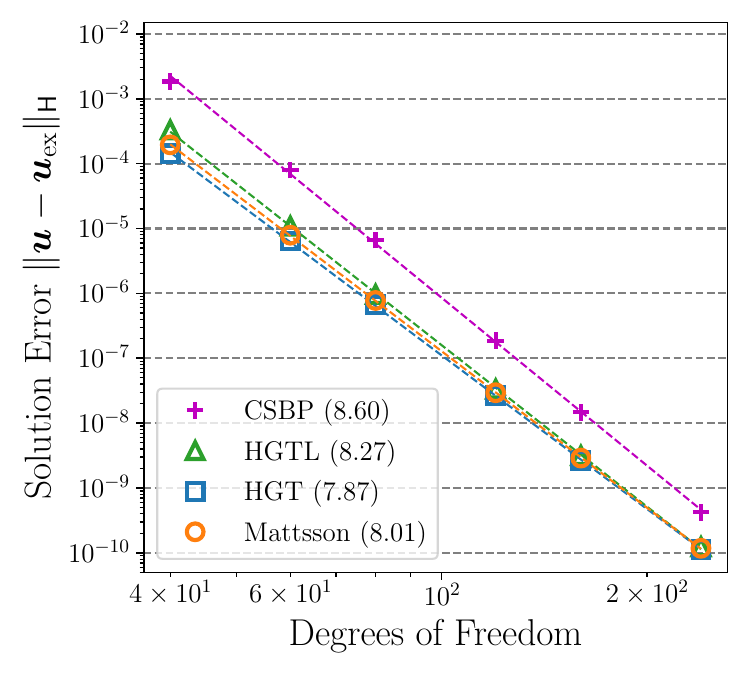}
        \caption{$p=4$, $t=1$}
        \label{fig:LCEconv2_b}
    \end{subfigure}
    \hfill
    \begin{subfigure}[t]{0.32\textwidth}
        \centering
        \includegraphics[width=\textwidth, trim={5 10 5 5}, clip]{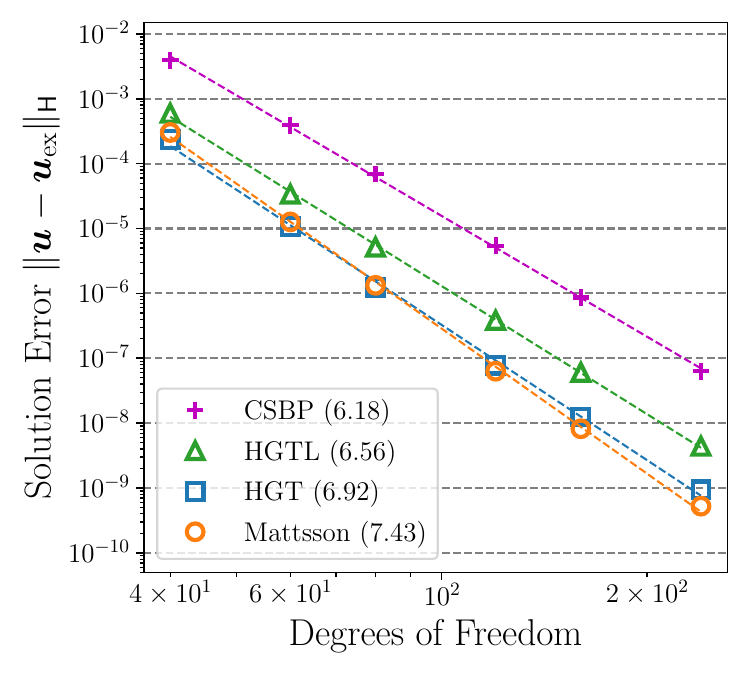}
        \caption{$p=4$, $t=1.5$}
        \label{fig:LCEconv2_c}
    \end{subfigure}
    \caption{Solution error convergence after 1 or 1.5 periods of the linear convection equation for central SBP schemes with upwind SATs and volume dissipation with $\varepsilon=0.0002$. Subfigure (a) compares different formulations of $\mat{A}_\mat{D}$, while (b) and (c) compare different operators, each using $\mat{A}_\mat{D} = -\varepsilon \HI \tilde{\mat{D}}_s^\T \mat{B} \tilde{\mat{D}}_s$.}
    \label{fig:LCEconv2}
\end{figure}

To verify the order of accuracy of the FD dissipation operators, we perform a grid-refinement. We use the initial condition
\begin{equation} \label{eq:gaussian}
    \fnc{U}\left(x,0\right) = e^{0.5 \left( \frac{x- 0.5}{0.08} \right)^2}
\end{equation}
and evaluate the solution error $\norm{\vec{u}-\fnc{U}}_\Hnrm = \left(\vec{u}-\fnc{U}\right)^\T \Hnrm \left(\vec{u}-\fnc{U}\right)$ after either one or one and a half periods on grids with between 40 and 240 nodes. Motivated by the spectra of \figref{fig:LCEeigs_initial} and arguments in \S \ref{sec:implement_vol_diss_for_sbp}, we use the formulation ${\mat{A}_\mat{D} = -\varepsilon \HI \tilde{\mat{D}}_s^\T \mat{B} \tilde{\mat{D}}_s}$ with $s=p+1$. Results for CSBP $p=3$ and $p=4$ are plotted in \figref{fig:LCEconv}, where we again include UFD discretizations of \cite{Mattsson2017} for comparison, as well as discretizations with no volume dissipation (but dissipation from upwind SATs), and no dissipation at all (using symmetric SATs). Error convergence rates are computed using a linear best-fit through the data. All discretizations achieve the expected convergence rates of~$\geq p+1$, and are in fact consistent with the sharper $\geq p+1.5$ error estimates of~\cite{Jiang2024}. Furthermore, when the final time is chosen such that the Gaussian is in the interior of the domain, discretizations with volume dissipation converge at a much greater than expected rate of~$\geq 2p$, matching the order of the interior stencil. For example, for CSBP $p=4$ operators, at $t=1.5$, when the Gaussian is at a block interface, discretizations with volume dissipation gain one additional order of convergence and one-order-of-magnitude reduction in error on the finest grid level. At $t=1$, however, when the Gaussian is in the block interior, discretizations with volume dissipation gain two and a half orders of convergence and a three-order-of-magnitude reduction in error on the finest grid level. In general, the benefit to solution accuracy of volume dissipation increases with $p$. For larger $p$, all discretizations with volume dissipation perform nearly identically, but for smaller $p$, the odd $p_\text{u}$ upwind formulation appears preferable, likely due to its wider and more accurate internal stencil. \figref{fig:LCEconv2_a} demonstrates that the exclusion of $\Hnrm$ has little impact on solution accuracy, further justifying the choice $\mat{C}=\mat{B}\mat{A}$. Additional plots in Appendix~\ref{sec:additional_figures} demonstrate similar results for HGTL, HGT, and Mattsson operators; however, the addition of volume dissipation does not appear to improve the solution accuracy of Mattsson operators. Nevertheless, volume dissipation may still be beneficial for Mattsson operators in other ways, such as in accelerating convergence to steady-state and maintaining positivity of thermodynamic variables (demonstrated in \S\ref{sec:KelvinHelmholtz}). A comparison in \figref{fig:LCEconv2_b} demonstrates that using HGTL, HGT, or Mattsson operators can result in a roughly two-order-of-magnitude reduction in error as compared to equispaced CSBP operators. Since SE SBP schemes augmented with volume dissipation \eqnref{eq:diss_main_form} are equivalent to the USE discretizations of \cite{Glaubitz2024} in the context of the linear convection equation, we defer the verification of their accuracy to \cite{Glaubitz2024} and Appendix \ref{sec:appendix_spectral}.

\begin{figure}[t] 
    \centering
    \begin{subfigure}[t]{0.48\textwidth}
        \centering
        \includegraphics[width=\textwidth, trim={10 13 15 15}, clip]{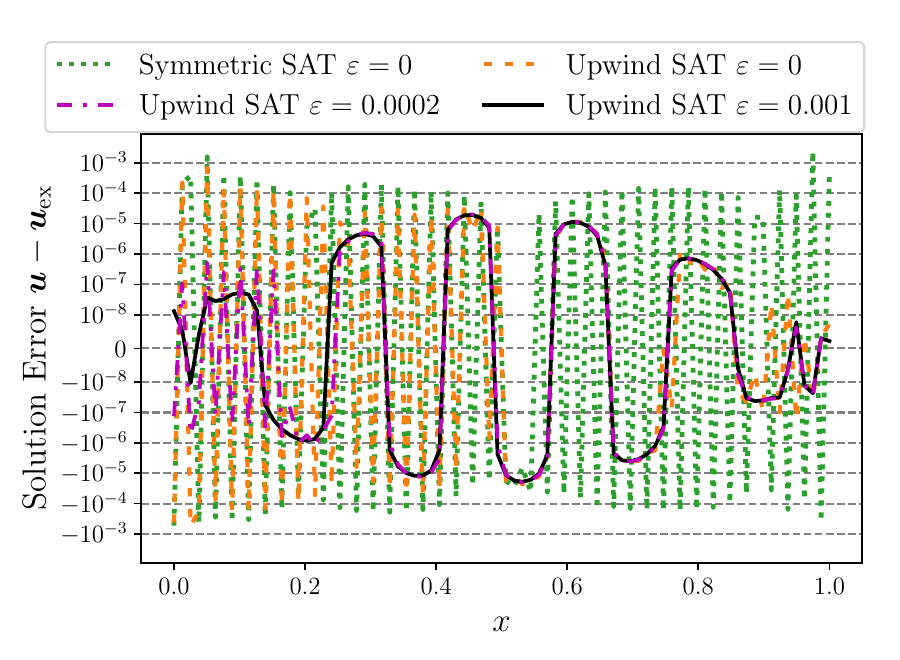}
        \caption{Gaussian, 1 block, 80 nodes, $t=1$}
    \end{subfigure}
    \hfill
    \begin{subfigure}[t]{0.48\textwidth}
        \centering
        \includegraphics[width=\textwidth, trim={10 13 15 15}, clip]{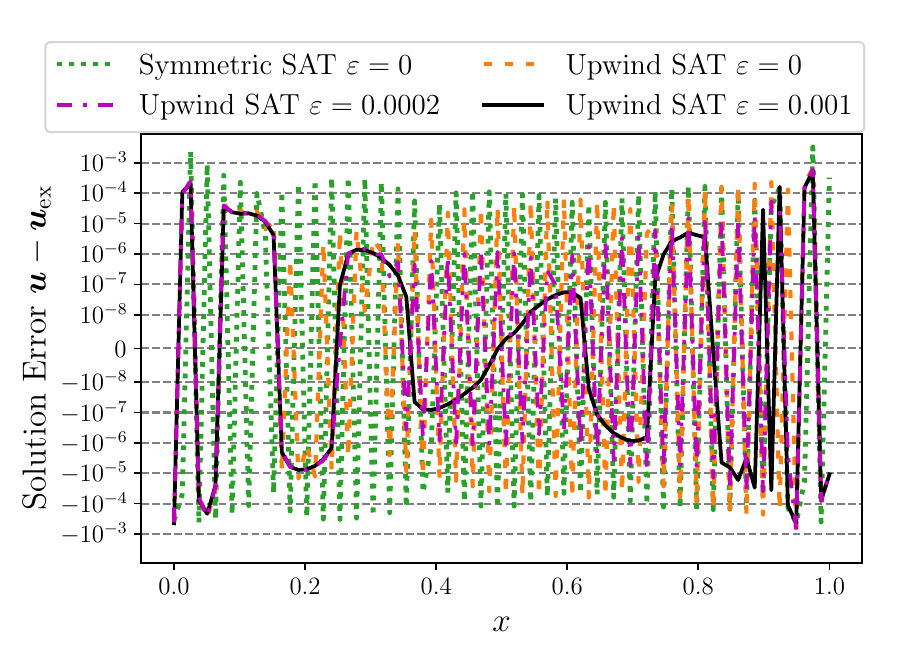}
        \caption{Gaussian, 1 block, 80 nodes, $t=1.5$}
    \end{subfigure}
    \vskip\baselineskip 
    \begin{subfigure}[t]{0.48\textwidth}
        \centering
        \includegraphics[width=\textwidth, trim={10 13 15 15}, clip]{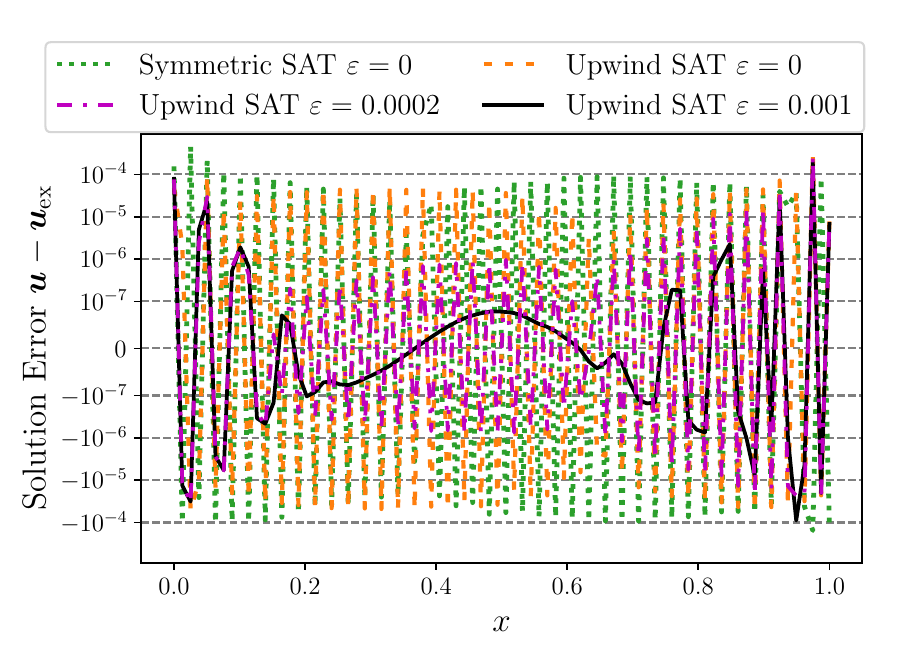}
        \caption{Sinusoid, 1 block, 80 nodes, $t=1$}
    \end{subfigure}
    \hfill
    \begin{subfigure}[t]{0.48\textwidth}
        \centering
        \includegraphics[width=\textwidth, trim={10 13 15 15}, clip]{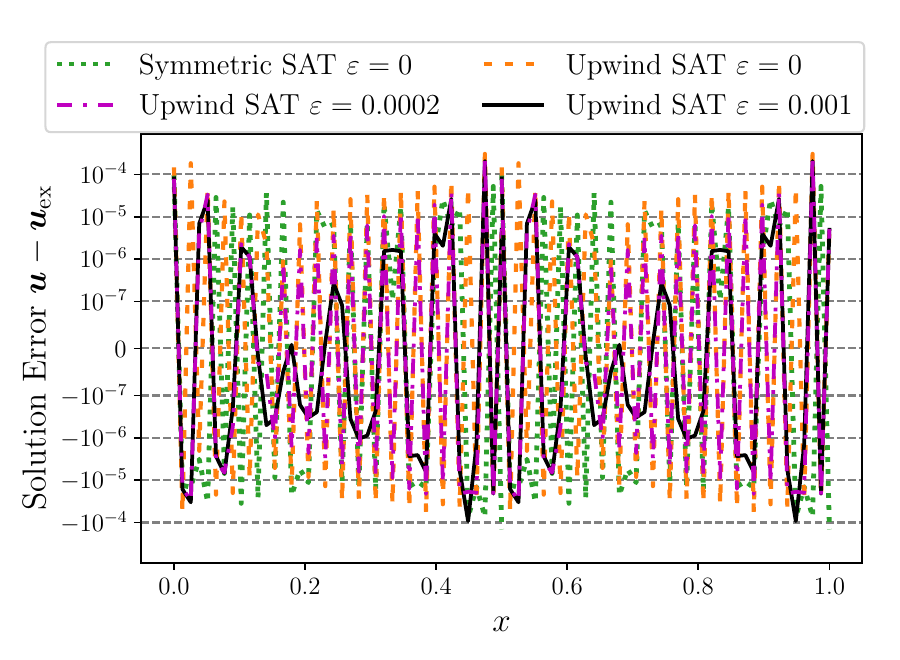}
        \caption{Sinusoid, 2 blocks, 40 nodes each, $t=1$}
    \end{subfigure}
    \caption{Solution error of the linear convection equation with either a Gaussian \eqnref{eq:gaussian} or Sinusoidal \eqnref{eq:sin4pi} initial condition for central SBP schemes using CSBP $p=4$ operators, either symmetric (\ie, no dissipation) or upwind SATs augmented with dissipation $\mat{A}_\mat{D} = -\varepsilon \HI \tilde{\mat{D}}_s^\T \mat{B} \tilde{\mat{D}}_s$ where $s=5$.}
    \label{fig:LCEsolerr}
\end{figure}

To further examine the effect of volume dissipation for FD operators, we plot the difference between numerical and exact solutions in \figref{fig:LCEsolerr} after one period. In addition to the Gaussian initial condition~\eqnref{eq:gaussian}, we also use a sinusoidal initial condition 
\begin{equation} \label{eq:sin4pi}
    \fnc{U}\left(x,0\right) = \sin{\left( 4 \pi x \right)}, \quad x \in [0,1].
\end{equation}
The discretization without any dissipation exhibits highly oscillatory errors introduced by the boundary stencils, which are reduced but not completely eliminated by the interface dissipation. Volume dissipation removes these high frequency modes from the solution. Increasing the dissipation coefficient beyond $\varepsilon = 3.125 \times 5^{-s}$ has a negligible impact on the errors observed in \figref{fig:LCEsolerr}. In examining the results of the sinusoidal initial condition in particular, it becomes clear that the volume dissipation is effective in removing high-order oscillations from the interior of the domain. Immediately adjacent to the block boundaries however, solution accuracy is not significantly improved. This is likely related to the presence of lower-order and possibly non-dissipative stencils at the boundaries of $\mat{A}_\mat{D}$, visible for example in the Taylor expansions \eqnref{eq:diss_2_b1} and \eqnref{eq:diss_2_b2}, and a direct result of the symmetric negative semi-definite structure of $\mat{A}_\mat{D}$. This explains why discretizations with volume dissipation often exceed expected error convergence rates when refining in an FD manner. Volume dissipation becomes more effective as the number of interior nodes increases, and since the interior also constitutes a larger portion of the domain, reducing its error has a greater impact on the overall global error. This also provides insight into why volume dissipation does not improve the solution accuracy of Mattsson operators. Since Mattsson operators are optimized to minimize solution error near block boundaries, the influence of the dissipation operator’s lower-order boundary stencils likely becomes more pronounced. Optimizing the dissipation operators by introducing unknowns near the boundaries of $\tilde{\mat{D}}_s$, as mentioned in \S \ref{sec:construction}, could improve their accuracy benefit.

\subsection{The Burgers Equation} \label{sec:burgers}

To verify the nonlinear stability properties and examine the local linear stability properties of the dissipation operators, we study the inviscid Burgers equation on a periodic domain $x \in [0,1]$. We employ the entropy-stable split-form SBP semi-discretization described in \cite{DelReyFernandez2014_review} and \cite{Gassner2022} with Rusanov entropy-dissipative interface SATs, \ie a numerical interface flux with dissipative part
$$f^\text{diss}(u_L,u_R) = - \max \left( \abs{u_L}, \abs{u_R} \right) \left( u_R - u_L \right). $$
We choose the initial condition
\begin{equation}
    \fnc{U}\left(x,0\right) = \sin{\left( 2 \pi x \right)} + \beta , 
\end{equation}
where setting $\beta = 1.5$ leads to a strictly positive solution, ensuring that the eigenvalues of the linearized semi-discretization Jacobian should have non-positive real parts in order to be consistent with the local linear stability properties of the continuous PDE \cite{Gassner2022}. We augment the baseline entropy-dissipative scheme with volume dissipation $\mat{A}_\mat{D} = -\varepsilon \HI \tilde{\mat{D}}^\T_s \mat{B} \mat{U} \tilde{\mat{D}}_s$, where $\mat{U} = \text{diag}\left(\vec{u}_h\right)$ is the scalar flux Jacobian (with a simple average at half-nodes for odd $s$ as explained in \S \ref{sec:variable_coeffient}), $s=p+1$ and $s=p$ for FD and SE operators, respectively, and the values of $\varepsilon$ selected in \S \ref{sec:LCE}. We run the simulation until the breaking time of $t=1/2\pi$, and compare the behaviour to the UFD operators of \cite{Mattsson2017} and USE operators of \cite{Glaubitz2024} using the flux-vector splitting presented in \cite{Ranocha2024}, as well as baseline entropy-conservative and entropy-dissipative discretizations relying solely on interface dissipation ($\varepsilon = 0$).

\begin{figure}[t] 
    \centering
    \begin{subfigure}[t]{0.32\textwidth}
        \centering
        \includegraphics[width=\textwidth, trim={11 13 10 10}, clip]{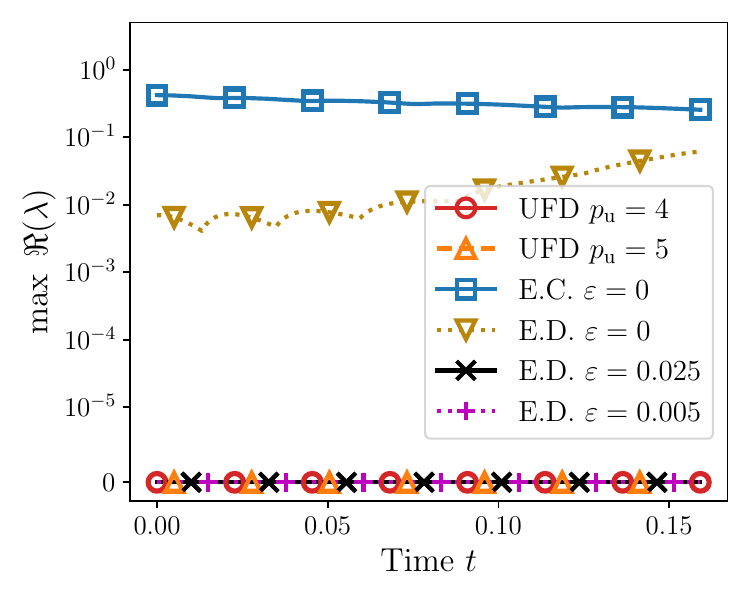}
        \caption{CSBP $p=2$, 40 nodes}
    \end{subfigure}
    \hfill
    \begin{subfigure}[t]{0.32\textwidth}
        \centering
        \includegraphics[width=\textwidth, trim={11 13 10 10}, clip]{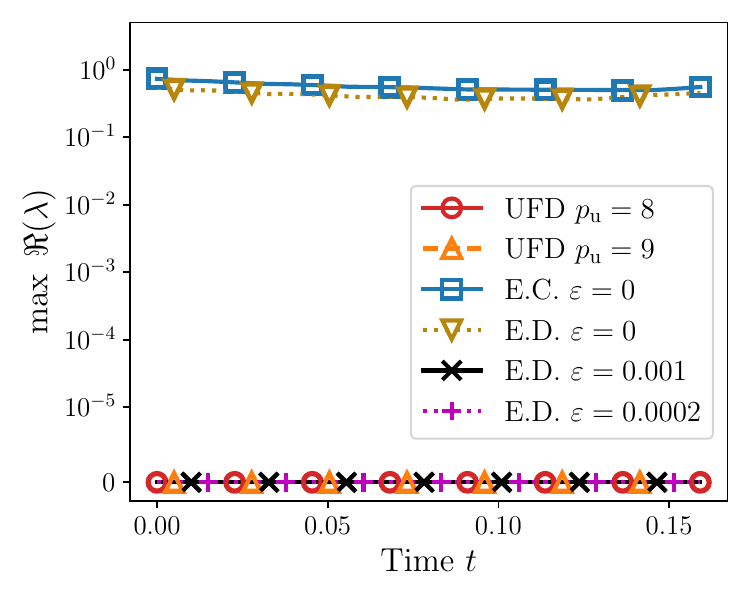}
        \caption{CSBP $p=4$, 40 nodes}
    \end{subfigure}
    \hfill
    \begin{subfigure}[t]{0.32\textwidth}
        \centering
        \includegraphics[width=\textwidth, trim={11 13 10 10}, clip]{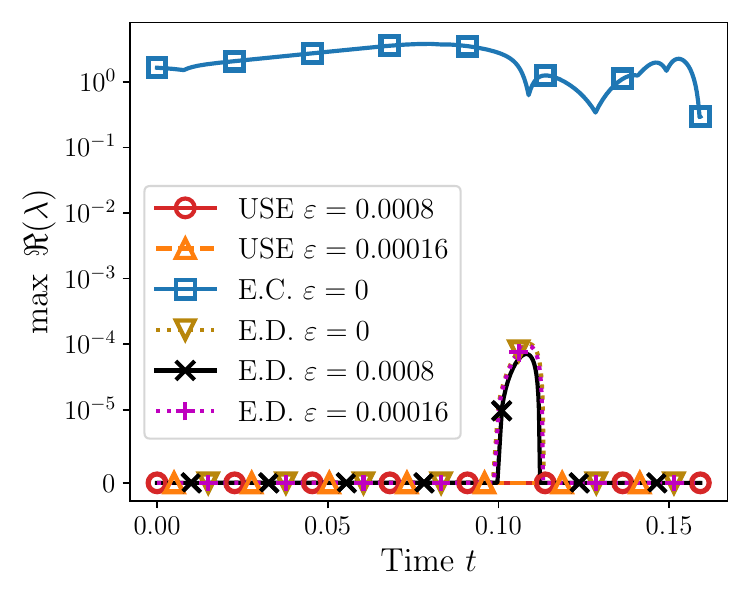}
        \caption{LGL $p=6$, 5 elements}
    \end{subfigure}
    \caption{Maximum real part of the eigenvalues of the semi-discretization Jacobian for the Burgers equation using UFD, USE, and entropy-stable split form SBP schemes with either entropy-conservative (E.C.) or entropy-dissipative (E.D.) Rusanov SATs. Values shown below $10^{-5}$ are zero to machine-precision.}
    \label{fig:BurgersMaxEig}
\end{figure}

To examine local linear stability, \figref{fig:BurgersMaxEig} plots the maximum real part of the eigenvalues of the semi-discretization Jacobian (linearization of the semi-discretization), computed via the complex step approximation, at each time step. Consistent with \cite{Ranocha2024, Glaubitz2024}, the upwind schemes are locally linearly stable with $\beta = 1.5$, \ie, they possess no eigenvalues with positive real parts. By contrast, and as discussed in \cite{Gassner2022}, both the entropy-conservative scheme and the entropy-dissipative scheme relying on interface dissipation alone are locally linearly \emph{unstable}. Reassuringly, adding a small amount of volume dissipation to FD schemes is sufficient to maintain local linear stability to machine precision for all time steps up until the breaking time. Similar results were obtained with varying degrees $p$, numbers of blocks, and numbers of interior nodes; however it should be noted that these results do not hold past the breaking time. Past $t=1/2 \pi$, a shock develops that often leads to the emergence of eigenvalues with positive real parts, likely due to the connection between local linear instabilities and under-resolved modes suggested in \cite{Gassner2022}. Although SE schemes with volume dissipation also usually exhibit local linear stability, \figref{fig:BurgersMaxEig} includes a counterexample where adding volume dissipation does not prevent the appearance of eigenvalues with positive real parts, even for smooth solutions. A possible explanation is discussed in Appendix~\ref{sec:appendix_spectral}.

\begin{figure}[t] 
    \centering
    \begin{subfigure}[t]{0.32\textwidth}
        \centering
        \includegraphics[width=\textwidth, trim={10 13 10 10}, clip]{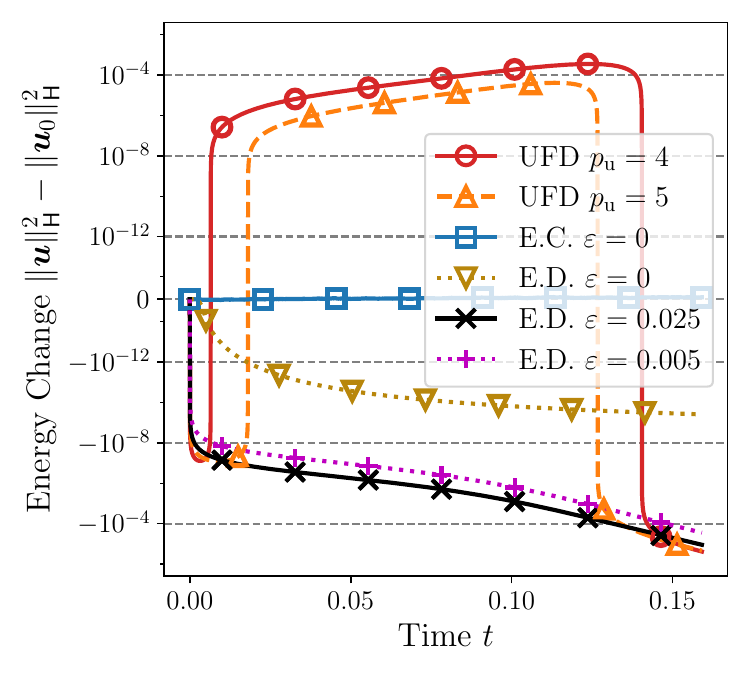}
        \caption{CSBP $p=2$, 40 nodes}
    \end{subfigure}
    \hfill
    \begin{subfigure}[t]{0.32\textwidth}
        \centering
        \includegraphics[width=\textwidth, trim={10 13 10 10}, clip]{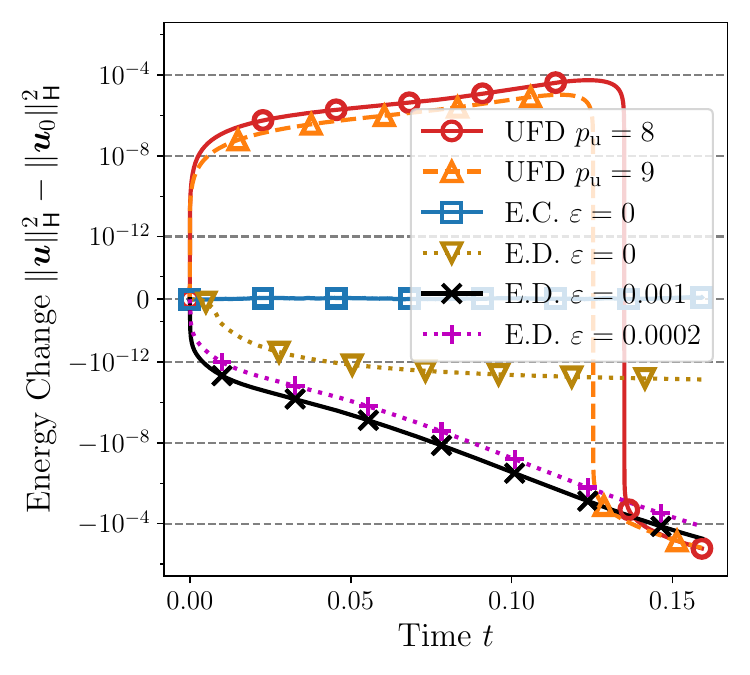}
        \caption{CSBP $p=4$, 40 nodes}
    \end{subfigure}
    \hfill
    \begin{subfigure}[t]{0.32\textwidth}
        \centering
        \includegraphics[width=\textwidth, trim={10 13 10 10}, clip]{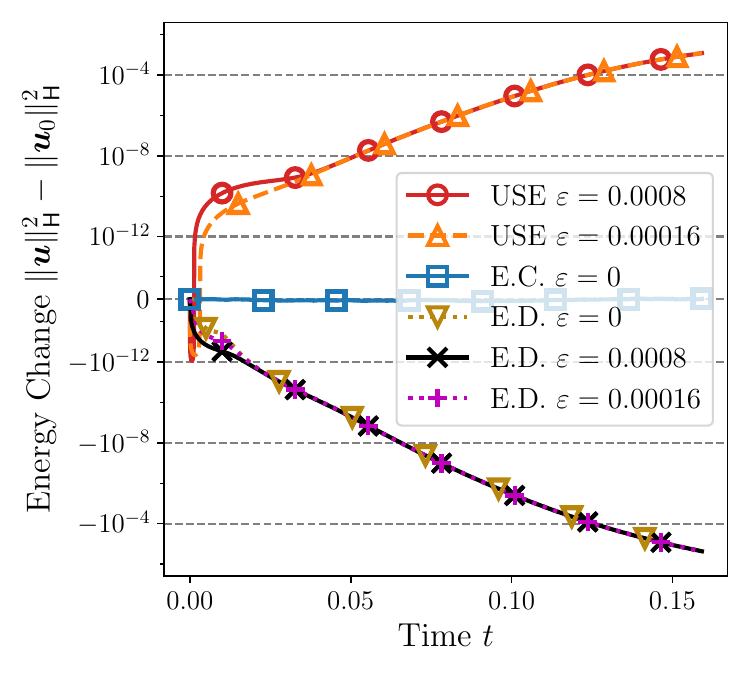}
        \caption{LGL $p=6$, 5 elements}
    \end{subfigure}
    \caption{Change in discrete energy $\norm{\vec{u}}_\Hnrm^2 = \vec{u}^\T \Hnrm \vec{u}$ for the Burgers equation using the UFD, USE, and entropy-stable split form SBP schemes with either entropy-conservative (E.C.) or entropy-dissipative (E.D.) Rusanov SATs.}
    \label{fig:BurgersEnergy}
\end{figure}

To verify the nonlinear stability properties, Figure \ref{fig:BurgersEnergy} plots the discrete energy in the $\Hnrm$-norm for the case $\beta=0$, where nonlinear instabilities are more easily observed. Neglecting time marching errors, the entropy-conservative discretization conserves energy, as expected, while the entropy-dissipative discretizations exhibit a monotonic decrease in energy. An increase in energy is observed for the upwind schemes, which stems from the lack of provable stability discussed in \S \ref{sec:relation_to_upwind}. Past the breaking time of $t=1/2 \pi$, the presence of the shock can cause the upwind discretizations to blow up, whereas the entropy-stable discretizations remain stable for all times. The shock and resulting spurious oscillations also significantly reduce the solution accuracy of FD schemes without volume dissipation, whereas those augmented with volume dissipation are able to more accurately represent the correct solution. All discretizations remain discretely conservative to machine precision.

\subsection{1D Euler Density-Wave} \label{sec:1deuler}

In \cite{Gassner2022}, a simple density-wave problem for the 1D Euler equations \eqnref{eq:1D_euler} was found to be problematic for high-order entropy-stable schemes. Using an initial condition where density is close to zero, \ie, 
\begin{align*}
    \vec{u} = \begin{bmatrix}
        \rho \\ \rho v \\ e
    \end{bmatrix} \ , \ \
    \rho = 1 + 0.98 \sin{\left( 2 \pi x \right)} \ , \ \
    v = 0.1 \ , \ \
    p = 20 \ , \ \
    \gamma = 1.4 
\end{align*}
on a domain $x \in [-1,1]$, the presence of unphysical eigenvalues with positive real parts was shown to quickly lead to spurious oscillations and positivity violations. To further investigate the ability of volume dissipation to suppress these oscillations, we employ the entropy-stable discretization of \cite{Fisher_2013, Crean2018} with the Chandrashekar entropy-conserving two-point flux \cite{Chandrashekar2013} and entropy-dissipative interface dissipation of \cite{Winters2017}. We augment this with matrix-matrix and scalar-matrix variants (introduced in \S \ref{sec:systems}) of the volume dissipation $\mat{A}_\mat{D} = -\varepsilon \HI \tilde{\mat{D}}_s^\T \mat{B} \mat{A} \tilde{\mat{D}}_s$ acting on entropy variables. For comparison, we include results for the UFD scheme of~\cite{Mattsson2017, Ranocha2024} with Steger-Warming flux splitting~\cite{Steger_flux} and Roe interface dissipation. We use the same problem formulation and initial condition described in Equation (50) of~\cite{Gassner2022} with a $p=4$ CSBP operator on grids of between 20 and 320 nodes.

\begin{figure}[t] 
    \centering
    \begin{subfigure}[t]{0.32\textwidth}
        \centering
        \includegraphics[height=3.5cm, trim={14 10 35 25}, clip]{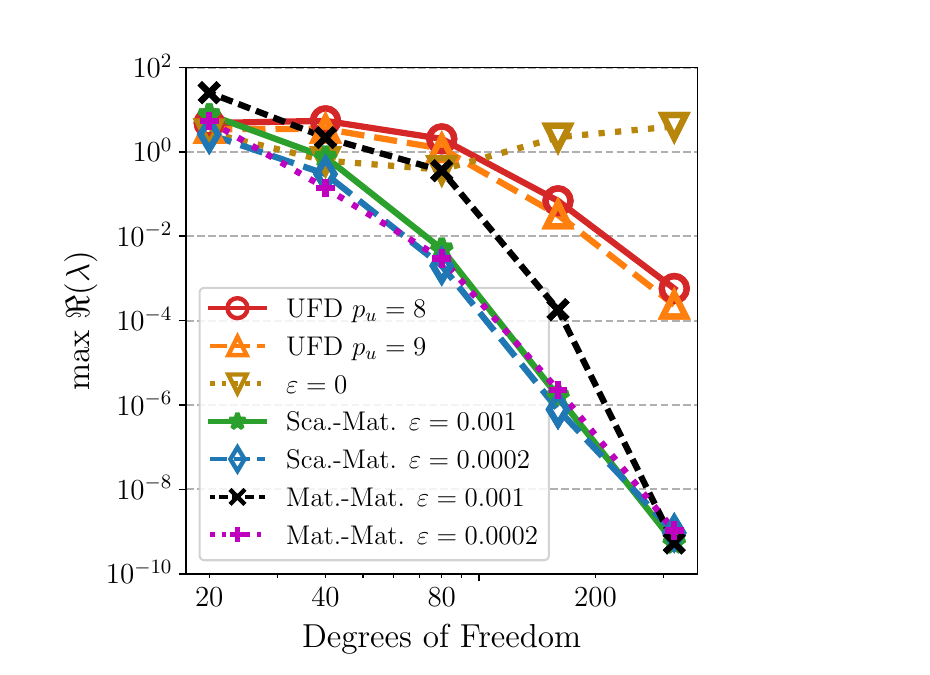}
    \end{subfigure}
    \hfill
    \begin{subfigure}[t]{0.32\textwidth}
        \centering
        \includegraphics[height=3.5cm, trim={6 10 35 25}, clip]{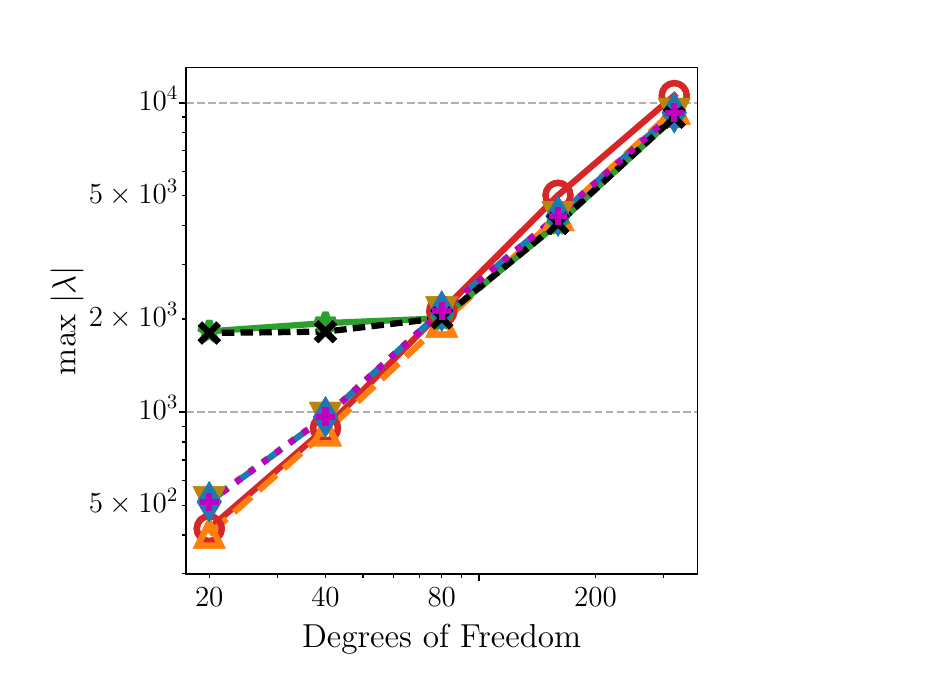}
    \end{subfigure}
    \hfill
    \begin{subfigure}[t]{0.32\textwidth}
        \centering
        \includegraphics[height=3.5cm, trim={27 10 35 25}, clip]{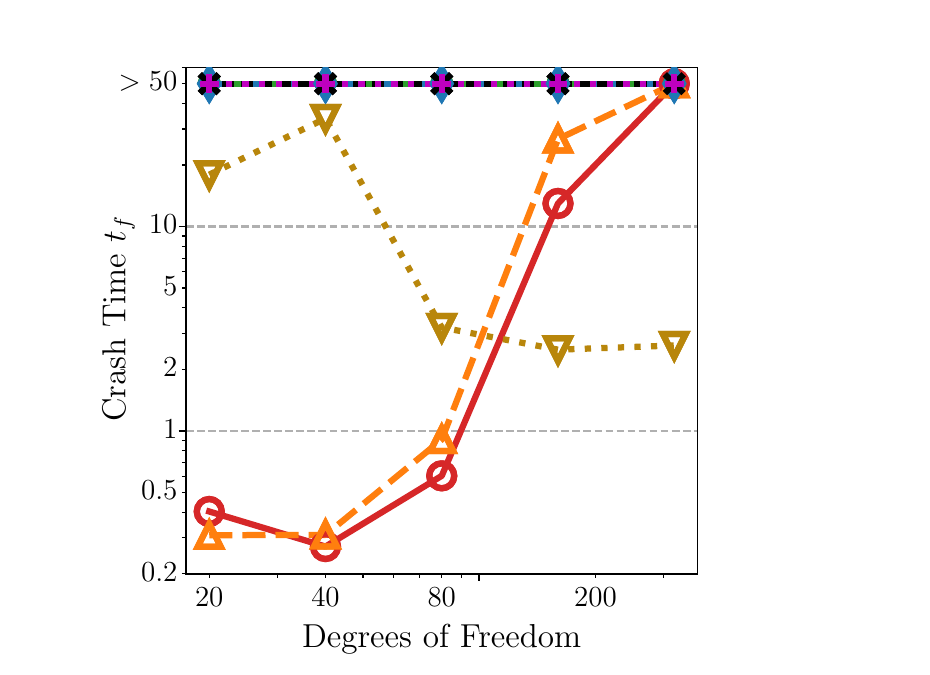}
    \end{subfigure}
    \caption{The maximum real part of the eigenvalues and spectral radius of the semi-discretization Jacobian, and crash times due to positivity errors for the 1D Euler density-wave problem using an UFD or entropy-dissipative scheme with a $p=4$ CSBP operator, entropy-dissipative SATs, and either matrix-matrix or scalar-matrix volume dissipation acting on entropy variables.}
    \label{fig:1deuler}
\end{figure}

\figref{fig:1deuler} shows the maximum real part of the eigenvalues and the spectral radius of the semi-discretization Jacobian linearized about the initial condition, along with the final crash time (up to $t_f=50$). On coarse grids, all discretizations---including those with volume dissipation---exhibit eigenvalues with large positive real parts. However, as noted in~\cite{Gassner2022}, grid refinement pushes these unphysical modes to higher frequencies.  Since volume dissipation removes high-frequency modes, and does so more effectively as the grid is refined, grid refinement sees the real parts of these eigenvalues converge to zero at a rate far surpassing the order of the discretization. Therefore, although entropy-dissipative schemes can initially exhibit unphysical linear stability behavior on coarse grids, volume dissipation ensures rapid convergence to the correct behavior under grid refinement, faster even than UFD schemes, without adversely impacting the spectral radius. Furthermore, while UFD and entropy-stable discretizations relying solely on interface dissipation often crash due to positivity violations, the addition of entropy-based volume dissipation reduces oscillations and maintains positivity of thermodynamic variables for long times. The smaller values of coefficient $\varepsilon$ are found to be at least as effective as---if not slightly more than--- the larger values in controlling local linear perturbation growth, while also maintaining a smaller spectral radius on coarse grids. Similar behavior is observed using different FD operators, entropy-conservative two-point fluxes, and interface SATs. In some cases, volume dissipation reduces the positive real parts of eigenvalues to machine precision even on relatively coarse grids. For SE operators, however, volume dissipation does not appear to be effective in accelerating convergence of the positive real parts of the eigenvalues to zero. We provide some discussion on this in Appendix \ref{sec:appendix_spectral}. Nevertheless, the dissipation acts to enhance robustness by maintaining positivity for long times.

\subsection{2D Euler Unsteady Isentropic Vortex} \label{sec:2dvortex}

To verify the accuracy of the dissipation operators in a nonlinear setting, we use the well-known two-dimensional isentropic vortex problem for the compressible Euler equations (see for example, \cite{spiegel2015}) with initial condition
\begin{gather*}
    \vec{u} = \begin{bmatrix}
        \rho \\ \rho \vec{v} \\ e
    \end{bmatrix} , \ \
    \vec{v} = M_\infty \begin{bmatrix}
        1 - \beta \frac{y}{R} e^{-r^2/2} \\
        \beta \frac{x}{R} e^{-r^2/2}
    \end{bmatrix} , \ \ 
    e = \frac{p}{\gamma - 1} + \frac{1}{2} \rho \norm{\vec{v}}^2 , \\
    \rho = \left( 1 - \frac{1}{2} \left( M_\infty \beta \right)^2 \left( \gamma - 1 \right) e^{-r^2} \right)^{1/\left( \gamma-1 \right)} , \ \ p = \frac{\rho^\gamma}{\gamma} , \ \
    r^2 = \left(\frac{x}{R}\right)^2 + \left(\frac{y}{R}\right)^2
\end{gather*}
where $\gamma = 1.4$, $M_\infty = 0.5$, $\beta = 0.2$, and $R=0.5$ on a periodic domain $(x,y) \in [-5,5]^2$. The solution is advanced in time for one period (until $t_f=20$). We employ two different schemes. The first is a central SBP scheme as described in \cite{hicken2008} that is not entropy-stable, augmented with Roe interface dissipation and either scalar or matrix variants of the volume dissipation \eqnref{eq:2d_diss} acting on conservative variables. The second is the entropy-stable discretization of \cite{Fisher_2013, Crean2018} using the Ranocha entropy-conserving two-point flux \cite{Ranocha2018} and entropy-dissipative interface dissipation of \cite{Winters2017}. To both schemes, we add one of four variants of the volume dissipation \eqnref{eq:2d_diss}: scalar or matrix dissipation acting on conservative variables, or scalar-matrix or matrix-matrix dissipation acting on entropy variables. In each direction we use 3 blocks of either $20^2$, $40^2$, $80^2$, and $160^2$ nodes such that the effect of block interfaces is included, ensuring that the error convergence rates are not portrayed in an overly favourable scenario. For comparison, we include comparable UFD and USE schemes using a modified variant of the Steger-Warming flux splitting due to Drikakis and Tsangaris~\cite{Drikakis} as described in~\cite{Ranocha2024}.

\begin{figure}[t] 
    \centering
        \begin{subfigure}[t]{0.32\textwidth}
        \centering
        \includegraphics[width=\textwidth, trim={5 10 5 5}, clip]{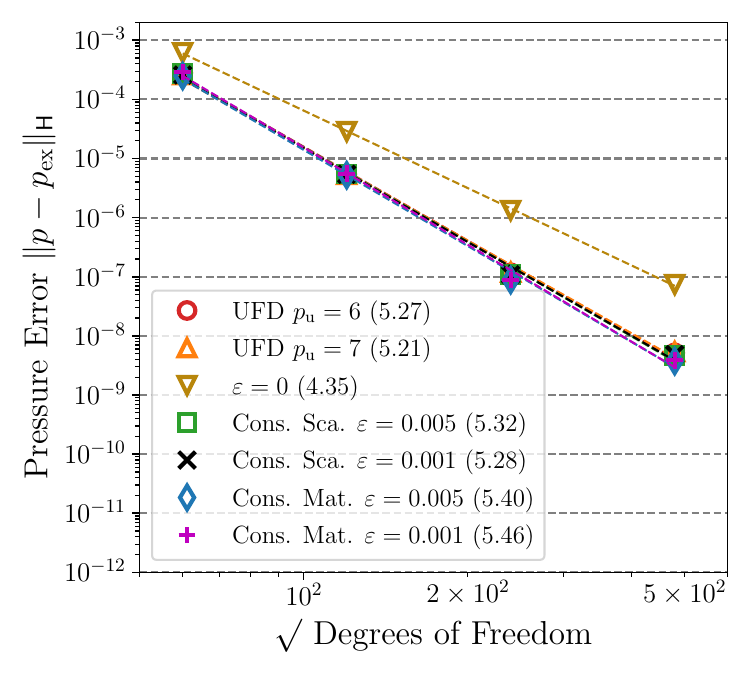}
        \caption{Central CSBP $p=3$ \\ \phantom{(a)} and UFD}
    \end{subfigure}
    \hfill
    \begin{subfigure}[t]{0.32\textwidth}
        \centering
        \includegraphics[width=\textwidth, trim={5 10 5 5}, clip]{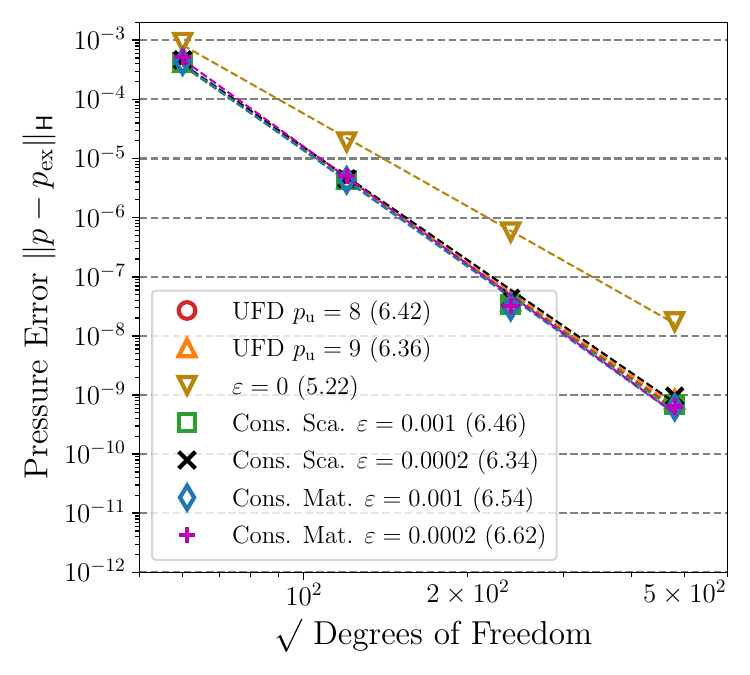}
        \caption{Central CSBP $p=4$ \\ \phantom{(b)} and UFD}
    \end{subfigure}
    \hfill
    \begin{subfigure}[t]{0.32\textwidth}
        \centering
        \includegraphics[width=\textwidth, trim={5 10 5 5}, clip]{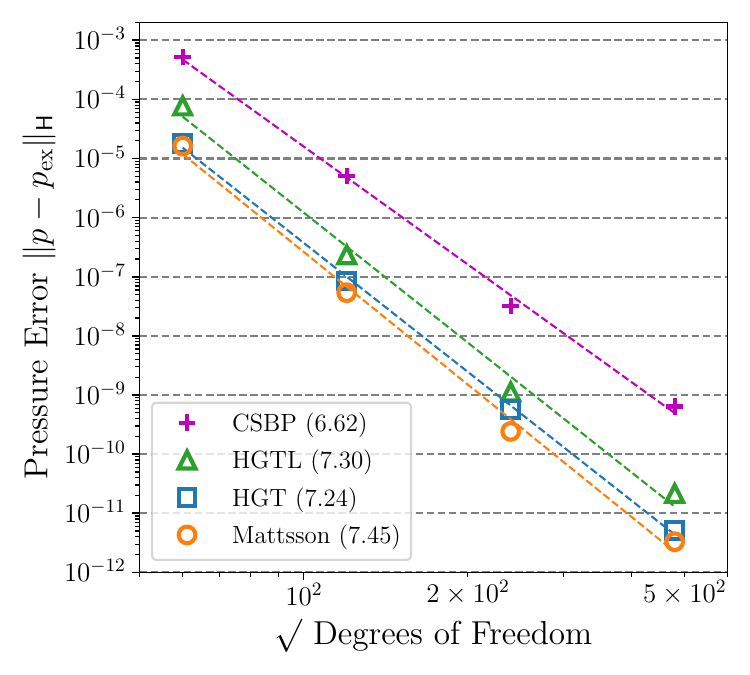}
        \caption{Central $p=4$ with \\ \phantom{(c)} Cons. Mat. $\varepsilon = 0.0002$}
    \end{subfigure}
    
    \vskip\baselineskip 

    \begin{subfigure}[t]{0.32\textwidth}
        \centering
        \includegraphics[width=\textwidth, trim={5 10 5 5}, clip]{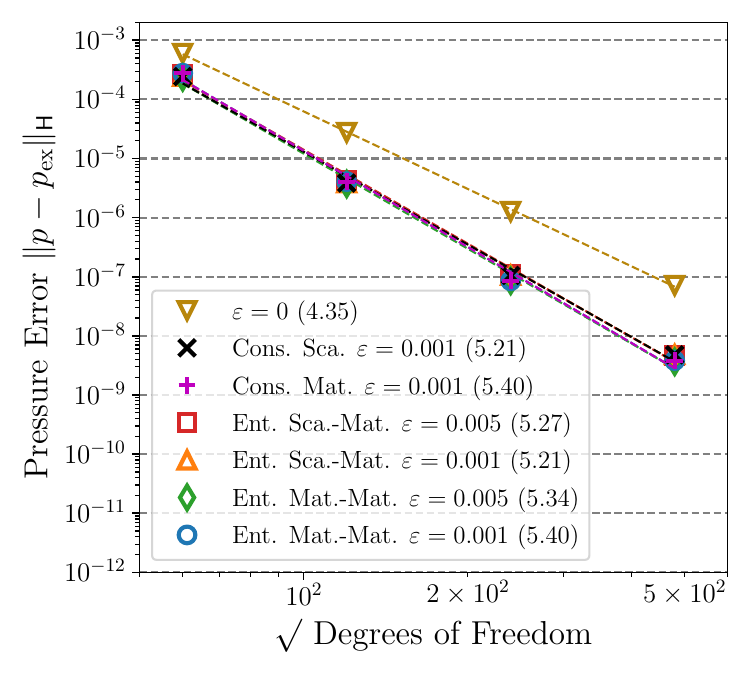}
        \caption{Entropy-Stable CSBP \\ \phantom{(d)} $p=3$}
    \end{subfigure}
    \hfill
    \begin{subfigure}[t]{0.32\textwidth}
        \centering
        \includegraphics[width=\textwidth, trim={5 10 5 5}, clip]{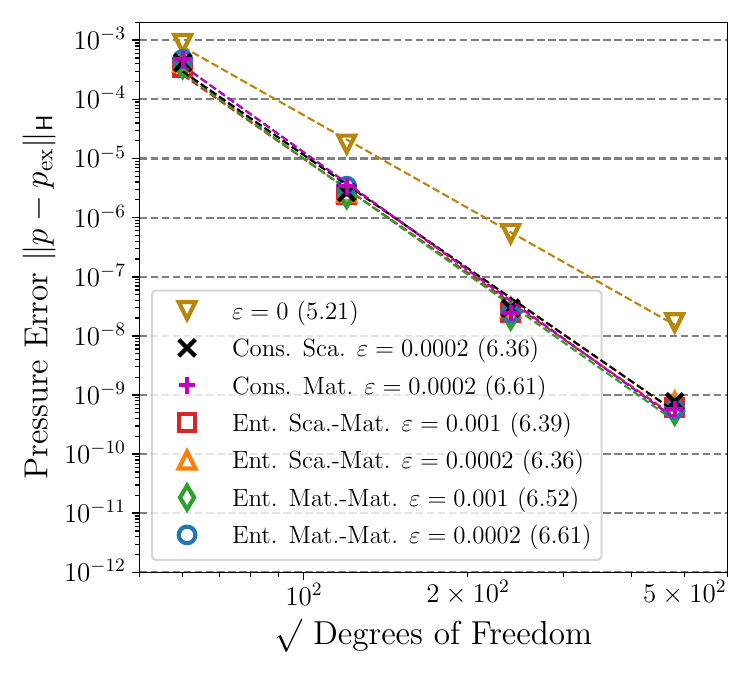}
        \caption{Entropy-Stable CSBP \\ \phantom{(e)} $p=4$}
    \end{subfigure}
    \hfill
    \begin{subfigure}[t]{0.32\textwidth}
        \centering
        \includegraphics[width=\textwidth, trim={5 10 5 5}, clip]{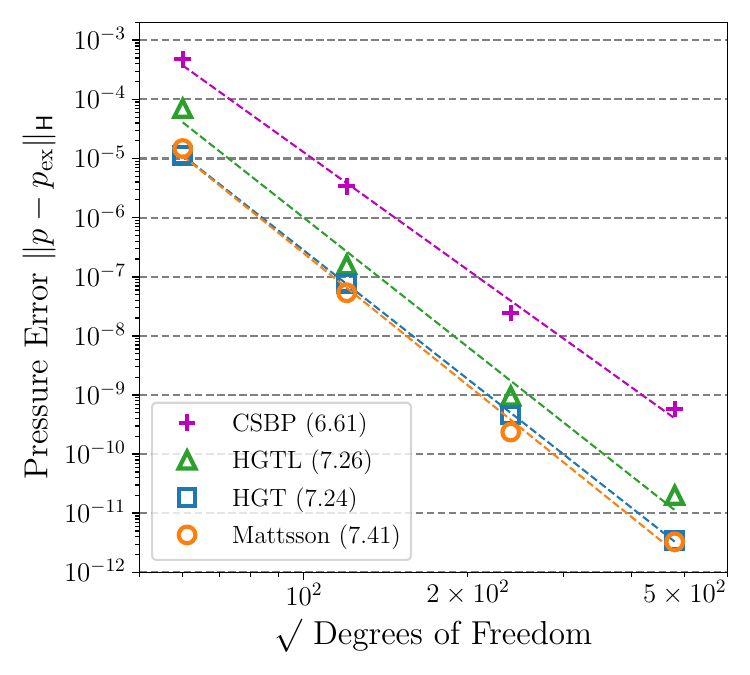}
        \caption{Entropy-Stable $p=4$ with \\ \phantom{(f} Ent. Mat.-Mat. $\varepsilon = 0.0002$}
    \end{subfigure}
    \caption{Pressure error after one period of the isentropic vortex problem for the 2D Euler equations using UFD~\cite{Mattsson2017}, central (\eg, \cite{hicken2008}), and entropy-stable SBP schemes of \cite{Fisher_2013, Crean2018}. The latter two are augmented with volume dissipation \eqnref{eq:2d_diss}, either as scalar or matrix formulations acting on conservative variables, or scalar-scalar or scalar-matrix formulations acting on entropy variables. Convergence rates are given in the legends.}
    \label{fig:Vortexpconv}
\end{figure}

The convergence of pressure errors is shown in Figure \ref{fig:Vortexpconv} for CSBP operators. The results are consistent with those observed for the linear convection equation: all discretizations with volume dissipation achieve convergence rates of~${\geq p+1.5}$, with the majority gaining one order of accuracy and exhibiting over an order of magnitude reduction in pressure error on finer grid levels, relative to discretizations without volume dissipation. Generally, there are negligible differences in accuracy between schemes augmented with different volume dissipations, be it scalar or matrix variants, central, entropy-stable, or even upwind discretizations. Additional figures in Appendix \ref{sec:additional_figures} demonstrate the same conclusions for HGTL, HGT, and Mattsson operators, except for a few cases where the scalar dissipation is seen to be overly-dissipative. Appendix \ref{sec:additional_figures} also includes results for LGL and LG operators demonstrating the expected SE convergence rates of~$p$. Similar results are observed for density and entropy errors. Comparisons in \figref{fig:Vortexpconv} once again demonstrate that using HGTL, HGT, or Mattsson operators can result in significant reductions in error as compared to using equispaced CSBP operators.

\subsection{2D Euler Kelvin-Helmholtz Instability} \label{sec:KelvinHelmholtz}

Finally, we investigate the robustness of the presented volume dissipation using the Kelvin-Helmholtz instability for the compressible Euler equations. We use the same initial condition as in \cite{Ranocha2024} and \cite{Glaubitz2024}, \ie
\begin{gather*}
    \rho = \frac{1}{2} + \frac{3}{4} \beta , \quad
    \vec{v} = \begin{bmatrix}
        \frac{1}{2} \beta - 1 \\
        \frac{1}{10} \sin\left(2 \pi x\right)
    \end{bmatrix} , \quad
    p = 1 ,  \\
    \beta\left(x,y\right) = \tanh\left(15y+7.5\right) - \tanh\left(15y-7.5\right),
\end{gather*}
on a periodic domain $(x,y) \in [-1,1]^2$. The solution is advanced in time until $t_f=15$ using the adaptive Dormand–Prince algorithm \cite{Hairer1993} with relative and absolute tolerances of $10^{-7}$. To verify that robustness benefits of including volume dissipation are independent of the choice of time-marching method and time step, we also performed simulations using the classical fourth-order Runge–Kutta method with a fixed time step set at $\mathrm{CFL}=1$. Although temporal accuracy suffers, the robustness benefits discussed below persist.

As before, we employ three different schemes: i) a central SBP scheme \cite{hicken2008} augmented with Roe interface dissipation and either scalar or matrix variants of the volume dissipation \eqnref{eq:2d_diss} acting on conservative variables, ii) the entropy-stable discretization of \cite{Fisher_2013, Crean2018} using the Ranocha entropy-conserving two-point flux \cite{Ranocha2018}, entropy-dissipative interface dissipation of \cite{Winters2017}, and either scalar-matrix or matrix-matrix dissipation acting on entropy variables, and iii) the UFD and USE schemes of \cite{Mattsson2017, Ranocha2024} and \cite{Glaubitz2024}, respectively, with Drikakis-Tsangaris flux splitting~\cite{Drikakis} and Roe interface dissipation. For FD discretizations, we use either one or three blocks in each direction, with between $20^2$ and $480^2$ nodes in each block. We match the total degrees of freedom across the two configurations; \eg one block of $60^2$ nodes matches $3^2$ blocks of $20^2$ nodes each. For SE discretizations, we use between 4 and 32 elements in each direction with a fixed $(p+1)^2$ nodes in each element. To verify that the results do not depend on the choice of matrix interface dissipation, we also include SE results with scalar (Rusanov) interface dissipation. The crash time is determined by either thermodynamic variables becoming non-positive, or the time step dropping below $10^{-10}$. A vanishingly small time step is a result of the semi-discretization Jacobian possessing extremely large eigenvalues, often with large positive real parts, which likewise occurs when thermodynamic variables approach zero.

\begin{figure}[t] 
    \centering
    \begin{subfigure}[t]{0.43\textwidth}
        \centering
        \includegraphics[height=5.3cm, trim={10 28 92 28}, clip]{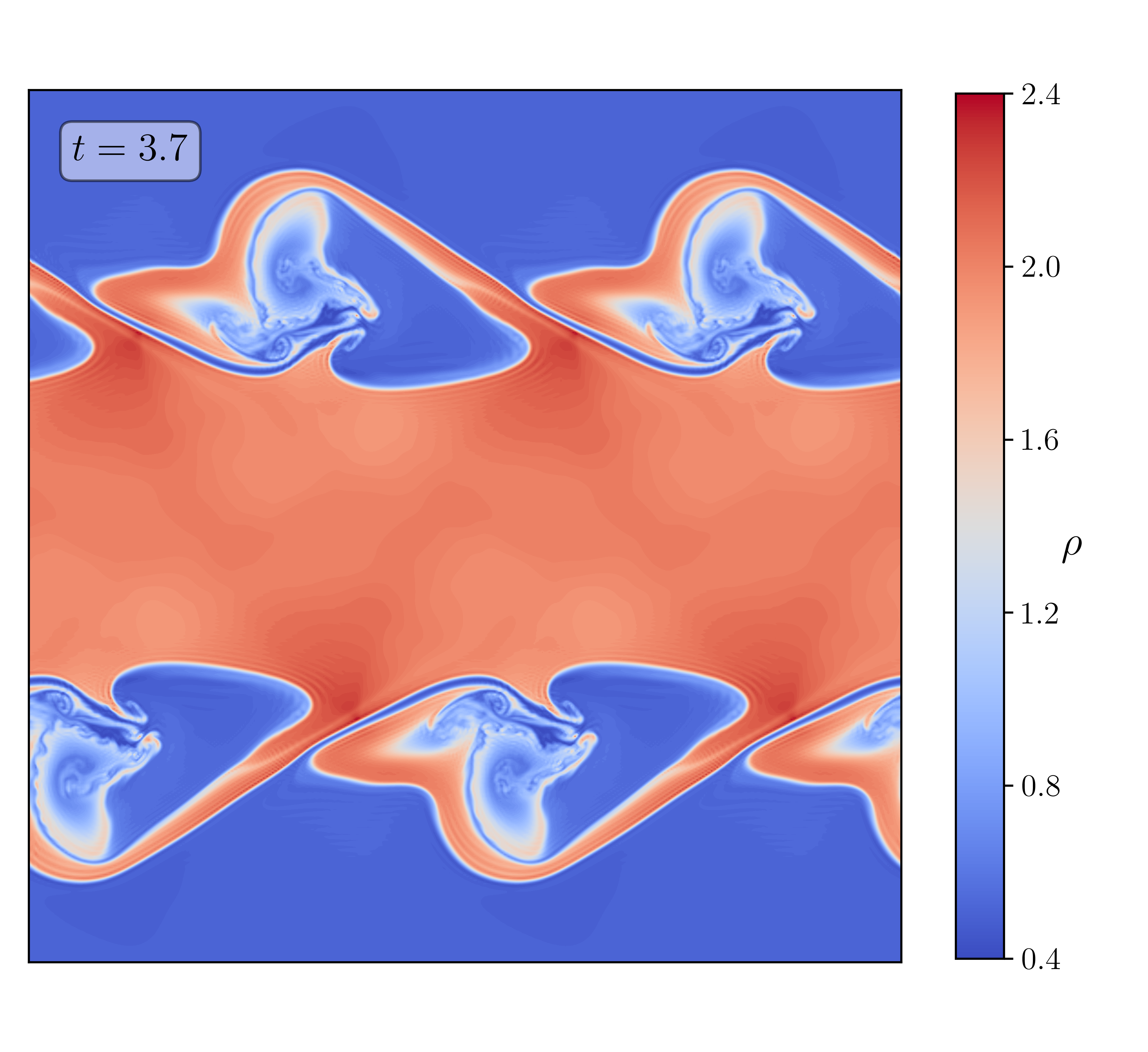}
    \end{subfigure}
    \hfill
    \begin{subfigure}[t]{0.56\textwidth}
        \centering
        \includegraphics[height=5.3cm, trim={10 28 25 28}, clip]{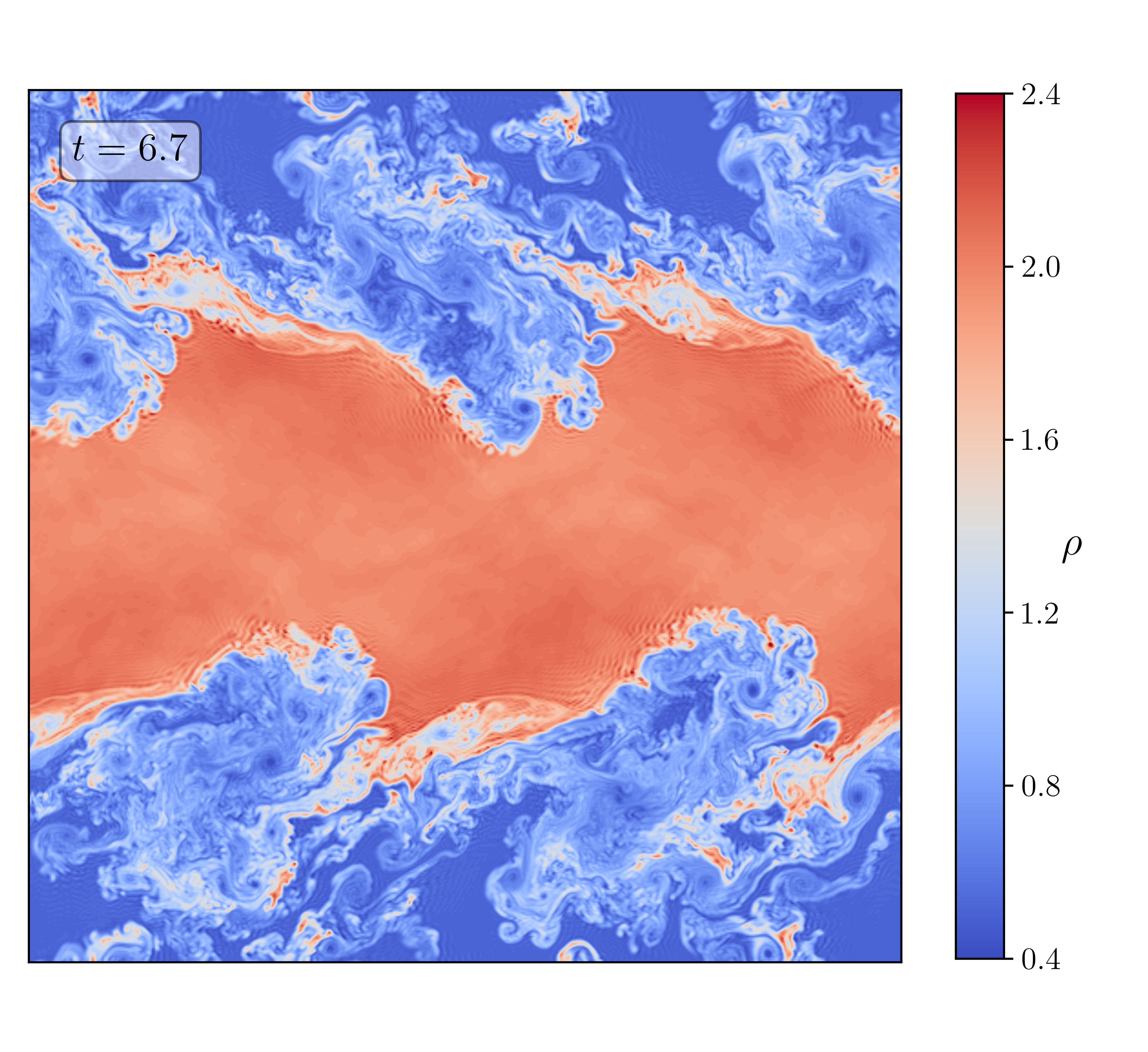}
    \end{subfigure}
    \caption{Density contours for the Kelvin-Helmholtz instability using the entropy-stable scheme of \cite{Fisher_2013, Crean2018} with a $p=4$ CSBP operator of $480^2$ nodes and matrix-matrix $s=p+1$ dissipation \eqnref{eq:2d_diss} acting on entropy variables with $\varepsilon = 0.0002$.}
    \label{fig:KHI}
\end{figure}

Figure~\ref{fig:KHI} shows density contours for the matrix-matrix low-dissipation case $\varepsilon = 0.625 \times 5^{-s}$ at $t=3.7$ and $t=6.7$, matching the times reported in~\cite{rueda_KHI_2022} for a $p=7$ entropy-stable discontinuous Galerkin scheme augmented with a positivity-preserving mechanism. Both the primary and secondary vortex structures observed in Figure~\ref{fig:KHI} are qualitatively similar to those in~\cite{rueda_KHI_2022}. Comparable results are also observed across different operators. However, fine structures appear more smoothed out at higher dissipation coefficients $\varepsilon = 3.125 \times 5^{-s}$ and with scalar-matrix dissipation, while spurious oscillations emerge at lower dissipation coefficients ($\varepsilon = 0.125 \times 5^{-s}$).

\begin{table}[!t]
\centering
\caption{Crash times (up to $t_f = 15$) for the Kelvin-Helmholtz instability using CSBP and UFD operators. Degrees of freedom (DOF) are given in terms of number of blocks times number of nodes per block.}
\renewcommand{\arraystretch}{1.2} 

\begin{subtable}{\textwidth}
\centering
\captionsetup{justification=centering}
\caption{UFD \cite{Mattsson2017} with Drikakis-Tsangaris flux splitting \cite{Drikakis}, as described in \cite{Ranocha2024}. \\
Recall that comparable central schemes use $p \sim \lfloor \frac{p_u}{2} \rfloor$, as detailed in Table \ref{tab:comparison}.}
\begin{tabular}{rcccccc}
\hline
DOF & $p_u=4$ & $p_u=5$ & $p_u=6$ & $p_u=7$ & $p_u=8$ & $p_u=9$ \\ \hline
$1 \times 30^2$ & 15.0 & 15.0 & 15.0 & 15.0 & 15.0 & 15.0
\\
$1 \times 60^2$ & 15.0 & 15.0 & 15.0 & 15.0 & 15.0 & {\color{BrickRed}14.97}
\\
$1 \times 120^2$ & 15.0 & 15.0 & 15.0 & {\color{BrickRed}6.02} & {\color{BrickRed}4.56} & {\color{BrickRed}4.52}
\\
$1 \times 240^2$ & 15.0 & 15.0 & {\color{BrickRed}8.24} & {\color{BrickRed}5.69} & {\color{BrickRed}4.69} & {\color{BrickRed}4.61}
\\
$1 \times 480^2$ & 15.0 & 15.0 & {\color{BrickRed}6.04} & {\color{BrickRed}4.62} & {\color{BrickRed}4.59} & {\color{BrickRed}4.59}
\\ \hline
$3^2 \times 20^2$ & 15.0 & {\color{BrickRed}5.37} &  {\color{BrickRed}4.24} &  {\color{BrickRed}4.08} &  {\color{BrickRed}5.10} &  {\color{BrickRed}9.78}
\\
$3^2 \times 40^2$ & {\color{BrickRed}5.26} & {\color{BrickRed}5.28} &  {\color{BrickRed}4.78} &  {\color{BrickRed}5.20} &  {\color{BrickRed}4.54} & {\color{BrickRed}3.36}
\\
$3^2 \times 80^2$ & 15.0 & {\color{BrickRed}3.91} & {\color{BrickRed}3.86} & {\color{BrickRed}3.85} & {\color{BrickRed}3.75} & {\color{BrickRed}3.68}
\\
$3^2 \times 160^2$ & 15.0 & {\color{BrickRed}3.87} & {\color{BrickRed}3.83} & {\color{BrickRed}3.81} & {\color{BrickRed}3.78} & {\color{BrickRed}3.72}
\\ \hline
\end{tabular}
\end{subtable}

\vskip\baselineskip 

\begin{subtable}[t]{0.48\textwidth}
        \centering
        \caption{Central CSBP, \\ \phantom{(a)} no volume dissipation ($\varepsilon=0$).}
        \begin{tabular}{rccc}
        \hline
DOF & $p=2$ & $p=3$ & $p=4$  \\ \hline
$1 \times 30^2$ & {\color{BrickRed}1.56} & {\color{BrickRed}1.50} & {\color{BrickRed}1.41} \\
$1 \times 60^2$ & {\color{BrickRed}1.26} & {\color{BrickRed}1.23} & {\color{BrickRed}1.23} \\
$1 \times 120^2$ & {\color{BrickRed}1.40} & {\color{BrickRed}1.43} & {\color{BrickRed}1.45} \\
$1 \times 240^2$ & {\color{BrickRed}1.89} & {\color{BrickRed}1.93} & {\color{BrickRed}1.94} \\
$1 \times 480^2$ & {\color{BrickRed}2.29} & {\color{BrickRed}2.83} & {\color{BrickRed}2.84} \\
\hline
$3^2 \times 20^2$ & {\color{BrickRed}1.19} & {\color{BrickRed}1.07} & {\color{BrickRed}1.16} \\
$3^2 \times 40^2$ & {\color{BrickRed}1.39} & {\color{BrickRed}1.42} & {\color{BrickRed}1.39} \\
$3^2 \times 80^2$ & {\color{BrickRed}1.88} & {\color{BrickRed}1.91} & {\color{BrickRed}1.91} \\
$3^2 \times 160^2$ & {\color{BrickRed}2.39} & {\color{BrickRed}2.82} & {\color{BrickRed}2.57} \\
\hline
        \end{tabular}
    \end{subtable}
    \hfill
    \begin{subtable}[t]{0.48\textwidth}
        \centering
        \caption{Entropy-stable CSBP,\\ \phantom{(b)} no volume dissipation ($\varepsilon=0$).}
        \begin{tabular}{rccc}
        \hline
DOF & $p=2$ & $p=3$ & $p=4$  \\ \hline
$1 \times 30^2$ & {\color{BrickRed}2.92} & {\color{BrickRed}3.50} & {\color{BrickRed}3.67} \\
$1 \times 60^2$ & {\color{BrickRed}3.43} & {\color{BrickRed}3.21} & {\color{BrickRed}3.04} \\
$1 \times 120^2$ & {\color{BrickRed}3.10} & {\color{BrickRed}3.18} & {\color{BrickRed}3.25} \\
$1 \times 240^2$ & {\color{BrickRed}3.61} & {\color{BrickRed}3.61} & {\color{BrickRed}3.63} \\
$1 \times 480^2$ & {\color{BrickRed}3.69} & {\color{BrickRed}3.66} & {\color{BrickRed}3.50} \\
\hline
$3^2 \times 20^2$ & {\color{BrickRed}2.92} & {\color{BrickRed}2.88} & {\color{BrickRed}2.31} \\
$3^2 \times 40^2$ & {\color{BrickRed}3.13} & {\color{BrickRed}2.95} & {\color{BrickRed}2.77} \\
$3^2 \times 80^2$ & {\color{BrickRed}3.66} & {\color{BrickRed}3.62} & {\color{BrickRed}3.56} \\
$3^2 \times 160^2$ & {\color{BrickRed}3.64} & {\color{BrickRed}3.67} & {\color{BrickRed}3.55} \\
\hline
        \end{tabular}
    \end{subtable}

\vskip\baselineskip 

\begin{subtable}[t]{0.48\textwidth}
        \centering
        \vspace{1\baselineskip}
        \caption{Central CSBP, matrix dissipation, \\ 
        \phantom{(d)} $s=p+1$, large $\varepsilon=3.125 \times 5^{-s}$.}
        \begin{tabular}{rccc}
        \hline
DOF & $p=2$ & $p=3$ & $p=4$  \\ \hline
$1 \times 30^2$ & 15.0 & 15.0 & {\color{BrickRed}7.58} \\
$1 \times 60^2$ & {\color{BrickRed}4.68} & {\color{BrickRed}3.75} & {\color{BrickRed}3.63} \\
$1 \times 120^2$ & {\color{BrickRed}4.50} & {\color{BrickRed}4.35} & {\color{BrickRed}4.16} \\
$1 \times 240^2$ & {\color{BrickRed}5.13} & {\color{BrickRed}3.61} & {\color{BrickRed}3.59} \\
$1 \times 480^2$ & {\color{BrickRed}4.57} & {\color{BrickRed}4.14} & {\color{BrickRed}3.53} \\
\hline
$3^2 \times 20^2$ & {\color{BrickRed}5.20} & {\color{BrickRed}1.35} & {\color{BrickRed}2.95} \\
$3^2 \times 40^2$ & {\color{BrickRed}4.48} & {\color{BrickRed}4.20} & {\color{BrickRed}3.15} \\
$3^2 \times 80^2$ & {\color{BrickRed}3.94} & {\color{BrickRed}3.58} & {\color{BrickRed}3.58} \\
$3^2 \times 160^2$ & {\color{BrickRed}3.87} & {\color{BrickRed}3.71} & {\color{BrickRed}3.30} \\
\hline
        \end{tabular}
    \end{subtable}
    \hfill
    \begin{subtable}[t]{0.48\textwidth}
        \centering
        \caption{Entropy-stable CSBP, matrix-matrix or \\ 
        \phantom{(e)} scalar-matrix dissipation, $s=p+1$, \\ \phantom{(e)} $\varepsilon=3.125 \times 5^{-s}$ or $\varepsilon=0.625 \times 5^{-s}$.}
        \begin{tabular}{rccc}
        \hline
DOF & $p=2$ & $p=3$ & $p=4$  \\ \hline
$1 \times 30^2$ & 15.0 & 15.0 & 15.0 \\
$1 \times 60^2$ & 15.0 & 15.0 & 15.0 \\
$1 \times 120^2$ & 15.0 & 15.0 & 15.0 \\
$1 \times 240^2$ & 15.0 & 15.0 & 15.0 \\
$1 \times 480^2$ & 15.0 & 15.0 & 15.0 \\
\hline
$3^2 \times 20^2$ & 15.0 & 15.0 & 15.0 \\
$3^2 \times 40^2$ & 15.0 & 15.0 & 15.0 \\
$3^2 \times 80^2$ & 15.0 & 15.0 & 15.0 \\
$3^2 \times 160^2$ & 15.0 & 15.0 & 15.0 \\
\hline
        \end{tabular}
    \end{subtable}

\vskip\baselineskip 
\vskip\baselineskip 

\label{tab:KelvinHelmholtz}
\end{table}

The final crash times for UFD and CSBP schemes are reported in Table~\ref{tab:KelvinHelmholtz}. Among all methods tested, the central discretization without volume dissipation is the least robust, consistently crashing between $t=1$ and $t=2$. Adding volume dissipation offers modest improvement, increasing crash times to between $t=3$ and $t=5$. Additional results in Appendix~\ref{sec:additional_figures} show that employing scalar dissipation and larger $\varepsilon$ yield further robustness improvements. A more substantial improvement, however, is achieved by switching to a UFD discretization, with a significantly higher proportion of runs reaching $t = 15$. The reason for this enhanced robustness warrants further investigation. Regardless, consistent with~\cite{Ranocha2024}, UFD robustness eventually degrades with increasing polynomial degree and grid refinement. 

The entropy-stable scheme without volume dissipation, while more robust than its central counterpart, also crashes between $t=2$ and $t=4$. Adding volume dissipation applied to entropy variables, however, results in schemes that consistently complete the simulation to $t=15$. All entropy-dissipative variants tested---both matrix-matrix and scalar-matrix variants, with small and large values of $\varepsilon$ (and even values at a lower one twenty-fifth of the large coefficient)---exhibited this high level of robustness, monotonically dissipating entropy and maintaining positivity of thermodynamic variables. This also holds true for HGTL and Mattsson operators; as detailed in Appendix~\ref{sec:additional_figures}, the entropy-stable scheme with entropy-based volume dissipation reaches $t = 15$ for all tested cases. However, HGT simulations do not exhibit this same robustness. Although the entropy-dissipative discretizations still exhibit improved robustness over central discretizations, they crash for increasing polynomial degree and grid refinement. Examining the solutions at crash times reveals that thermodynamic variables at solution nodes remain positive, well above zero. However, extrapolations of the solution to the block boundaries result in interface solutions with near-zero thermodynamic variables. Since these unphysical interface solutions are used in SAT calculations, the semi-discrete Jacobian becomes extremely stiff despite the solution maintaining smoothness and positive thermodynamic variables everywhere in the interior.

\begin{table}[!t]
\centering
\caption{Crash times (up to $t_f = 15$) for the Kelvin-Helmholtz instability using LGL operators. Degrees of freedom are calculated as ${\text{DOF} = K^2 (p+1)^2}$, where $K$ is the number of elements per direction. The dissipation coefficients are $\varepsilon_1 = [0.01,0.004,0.002,0.0008,0.0004,0.0002]$ for $p=[3\text{--}8]$, respectively, and $\varepsilon_{1/5}$ as one fifth of these values. Results with matrix and Rusanov scalar interface dissipation are shown left/right, respectively.
}
\renewcommand{\arraystretch}{1.2} 

\begin{subtable}{\textwidth}
\centering
\captionsetup{justification=centering}
\caption{USE-LGL with Drikakis–Tsangaris flux splitting \cite{Drikakis}. \\ Recall that $s = p$ in this work corresponds to $p_u = p - 1$ in \cite{Glaubitz2024}.}
\begin{tabular}{cccccccc}
\hline
$K$ & $\varepsilon$ & $p=3$ & $p=4$ & $p=5$ & $p=6$ & $p=7$ & $p=8$ \\ \hline
4 & $\varepsilon_1$ & 15.0/15.0 & {\color{BrickRed}2.63}/{\color{BrickRed}2.23} & {\color{BrickRed}1.71}/{\color{BrickRed}1.54} & {\color{BrickRed}2.60}/{\color{BrickRed}2.24} & {\color{BrickRed}2.11}/{\color{BrickRed}2.05} & {\color{BrickRed}1.86}/{\color{BrickRed}2.47}
\\
8 & $\varepsilon_1$ & 15.0/15.0 & 15.0/{\color{BrickRed}8.50} & {\color{BrickRed}2.49}/{\color{BrickRed}2.45} & {\color{BrickRed}2.25}/{\color{BrickRed}3.16} & {\color{BrickRed}2.25}/{\color{BrickRed}2.24} & {\color{BrickRed}1.87}/{\color{BrickRed}1.79}
\\
16 & $\varepsilon_1$ & {\color{BrickRed}4.69}/{\color{BrickRed}3.83} & {\color{BrickRed}3.94}/{\color{BrickRed}2.78} & {\color{BrickRed}2.45}/{\color{BrickRed}2.45} & {\color{BrickRed}2.52}/{\color{BrickRed}2.56} & {\color{BrickRed}2.19}/{\color{BrickRed}1.85} & {\color{BrickRed}2.23}/{\color{BrickRed}1.97}
\\
32 & $\varepsilon_1$ & {\color{BrickRed}3.18}/{\color{BrickRed}3.27} & {\color{BrickRed}3.23}/{\color{BrickRed}3.54} & {\color{BrickRed}2.34}/{\color{BrickRed}2.46} & {\color{BrickRed}2.37}/{\color{BrickRed}2.37} & {\color{BrickRed}2.41}/{\color{BrickRed}2.39} & {\color{BrickRed}2.17}/{\color{BrickRed}2.22}
\\ \hline
4 & $\varepsilon_{1/5}$ & 15.0/15.0 & {\color{BrickRed}2.93}/{\color{BrickRed}1.57} & {\color{BrickRed}3.63}/{\color{BrickRed}2.00} & {\color{BrickRed}4.23}/{\color{BrickRed}4.22} & {\color{BrickRed}1.91}/{\color{BrickRed}1.84} & {\color{BrickRed}2.17}/{\color{BrickRed}2.41}
\\
8 & $\varepsilon_{1/5}$ & {\color{BrickRed}1.82}/{\color{BrickRed}1.70} & {\color{BrickRed}3.48}/{\color{BrickRed}3.06} & {\color{BrickRed}4.22}/{\color{BrickRed}2.86} & {\color{BrickRed}1.54}/{\color{BrickRed}1.61} & {\color{BrickRed}2.88}/{\color{BrickRed}3.22} & {\color{BrickRed}3.69}/{\color{BrickRed}2.99}
\\
16 & $\varepsilon_{1/5}$ & {\color{BrickRed}4.31}/{\color{BrickRed}4.19} & {\color{BrickRed}3.46}/{\color{BrickRed}3.47} & {\color{BrickRed}4.05}/{\color{BrickRed}3.61} & {\color{BrickRed}3.45}/{\color{BrickRed}3.14} & {\color{BrickRed}3.58}/{\color{BrickRed}3.23} & {\color{BrickRed}3.14}/{\color{BrickRed}2.56}
\\
32 & $\varepsilon_{1/5}$ & {\color{BrickRed}3.42}/{\color{BrickRed}3.50} & {\color{BrickRed}3.58}/{\color{BrickRed}4.13} & {\color{BrickRed}3.62}/{\color{BrickRed}3.54} & {\color{BrickRed}3.36}/{\color{BrickRed}3.57} & {\color{BrickRed}3.49}/{\color{BrickRed}3.31} & {\color{BrickRed}3.20}/{\color{BrickRed}3.16}
\\ \hline
\end{tabular}
\end{subtable}

\vskip\baselineskip 

\begin{subtable}{\textwidth}
\centering
\captionsetup{justification=centering}
\caption{Entropy-stable (often called `DGSEM'), no volume dissipation ($\varepsilon=0$).}
\begin{tabular}{cccccccc}
\hline
$K$ & $p=3$ & $p=4$ & $p=5$ & $p=6$ & $p=7$ & $p=8$ \\ \hline
4 & 15.0/{\color{BrickRed}4.46} & 15.0/{\color{BrickRed}2.47} & 15.0/{\color{BrickRed}3.01} & 15.0/{\color{BrickRed}2.80} & 15.0/{\color{BrickRed}3.63} & 15.0/{\color{BrickRed}4.13}
\\
8 & 15.0/{\color{BrickRed}1.53} & 15.0/{\color{BrickRed}4.04} & {\color{BrickRed}5.79}/{\color{BrickRed}3.70} & {\color{BrickRed}4.20}/{\color{BrickRed}4.10} & 15.0/{\color{BrickRed}3.56} & {\color{BrickRed}4.25}/{\color{BrickRed}3.66}
\\
16  & 15.0/{\color{BrickRed}3.77} & 15.0/{\color{BrickRed}4.44} & {\color{BrickRed}5.79}/{\color{BrickRed}3.74} & 15.0/{\color{BrickRed}3.37} & {\color{BrickRed}5.01}/{\color{BrickRed}3.64} & {\color{BrickRed}4.95}/{\color{BrickRed}3.83}
\\
32 & {\color{BrickRed}3.52}/{\color{BrickRed}3.66} & 15.0/{\color{BrickRed}4.27} & {\color{BrickRed}4.80}/{\color{BrickRed}3.54} & 15.0/{\color{BrickRed}3.66} & {\color{BrickRed}5.95}/{\color{BrickRed}3.56} & {\color{BrickRed}4.44}/{\color{BrickRed}3.50}
\\ \hline
\end{tabular}
\end{subtable}

\vskip\baselineskip 

\begin{subtable}{\textwidth}
\centering
\captionsetup{justification=centering}
\caption{Entropy-stable (DGSEM), matrix-matrix or scalar-matrix dissipation, $s=p$, scalar Rusanov or matrix interface dissipation of \cite{Winters2017}.}
\begin{tabular}{cccccccc}
\hline
$K$ & $\varepsilon$ & $p=3$ & $p=4$ & $p=5$ & $p=6$ & $p=7$ & $p=8$ \\ \hline
4 & $\varepsilon_1$ & 15.0 & 15.0 & 15.0 & 15.0 & 15.0 & 15.0
\\
8 & $\varepsilon_1$ & 15.0 & 15.0 & 15.0 & 15.0 & 15.0 & 15.0
\\
16 & $\varepsilon_1$ & 15.0 & 15.0 & 15.0 & 15.0 & 15.0 & 15.0
\\
32 & $\varepsilon_1$ & 15.0 & 15.0 & 15.0 & 15.0 & 15.0 & 15.0
\\ \hline
4 & $\varepsilon_{1/5}$ & 15.0 & 15.0 & 15.0 & 15.0 & 15.0 & 15.0
\\
8 & $\varepsilon_{1/5}$ & 15.0 & 15.0 & 15.0 & 15.0 & 15.0 & 15.0
\\
16 & $\varepsilon_{1/5}$ & 15.0 & 15.0 & 15.0 & 15.0 & 15.0 & 15.0
\\
32 & $\varepsilon_{1/5}$ & 15.0 & 15.0 & 15.0 & 15.0 & 15.0 & 15.0
\\ \hline
\end{tabular}
\end{subtable}

\label{tab:KelvinHelmholtz2}
\end{table}

Similar results are observed for spectral-element discretizations. As reported in Table \ref{tab:KelvinHelmholtz2} and consistent with~\cite{Glaubitz2024}, the USE and the entropy-stable scheme without volume dissipation exhibit comparable robustness, with simulations frequently crashing between $t = 2$ and $t = 4$. Robustness improves substantially when the matrix-based interface dissipation of~\cite{Winters2017} is used for entropy-stable formulations, enabling a significant fraction of simulations to reach the final time of $t = 15$. As in the finite-difference case though, entropy-stable schemes augmented with entropy-based volume dissipation demonstrate exceptional robustness: for LGL operators, both matrix-matrix and scalar-matrix dissipation, and for both small and large values of $\varepsilon$, all simulations completed successfully, ensuring both long-time stability and positivity of thermodynamic variables. As detailed in Appendix~\ref{sec:additional_figures}, however, the LG operators are less robust. Although entropy-dissipative LG discretizations still exhibit improved robustness over both central and USE discretizations, they crash for increasing polynomial degree and number of elements. Similar to HGT operators, solutions at the final crash times possess positive thermodynamic variables at all solution nodes, but at intermediate interpolation points or at extrapolated interface nodes, solutions can contain become unphysical.  

In~\cite{chan_entropyprojection}, entropy-stable schemes employing an entropy projection demonstrated improved robustness over collocation schemes (such as those employed in this work). Although the mechanism behind the improved robustness was not definitively explained, they highlighted that as the flow approaches a vacuum state, the entropy approaches non-convexity. A similar explanation could explain the positivity results of \S\ref{sec:1deuler} and \S\ref{sec:KelvinHelmholtz}, in that higher-order derivatives of the entropy variables become large in regions of low density, leading to increased volume dissipation, and better positivity preservation. Our results therefore may offer additional insight into this robustness difference, though further work is needed to clarify the roles of entropy projection and artificial dissipation in the robustness of entropy-stable schemes.


\section{Conclusions}

We have shown how to construct volume dissipation operators for finite-difference and spectral-element methods with the generalized SBP property such that they are design-order accurate, dimensionally consistent, and preserve energy or entropy stability when used with energy- or entropy-stable schemes. The dissipation operators were proven to be stable, conservative, design-order accurate, and free-stream preserving. An extension to multiple dimensions was performed using a tensor-product formulation. Several new artificial dissipation operators were then constructed for use with HGTL, HGT, Mattsson, LGL, and LG operators. We have clarified the inclusion of the variable coefficient, noting that averaging the variable coefficient at half-nodes is required for odd-degree $\tilde{\mat{D}}_s$ operators to ensure that the dissipation operator is not directionally biased. Furthermore, we have shown that different choices for the boundary correction matrix $\mat{B}$ can affect both the accuracy and amount of dissipation applied at block/domain boundaries.

We also connected the constructed dissipation operators to other approaches for including volume dissipation in the literature. In contrast to upwind SBP operators implemented with flux-vector splitting, the present approach preserves the nonlinear stability and free-stream preservation properties of the underlying scheme while introducing an adjustable mechanism to control local linear instabilities and suppress spurious high-order modes. When applied to linear problems in a spectral-element context, we have shown that the present approach reproduces volume dissipation operators that are equivalent to other more involved approaches leveraging orthogonal polynomials. 

Numerical examples have verified the nonlinear stability, conservation, and accuracy properties of the presented dissipation operators using a suite of test problems involving the linear convection, Burgers, and Euler equations. The results suggest that volume dissipation can be beneficial even when it is not required for stabilization purposes. For finite-difference operators, it was shown that adding volume dissipation can significantly increase the accuracy of the resulting discretizations. For the Burgers equation, we have also shown that entropy-stable discretizations augmented with a small amount of volume dissipation are able to maintain local linear stability. In general, while adding volume dissipation does not guarantee local linear stability, it extends the range of problems for which entropy-stable discretizations exhibit this property, and accelerates convergence towards it under finite-difference grid refinement. Finally, we have shown that volume dissipation greatly enhances robustness and helps maintain the positivity of thermodynamic variables in under-resolved flows. We do not claim our volume dissipation to be a substitute for positivity-preserving mechanisms; rather, it broadens the class of problems that entropy-stable methods can solve without requiring such mechanisms, and may reduce reliance on them in more challenging scenarios.

Future work can involve the optimization of the operators presented in this work. This can be achieved by adding more degrees of freedom of the $\tilde{\mat{D}}_s$ matrices near the boundaries, or investigating optimal ways to average the variable coefficient $\mat{A}$ at half-nodes. Strategies to localize the volume dissipation by modifying the structure of $\mat{B}$, similar to~\cite{Kord2023}, could also be investigated. In terms of applications, future work could focus on how the presented dissipation influences both convergence to steady state and the accuracy of functional outputs. Application of the dissipation operators to the residual correction schemes of~\cite{Abgrall_correction} through the framework established in~\cite{Edoh_correction} may also be of interest. Finally, the combination of volume dissipation with appropriate shock capturing and positivity-preserving mechanisms should be investigated.

\begin{acknowledgements}
The authors would like to thank Professor Masayuki Yano for an insightful discussion on the accuracy of spectral-element volume dissipation.
\end{acknowledgements}

\section*{Statements and declarations}

\subsection*{Funding}
This research was supported by the Natural Sciences and Engineering Research Council of Canada, the Government of Ontario, and the University of Toronto.


\subsection*{Conflict of interest}
The authors declare that they have no conflict of interest.

\subsection*{Data Availability}
The supplementary material and scripts to reproduce all of the numerical results are accessible via the open-source repository \url{https://github.com/alexbercik/ESSBP/tree/main/Paper-StableVolumeDissipation}. 

\appendix

\section{Some Comments Regarding the Performance of Spectral-Element Volume Dissipation} \label{sec:appendix_spectral}

The question of whether or not volume dissipation should be incorporated in SE schemes is nuanced, given that interface dissipation is often sufficient in the absence of strong shocks \cite{chen_review}. With the recent introduction of USE SBP schemes in \cite{Glaubitz2024}, however, this question has received more attention.

We first discuss the order of accuracy in the context of the linear convection equation. We use the same SBP discretizations and Gaussian initial condition \eqnref{eq:gaussian} described in \S \ref{sec:LCE}, augmented with dissipation ${\mat{A}_\mat{D} = -\varepsilon \HI \tilde{\mat{D}}_s^\T \tilde{\mat{D}}_s}$ and $s=p$. Example results which verify the expected convergence rates of approximately $p$ are shown in \figref{fig:LCEconv_spec}. Consistent with \cite{Glaubitz2024}, we observe an improvement in solution error when compared to baseline discretizations of equal degree. However, this is potentially misleading, because by simply `turning off' the dissipation one can obtain even more accurate solutions. \figref{fig:LCEsolerr_LGL} compares the the difference between the numerical and exact solution after one period using the initial condition
\begin{equation} \label{eq:sinusoid2}
    \fnc{U}\left(x,0\right) = \sin{\left( 2 \pi x \right)} , \quad x \in [0,1].
\end{equation}
Once again, the solution accuracy of the discretizations with volume dissipation falls somewhere between the higher and equal degree baseline discretizations. Experiments performed with the Burgers equation likewise show that SE volume dissipation constructed with the current framework is not any more effective in improving solution accuracy than schemes relying on interface dissipation alone, even after the formation of shocks.

\begin{figure}[!t] 
    \centering
    \begin{subfigure}[t]{0.32\textwidth}
        \centering
        \includegraphics[width=\textwidth, trim={5 10 5 5}, clip]{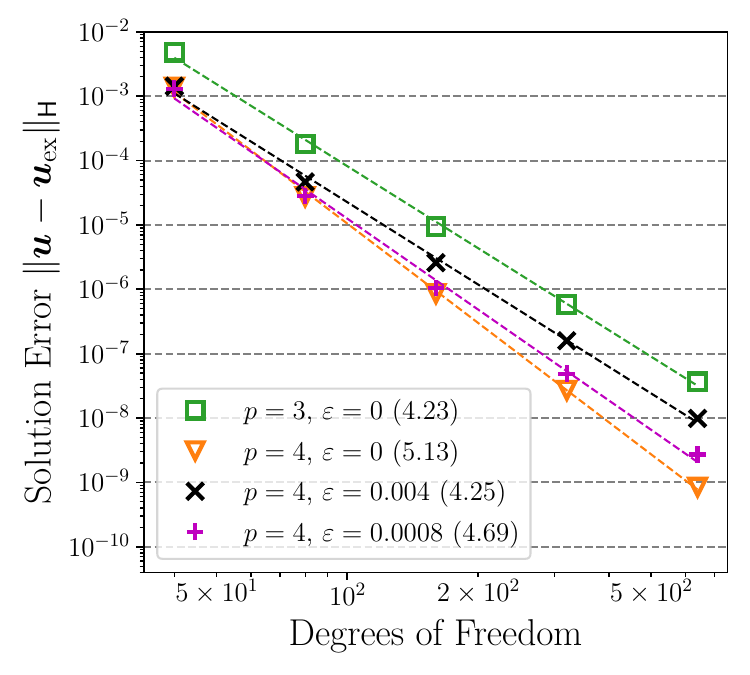}
        \caption{LGL $p=4$}
    \end{subfigure}
    \hfill
    \begin{subfigure}[t]{0.32\textwidth}
        \centering
        \includegraphics[width=\textwidth, trim={5 10 5 5}, clip]{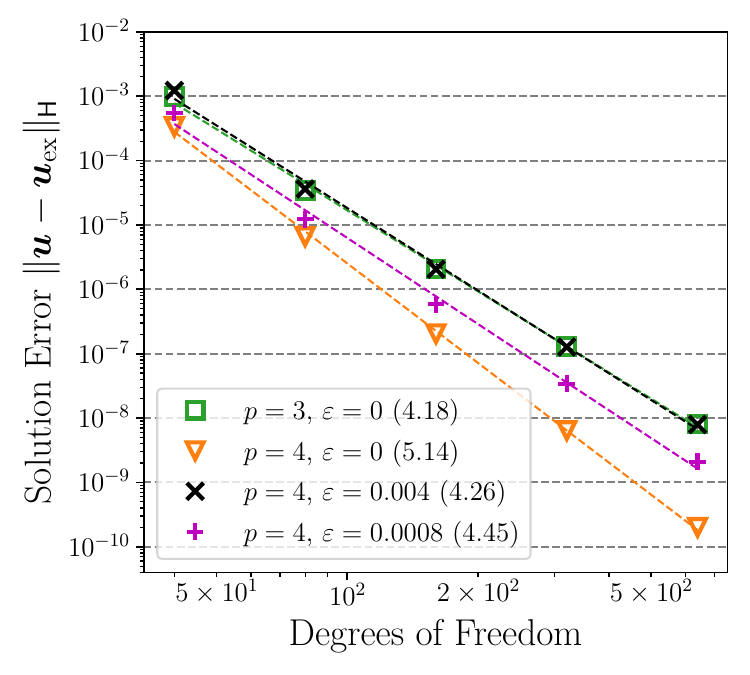}
        \caption{LG $p=4$}
    \end{subfigure}
    \hfill
    \begin{subfigure}[t]{0.32\textwidth}
        \centering
        \includegraphics[width=\textwidth, trim={5 10 5 5}, clip]{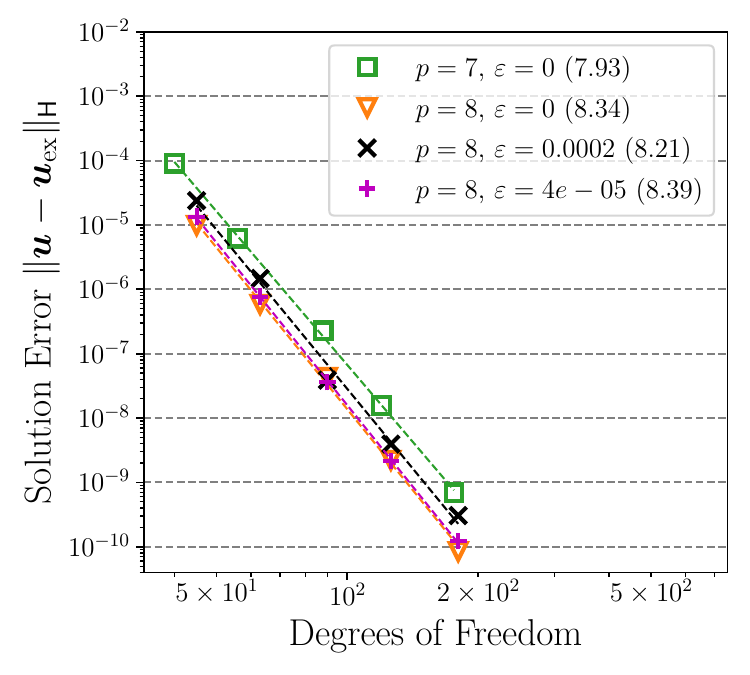}
        \caption{LGL $p=8$}
    \end{subfigure}
    \caption{Solution error convergence after one period of the linear convection equation for SE SBP schemes with upwind SATs, initial condition~\eqnref{eq:gaussian}, and dissipation $\mat{A}_\mat{D} = -\varepsilon \HI \tilde{\mat{D}}_s^\T \tilde{\mat{D}}_s$ with $s=p$. Convergence rates are given in the legends.}
    \label{fig:LCEconv_spec}

\bigskip

    \centering
    \begin{subfigure}[t]{0.48\textwidth}
        \centering
        \includegraphics[width=\textwidth, trim={10 13 10 15}, clip]{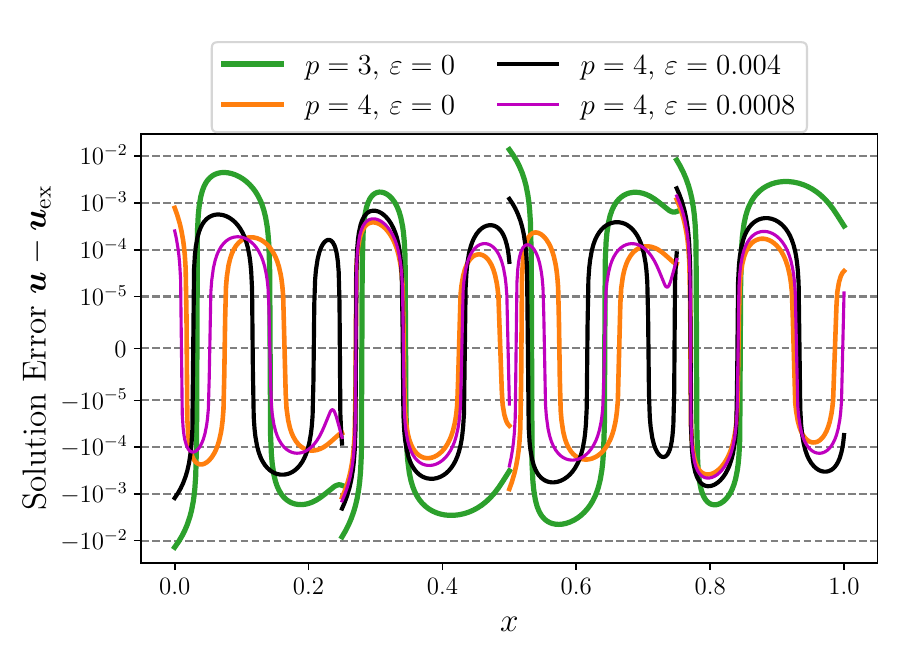}
        \caption{LGL $p=4$, 4 elements}
    \end{subfigure}
    \hfill
    \begin{subfigure}[t]{0.48\textwidth}
        \centering
        \includegraphics[width=\textwidth, trim={10 13 10 15}, clip]{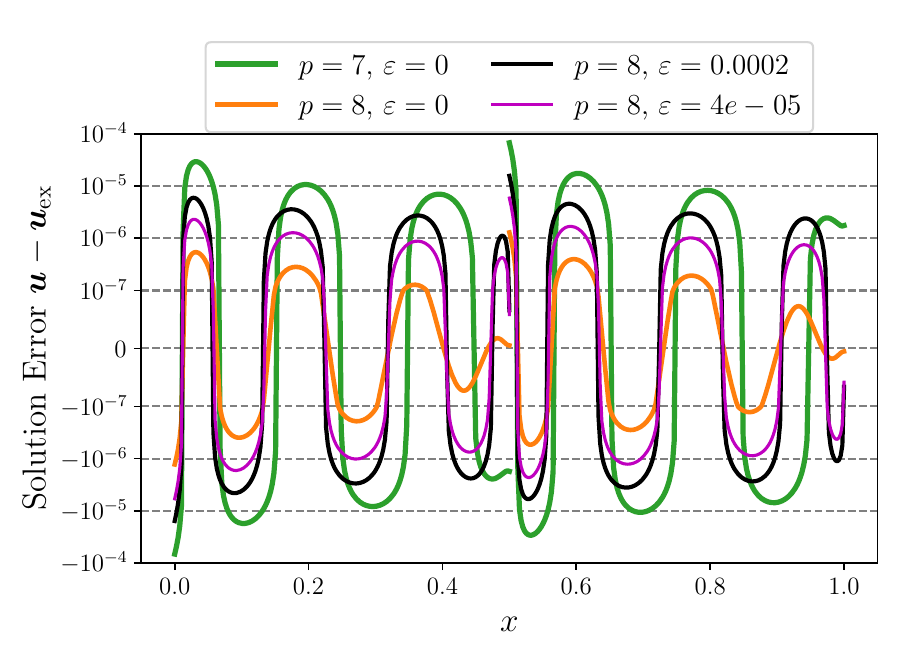}
        \caption{LGL $p=8$, 2 elements}
    \end{subfigure}
    \caption{Solution error after one period of the linear convection equation for SE SBP schemes with upwind SATs, initial condition~\eqnref{eq:sinusoid2}, and dissipation $\mat{A}_\mat{D} = -\varepsilon \HI \tilde{\mat{D}}_s^\T \tilde{\mat{D}}_s$ with $s=p$.}
    \label{fig:LCEsolerr_LGL}
\end{figure}

To understand why this is the case, it is useful to express $\mat{A}_\mat{D}$ as follows. The accuracy conditions \eqnref{eq:accuracy_conditions} coupled with a minimum number of $N=p+1$ nodes lead to $\tilde{\mat{D}}_s$ matrices with repeating rows, implying $\tilde{\mat{D}}_s = \vec{1} \vec{v}^\T$, where $\vec{v}$ contains the finite-difference stencil of $\tilde{\mat{D}}_s$. Subsequently,
\begin{equation}
    \mat{A}_\mat{D} = - \varepsilon \HI \tilde{\mat{D}}_s^\T \tilde{\mat{D}}_s = - \varepsilon N \HI \vec{v} \vec{v}^\T .
\end{equation}
$\mat{A}_\mat{D}$, with column space of dimension 1, always maps vectors to $\text{span}\left( \HI \vec{v} \right)$, which by using nullspace arguments, is orthogonal to degree $\leq p-1$ polynomials with respect to the discrete integration $\Hnrm$. Therefore, the only mechanism by which $\mat{A}_\mat{D}$ can decrease energy is by removing some amount of the highest-degree orthogonal polynomial $\HI \vec{v}$ from the solution $\vec{u}$. The current volume dissipation framework is therefore unlikely to lead to a more \emph{accurate} solution when applied to SE schemes. In some respect, the oscillations observed in \figref{fig:LCEsolerr_LGL} for the baseline discretizations are optimal in the $\Hnrm$-norm, because they are the result of finding the best possible polynomial approximation of $\fnc{U}$. Therefore, it is reasonable to conjecture that removing information from the highest-order mode will negatively impact the accuracy of the approximation. Furthermore, the absence of a repeating interior stencil likely precludes the accuracy benefits observed in~\S\ref{sec:LCE} in block interiors of FD operators.

Consider as another example the local linear stability of the 1D Euler density-wave problem discussed in~\S\ref{sec:1deuler}. Repeating the same experiment with LGL or LG operators shows that entropy-stable schemes augmented with volume dissipation do not have eigenvalues with smaller positive real parts than those relying on interface dissipation alone. Furthermore, the maxmimum real part does not reliably converge to zero as more elements are added. We hypothesize that because the dissipation operator only acts on the highest-order orthogonal mode, unphysical perturbations may not be properly damped if they contain energy in lower-order modes. To fully remove oscillations from lower-degree polynomial modes, one would have to further relax the accuracy conditions \eqnref{eq:accuracy_conditions} by setting $s < p$. In such a case, an alternative strategy would be necessary to retain high-order accuracy, such as applying the volume dissipation as a filter as done in \cite{Ranocha_dissipation}.

While we can conlude that the benefits of the volume dissipation presented in this work are less significant for SE schemes than they are for FD schemes, it is important to remember that SE volume dissipation can certainly still be beneficial for various reasons. These include accelerating convergence to steady-state, stabilizing SBP schemes that lack nonlinear stability, and aiding in positivity preservation. The latter two benefits were demonstrated in~\S\ref{sec:KelvinHelmholtz}, where in particular, entropy-stable SE schemes augmented with volume dissipation were found to be remarkably robust for the Kelvin-Helmholtz instability.

\FloatBarrier

\section{Additional Spectra, Error Convergence Figures, and Kelvin-Helmholtz Numerical Results} \label{sec:additional_figures}

This section includes results from additional numerical experiments conducted alongside those presented in the main article. These results provide a detailed comparison between the CSBP, HGTL, HGT, Mattsson, LGL, and LG operators. We first present additional spectra and error convergence plots for the linear convection equation grid convergence study described in \S \ref{sec:LCE}. Since the spectral-element schemes are equivalent to those of \cite{Glaubitz2024} in the context of the linear convection equation, we omit linear convection equation results for LGL and LG operators (though some discussion is included in Appendix \ref{sec:appendix_spectral}). Then, we include additional pressure error convergence plots for the 2D Euler equations isentropic vortex problem described in \S \ref{sec:2dvortex}, both for finite-difference and spectral-element operators. Finally, we provide more detailed tables showing final crash times of the Kelvin-Helmholtz instability described in~\S \ref{sec:KelvinHelmholtz}.

\begin{figure}[t] 
    \centering
    \begin{subfigure}[t]{0.32\textwidth}
        \centering
        \includegraphics[width=\textwidth, trim={5 10 5 5}, clip]{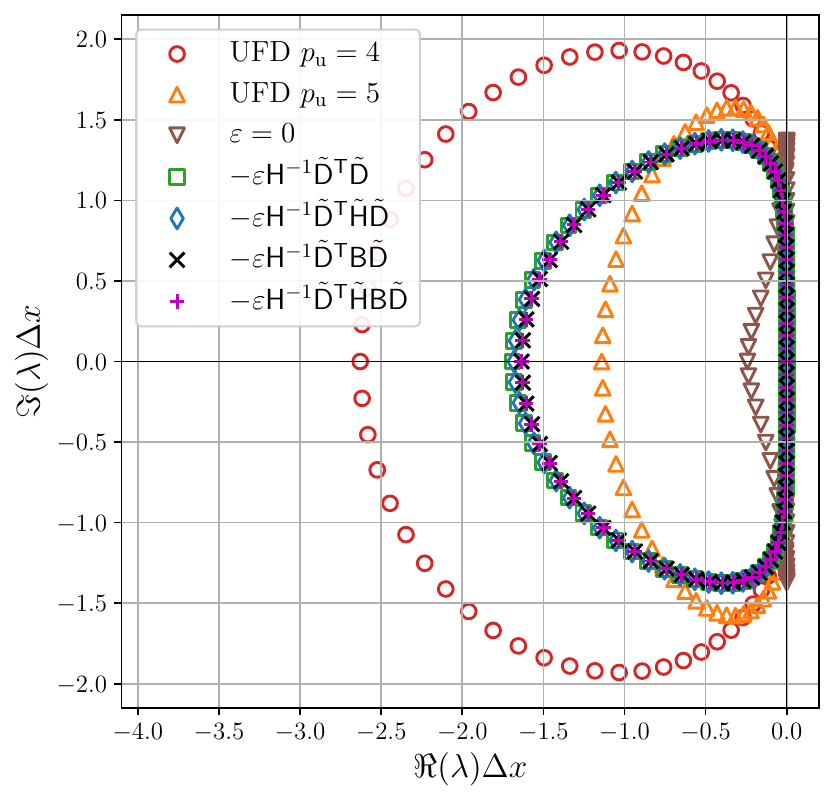}
        \caption{CSBP $p=2$ $\varepsilon = 0.025$}
    \end{subfigure}
    \hfill
    \begin{subfigure}[t]{0.32\textwidth}
        \centering
        \includegraphics[width=\textwidth, trim={5 10 5 5}, clip]{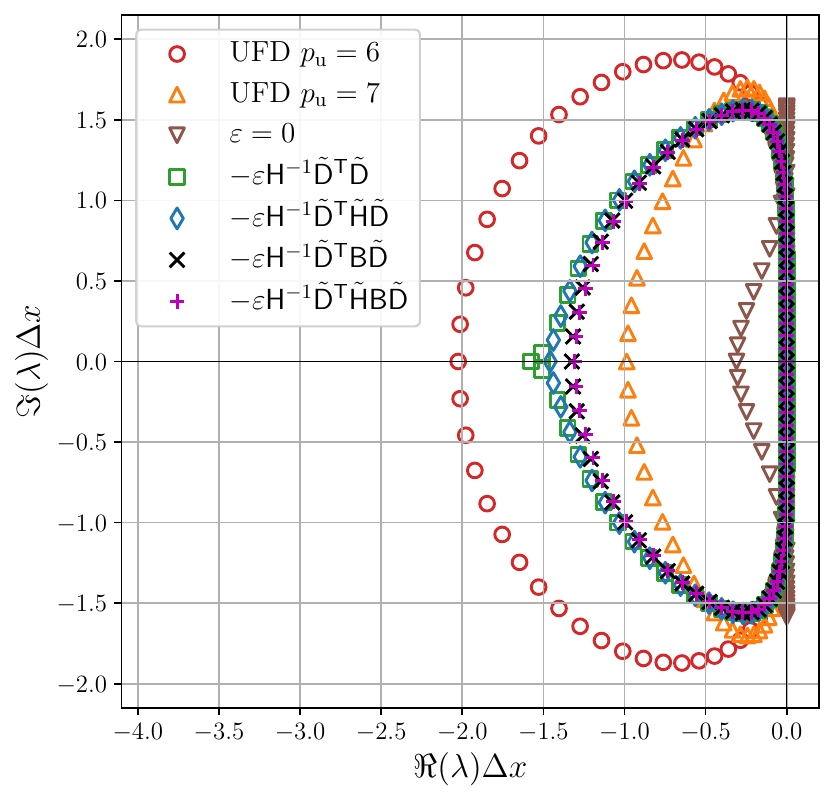}
        \caption{CSBP $p=3$ $\varepsilon = 0.005$}
    \end{subfigure}
    \hfill
    \begin{subfigure}[t]{0.32\textwidth}
        \centering
        \includegraphics[width=\textwidth, trim={5 10 5 5}, clip]{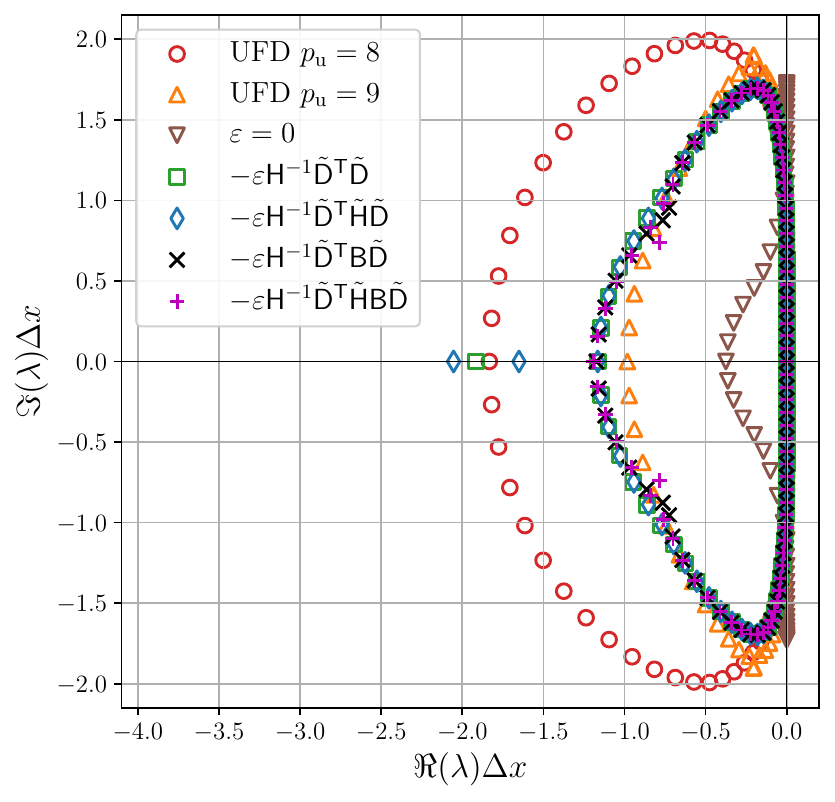}
        \caption{CSBP $p=4$ $\varepsilon = 0.001$}
    \end{subfigure}
    \vskip\baselineskip 
    \begin{subfigure}[t]{0.32\textwidth}
        \centering
        \includegraphics[width=\textwidth, trim={5 10 5 5}, clip]{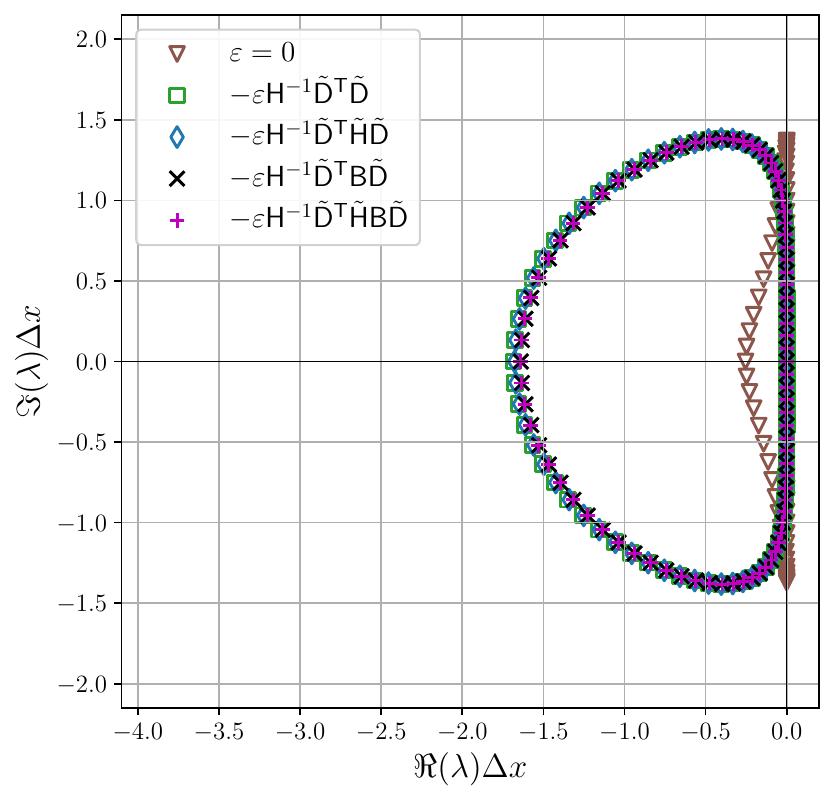}
        \caption{HGTL $p=2$ $\varepsilon = 0.025$}
    \end{subfigure}
    \hfill
    \begin{subfigure}[t]{0.32\textwidth}
        \centering
        \includegraphics[width=\textwidth, trim={5 10 5 5}, clip]{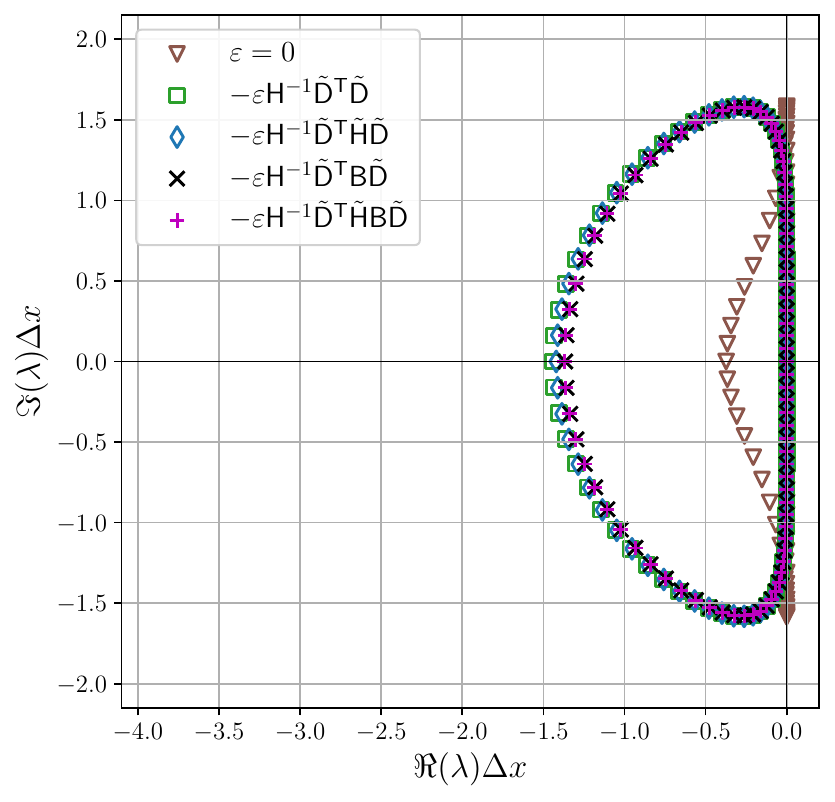}
        \caption{HGTL $p=3$ $\varepsilon = 0.005$}
    \end{subfigure}
    \hfill
    \begin{subfigure}[t]{0.32\textwidth}
        \centering
        \includegraphics[width=\textwidth, trim={5 10 5 5}, clip]{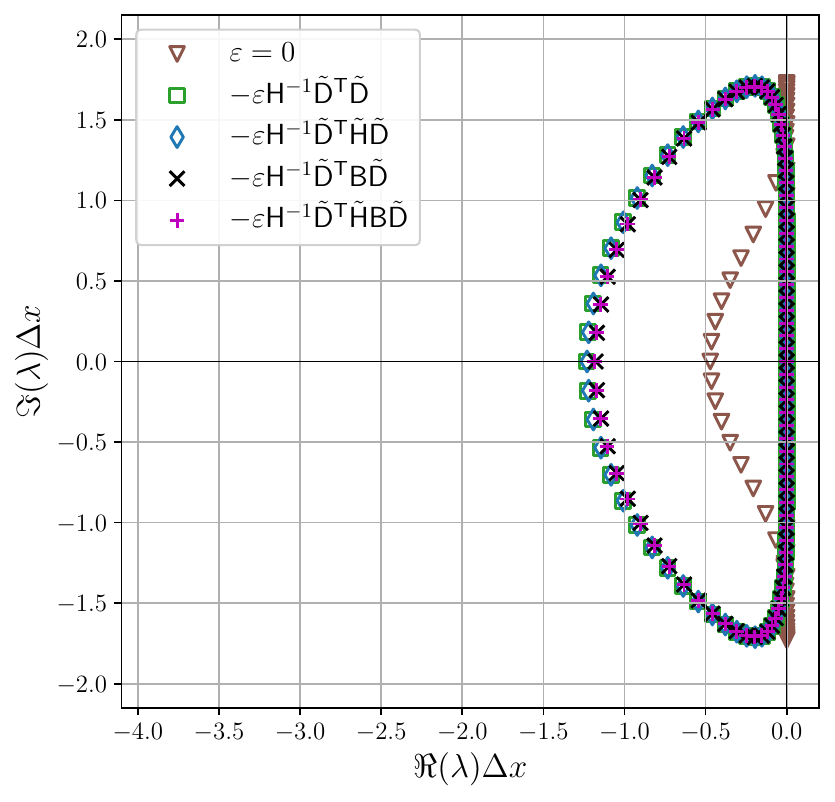}
        \caption{HGTL $p=4$ $\varepsilon = 0.001$}
    \end{subfigure}
    \vskip\baselineskip 
    \begin{subfigure}[t]{0.32\textwidth}
        \centering
        \includegraphics[width=\textwidth, trim={5 10 5 5}, clip]{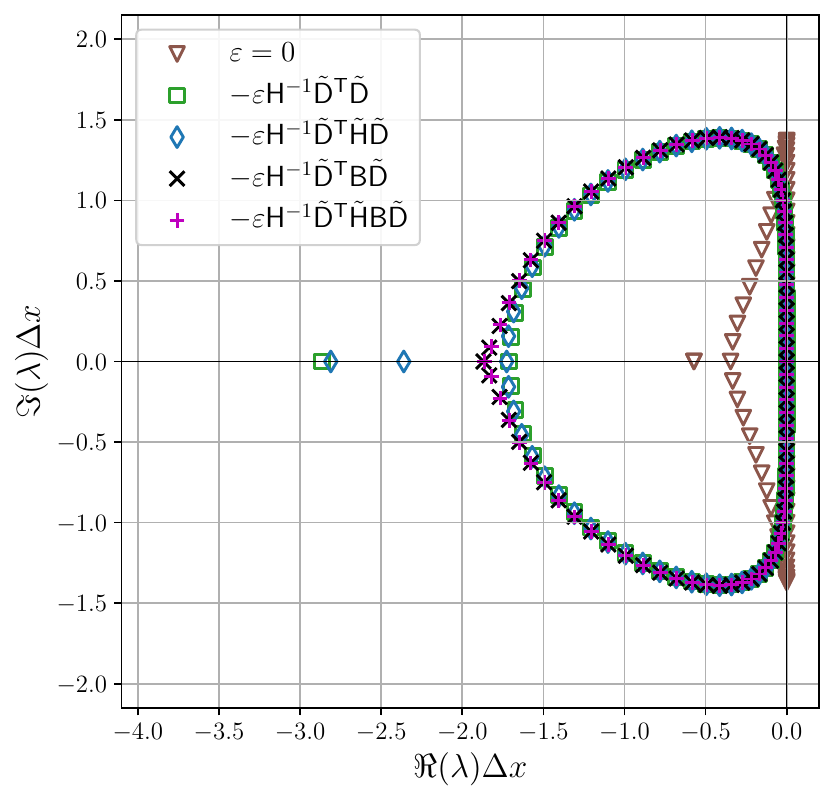}
        \caption{HGT $p=2$ $\varepsilon = 0.025$}
    \end{subfigure}
    \hfill
    \begin{subfigure}[t]{0.32\textwidth}
        \centering
        \includegraphics[width=\textwidth, trim={5 10 5 5}, clip]{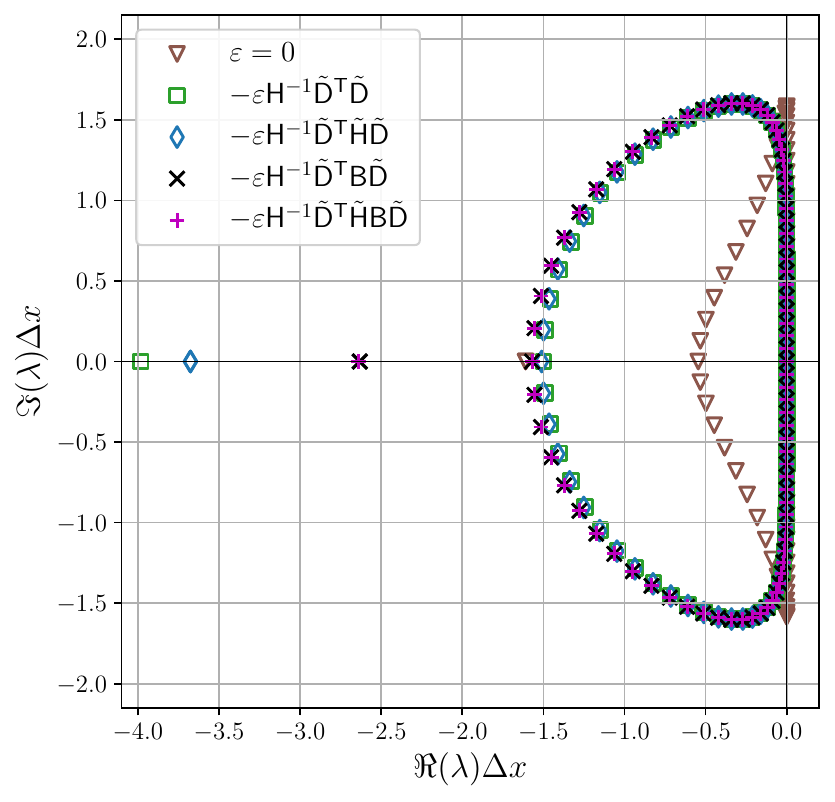}
        \caption{HGT $p=3$ $\varepsilon = 0.005$}
    \end{subfigure}
    \hfill
    \begin{subfigure}[t]{0.32\textwidth}
        \centering
        \includegraphics[width=\textwidth, trim={5 10 5 5}, clip]{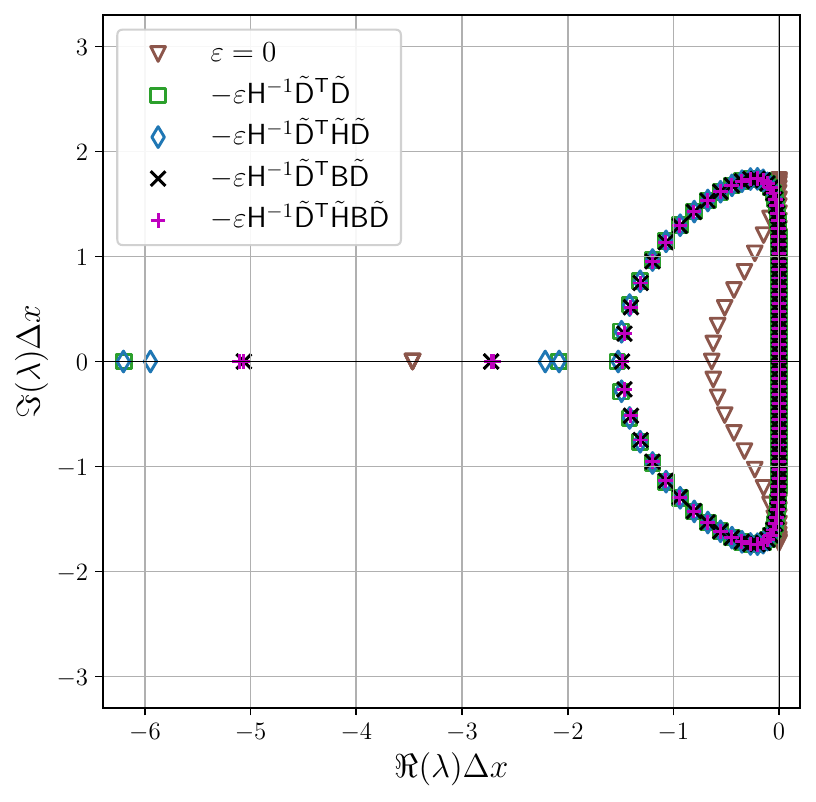}
        \caption{HGT $p=4$ $\varepsilon = 0.001$}
    \end{subfigure}
    \vskip\baselineskip 
    \begin{subfigure}[t]{0.32\textwidth}
        \centering
        \includegraphics[width=\textwidth, trim={5 10 5 5}, clip]{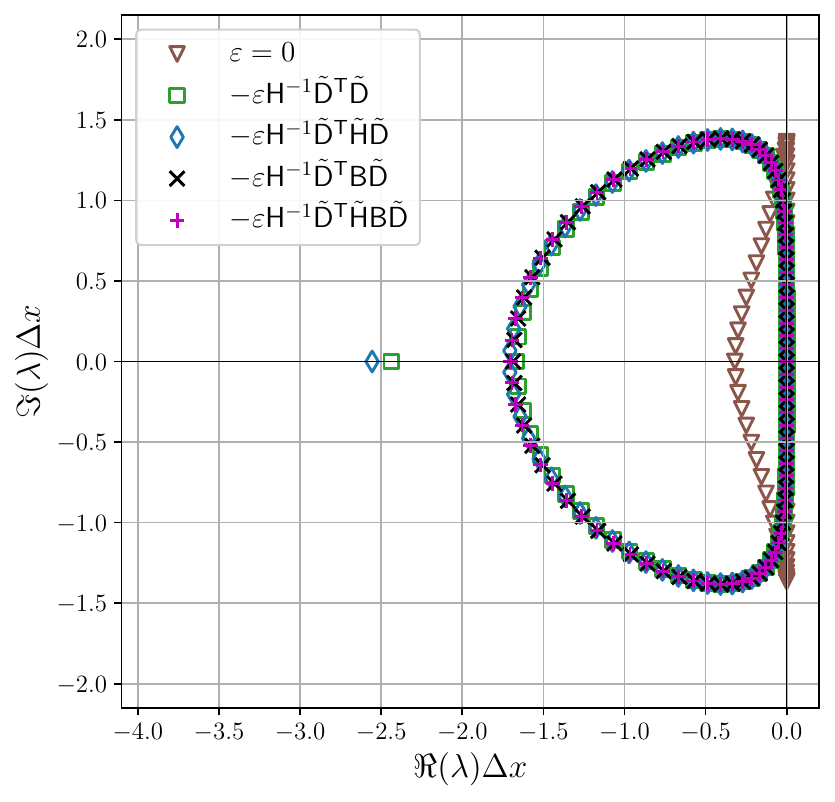}
        \caption{Matt. $p=2$ $\varepsilon = 0.025$}
    \end{subfigure}
    \hfill
    \begin{subfigure}[t]{0.32\textwidth}
        \centering
        \includegraphics[width=\textwidth, trim={5 10 5 5}, clip]{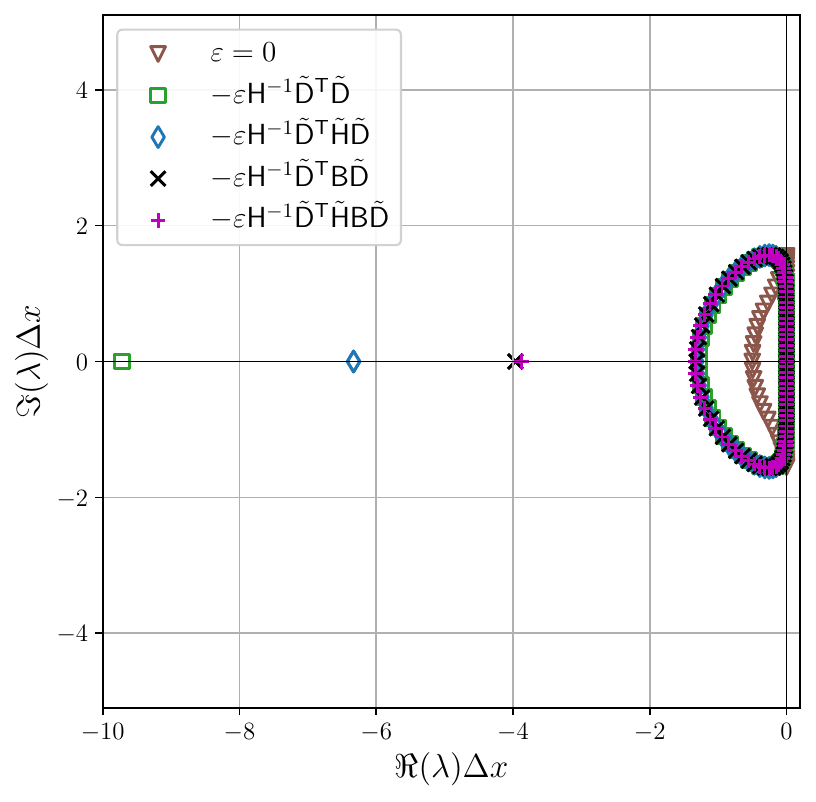}
        \caption{Matt. $p=3$ $\varepsilon = 0.005$}
    \end{subfigure}
    \hfill
    \begin{subfigure}[t]{0.32\textwidth}
        \centering
        \includegraphics[width=\textwidth, trim={5 10 5 5}, clip]{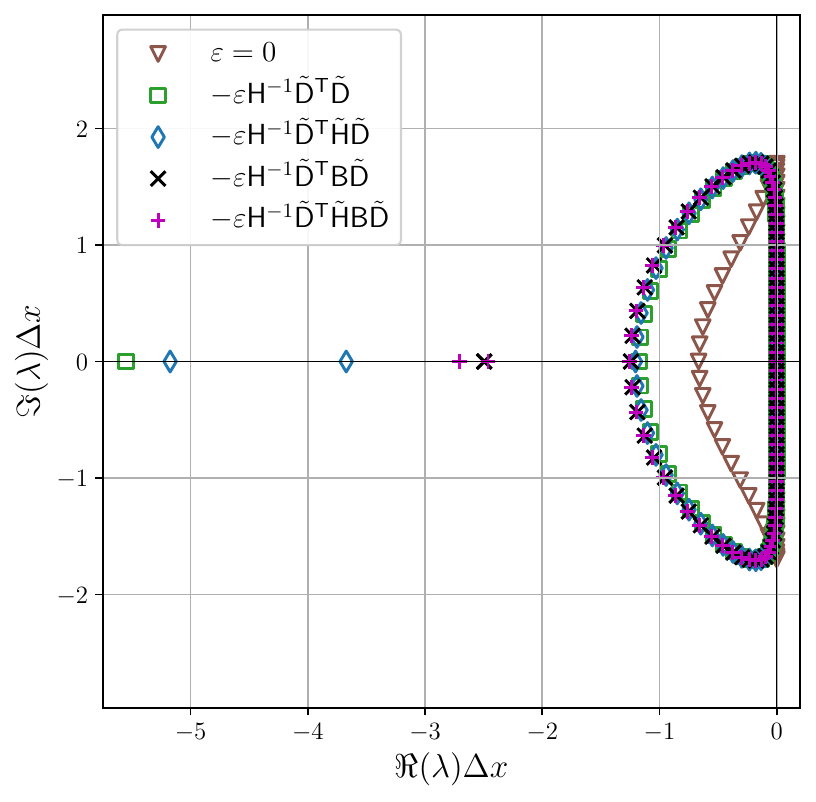}
        \caption{Matt. $p=4$ $\varepsilon = 0.001$}
    \end{subfigure}
    \caption{Eigenspectra for the Linear Convection Equation of central SBP semi-discretizations using CSBP, HGTL, HGT, or Mattsson operators on a grid of 80 nodes, upwind (Lax-Friedrichs) SATs and dissipation $s=p+1$ with a large coefficient value of $\varepsilon = 3.125 \times 5^{-s}$. Spectra without any volume dissipation ($\varepsilon = 0$) and of UFD discretizations of \cite{Mattsson2017} included for comparison.}
    \label{fig:LCEeigs_additional}
\end{figure}

\begin{figure}[t] 
    \centering
    \begin{subfigure}[t]{0.32\textwidth}
        \centering
        \includegraphics[width=\textwidth, trim={5 10 5 5}, clip]{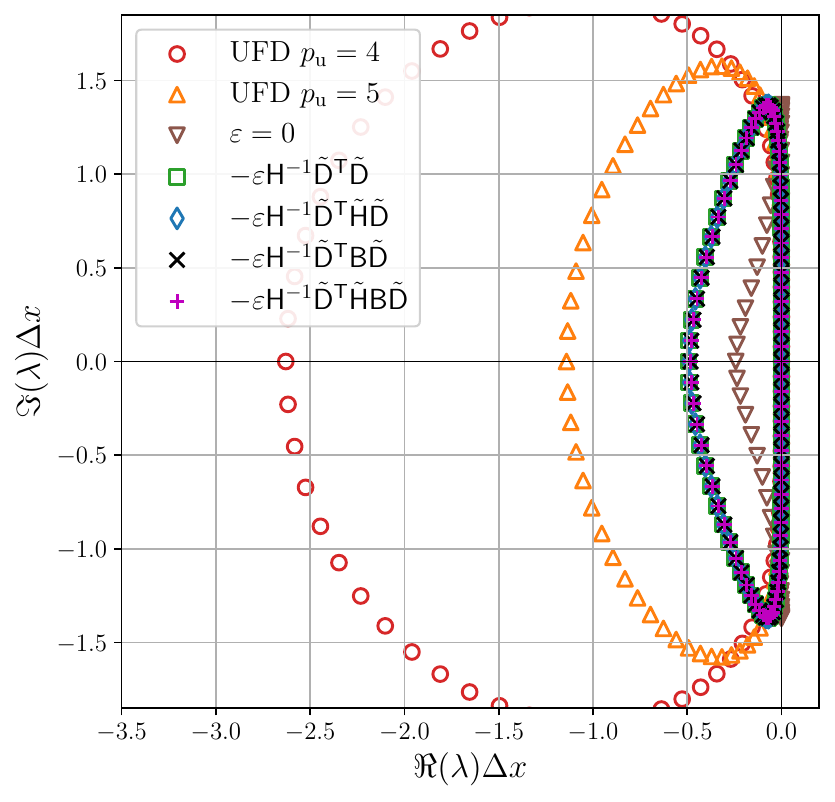}
        \caption{CSBP $p=2$ $\varepsilon = 0.005$}
    \end{subfigure}
    \hfill
    \begin{subfigure}[t]{0.32\textwidth}
        \centering
        \includegraphics[width=\textwidth, trim={5 10 5 5}, clip]{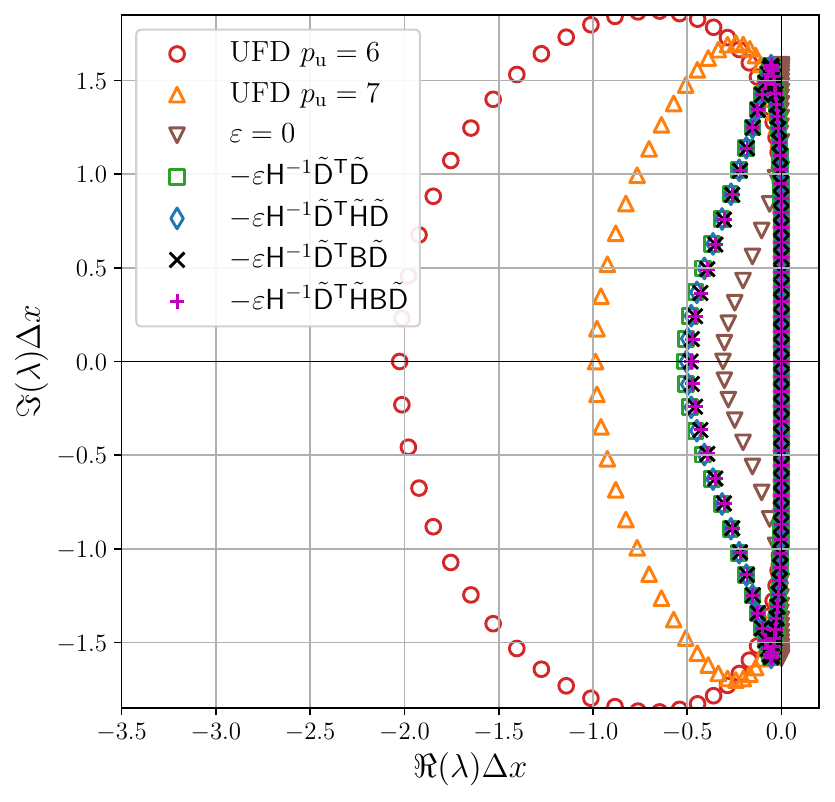}
        \caption{CSBP $p=3$ $\varepsilon = 0.001$}
    \end{subfigure}
    \hfill
    \begin{subfigure}[t]{0.32\textwidth}
        \centering
        \includegraphics[width=\textwidth, trim={5 10 5 5}, clip]{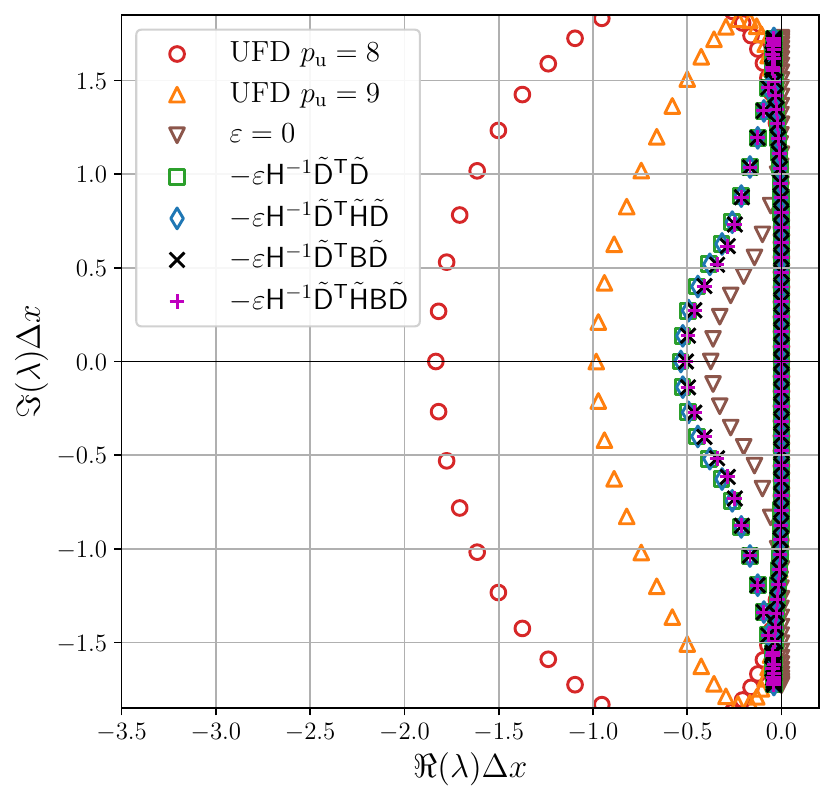}
        \caption{CSBP $p=4$ $\varepsilon = 0.0002$}
    \end{subfigure}
    \vskip\baselineskip 
    \begin{subfigure}[t]{0.32\textwidth}
        \centering
        \includegraphics[width=\textwidth, trim={5 10 5 5}, clip]{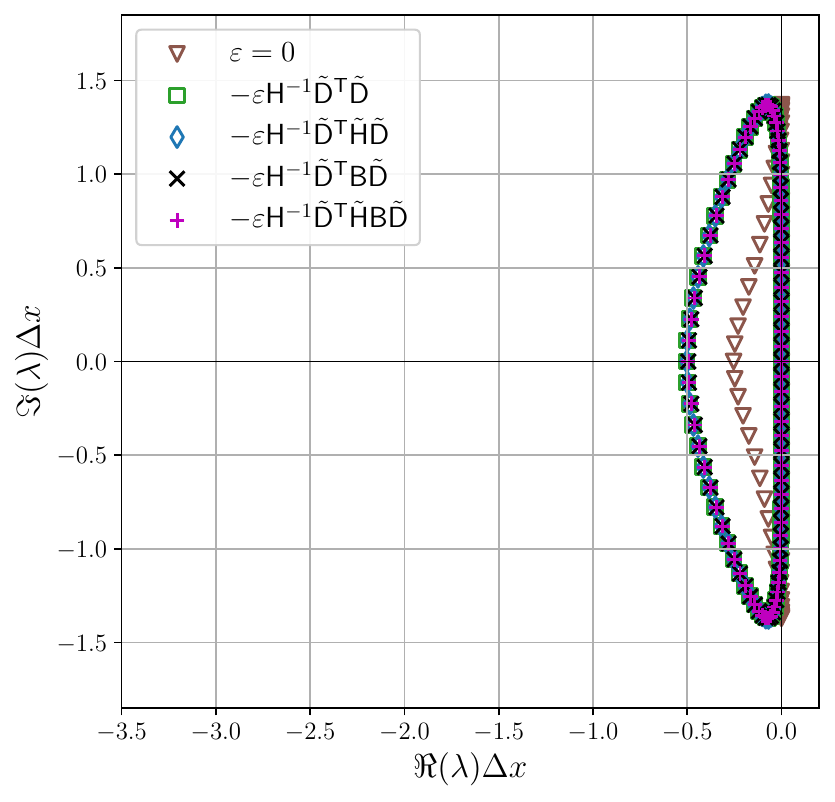}
        \caption{HGTL $p=2$ $\varepsilon = 0.005$}
    \end{subfigure}
    \hfill
    \begin{subfigure}[t]{0.32\textwidth}
        \centering
        \includegraphics[width=\textwidth, trim={5 10 5 5}, clip]{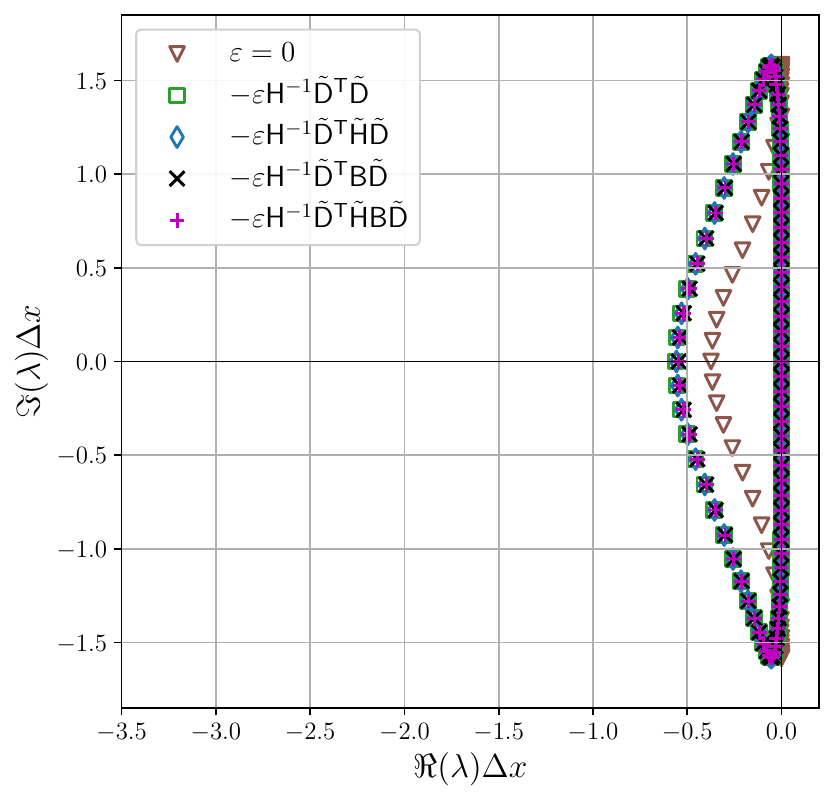}
        \caption{HGTL $p=3$ $\varepsilon = 0.001$}
    \end{subfigure}
    \hfill
    \begin{subfigure}[t]{0.32\textwidth}
        \centering
        \includegraphics[width=\textwidth, trim={5 10 5 5}, clip]{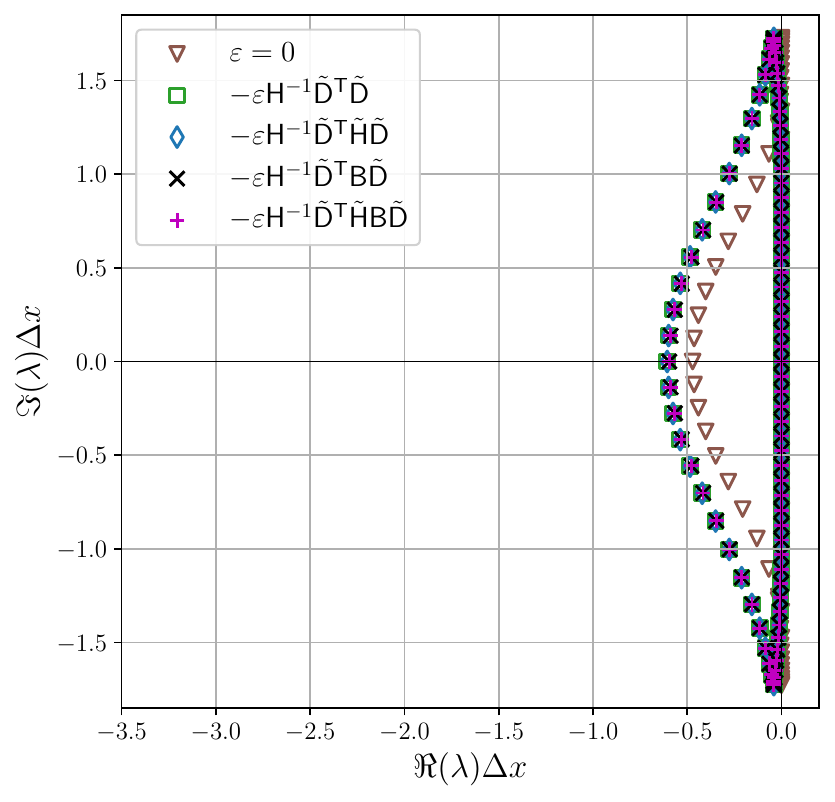}
        \caption{HGTL $p=4$ $\varepsilon = 0.0002$}
    \end{subfigure}
    \vskip\baselineskip 
    \begin{subfigure}[t]{0.32\textwidth}
        \centering
        \includegraphics[width=\textwidth, trim={5 10 5 5}, clip]{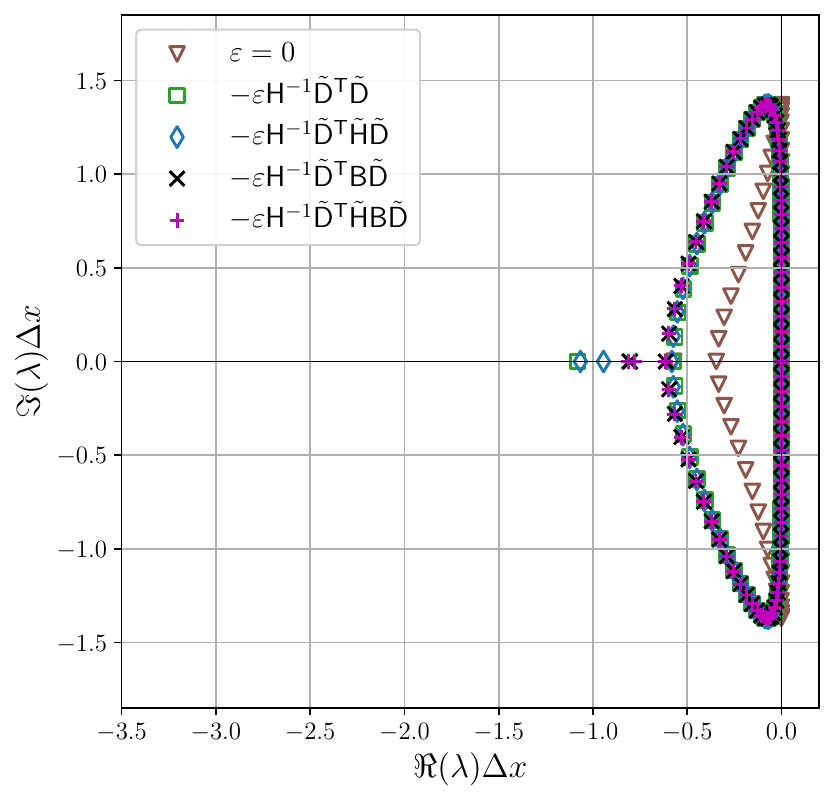}
        \caption{HGT $p=2$ $\varepsilon = 0.005$}
    \end{subfigure}
    \hfill
    \begin{subfigure}[t]{0.32\textwidth}
        \centering
        \includegraphics[width=\textwidth, trim={5 10 5 5}, clip]{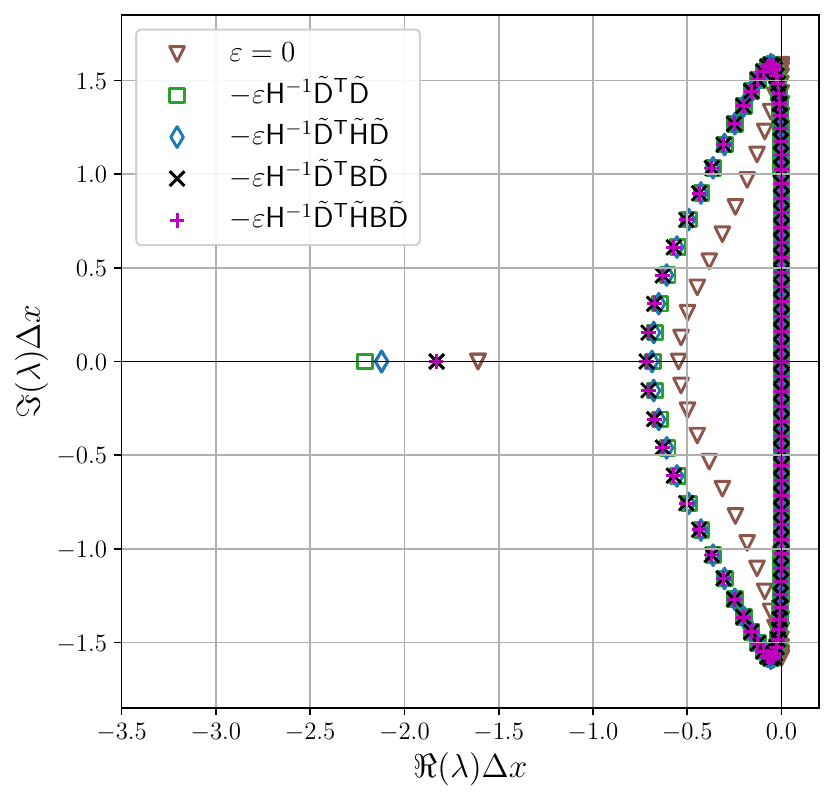}
        \caption{HGT $p=3$ $\varepsilon = 0.001$}
    \end{subfigure}
    \hfill
    \begin{subfigure}[t]{0.32\textwidth}
        \centering
        \includegraphics[width=\textwidth, trim={5 10 5 5}, clip]{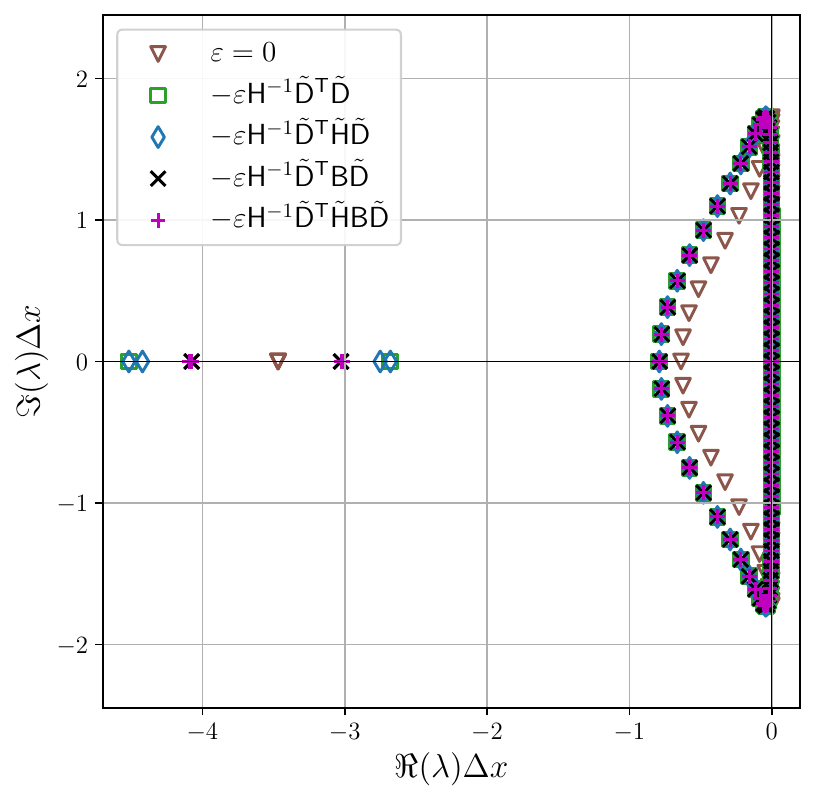}
        \caption{HGT $p=4$ $\varepsilon = 0.0002$}
    \end{subfigure}
    \vskip\baselineskip 
    \begin{subfigure}[t]{0.32\textwidth}
        \centering
        \includegraphics[width=\textwidth, trim={5 10 5 5}, clip]{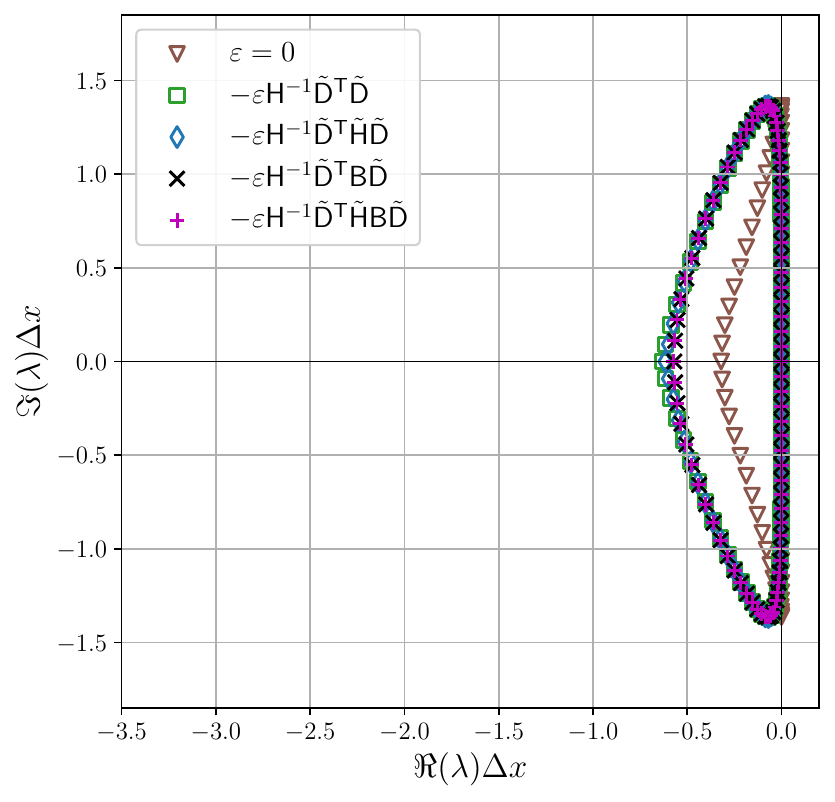}
        \caption{Matt. $p=2$ $\varepsilon = 0.005$}
    \end{subfigure}
    \hfill
    \begin{subfigure}[t]{0.32\textwidth}
        \centering
        \includegraphics[width=\textwidth, trim={5 10 5 5}, clip]{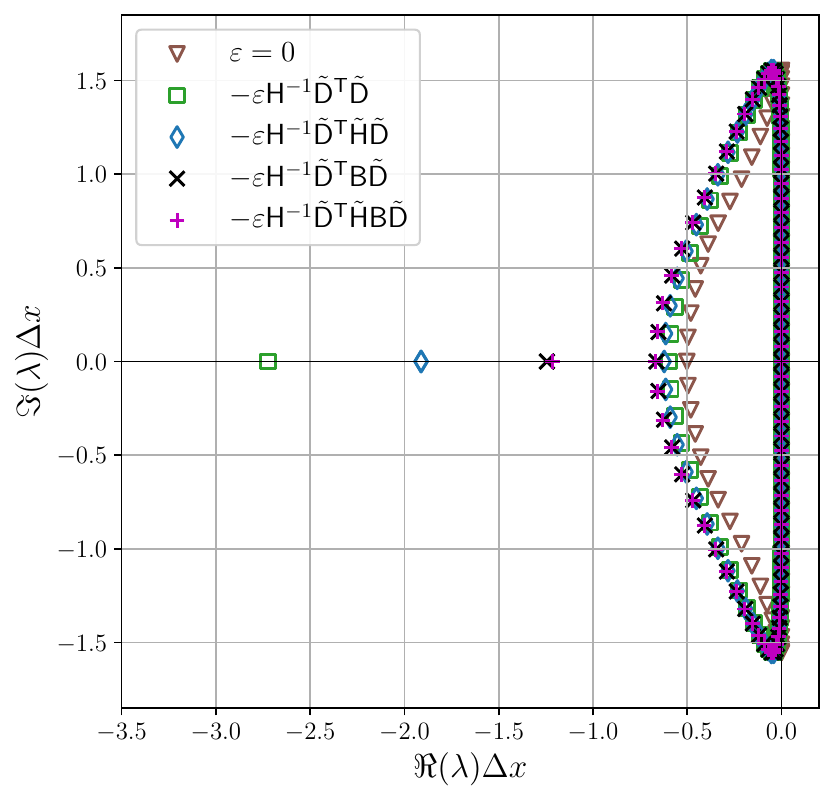}
        \caption{Matt. $p=3$ $\varepsilon = 0.001$}
    \end{subfigure}
    \hfill
    \begin{subfigure}[t]{0.32\textwidth}
        \centering
        \includegraphics[width=\textwidth, trim={5 10 5 5}, clip]{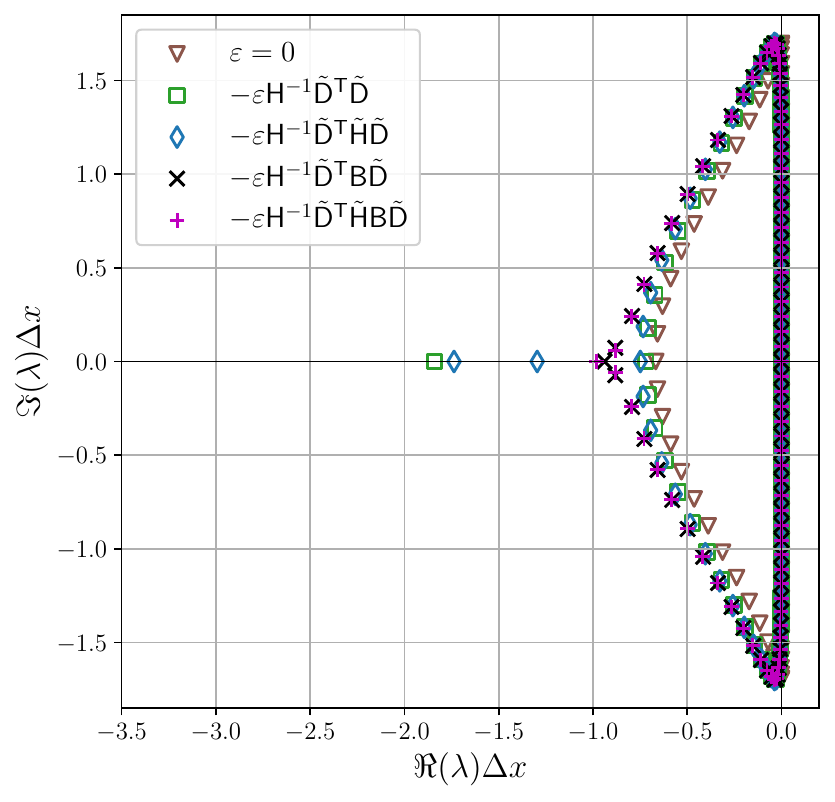}
        \caption{Matt. $p=4$ $\varepsilon = 0.0002$}
    \end{subfigure}
    \caption{Eigenspectra for the Linear Convection Equation of central SBP semi-discretizations using CSBP, HGTL, HGT, or Mattsson operators on a grid of 80 nodes, upwind (Lax-Friedrichs) SATs and dissipation $s=p+1$ with a small coefficient value of $\varepsilon = 0.625 \times 5^{-s}$. Spectra without any volume dissipation ($\varepsilon = 0$) and of UFD discretizations of \cite{Mattsson2017} are included for comparison.}
    \label{fig:LCEeigs_additional2}
\end{figure}

\begin{figure}[t] 
    \centering
    \begin{subfigure}[t]{0.32\textwidth}
        \centering
        \includegraphics[width=\textwidth, trim={10 10 10 10}, clip]{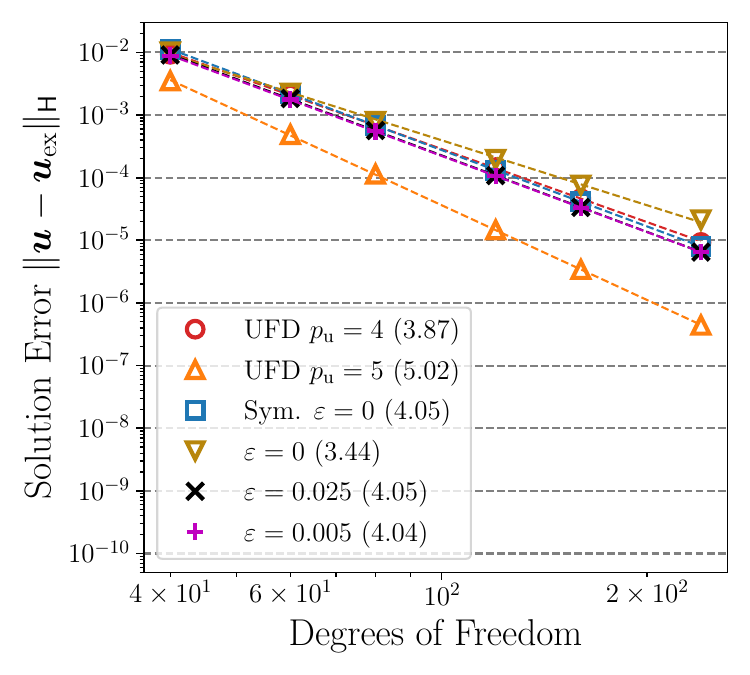}
        \caption{CSBP $p=2$}
    \end{subfigure}
    \hfill
    \begin{subfigure}[t]{0.32\textwidth}
        \centering
        \includegraphics[width=\textwidth, trim={10 10 10 10}, clip]{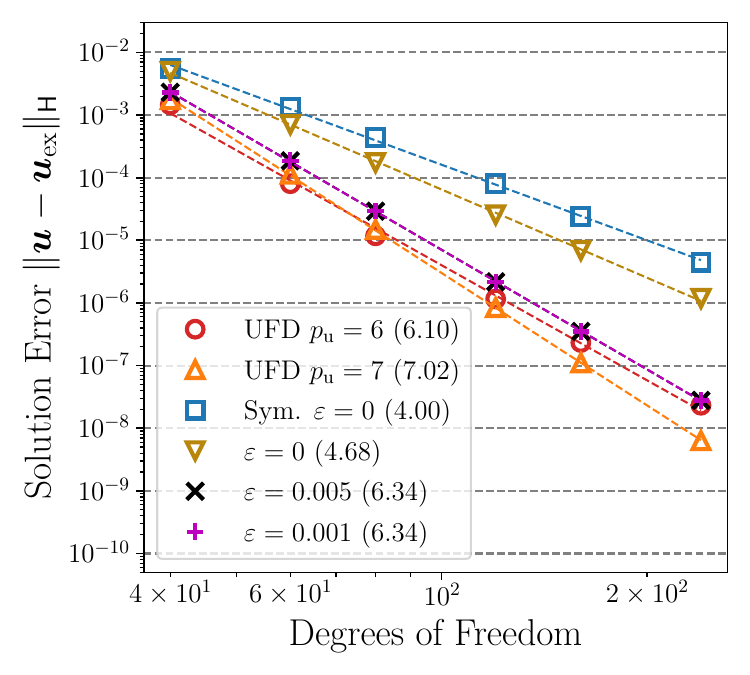}
        \caption{CSBP $p=3$}
    \end{subfigure}
    \hfill
    \begin{subfigure}[t]{0.32\textwidth}
        \centering
        \includegraphics[width=\textwidth, trim={10 10 10 10}, clip]{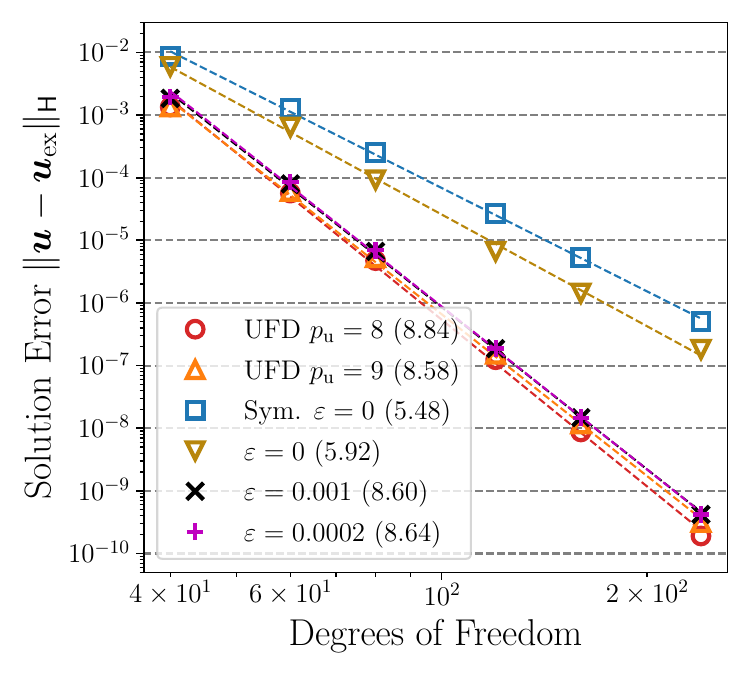}
        \caption{CSBP $p=4$}
    \end{subfigure}
    \vskip\baselineskip 
    \begin{subfigure}[t]{0.32\textwidth}
        \centering
        \includegraphics[width=\textwidth, trim={10 10 10 10}, clip]{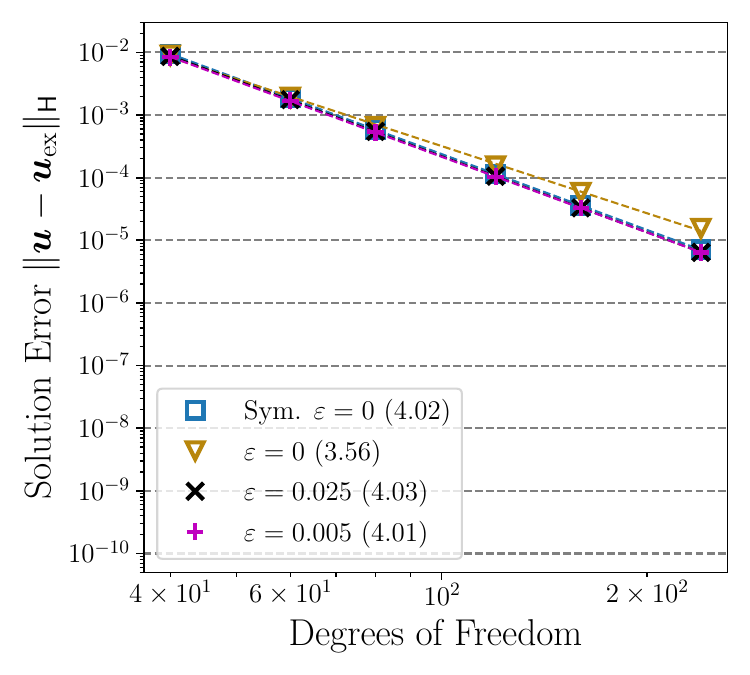}
        \caption{HGTL $p=2$}
    \end{subfigure}
    \hfill
    \begin{subfigure}[t]{0.32\textwidth}
        \centering
        \includegraphics[width=\textwidth, trim={10 10 10 10}, clip]{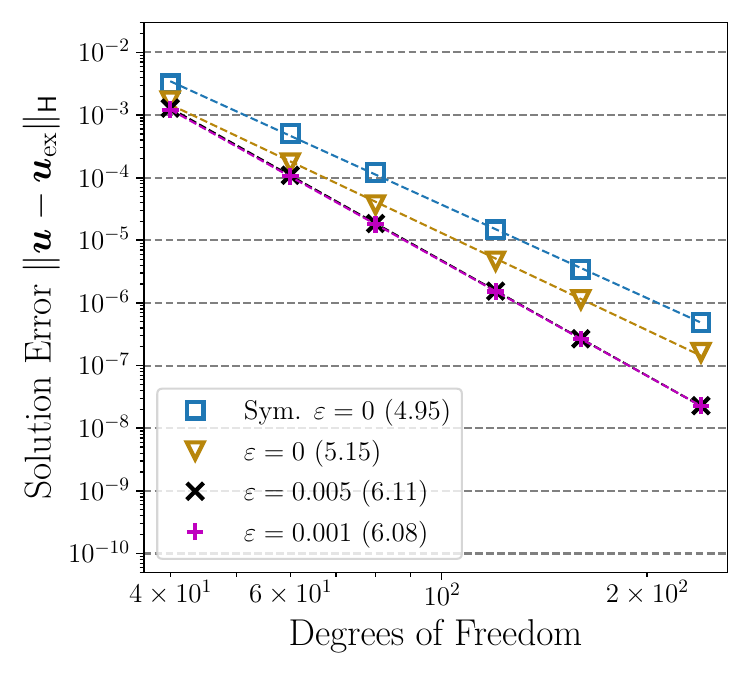}
        \caption{HGTL $p=3$}
    \end{subfigure}
    \hfill
    \begin{subfigure}[t]{0.32\textwidth}
        \centering
        \includegraphics[width=\textwidth, trim={10 10 10 10}, clip]{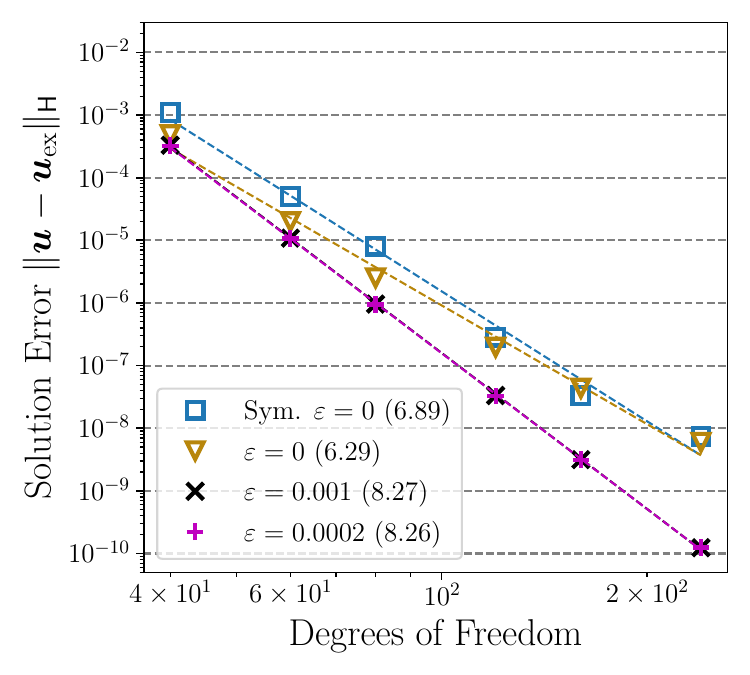}
        \caption{HGTL $p=4$}
    \end{subfigure}
    \vskip\baselineskip 
    \begin{subfigure}[t]{0.32\textwidth}
        \centering
        \includegraphics[width=\textwidth, trim={10 10 10 10}, clip]{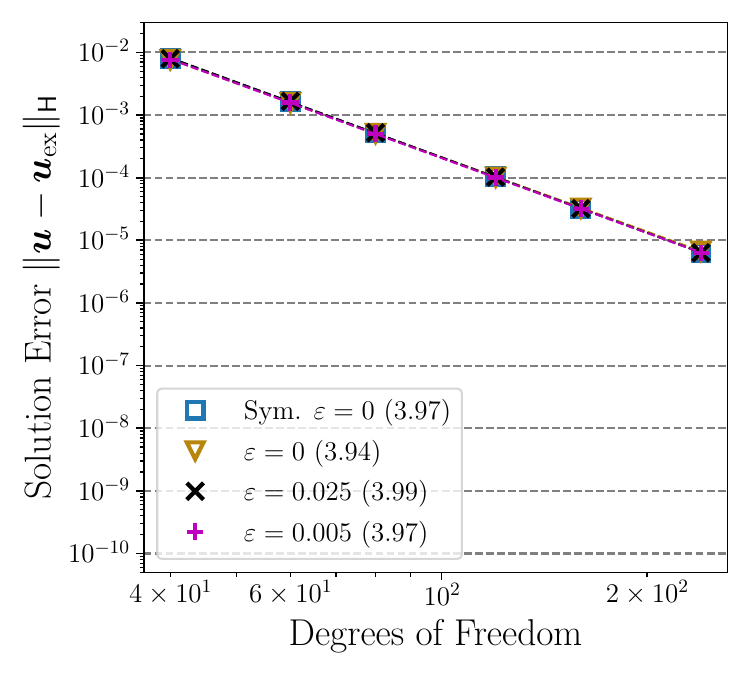}
        \caption{HGT $p=2$}
    \end{subfigure}
    \hfill
    \begin{subfigure}[t]{0.32\textwidth}
        \centering
        \includegraphics[width=\textwidth, trim={10 10 10 10}, clip]{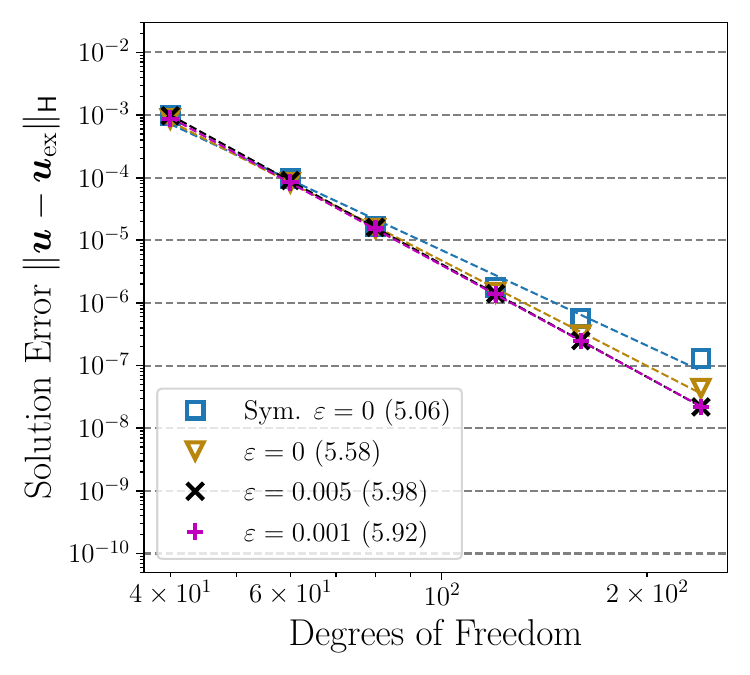}
        \caption{HGT $p=3$}
    \end{subfigure}
    \hfill
    \begin{subfigure}[t]{0.32\textwidth}
        \centering
        \includegraphics[width=\textwidth, trim={10 10 10 10}, clip]{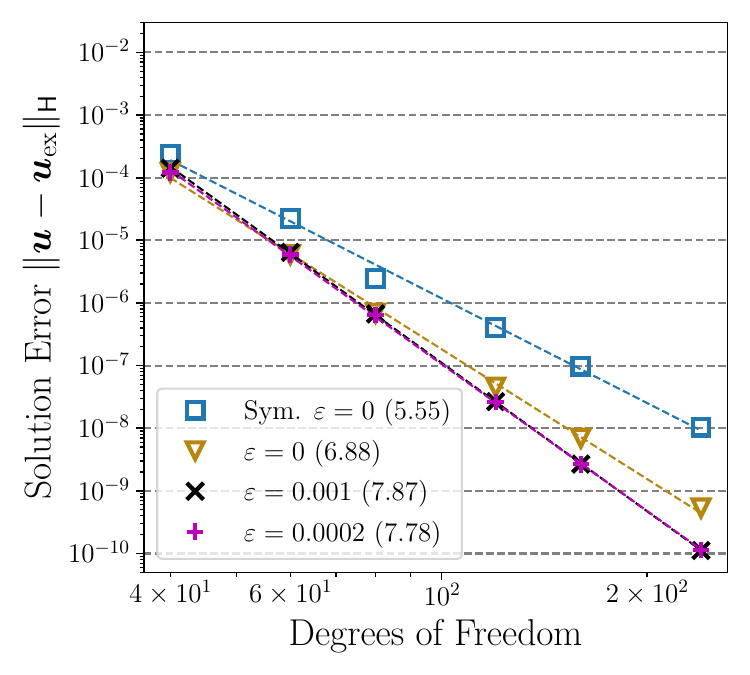}
        \caption{HGT $p=4$}
    \end{subfigure}
    \vskip\baselineskip 
    \begin{subfigure}[t]{0.32\textwidth}
        \centering
        \includegraphics[width=\textwidth, trim={10 10 10 10}, clip]{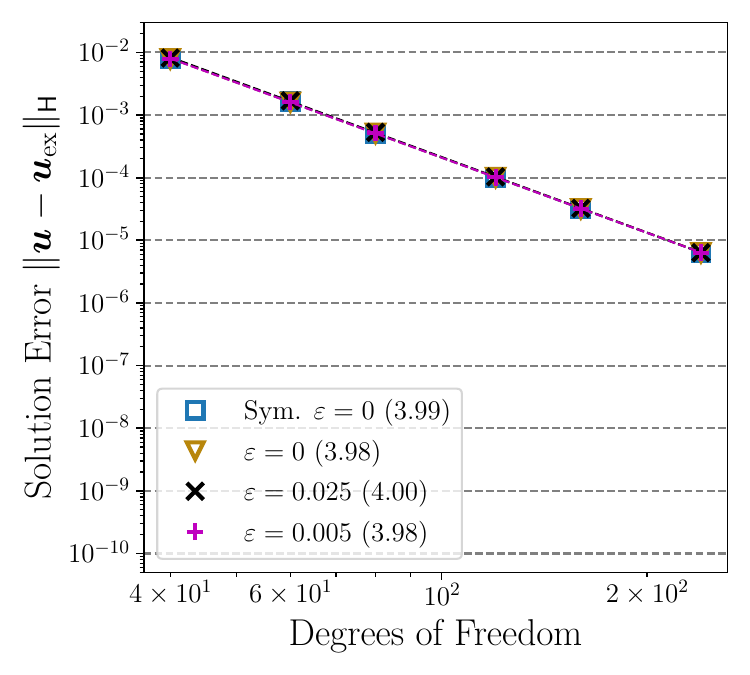}
        \caption{Mattsson $p=2$}
    \end{subfigure}
    \hfill
    \begin{subfigure}[t]{0.32\textwidth}
        \centering
        \includegraphics[width=\textwidth, trim={10 10 10 10}, clip]{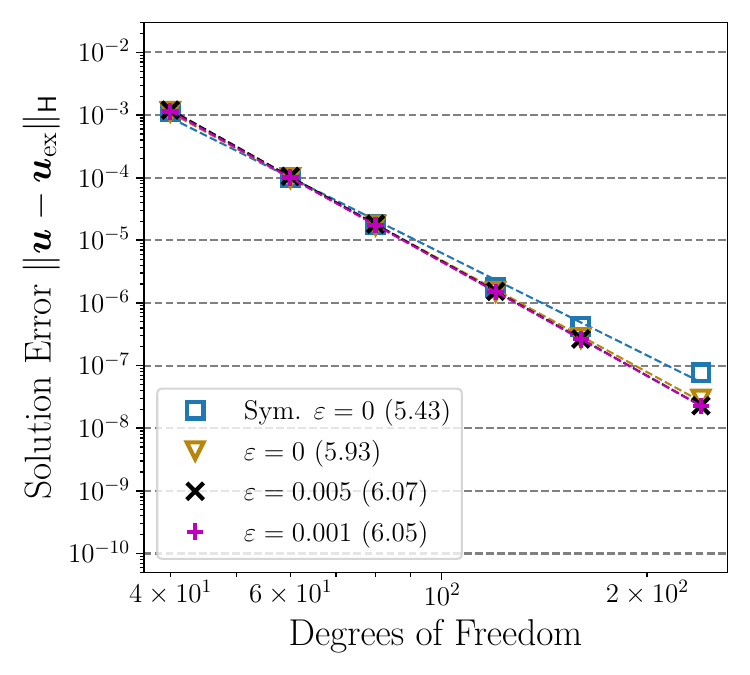}
        \caption{Mattsson $p=3$}
    \end{subfigure}
    \hfill
    \begin{subfigure}[t]{0.32\textwidth}
        \centering
        \includegraphics[width=\textwidth, trim={10 10 10 10}, clip]{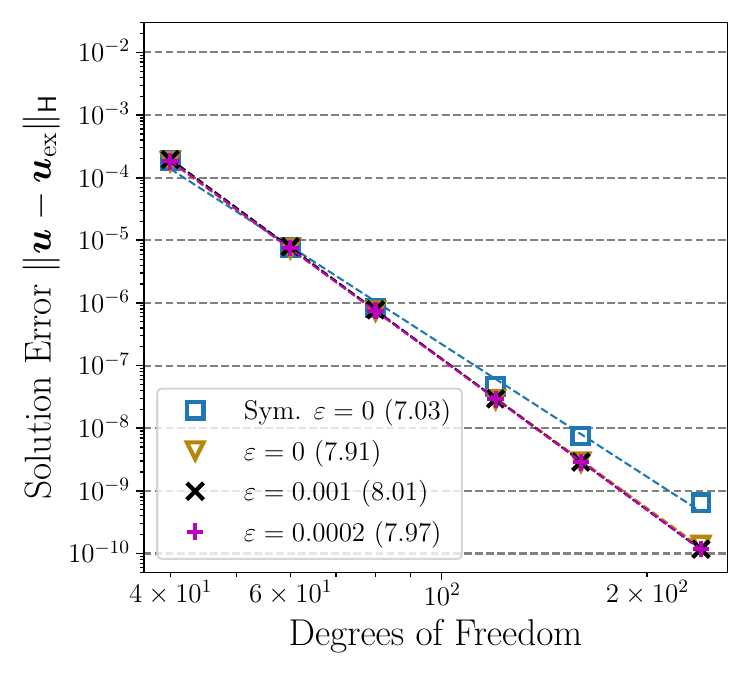}
        \caption{Mattsson $p=4$}
    \end{subfigure}
    \caption{Solution error convergence after one period ($t=1$) of the linear convection equation for UFD SBP semi-discretizations of \cite{Mattsson2017} and central SBP semi-discretizations with either symmetric (\ie, no dissipation) or upwind SATs augmented with dissipation $\mat{A}_\mat{D} = -\varepsilon \HI \tilde{\mat{D}}_s^\T \mat{B} \tilde{\mat{D}}_s$ with $s=p+1$ and varying~$\varepsilon$. A grid of 40, 60, 80, 120, 160, and 240 nodes is used. Solution error convergence rates are given in the legends.}
    \label{fig:LCEconv_additional_t1}
\end{figure}

\begin{figure}[t] 
    \centering
    \begin{subfigure}[t]{0.32\textwidth}
        \centering
        \includegraphics[width=\textwidth, trim={10 10 10 10}, clip]{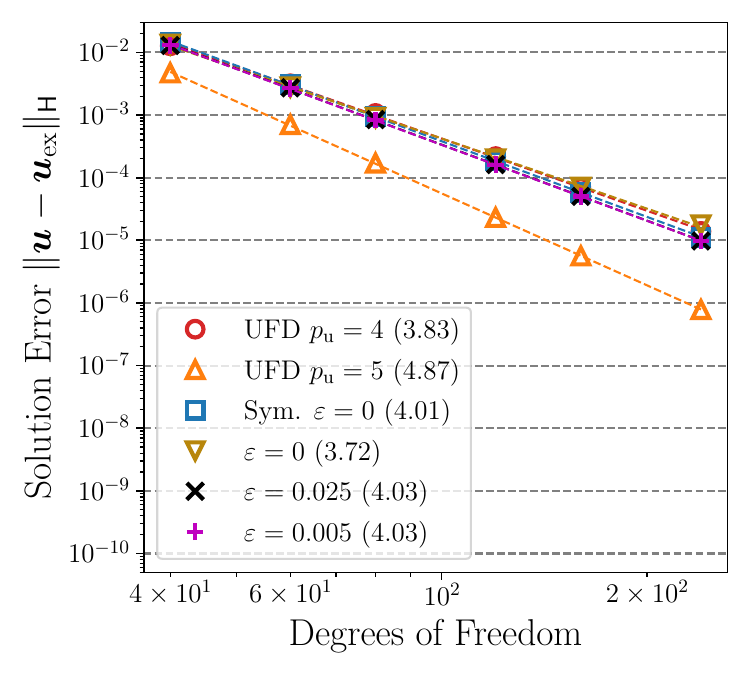}
        \caption{CSBP $p=2$}
    \end{subfigure}
    \hfill
    \begin{subfigure}[t]{0.32\textwidth}
        \centering
        \includegraphics[width=\textwidth, trim={10 10 10 10}, clip]{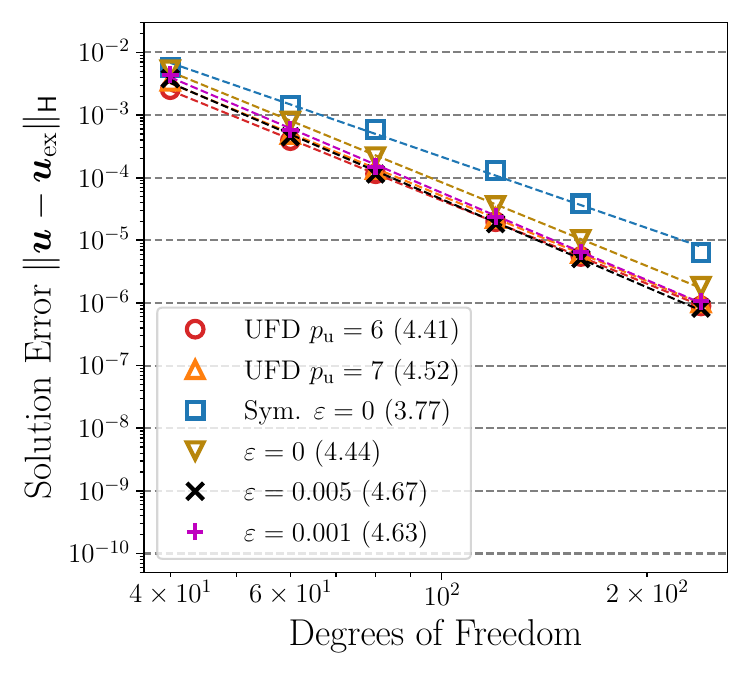}
        \caption{CSBP $p=3$}
    \end{subfigure}
    \hfill
    \begin{subfigure}[t]{0.32\textwidth}
        \centering
        \includegraphics[width=\textwidth, trim={10 10 10 10}, clip]{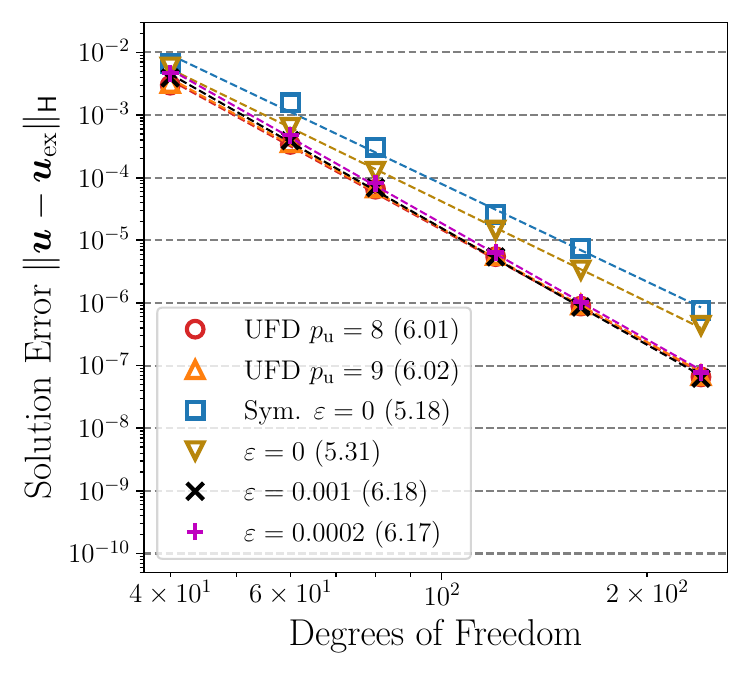}
        \caption{CSBP $p=4$}
    \end{subfigure}
    \vskip\baselineskip 
    \begin{subfigure}[t]{0.32\textwidth}
        \centering
        \includegraphics[width=\textwidth, trim={10 10 10 10}, clip]{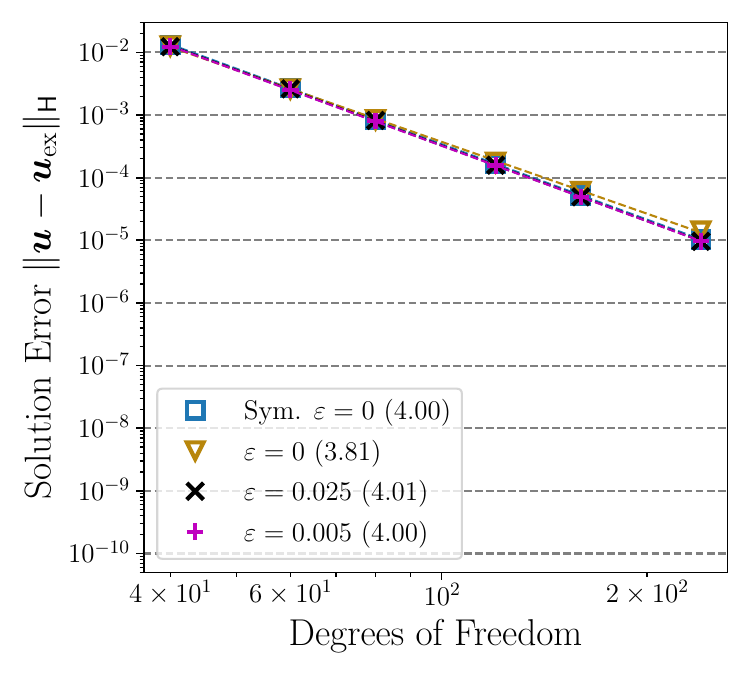}
        \caption{HGTL $p=2$}
    \end{subfigure}
    \hfill
    \begin{subfigure}[t]{0.32\textwidth}
        \centering
        \includegraphics[width=\textwidth, trim={10 10 10 10}, clip]{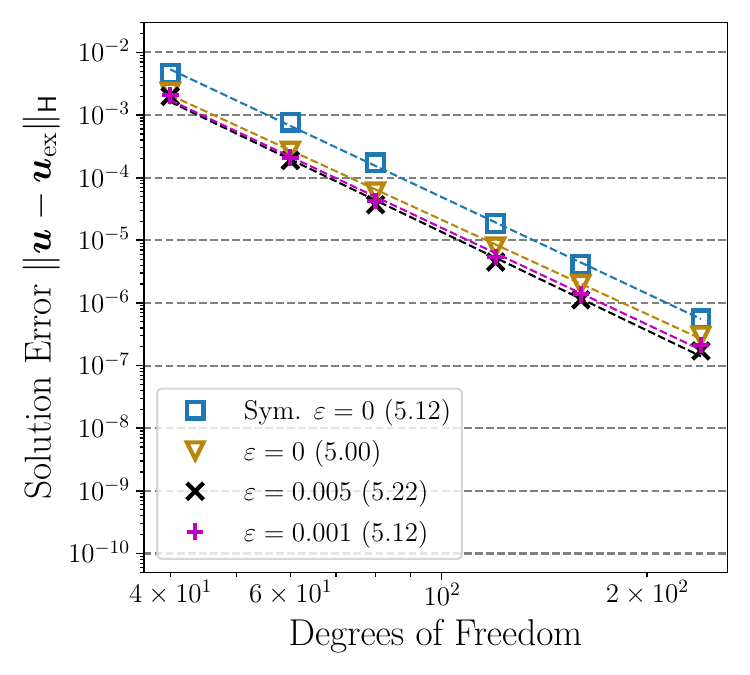}
        \caption{HGTL $p=3$}
    \end{subfigure}
    \hfill
    \begin{subfigure}[t]{0.32\textwidth}
        \centering
        \includegraphics[width=\textwidth, trim={10 10 10 10}, clip]{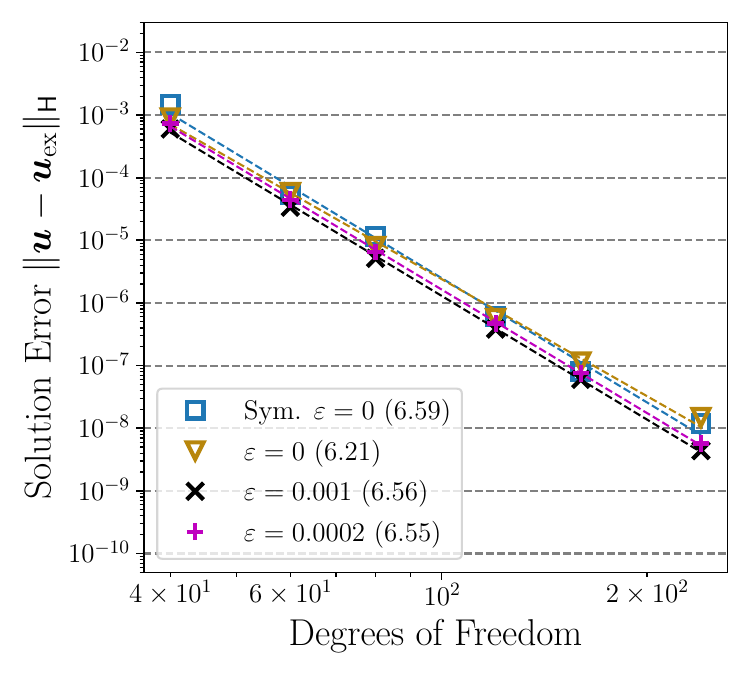}
        \caption{HGTL $p=4$}
    \end{subfigure}
    \vskip\baselineskip 
    \begin{subfigure}[t]{0.32\textwidth}
        \centering
        \includegraphics[width=\textwidth, trim={10 10 10 10}, clip]{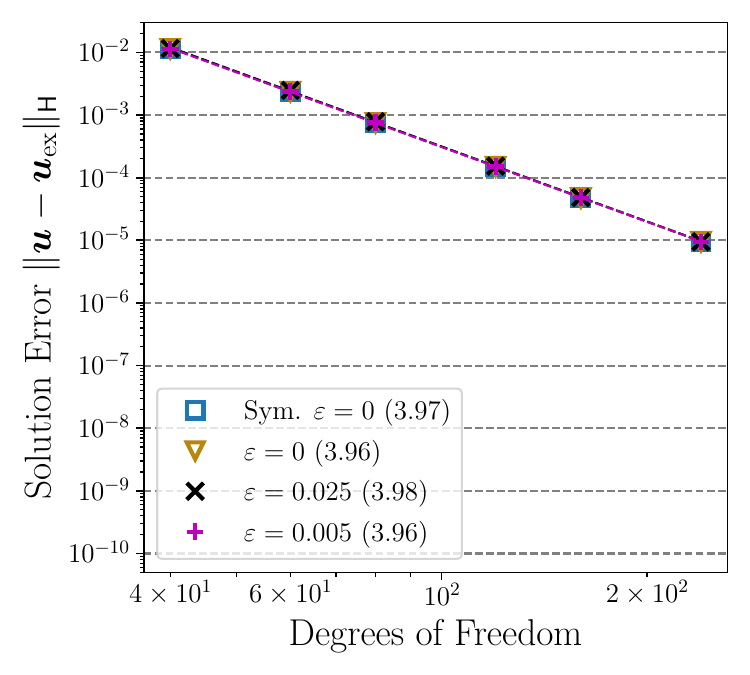}
        \caption{HGT $p=2$}
    \end{subfigure}
    \hfill
    \begin{subfigure}[t]{0.32\textwidth}
        \centering
        \includegraphics[width=\textwidth, trim={10 10 10 10}, clip]{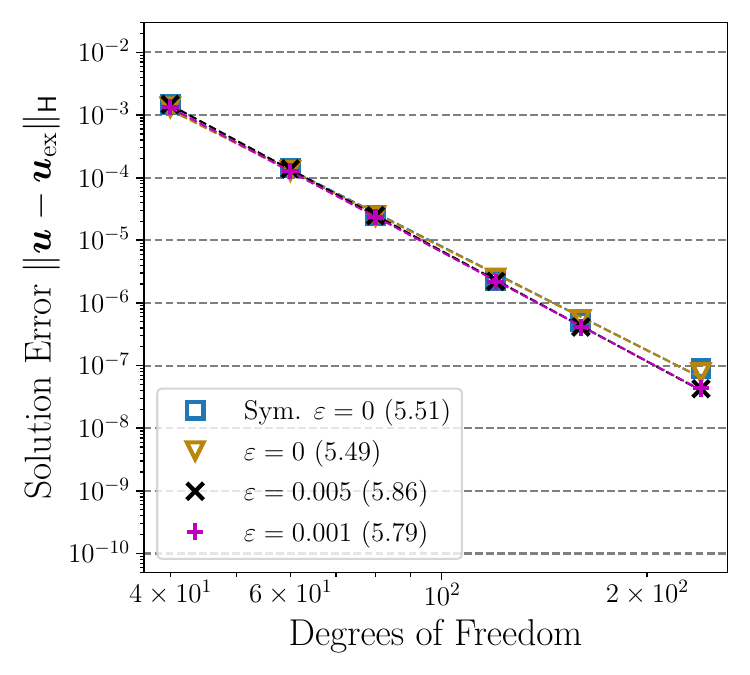}
        \caption{HGT $p=3$}
    \end{subfigure}
    \hfill
    \begin{subfigure}[t]{0.32\textwidth}
        \centering
        \includegraphics[width=\textwidth, trim={10 10 10 10}, clip]{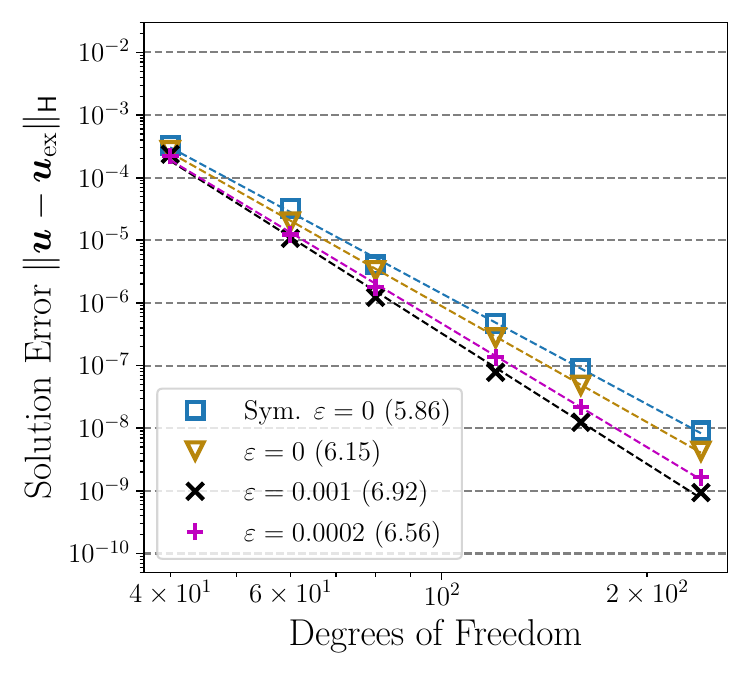}
        \caption{HGT $p=4$}
    \end{subfigure}
    \vskip\baselineskip 
    \begin{subfigure}[t]{0.32\textwidth}
        \centering
        \includegraphics[width=\textwidth, trim={10 10 10 10}, clip]{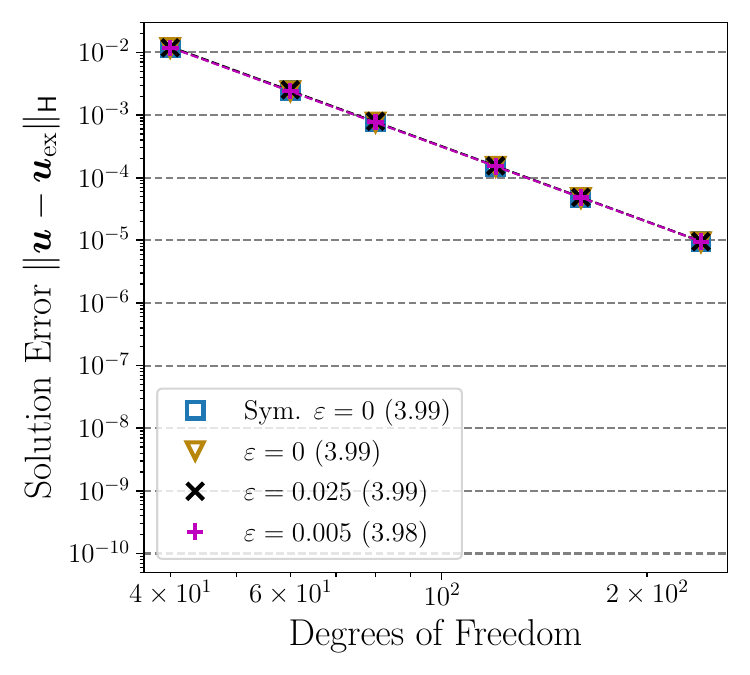}
        \caption{Mattsson $p=2$}
    \end{subfigure}
    \hfill
    \begin{subfigure}[t]{0.32\textwidth}
        \centering
        \includegraphics[width=\textwidth, trim={10 10 10 10}, clip]{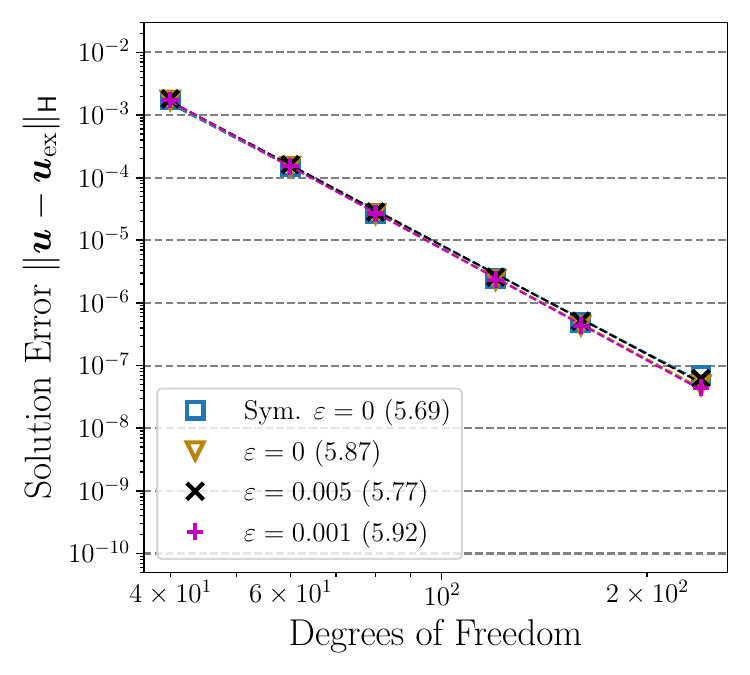}
        \caption{Mattsson $p=3$}
    \end{subfigure}
    \hfill
    \begin{subfigure}[t]{0.32\textwidth}
        \centering
        \includegraphics[width=\textwidth, trim={10 10 10 10}, clip]{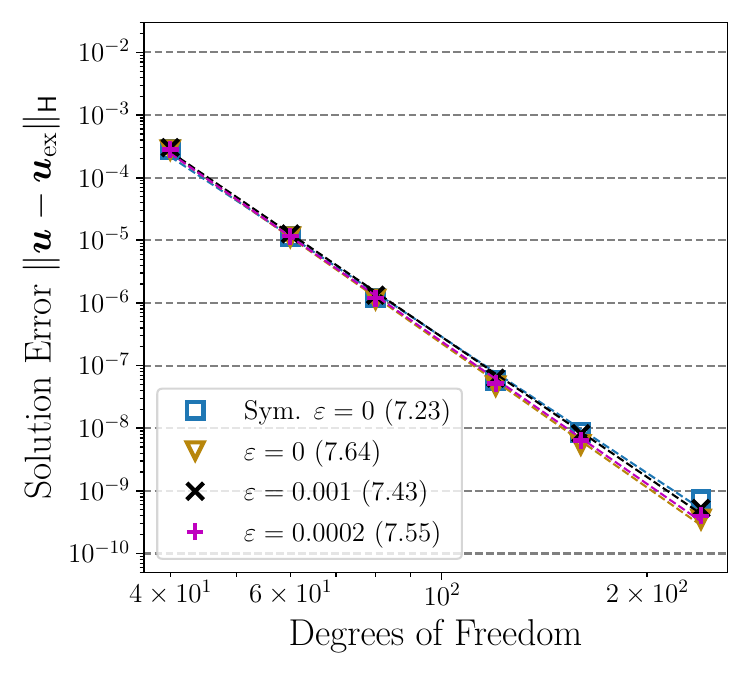}
        \caption{Mattsson $p=4$}
    \end{subfigure}
    \caption{Solution error convergence after one and a half periods ($t=1.5$) of the linear convection equation for UFD SBP semi-discretizations of \cite{Mattsson2017} and central SBP semi-discretizations with either symmetric (\ie, no dissipation) or upwind SATs augmented with dissipation $\mat{A}_\mat{D} = -\varepsilon \HI \tilde{\mat{D}}_s^\T \mat{B} \tilde{\mat{D}}_s$ with $s=p+1$ and varying~$\varepsilon$. A grid of 40, 60, 80, 120, 160, and 240 nodes is used. Solution error convergence rates are given in the legends.}
    \label{fig:LCEconv_additional_t15}
\end{figure}

\begin{figure}[t] 
    \centering
    \begin{subfigure}[t]{0.32\textwidth}
        \centering
        \includegraphics[width=\textwidth, trim={10 10 10 10}, clip]{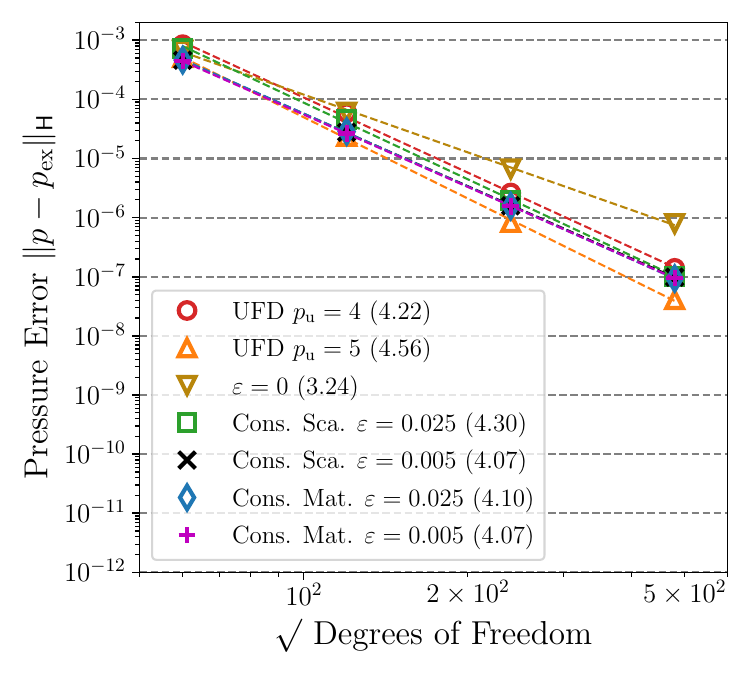}
        \caption{CSBP $p=2$}
    \end{subfigure}
    \hfill
    \begin{subfigure}[t]{0.32\textwidth}
        \centering
        \includegraphics[width=\textwidth, trim={10 10 10 10}, clip]{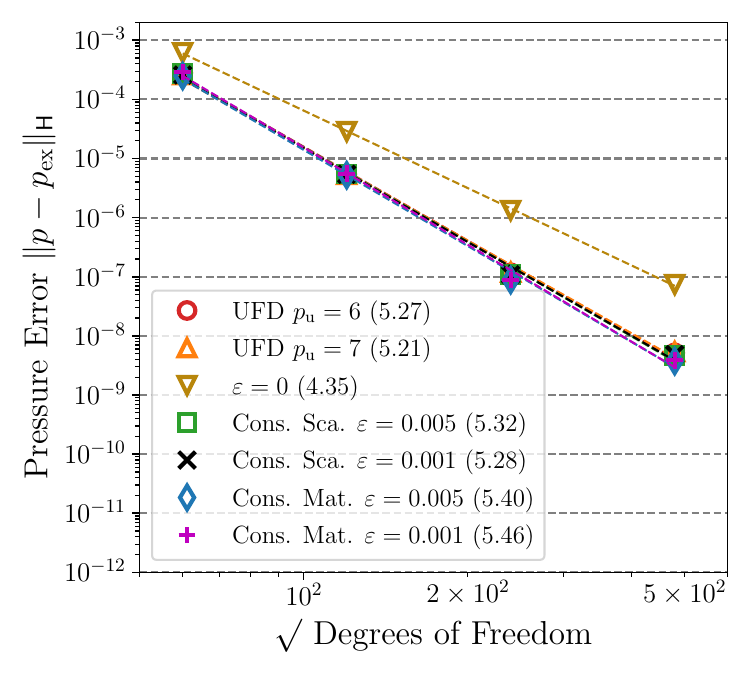}
        \caption{CSBP $p=3$}
    \end{subfigure}
    \hfill
    \begin{subfigure}[t]{0.32\textwidth}
        \centering
        \includegraphics[width=\textwidth, trim={10 10 10 10}, clip]{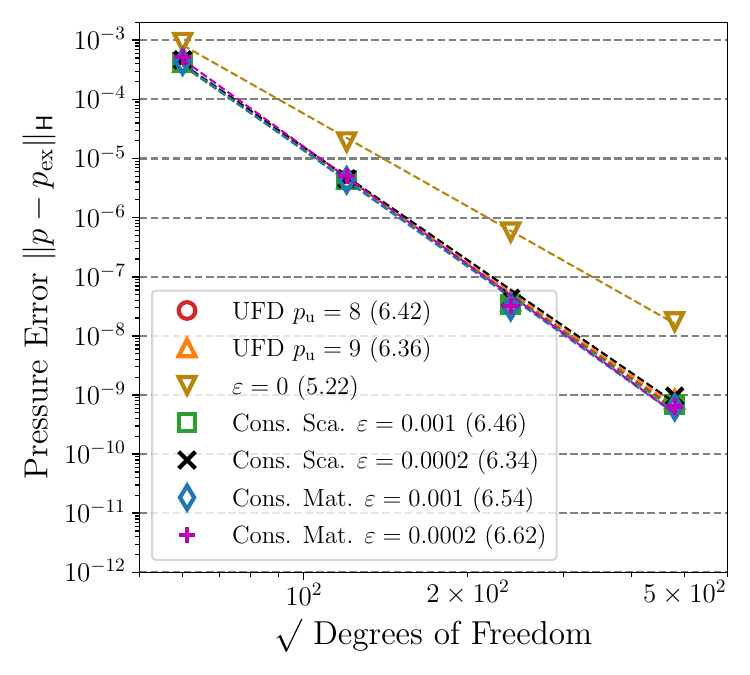}
        \caption{CSBP $p=4$}
    \end{subfigure}
    \vskip\baselineskip 
    \begin{subfigure}[t]{0.32\textwidth}
        \centering
        \includegraphics[width=\textwidth, trim={10 10 10 10}, clip]{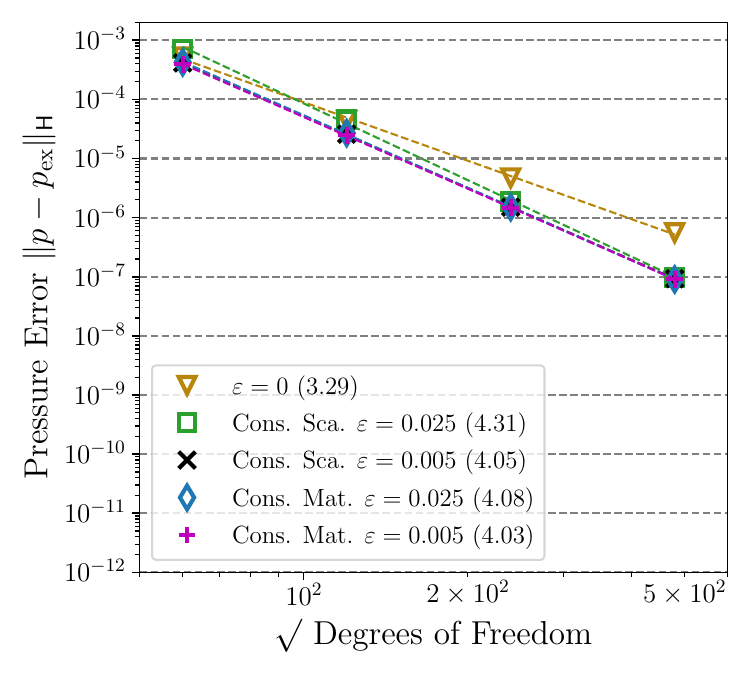}
        \caption{HGTL $p=2$}
    \end{subfigure}
    \hfill
    \begin{subfigure}[t]{0.32\textwidth}
        \centering
        \includegraphics[width=\textwidth, trim={10 10 10 10}, clip]{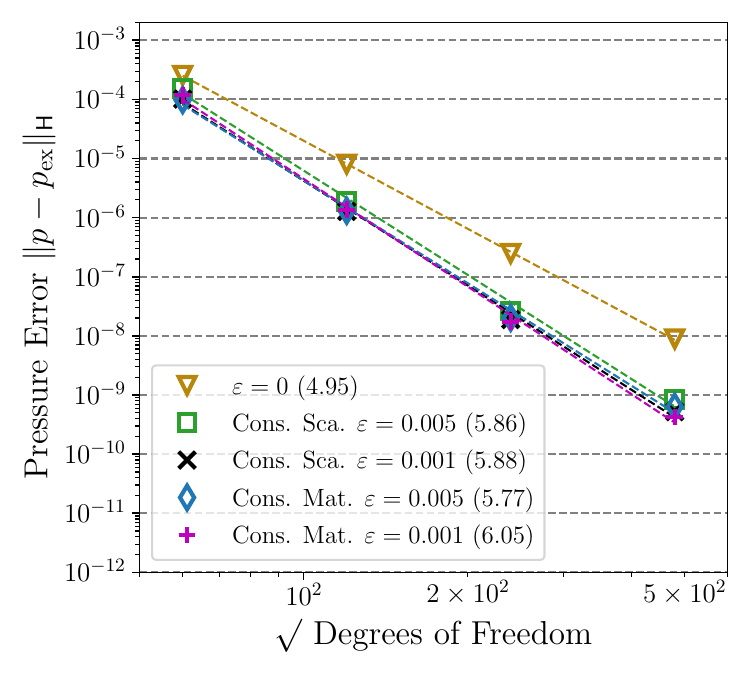}
        \caption{HGTL $p=3$}
    \end{subfigure}
    \hfill
    \begin{subfigure}[t]{0.32\textwidth}
        \centering
        \includegraphics[width=\textwidth, trim={10 10 10 10}, clip]{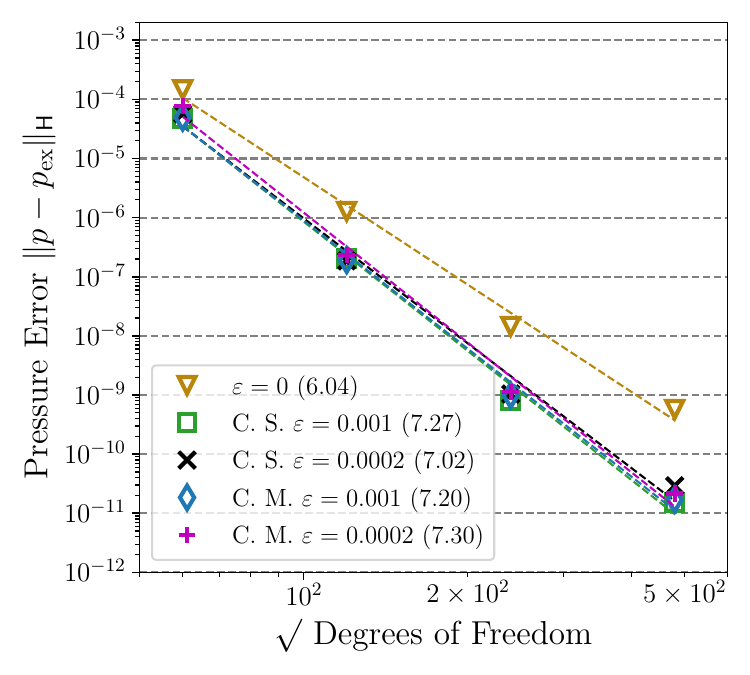}
        \caption{HGTL $p=4$}
    \end{subfigure}
    \vskip\baselineskip 
    \begin{subfigure}[t]{0.32\textwidth}
        \centering
        \includegraphics[width=\textwidth, trim={10 10 10 10}, clip]{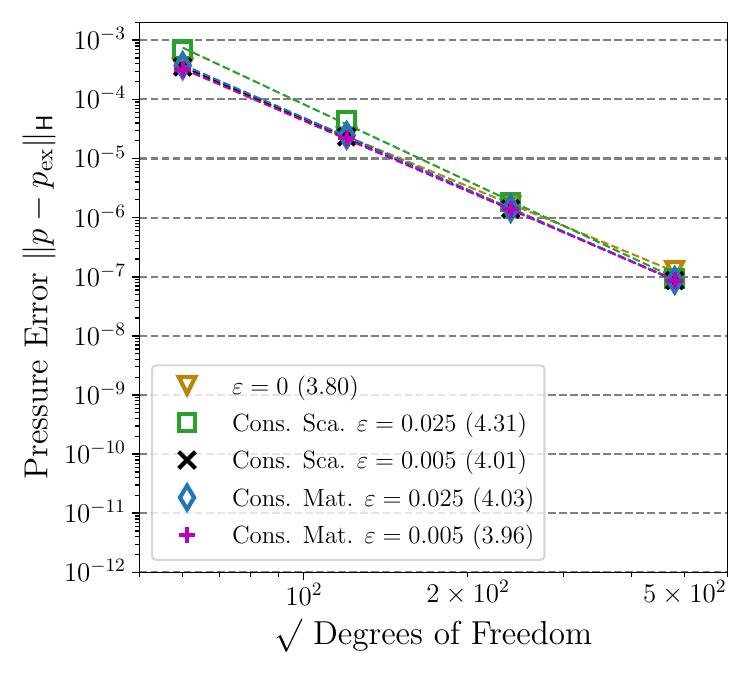}
        \caption{HGT $p=2$}
    \end{subfigure}
    \hfill
    \begin{subfigure}[t]{0.32\textwidth}
        \centering
        \includegraphics[width=\textwidth, trim={10 10 10 10}, clip]{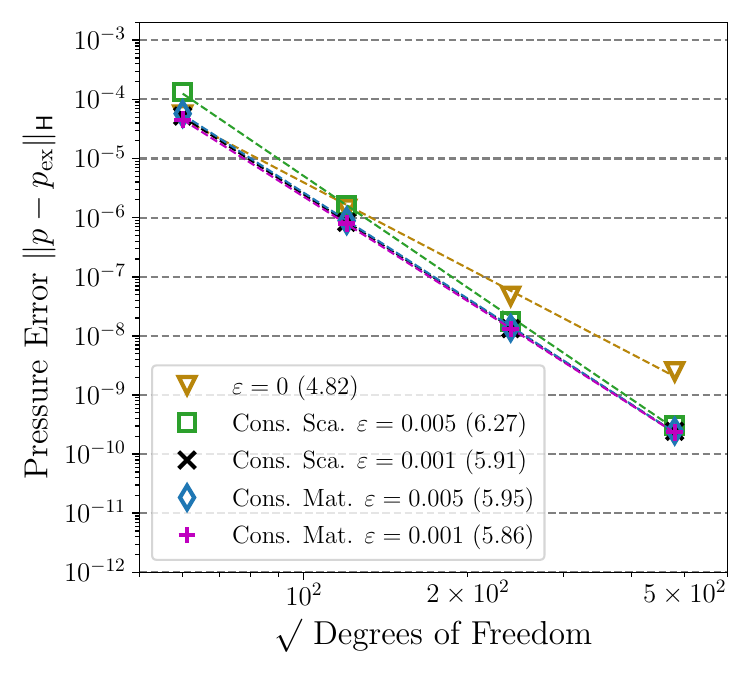}
        \caption{HGT $p=3$}
    \end{subfigure}
    \hfill
    \begin{subfigure}[t]{0.32\textwidth}
        \centering
        \includegraphics[width=\textwidth, trim={10 10 10 10}, clip]{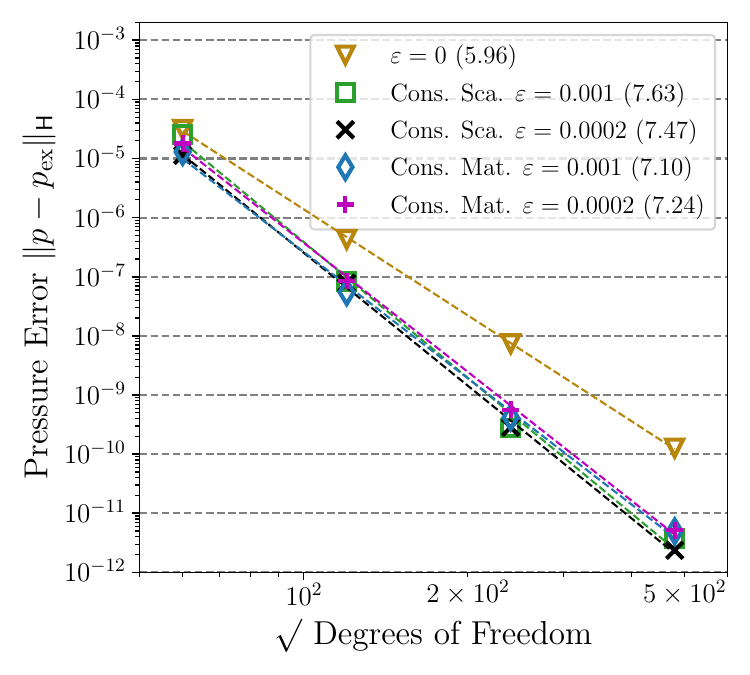}
        \caption{HGT $p=4$}
    \end{subfigure}
    \vskip\baselineskip 
    \begin{subfigure}[t]{0.32\textwidth}
        \centering
        \includegraphics[width=\textwidth, trim={10 10 10 10}, clip]{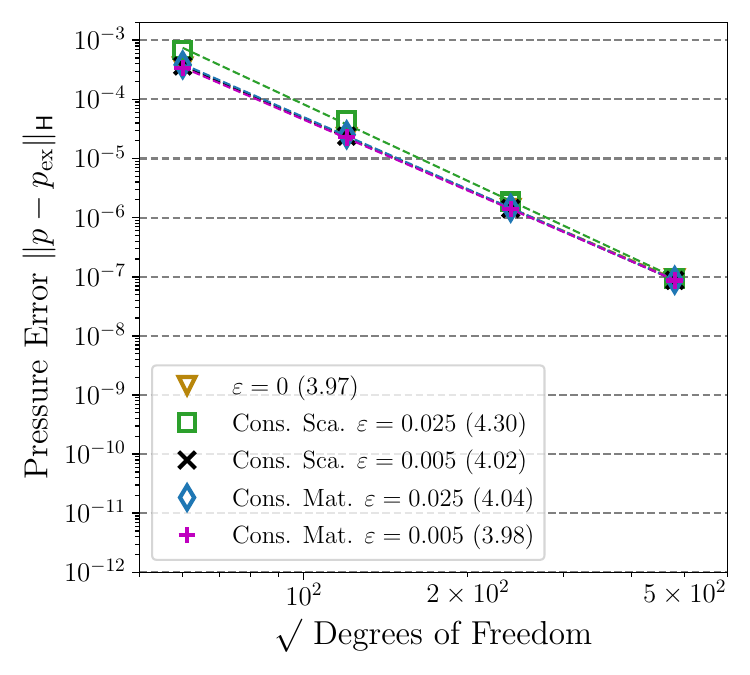}
        \caption{Mattsson $p=2$}
    \end{subfigure}
    \hfill
    \begin{subfigure}[t]{0.32\textwidth}
        \centering
        \includegraphics[width=\textwidth, trim={10 10 10 10}, clip]{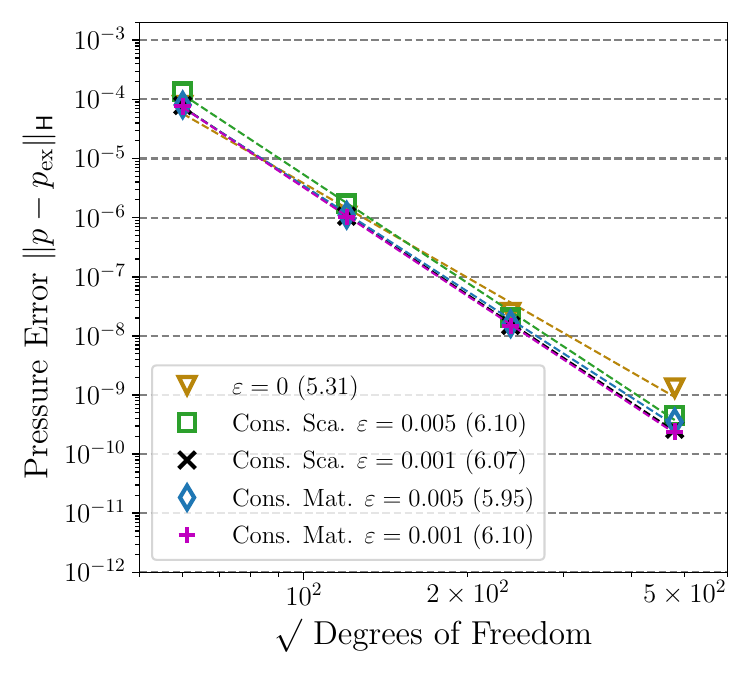}
        \caption{Mattsson $p=3$}
    \end{subfigure}
    \hfill
    \begin{subfigure}[t]{0.32\textwidth}
        \centering
        \includegraphics[width=\textwidth, trim={10 10 10 10}, clip]{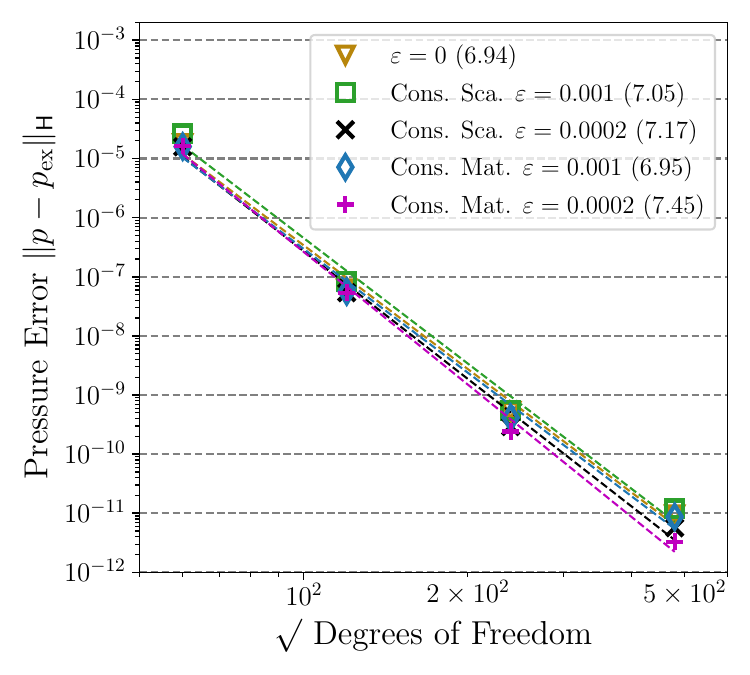}
        \caption{Mattsson $p=4$}
    \end{subfigure}
    \caption{Pressure error after one period of the isentropic vortex problem for the 2D Euler equations using UFD SBP discretizations of \cite{Mattsson2017} and SBP central discretizations (\eg, \cite{hicken2008}). The latter is augmented with volume dissipation \eqnref{eq:2d_diss}, either as scalar or matrix formulations acting on conservative variables with $s=p+1$. Three blocks in each direction of $20^2$, $40^2$, $80^2$, and $160^2$ nodes are used. Convergence rates are given in the legends.}
    \label{fig:Vortexpconv_central_additional}
\end{figure}

\begin{figure}[t] 
    \centering
    \begin{subfigure}[t]{0.32\textwidth}
        \centering
        \includegraphics[width=\textwidth, trim={10 10 10 10}, clip]{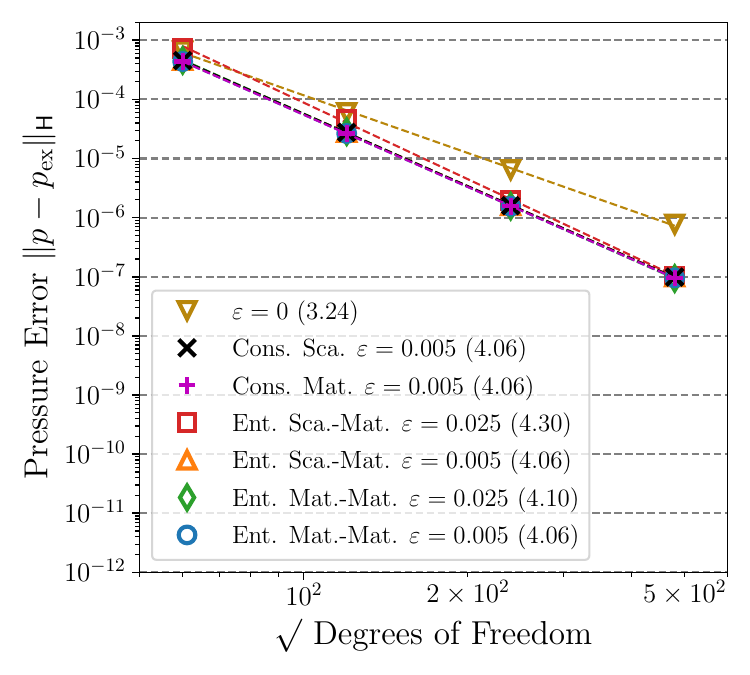}
        \caption{CSBP $p=2$}
    \end{subfigure}
    \hfill
    \begin{subfigure}[t]{0.32\textwidth}
        \centering
        \includegraphics[width=\textwidth, trim={10 10 10 10}, clip]{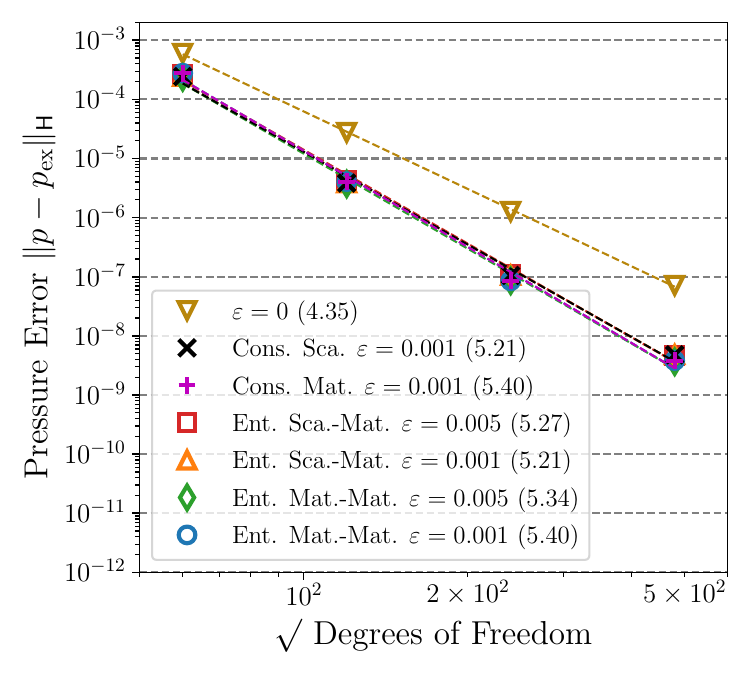}
        \caption{CSBP $p=3$}
    \end{subfigure}
    \hfill
    \begin{subfigure}[t]{0.32\textwidth}
        \centering
        \includegraphics[width=\textwidth, trim={10 10 10 10}, clip]{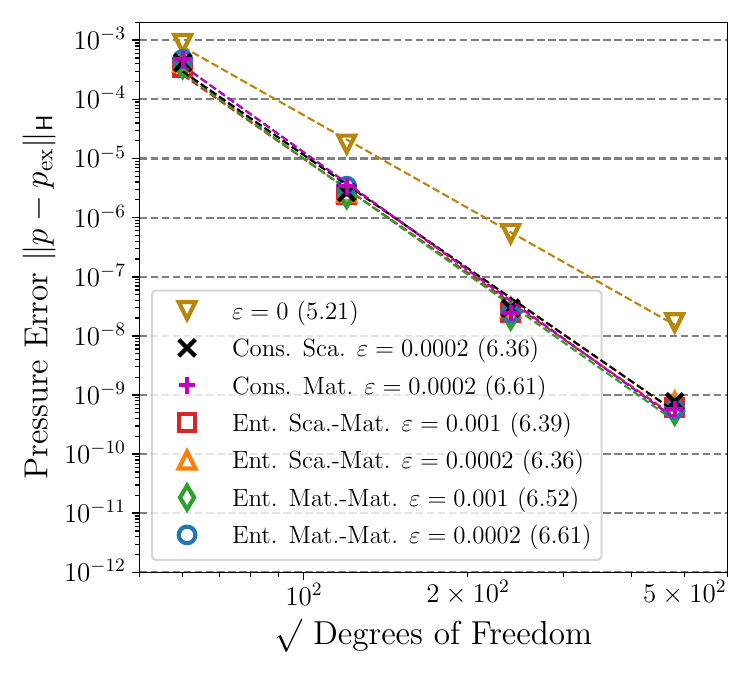}
        \caption{CSBP $p=4$}
    \end{subfigure}
    \vskip\baselineskip 
    \begin{subfigure}[t]{0.32\textwidth}
        \centering
        \includegraphics[width=\textwidth, trim={10 10 10 10}, clip]{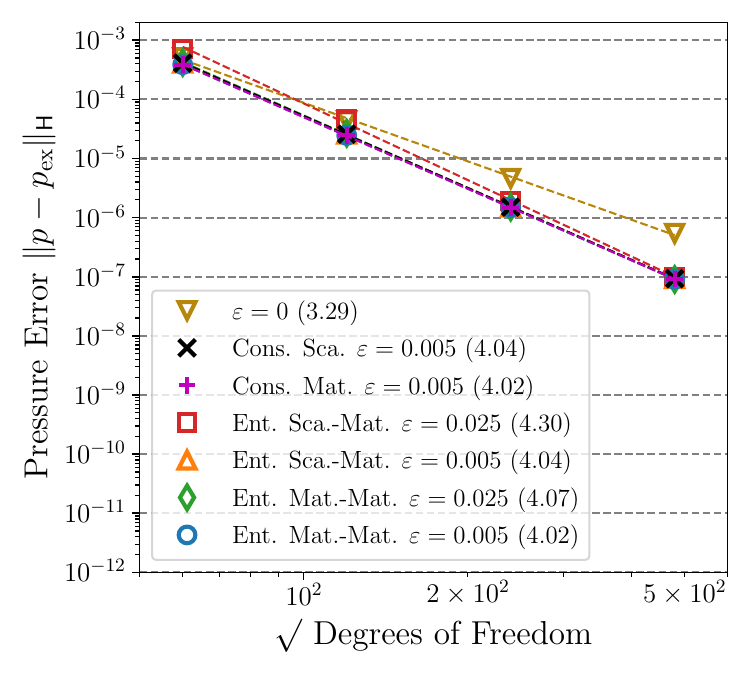}
        \caption{HGTL $p=2$}
    \end{subfigure}
    \hfill
    \begin{subfigure}[t]{0.32\textwidth}
        \centering
        \includegraphics[width=\textwidth, trim={10 10 10 10}, clip]{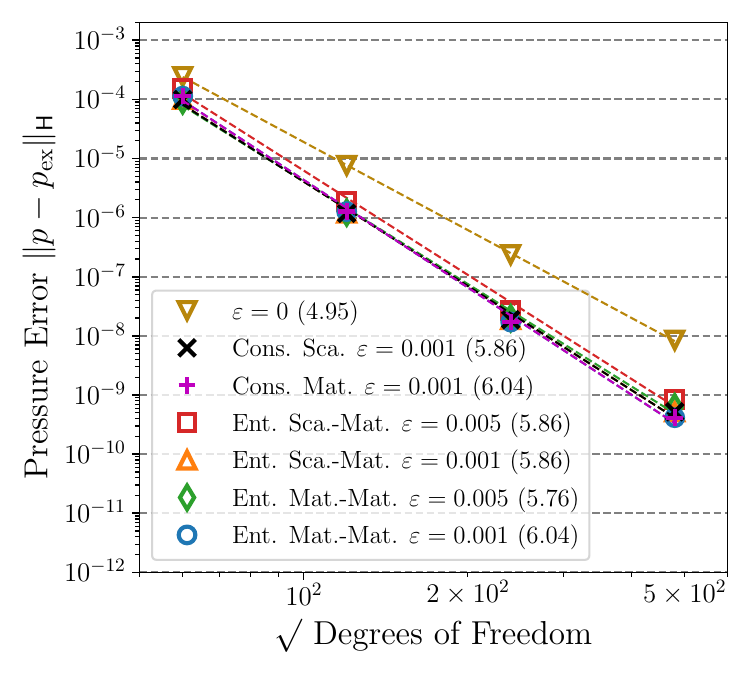}
        \caption{HGTL $p=3$}
    \end{subfigure}
    \hfill
    \begin{subfigure}[t]{0.32\textwidth}
        \centering
        \includegraphics[width=\textwidth, trim={10 10 10 10}, clip]{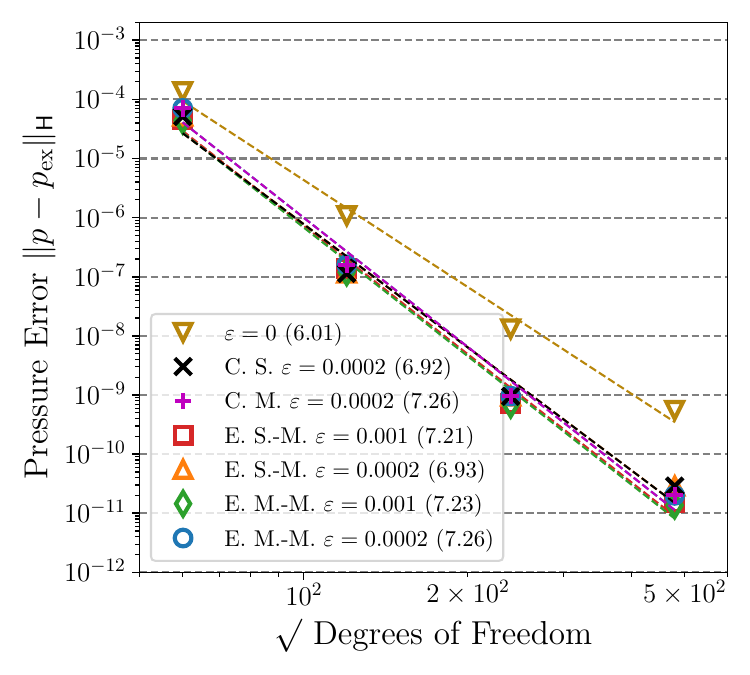}
        \caption{HGTL $p=4$}
    \end{subfigure}
    \vskip\baselineskip 
    \begin{subfigure}[t]{0.32\textwidth}
        \centering
        \includegraphics[width=\textwidth, trim={10 10 10 10}, clip]{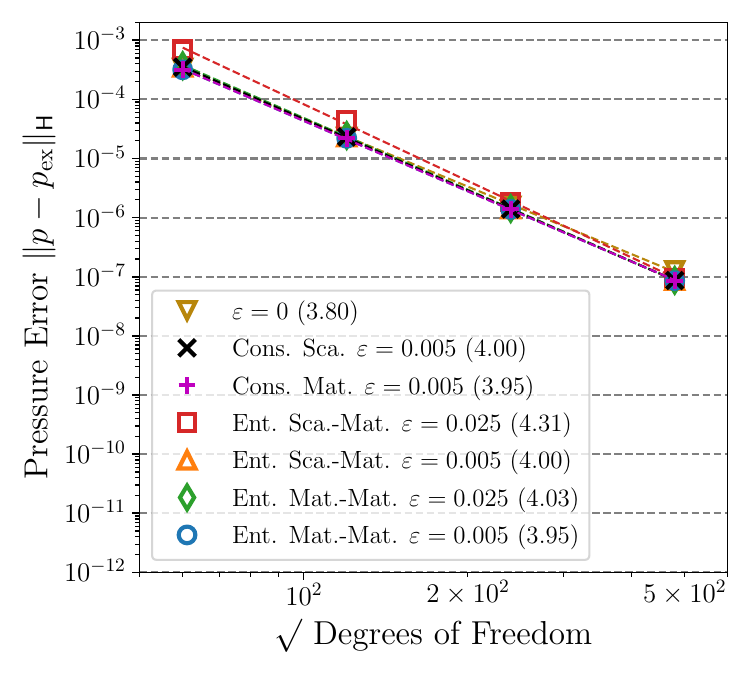}
        \caption{HGT $p=2$}
    \end{subfigure}
    \hfill
    \begin{subfigure}[t]{0.32\textwidth}
        \centering
        \includegraphics[width=\textwidth, trim={10 10 10 10}, clip]{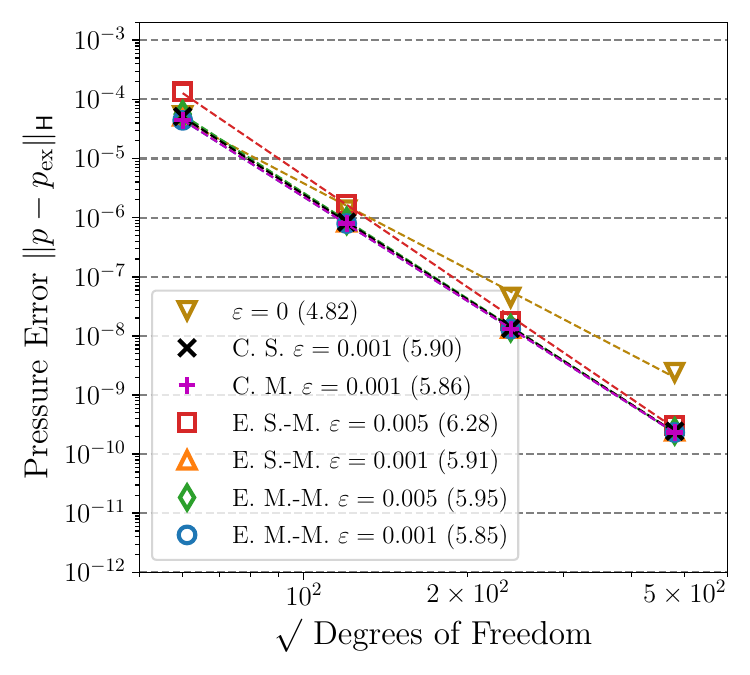}
        \caption{HGT $p=3$}
    \end{subfigure}
    \hfill
    \begin{subfigure}[t]{0.32\textwidth}
        \centering
        \includegraphics[width=\textwidth, trim={10 10 10 10}, clip]{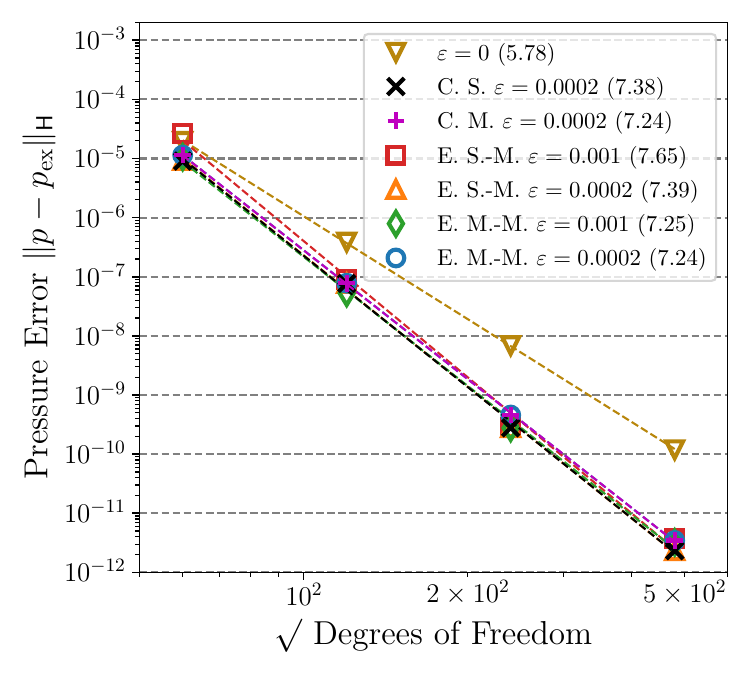}
        \caption{HGT $p=4$}
    \end{subfigure}
    \vskip\baselineskip 
    \begin{subfigure}[t]{0.32\textwidth}
        \centering
        \includegraphics[width=\textwidth, trim={10 10 10 10}, clip]{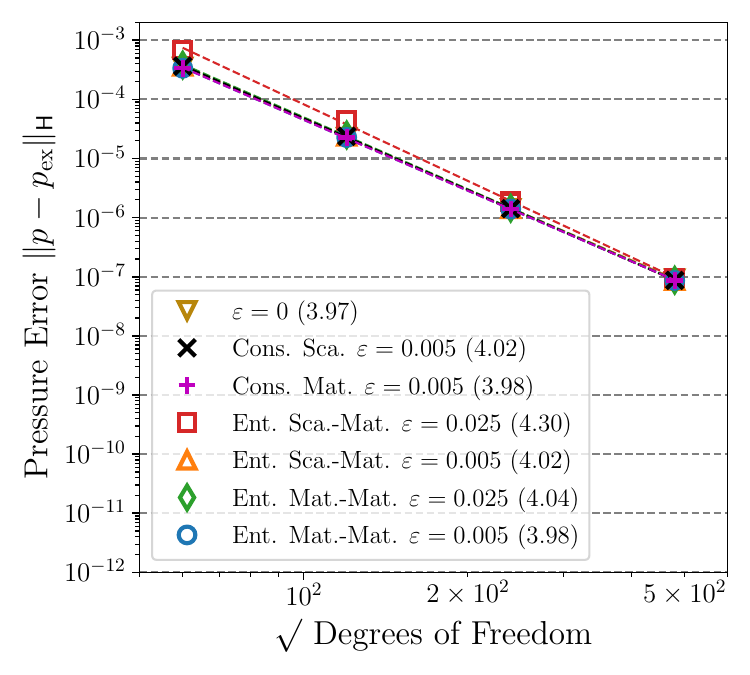}
        \caption{Mattsson $p=2$}
    \end{subfigure}
    \hfill
    \begin{subfigure}[t]{0.32\textwidth}
        \centering
        \includegraphics[width=\textwidth, trim={10 10 10 10}, clip]{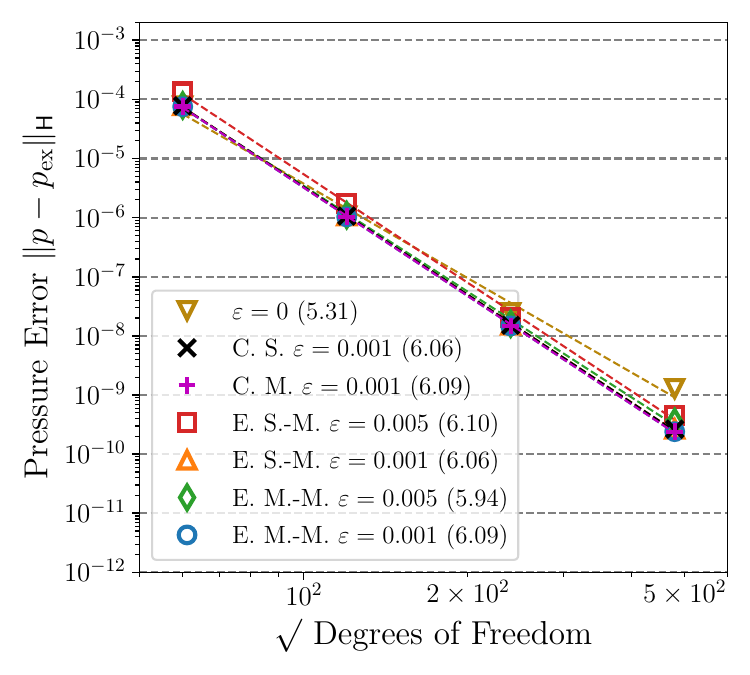}
        \caption{Mattsson $p=3$}
    \end{subfigure}
    \hfill
    \begin{subfigure}[t]{0.32\textwidth}
        \centering
        \includegraphics[width=\textwidth, trim={10 10 10 10}, clip]{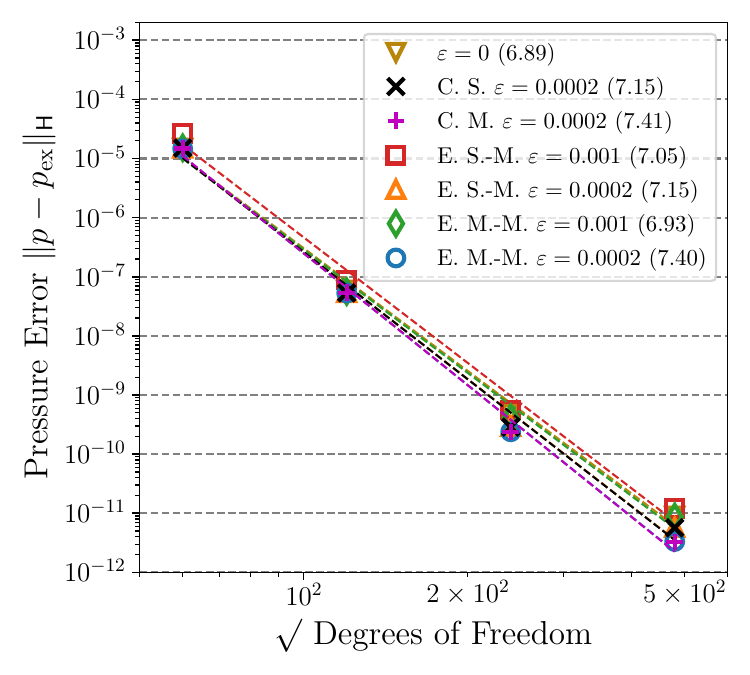}
        \caption{Mattsson $p=4$}
    \end{subfigure}
    \caption{Pressure error after one period of the isentropic vortex problem for the 2D Euler equations using the entropy-stable discretizations of \cite{Fisher_2013, Crean2018} with volume dissipation \eqnref{eq:2d_diss}, either as scalar or matrix formulations acting on conservative variables, or scalar-scalar or scalar-matrix formulations acting on entropy variables with $s=p+1$. Three blocks in each direction of $20^2$, $40^2$, $80^2$, and $160^2$ nodes are used. Convergence rates are given in the legends.}
    \label{fig:Vortexpconv_had_additional}
\end{figure}

\begin{figure}[t] 
    \centering
    \begin{subfigure}[t]{0.32\textwidth}
        \centering
        \includegraphics[width=\textwidth, trim={10 10 10 10}, clip]{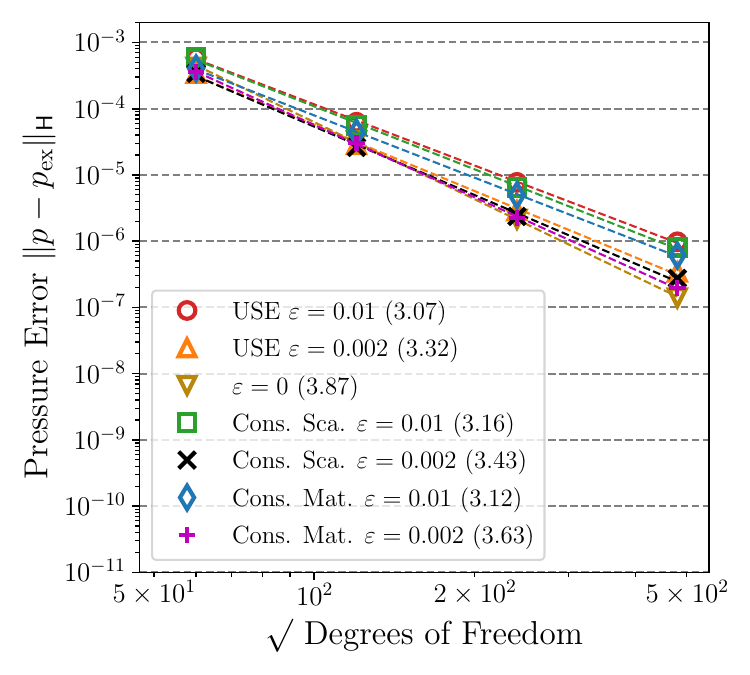}
        \caption{LGL $p=3$}
    \end{subfigure}
    \hfill
    \begin{subfigure}[t]{0.32\textwidth}
        \centering
        \includegraphics[width=\textwidth, trim={10 10 10 10}, clip]{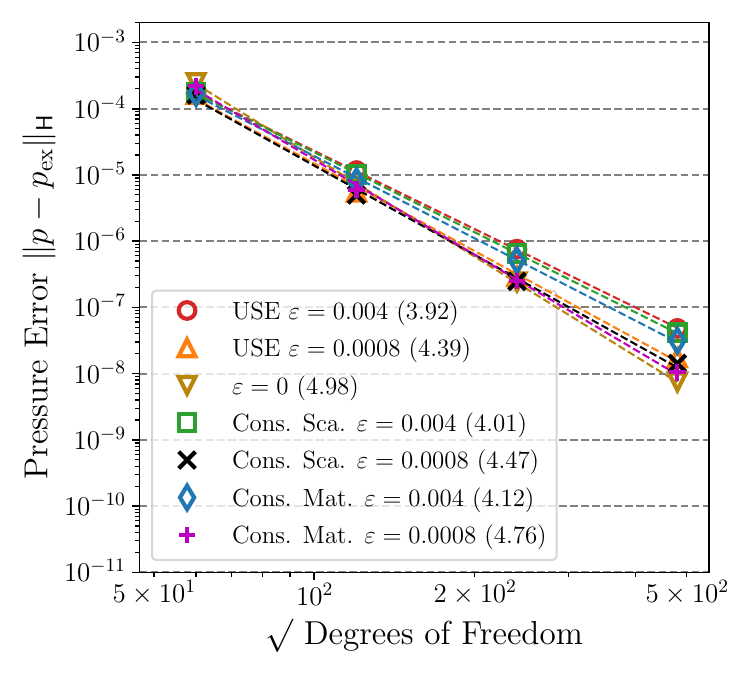}
        \caption{LGL $p=4$}
    \end{subfigure}
    \hfill
    \begin{subfigure}[t]{0.32\textwidth}
        \centering
        \includegraphics[width=\textwidth, trim={10 10 10 10}, clip]{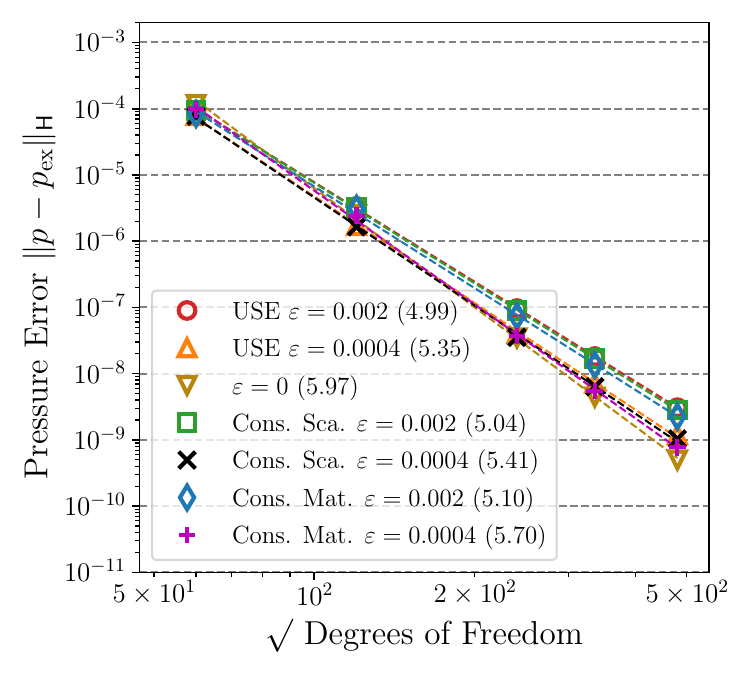}
        \caption{LGL $p=5$}
    \end{subfigure}
    \vskip\baselineskip 
    \begin{subfigure}[t]{0.32\textwidth}
        \centering
        \includegraphics[width=\textwidth, trim={10 10 10 10}, clip]{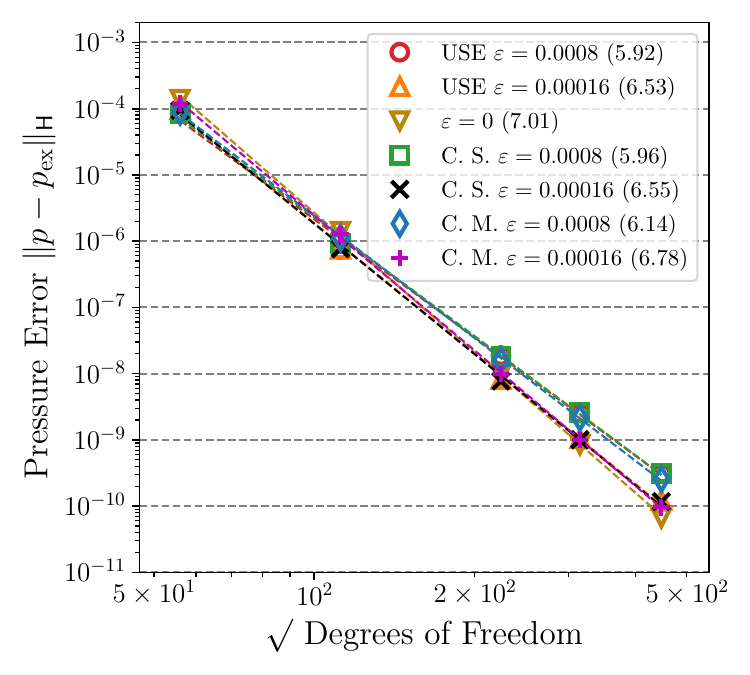}
        \caption{LGL $p=6$}
    \end{subfigure}
    \hfill
    \begin{subfigure}[t]{0.32\textwidth}
        \centering
        \includegraphics[width=\textwidth, trim={10 10 10 10}, clip]{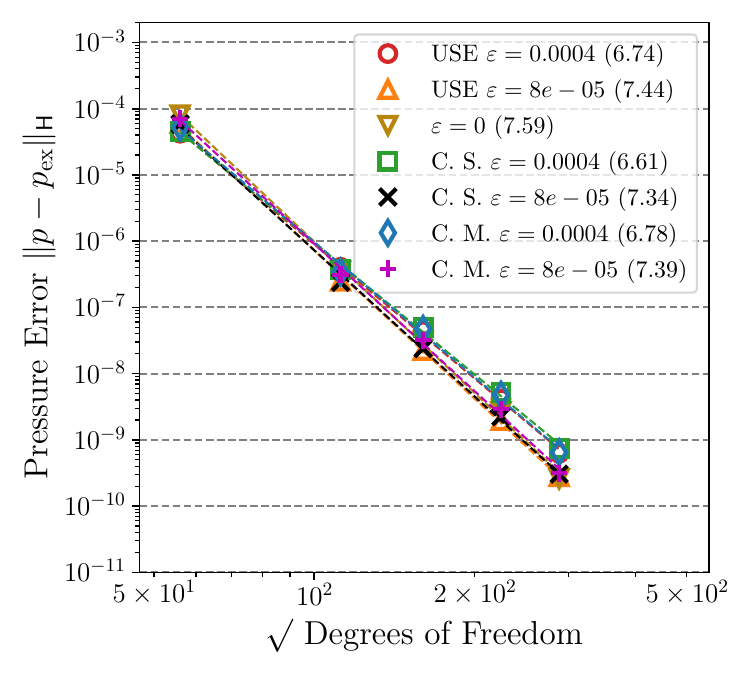}
        \caption{LGL $p=7$}
    \end{subfigure}
    \hfill
    \begin{subfigure}[t]{0.32\textwidth}
        \centering
        \includegraphics[width=\textwidth, trim={10 10 10 10}, clip]{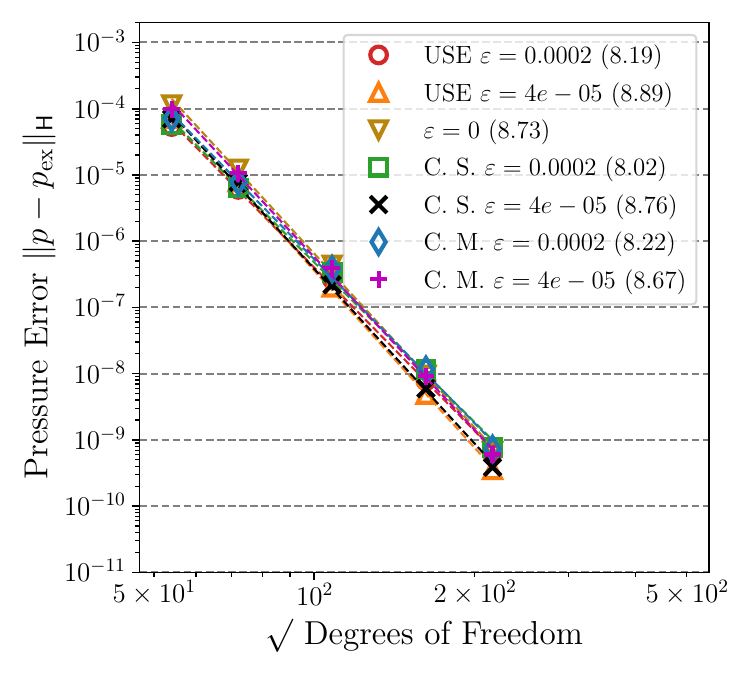}
        \caption{LGL $p=8$}
    \end{subfigure}
    \vskip\baselineskip 
    \begin{subfigure}[t]{0.32\textwidth}
        \centering
        \includegraphics[width=\textwidth, trim={10 10 10 10}, clip]{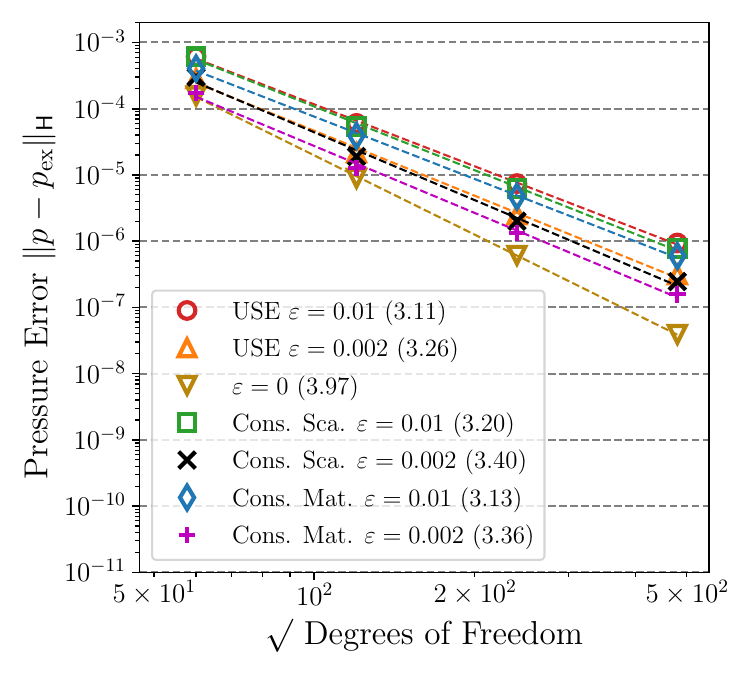}
        \caption{LG $p=3$}
    \end{subfigure}
    \hfill
    \begin{subfigure}[t]{0.32\textwidth}
        \centering
        \includegraphics[width=\textwidth, trim={10 10 10 10}, clip]{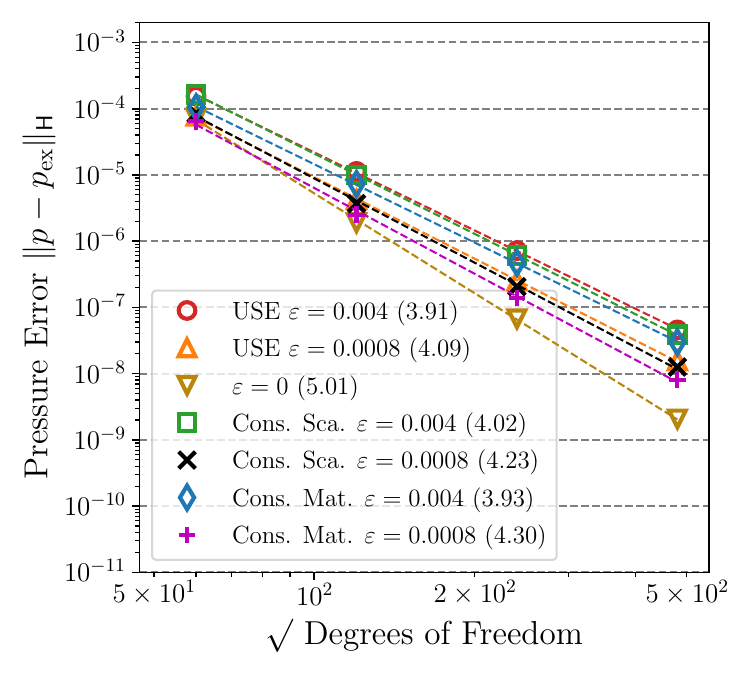}
        \caption{LG $p=4$}
    \end{subfigure}
    \hfill
    \begin{subfigure}[t]{0.32\textwidth}
        \centering
        \includegraphics[width=\textwidth, trim={10 10 10 10}, clip]{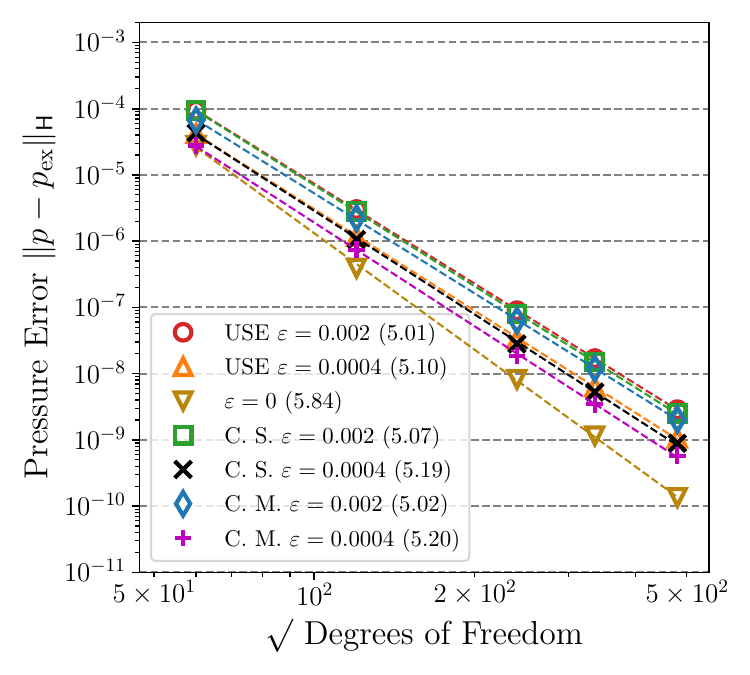}
        \caption{LG $p=5$}
    \end{subfigure}
    \vskip\baselineskip 
    \begin{subfigure}[t]{0.32\textwidth}
        \centering
        \includegraphics[width=\textwidth, trim={10 10 10 10}, clip]{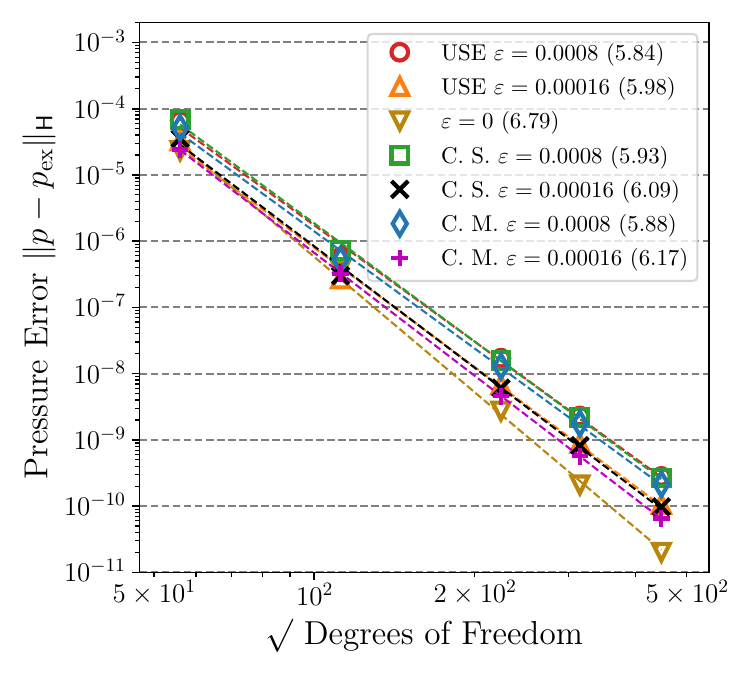}
        \caption{LG $p=6$}
    \end{subfigure}
    \hfill
    \begin{subfigure}[t]{0.32\textwidth}
        \centering
        \includegraphics[width=\textwidth, trim={10 10 10 10}, clip]{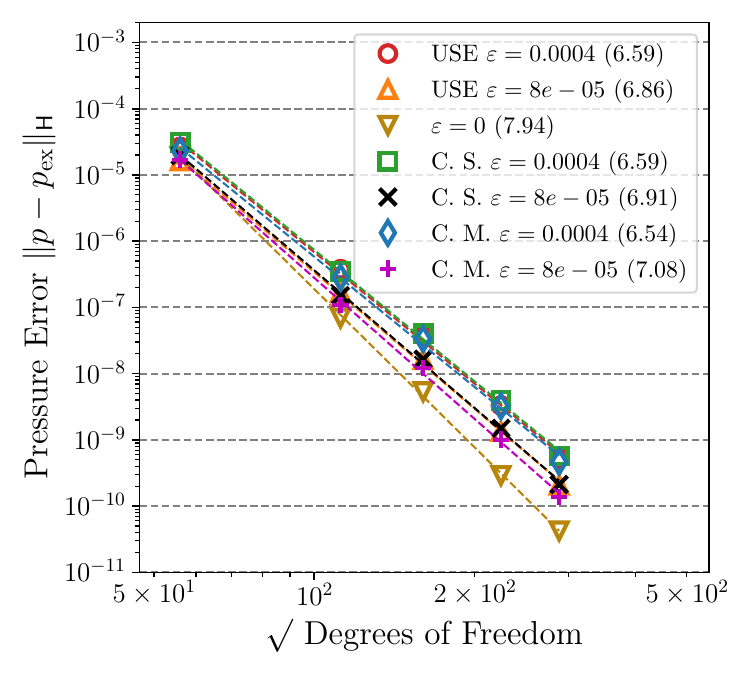}
        \caption{LG $p=7$}
    \end{subfigure}
    \hfill
    \begin{subfigure}[t]{0.32\textwidth}
        \centering
        \includegraphics[width=\textwidth, trim={10 10 10 10}, clip]{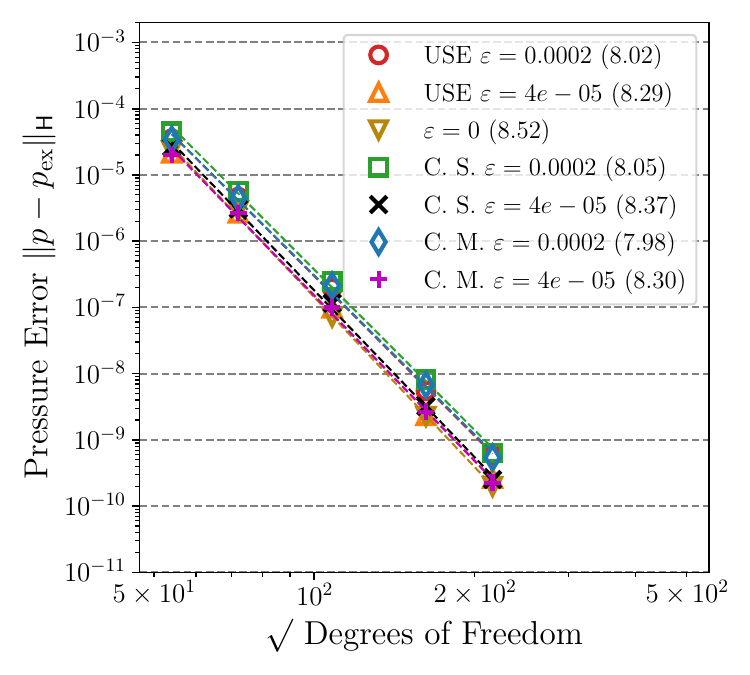}
        \caption{LG $p=8$}
    \end{subfigure}
    \caption{Pressure error after one period of the isentropic vortex problem for the 2D Euler equations using USE SBP discretizations of \cite{Glaubitz2024} and SE SBP central discretizations. The latter is augmented with volume dissipation \eqnref{eq:2d_diss}, either as scalar or matrix formulations acting on conservative variables with $s=p$. Convergence rates are given in the legends.}
    \label{fig:Vortexpconv_central_additional_SE}
\end{figure}

\begin{figure}[t] 
    \centering
    \begin{subfigure}[t]{0.32\textwidth}
        \centering
        \includegraphics[width=\textwidth, trim={10 10 10 10}, clip]{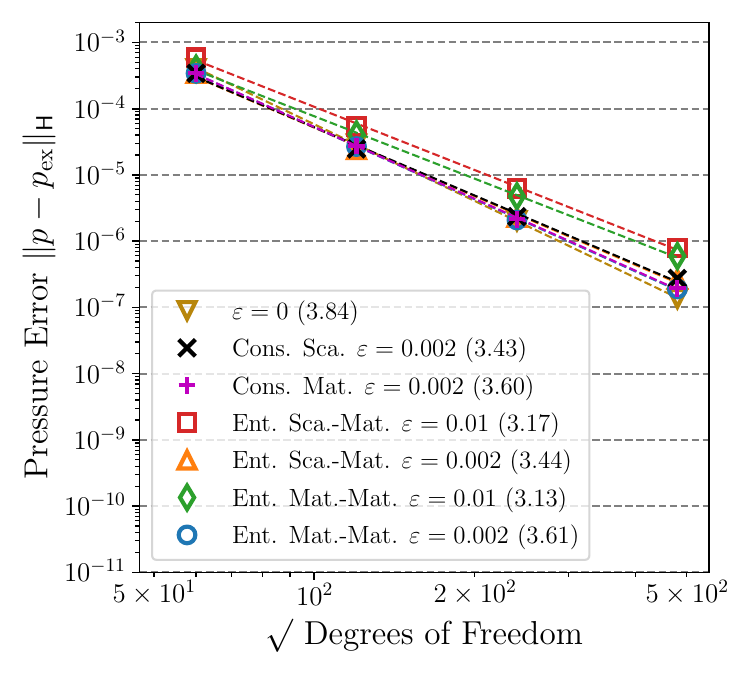}
        \caption{LGL $p=3$}
    \end{subfigure}
    \hfill
    \begin{subfigure}[t]{0.32\textwidth}
        \centering
        \includegraphics[width=\textwidth, trim={10 10 10 10}, clip]{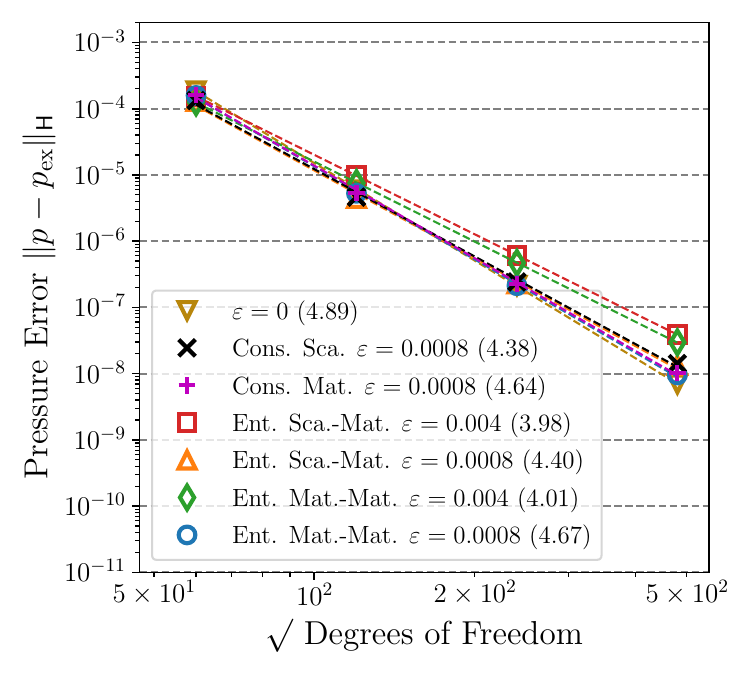}
        \caption{LGL $p=4$}
    \end{subfigure}
    \hfill
    \begin{subfigure}[t]{0.32\textwidth}
        \centering
        \includegraphics[width=\textwidth, trim={10 10 10 10}, clip]{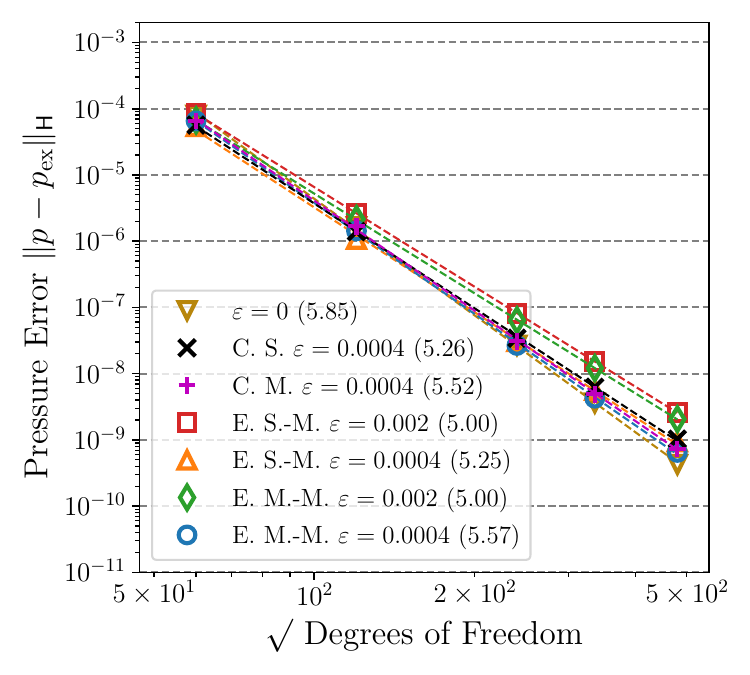}
        \caption{LGL $p=5$}
    \end{subfigure}
    \vskip\baselineskip 
    \begin{subfigure}[t]{0.32\textwidth}
        \centering
        \includegraphics[width=\textwidth, trim={10 10 10 10}, clip]{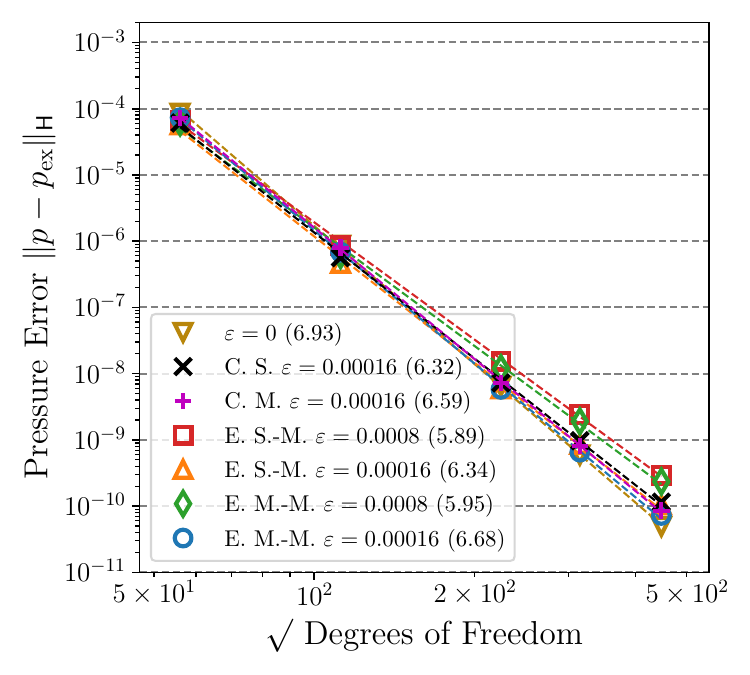}
        \caption{LGL $p=6$}
    \end{subfigure}
    \hfill
    \begin{subfigure}[t]{0.32\textwidth}
        \centering
        \includegraphics[width=\textwidth, trim={10 10 10 10}, clip]{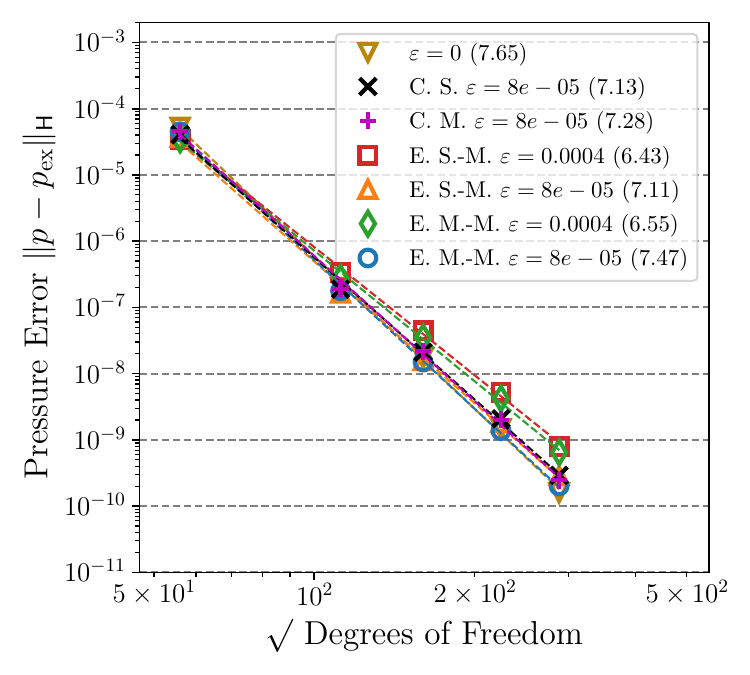}
        \caption{LGL $p=7$}
    \end{subfigure}
    \hfill
    \begin{subfigure}[t]{0.32\textwidth}
        \centering
        \includegraphics[width=\textwidth, trim={10 10 10 10}, clip]{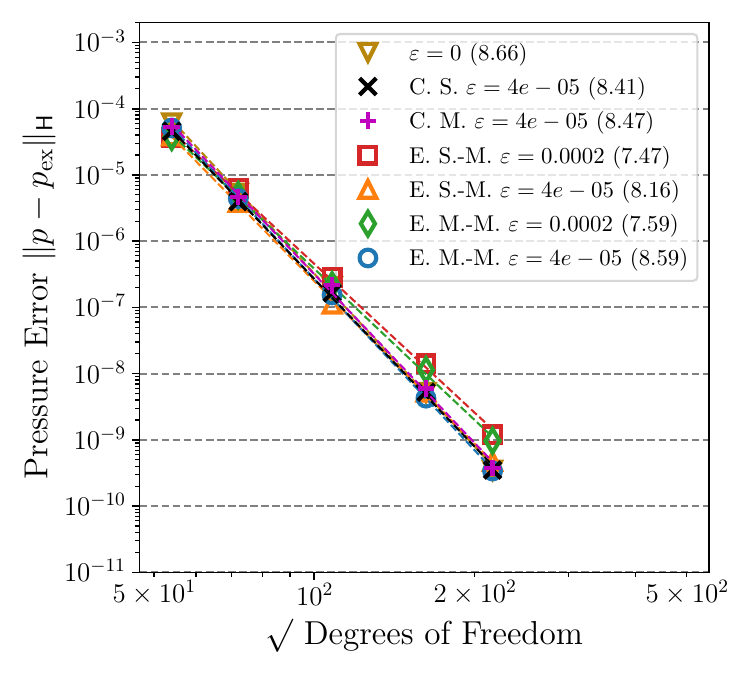}
        \caption{LGL $p=8$}
    \end{subfigure}
    \vskip\baselineskip 
    \begin{subfigure}[t]{0.32\textwidth}
        \centering
        \includegraphics[width=\textwidth, trim={10 10 10 10}, clip]{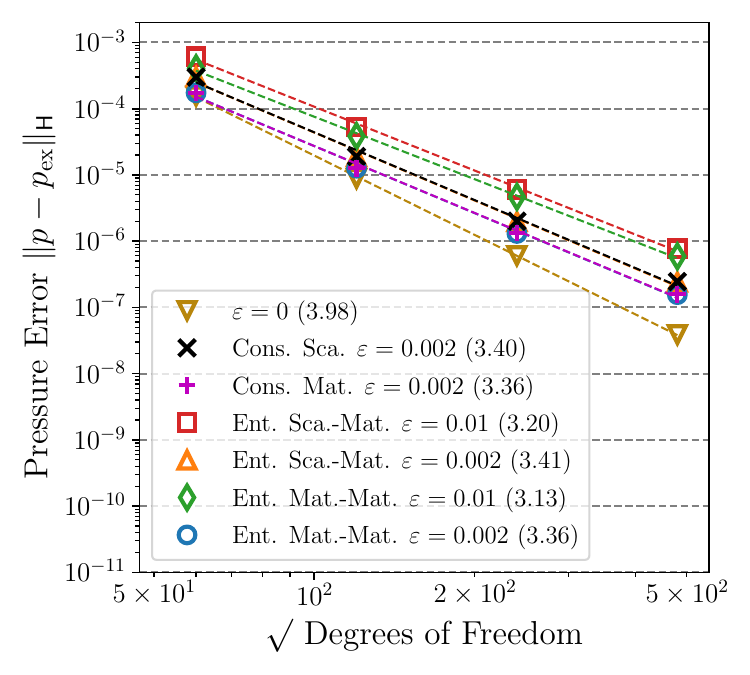}
        \caption{LG $p=3$}
    \end{subfigure}
    \hfill
    \begin{subfigure}[t]{0.32\textwidth}
        \centering
        \includegraphics[width=\textwidth, trim={10 10 10 10}, clip]{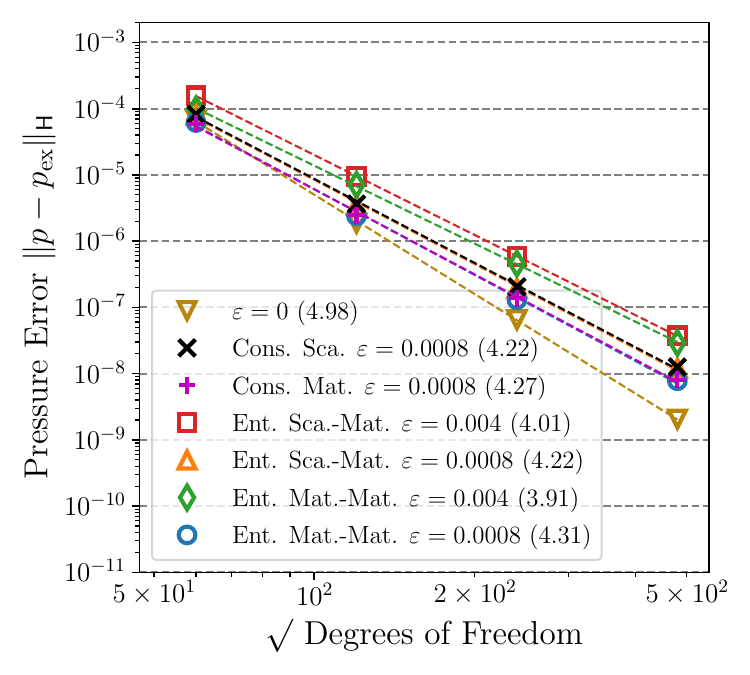}
        \caption{LG $p=4$}
    \end{subfigure}
    \hfill
    \begin{subfigure}[t]{0.32\textwidth}
        \centering
        \includegraphics[width=\textwidth, trim={10 10 10 10}, clip]{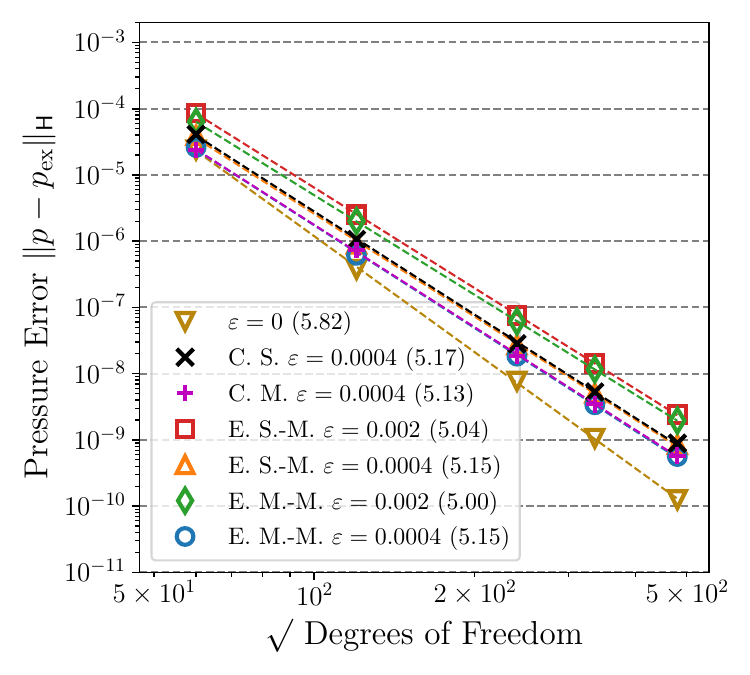}
        \caption{LG $p=5$}
    \end{subfigure}
    \vskip\baselineskip 
    \begin{subfigure}[t]{0.32\textwidth}
        \centering
        \includegraphics[width=\textwidth, trim={10 10 10 10}, clip]{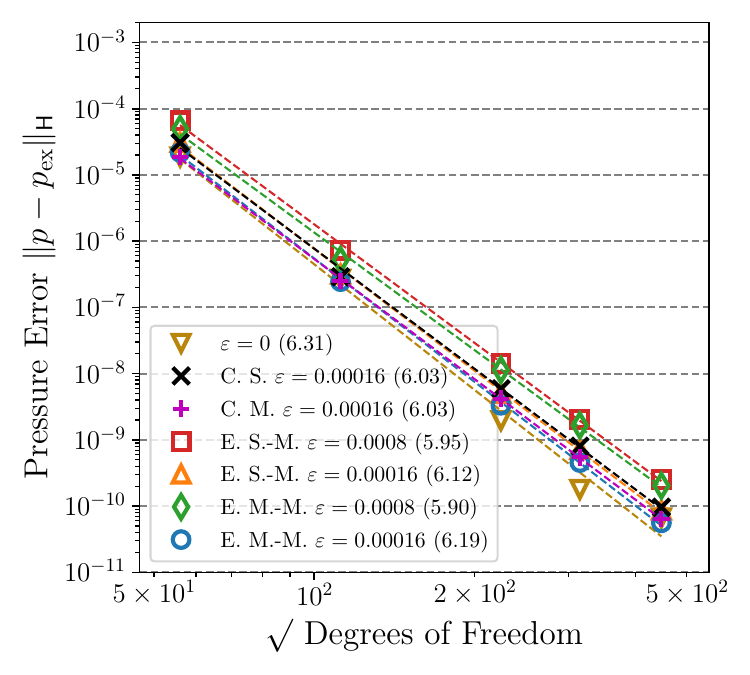}
        \caption{LG $p=6$}
    \end{subfigure}
    \hfill
    \begin{subfigure}[t]{0.32\textwidth}
        \centering
        \includegraphics[width=\textwidth, trim={10 10 10 10}, clip]{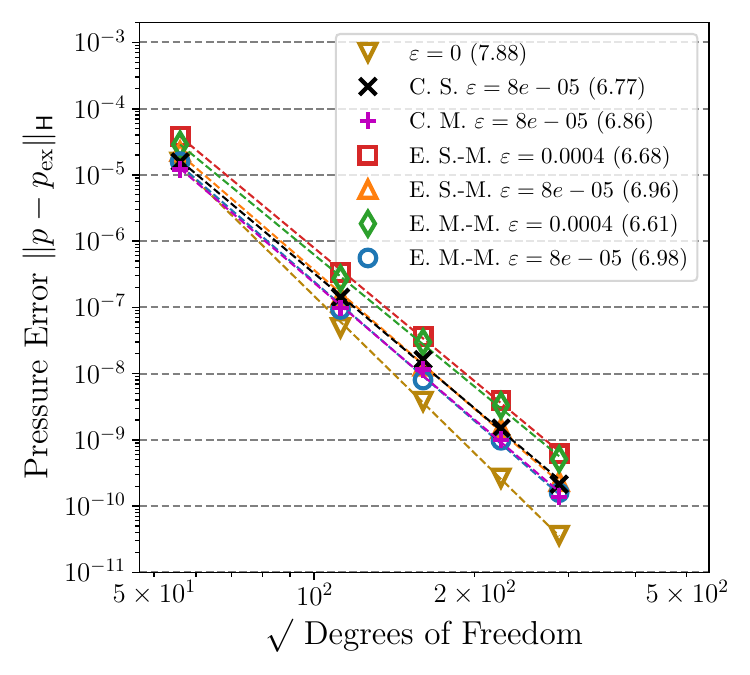}
        \caption{LG $p=7$}
    \end{subfigure}
    \hfill
    \begin{subfigure}[t]{0.32\textwidth}
        \centering
        \includegraphics[width=\textwidth, trim={10 10 10 10}, clip]{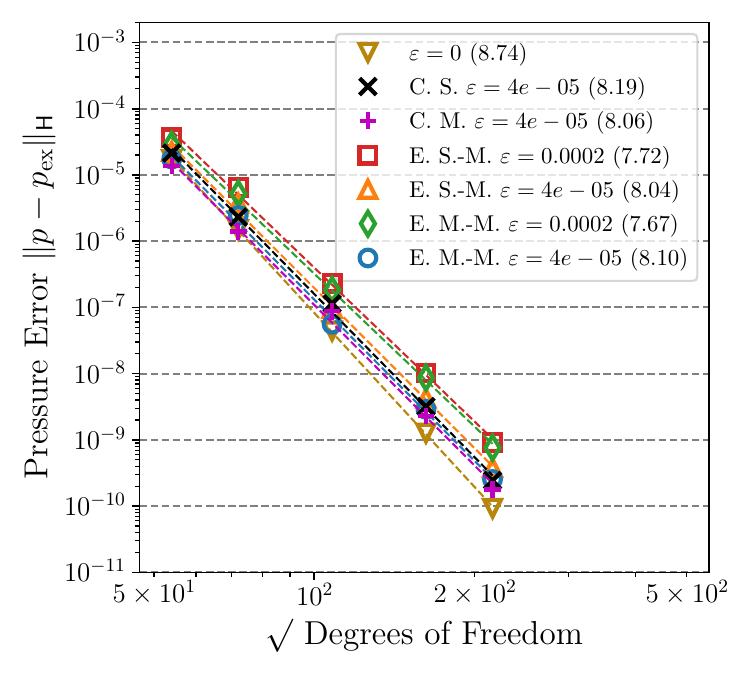}
        \caption{LG $p=8$}
    \end{subfigure}
    \caption{Pressure error after one period of the isentropic vortex problem for the 2D Euler equations using the SE entropy-stable discretizations of \cite{Fisher_2013, Crean2018} with volume dissipation \eqnref{eq:2d_diss}, either as scalar or matrix formulations acting on conservative variables, or scalar-scalar or scalar-matrix formulations acting on entropy variables with $s=p$. Convergence rates are given in the legends.}
    \label{fig:Vortexpconv_had_additional_SE}
\end{figure}

\FloatBarrier

\begin{table}[t]
\centering
\caption{Crash times (up to $t_f = 15$) for the Kelvin-Helmholtz instability described in~\S \ref{sec:KelvinHelmholtz} using CSBP operators with $s=p+1$ and a single block.}
\renewcommand{\arraystretch}{1.2} 

\begin{subtable}[t]{0.48\textwidth}
        \centering
        \caption{Central, $\varepsilon=0$.}
        \begin{tabular}{rccc}
        \hline
DOF & $p=2$ & $p=3$ & $p=4$  \\ \hline
$30^2$ & {\color{BrickRed}1.56} & {\color{BrickRed}1.50} & {\color{BrickRed}1.41} \\
$60^2$ & {\color{BrickRed}1.26} & {\color{BrickRed}1.23} & {\color{BrickRed}1.23} \\
$120^2$ & {\color{BrickRed}1.40} & {\color{BrickRed}1.43} & {\color{BrickRed}1.45} \\
$240^2$ & {\color{BrickRed}1.89} & {\color{BrickRed}1.93} & {\color{BrickRed}1.94} \\
$480^2$ & {\color{BrickRed}2.29} & {\color{BrickRed}2.83} & {\color{BrickRed}2.84} \\
\hline
        \end{tabular}
    \end{subtable}
    \hfill
    \begin{subtable}[t]{0.48\textwidth}
        \centering
        \caption{Entropy-stable, $\varepsilon=0$.}
        \begin{tabular}{rccc}
        \hline
DOF & $p=2$ & $p=3$ & $p=4$  \\ \hline
$30^2$ & {\color{BrickRed}2.92} & {\color{BrickRed}3.50} & {\color{BrickRed}3.67} \\
$60^2$ & {\color{BrickRed}3.43} & {\color{BrickRed}3.21} & {\color{BrickRed}3.04} \\
$120^2$ & {\color{BrickRed}3.10} & {\color{BrickRed}3.18} & {\color{BrickRed}3.25} \\
$240^2$ & {\color{BrickRed}3.61} & {\color{BrickRed}3.61} & {\color{BrickRed}3.63} \\
$480^2$ & {\color{BrickRed}3.69} & {\color{BrickRed}3.66} & {\color{BrickRed}3.50} \\
\hline
        \end{tabular}
    \end{subtable}

\vskip\baselineskip 

\begin{subtable}[t]{0.48\textwidth}
        \centering
        \caption{Central, matrix \\
        \phantom{(c)} $\varepsilon=3.125 \times 5^{-s}$.}
        \begin{tabular}{rccc}
        \hline
DOF & $p=2$ & $p=3$ & $p=4$  \\ \hline
$30^2$ & 15.0 & 15.0 & {\color{BrickRed}7.58} \\
$60^2$ & {\color{BrickRed}4.68} & {\color{BrickRed}3.75} & {\color{BrickRed}3.63} \\
$120^2$ & {\color{BrickRed}4.50} & {\color{BrickRed}4.35} & {\color{BrickRed}4.16} \\
$240^2$ & {\color{BrickRed}5.13} & {\color{BrickRed}3.61} & {\color{BrickRed}3.59} \\
$480^2$ & {\color{BrickRed}4.57} & {\color{BrickRed}4.14} & {\color{BrickRed}3.53} \\
\hline
        \end{tabular}
    \end{subtable}
    \hfill
    \begin{subtable}[t]{0.48\textwidth}
        \centering
        \captionsetup{justification=centering}
        \caption{Entropy-stable, matrix-matrix \\
        \phantom{(d)} $\varepsilon=3.125 \times 5^{-s}$.}
        \begin{tabular}{rccc}
        \hline
DOF & $p=2$ & $p=3$ & $p=4$  \\ \hline
$30^2$ & 15.0 & 15.0 & 15.0 \\
$60^2$ & 15.0 & 15.0 & 15.0 \\
$120^2$ & 15.0 & 15.0 & 15.0 \\
$240^2$ & 15.0 & 15.0 & 15.0 \\
$480^2$ & 15.0 & 15.0 & 15.0 \\
\hline
        \end{tabular}
    \end{subtable}

\vskip\baselineskip 

\begin{subtable}[t]{0.48\textwidth}
        \centering
        \caption{Central, matrix \\
        \phantom{(c)} $\varepsilon=0.625 \times 5^{-s}$.}
        \begin{tabular}{rccc}
        \hline
DOF & $p=2$ & $p=3$ & $p=4$  \\ \hline
$30^2$ & {\color{BrickRed}4.19} & {\color{BrickRed}2.58} & {\color{BrickRed}1.70} \\
$60^2$ & {\color{BrickRed}1.61} & {\color{BrickRed}1.51} & {\color{BrickRed}1.35} \\
$120^2$ & {\color{BrickRed}3.73} & {\color{BrickRed}3.69} & {\color{BrickRed}3.23} \\
$240^2$ & {\color{BrickRed}3.57} & {\color{BrickRed}3.53} & {\color{BrickRed}3.54} \\
$480^2$ & {\color{BrickRed}3.54} & {\color{BrickRed}3.37} & {\color{BrickRed}3.49} \\
\hline
        \end{tabular}
    \end{subtable}
    \hfill
    \begin{subtable}[t]{0.48\textwidth}
        \centering
        \captionsetup{justification=centering}
        \caption{Entropy-stable, matrix-matrix \\
        \phantom{(d)} $\varepsilon=0.625 \times 5^{-s}$.}
        \begin{tabular}{rccc}
        \hline
DOF & $p=2$ & $p=3$ & $p=4$  \\ \hline
$30^2$ & 15.0 & 15.0 & 15.0 \\
$60^2$ & 15.0 & 15.0 & 15.0 \\
$120^2$ & 15.0 & 15.0 & 15.0 \\
$240^2$ & 15.0 & 15.0 & 15.0 \\
$480^2$ & 15.0 & 15.0 & 15.0 \\
\hline
        \end{tabular}
    \end{subtable}

\vskip\baselineskip 

\begin{subtable}[t]{0.48\textwidth}
        \centering
        \caption{Central, scalar \\
        \phantom{(c)} $\varepsilon=3.125 \times 5^{-s}$.}
        \begin{tabular}{rccc}
        \hline
DOF & $p=2$ & $p=3$ & $p=4$  \\ \hline
$30^2$ & 15.0 & 15.0 & 15.0 \\
$60^2$ & 15.0 & {\color{BrickRed}4.18} & {\color{BrickRed}4.54} \\
$120^2$ & 15.0 & {\color{BrickRed}4.57} & {\color{BrickRed}4.52} \\
$240^2$ & 15.0 & {\color{BrickRed}5.95} & {\color{BrickRed}4.52} \\
$480^2$ & {\color{BrickRed}4.84} & {\color{BrickRed}4.22} & {\color{BrickRed}4.51} \\
\hline
        \end{tabular}
    \end{subtable}
    \hfill
    \begin{subtable}[t]{0.48\textwidth}
        \centering
        \captionsetup{justification=centering}
        \caption{Entropy-stable, scalar-matrix \\
        \phantom{(d)} $\varepsilon=3.125 \times 5^{-s}$.}
        \begin{tabular}{rccc}
        \hline
DOF & $p=2$ & $p=3$ & $p=4$  \\ \hline
$30^2$ & 15.0 & 15.0 & 15.0 \\
$60^2$ & 15.0 & 15.0 & 15.0 \\
$120^2$ & 15.0 & 15.0 & 15.0 \\
$240^2$ & 15.0 & 15.0 & 15.0 \\
$480^2$ & 15.0 & 15.0 & 15.0 \\
\hline
        \end{tabular}
    \end{subtable}

\vskip\baselineskip 

\begin{subtable}[t]{0.48\textwidth}
        \centering
        \caption{Central, scalar \\
        \phantom{(c)} $\varepsilon=0.625 \times 5^{-s}$.}
        \begin{tabular}{rccc}
        \hline
DOF & $p=2$ & $p=3$ & $p=4$  \\ \hline
$30^2$ & 15.0 & 15.0 & {\color{BrickRed}13.48} \\
$60^2$ & 15.0 & {\color{BrickRed}4.27} & {\color{BrickRed}4.42} \\
$120^2$ & {\color{BrickRed}4.54} & {\color{BrickRed}4.51} & {\color{BrickRed}4.45} \\
$240^2$ & {\color{BrickRed}4.59} & {\color{BrickRed}4.56} & {\color{BrickRed}3.80} \\
$480^2$ & {\color{BrickRed}4.52} & {\color{BrickRed}4.46} & {\color{BrickRed}3.72} \\
\hline
        \end{tabular}
    \end{subtable}
    \hfill
    \begin{subtable}[t]{0.48\textwidth}
        \centering
        \captionsetup{justification=centering}
        \caption{Entropy-stable, scalar-matrix \\
        \phantom{(d)} $\varepsilon=0.625 \times 5^{-s}$.}
        \begin{tabular}{rccc}
        \hline
DOF & $p=2$ & $p=3$ & $p=4$  \\ \hline
$30^2$ & 15.0 & 15.0 & 15.0 \\
$60^2$ & 15.0 & 15.0 & 15.0 \\
$120^2$ & 15.0 & 15.0 & 15.0 \\
$240^2$ & 15.0 & 15.0 & 15.0 \\
$480^2$ & 15.0 & 15.0 & 15.0 \\
\hline
        \end{tabular}
    \end{subtable}

\label{tab:KelvinHelmholtz_csbp}
\end{table}


\begin{table}[t]
\centering
\caption{Crash times (up to $t_f = 15$) for the Kelvin-Helmholtz instability described in~\S \ref{sec:KelvinHelmholtz} using HGTL operators with $s=p+1$ and a single block.}
\renewcommand{\arraystretch}{1.2} 

\begin{subtable}[t]{0.48\textwidth}
        \centering
        \caption{Central, $\varepsilon=0$.}
        \begin{tabular}{rccc}
        \hline
DOF & $p=2$ & $p=3$ & $p=4$  \\ \hline
$30^2$ & {\color{BrickRed}1.56} & {\color{BrickRed}1.50} & {\color{BrickRed}1.50} \\
$60^2$ & {\color{BrickRed}1.26} & {\color{BrickRed}1.23} & {\color{BrickRed}1.23} \\
$120^2$ & {\color{BrickRed}1.40} & {\color{BrickRed}1.43} & {\color{BrickRed}1.45} \\
$240^2$ & {\color{BrickRed}1.89} & {\color{BrickRed}1.93} & {\color{BrickRed}1.94} \\
$480^2$ & {\color{BrickRed}2.29} & {\color{BrickRed}2.83} & {\color{BrickRed}2.84} \\
\hline
        \end{tabular}
    \end{subtable}
    \hfill
    \begin{subtable}[t]{0.48\textwidth}
        \centering
        \caption{Entropy-stable, $\varepsilon=0$.}
        \begin{tabular}{rccc}
        \hline
DOF & $p=2$ & $p=3$ & $p=4$  \\ \hline
$30^2$ & {\color{BrickRed}2.88} & {\color{BrickRed}3.80} & {\color{BrickRed}3.65} \\
$60^2$ & {\color{BrickRed}3.44} & {\color{BrickRed}3.31} & {\color{BrickRed}3.11} \\
$120^2$ & {\color{BrickRed}3.10} & {\color{BrickRed}3.18} & {\color{BrickRed}3.25} \\
$240^2$ & {\color{BrickRed}3.74} & {\color{BrickRed}3.61} & {\color{BrickRed}3.63} \\
$480^2$ & {\color{BrickRed}3.69} & {\color{BrickRed}3.65} & {\color{BrickRed}3.66} \\
\hline
        \end{tabular}
    \end{subtable}

\vskip\baselineskip 

\begin{subtable}[t]{0.48\textwidth}
        \centering
        \caption{Central, matrix \\
        \phantom{(c)} $\varepsilon=3.125 \times 5^{-s}$.}
        \begin{tabular}{rccc}
        \hline
DOF & $p=2$ & $p=3$ & $p=4$  \\ \hline
$30^2$ & 15.0 & 15.0 & {\color{BrickRed}14.09} \\
$60^2$ & {\color{BrickRed}4.68} & {\color{BrickRed}4.31} & {\color{BrickRed}4.28} \\
$120^2$ & {\color{BrickRed}4.53} & {\color{BrickRed}4.32} & {\color{BrickRed}3.79} \\
$240^2$ & {\color{BrickRed}4.84} & {\color{BrickRed}3.61} & {\color{BrickRed}3.55} \\
$480^2$ & {\color{BrickRed}3.77} & {\color{BrickRed}4.14} & {\color{BrickRed}3.52} \\
\hline
        \end{tabular}
    \end{subtable}
    \hfill
    \begin{subtable}[t]{0.48\textwidth}
        \centering
        \captionsetup{justification=centering}
        \caption{Entropy-stable, matrix-matrix \\
        \phantom{(d)} $\varepsilon=3.125 \times 5^{-s}$.}
        \begin{tabular}{rccc}
        \hline
DOF & $p=2$ & $p=3$ & $p=4$  \\ \hline
$30^2$ & 15.0 & 15.0 & 15.0 \\
$60^2$ & 15.0 & 15.0 & 15.0 \\
$120^2$ & 15.0 & 15.0 & 15.0 \\
$240^2$ & 15.0 & 15.0 & 15.0 \\
$480^2$ & 15.0 & 15.0 & 15.0 \\
\hline
        \end{tabular}
    \end{subtable}

\vskip\baselineskip 

\begin{subtable}[t]{0.48\textwidth}
        \centering
        \caption{Central, matrix \\
        \phantom{(c)} $\varepsilon=0.625 \times 5^{-s}$.}
        \begin{tabular}{rccc}
        \hline
DOF & $p=2$ & $p=3$ & $p=4$  \\ \hline
$30^2$ & {\color{BrickRed}4.33} & {\color{BrickRed}2.27} & {\color{BrickRed}2.44} \\
$60^2$ & {\color{BrickRed}1.56} & {\color{BrickRed}1.51} & {\color{BrickRed}1.41} \\
$120^2$ & {\color{BrickRed}3.71} & {\color{BrickRed}3.69} & {\color{BrickRed}2.24} \\
$240^2$ & {\color{BrickRed}3.53} & {\color{BrickRed}3.54} & {\color{BrickRed}3.53} \\
$480^2$ & {\color{BrickRed}3.37} & {\color{BrickRed}3.37} & {\color{BrickRed}3.40} \\
\hline
        \end{tabular}
    \end{subtable}
    \hfill
    \begin{subtable}[t]{0.48\textwidth}
        \centering
        \captionsetup{justification=centering}
        \caption{Entropy-stable, matrix-matrix \\
        \phantom{(d)} $\varepsilon=0.625 \times 5^{-s}$.}
        \begin{tabular}{rccc}
        \hline
DOF & $p=2$ & $p=3$ & $p=4$  \\ \hline
$30^2$ & 15.0 & 15.0 & 15.0 \\
$60^2$ & 15.0 & 15.0 & 15.0 \\
$120^2$ & 15.0 & 15.0 & 15.0 \\
$240^2$ & 15.0 & 15.0 & 15.0 \\
$480^2$ & 15.0 & 15.0 & 15.0 \\
\hline
        \end{tabular}
    \end{subtable}

\vskip\baselineskip 

\begin{subtable}[t]{0.48\textwidth}
        \centering
        \caption{Central, scalar \\
        \phantom{(c)} $\varepsilon=3.125 \times 5^{-s}$.}
        \begin{tabular}{rccc}
        \hline
DOF & $p=2$ & $p=3$ & $p=4$  \\ \hline
$30^2$ & 15.0 & 15.0 & 15.0 \\
$60^2$ & 15.0 & 15.0 & {\color{BrickRed}4.38} \\
$120^2$ & 15.0 & {\color{BrickRed}4.61} & {\color{BrickRed}4.31} \\
$240^2$ & 15.0 & 15.0 & {\color{BrickRed}4.54} \\
$480^2$ & {\color{BrickRed}4.86} & {\color{BrickRed}5.71} & {\color{BrickRed}4.55} \\
\hline
        \end{tabular}
    \end{subtable}
    \hfill
    \begin{subtable}[t]{0.48\textwidth}
        \centering
        \captionsetup{justification=centering}
        \caption{Entropy-stable, scalar-matrix \\
        \phantom{(d)} $\varepsilon=3.125 \times 5^{-s}$.}
        \begin{tabular}{rccc}
        \hline
DOF & $p=2$ & $p=3$ & $p=4$  \\ \hline
$30^2$ & 15.0 & 15.0 & 15.0 \\
$60^2$ & 15.0 & 15.0 & 15.0 \\
$120^2$ & 15.0 & 15.0 & 15.0 \\
$240^2$ & 15.0 & 15.0 & 15.0 \\
$480^2$ & 15.0 & 15.0 & 15.0 \\
\hline
        \end{tabular}
    \end{subtable}

\vskip\baselineskip 

\begin{subtable}[t]{0.48\textwidth}
        \centering
        \caption{Central, scalar \\
        \phantom{(c)} $\varepsilon=0.625 \times 5^{-s}$.}
        \begin{tabular}{rccc}
        \hline
DOF & $p=2$ & $p=3$ & $p=4$  \\ \hline
$30^2$ & 15.0 & 15.0 & {\color{BrickRed}6.99} \\
$60^2$ & 15.0 & {\color{BrickRed}4.59} & {\color{BrickRed}4.39} \\
$120^2$ & {\color{BrickRed}4.53} & {\color{BrickRed}4.50} & {\color{BrickRed}4.45} \\
$240^2$ & {\color{BrickRed}6.17} & {\color{BrickRed}4.55} & {\color{BrickRed}3.67} \\
$480^2$ & {\color{BrickRed}4.24} & {\color{BrickRed}4.45} & {\color{BrickRed}3.66} \\
\hline
        \end{tabular}
    \end{subtable}
    \hfill
    \begin{subtable}[t]{0.48\textwidth}
        \centering
        \captionsetup{justification=centering}
        \caption{Entropy-stable, scalar-matrix \\
        \phantom{(d)} $\varepsilon=0.625 \times 5^{-s}$.}
        \begin{tabular}{rccc}
        \hline
DOF & $p=2$ & $p=3$ & $p=4$  \\ \hline
$30^2$ & 15.0 & 15.0 & 15.0 \\
$60^2$ & 15.0 & 15.0 & 15.0 \\
$120^2$ & 15.0 & 15.0 & 15.0 \\
$240^2$ & 15.0 & 15.0 & 15.0 \\
$480^2$ & 15.0 & 15.0 & 15.0 \\
\hline
        \end{tabular}
    \end{subtable}

\label{tab:KelvinHelmholtz_hgtl}
\end{table}


\begin{table}[t]
\centering
\caption{Crash times (up to $t_f = 15$) for the Kelvin-Helmholtz instability described in~\S \ref{sec:KelvinHelmholtz} using HGT operators with $s=p+1$ and a single block.}
\renewcommand{\arraystretch}{1.2} 

\begin{subtable}[t]{0.48\textwidth}
        \centering
        \caption{Central, $\varepsilon=0$.}
        \begin{tabular}{rccc}
        \hline
DOF & $p=2$ & $p=3$ & $p=4$  \\ \hline
$30^2$ & {\color{BrickRed}1.57} & {\color{BrickRed}1.55} & {\color{BrickRed}1.43} \\
$60^2$ & {\color{BrickRed}1.26} & {\color{BrickRed}1.23} & {\color{BrickRed}1.23} \\
$120^2$ & {\color{BrickRed}1.40} & {\color{BrickRed}1.43} & {\color{BrickRed}1.45} \\
$240^2$ & {\color{BrickRed}1.89} & {\color{BrickRed}1.93} & {\color{BrickRed}1.94} \\
$480^2$ & {\color{BrickRed}2.29} & {\color{BrickRed}2.83} & {\color{BrickRed}2.84} \\
\hline
        \end{tabular}
    \end{subtable}
    \hfill
    \begin{subtable}[t]{0.48\textwidth}
        \centering
        \caption{Entropy-stable, $\varepsilon=0$.}
        \begin{tabular}{rccc}
        \hline
DOF & $p=2$ & $p=3$ & $p=4$  \\ \hline
$30^2$ & {\color{BrickRed}2.83} & {\color{BrickRed}2.95} & {\color{BrickRed}2.45} \\
$60^2$ & {\color{BrickRed}3.36} & {\color{BrickRed}2.42} & {\color{BrickRed}2.49} \\
$120^2$ & {\color{BrickRed}3.23} & {\color{BrickRed}3.18} & {\color{BrickRed}3.25} \\
$240^2$ & {\color{BrickRed}3.74} & {\color{BrickRed}3.42} & {\color{BrickRed}3.41} \\
$480^2$ & {\color{BrickRed}3.70} & {\color{BrickRed}3.53} & {\color{BrickRed}3.52} \\
\hline
        \end{tabular}
    \end{subtable}

\vskip\baselineskip 

\begin{subtable}[t]{0.48\textwidth}
        \centering
        \caption{Central, matrix \\
        \phantom{(c)} $\varepsilon=3.125 \times 5^{-s}$.}
        \begin{tabular}{rccc}
        \hline
DOF & $p=2$ & $p=3$ & $p=4$  \\ \hline
$30^2$ & 15.0 & 15.0 & {\color{BrickRed}2.32} \\
$60^2$ & {\color{BrickRed}4.03} & {\color{BrickRed}2.06} & {\color{BrickRed}1.74} \\
$120^2$ & {\color{BrickRed}4.51} & {\color{BrickRed}4.34} & {\color{BrickRed}3.10} \\
$240^2$ & {\color{BrickRed}4.84} & {\color{BrickRed}3.61} & {\color{BrickRed}3.05} \\
$480^2$ & {\color{BrickRed}3.77} & {\color{BrickRed}4.14} & {\color{BrickRed}3.11} \\
\hline
        \end{tabular}
    \end{subtable}
    \hfill
    \begin{subtable}[t]{0.48\textwidth}
        \centering
        \captionsetup{justification=centering}
        \caption{Entropy-stable, matrix-matrix \\
        \phantom{(d)} $\varepsilon=3.125 \times 5^{-s}$.}
        \begin{tabular}{rccc}
        \hline
DOF & $p=2$ & $p=3$ & $p=4$  \\ \hline
$30^2$ & 15.0 & 15.0 & {\color{BrickRed}2.51} \\
$60^2$ & 15.0 & 15.0 & {\color{BrickRed}4.60} \\
$120^2$ & 15.0 & 15.0 & {\color{BrickRed}4.36} \\
$240^2$ & 15.0 & 15.0 & {\color{BrickRed}5.78} \\
$480^2$ & 15.0 & {\color{BrickRed}5.45} & {\color{BrickRed}5.35} \\
\hline
        \end{tabular}
    \end{subtable}

\vskip\baselineskip 

\begin{subtable}[t]{0.48\textwidth}
        \centering
        \caption{Central, matrix \\
        \phantom{(c)} $\varepsilon=0.625 \times 5^{-s}$.}
        \begin{tabular}{rccc}
        \hline
DOF & $p=2$ & $p=3$ & $p=4$  \\ \hline
$30^2$ & {\color{BrickRed}4.34} & {\color{BrickRed}2.85} & {\color{BrickRed}1.56} \\
$60^2$ & {\color{BrickRed}1.56} & {\color{BrickRed}1.51} & {\color{BrickRed}1.41} \\
$120^2$ & {\color{BrickRed}3.71} & {\color{BrickRed}2.14} & {\color{BrickRed}1.96} \\
$240^2$ & {\color{BrickRed}3.53} & {\color{BrickRed}3.53} & {\color{BrickRed}3.01} \\
$480^2$ & {\color{BrickRed}3.37} & {\color{BrickRed}3.37} & {\color{BrickRed}3.12} \\
\hline
        \end{tabular}
    \end{subtable}
    \hfill
    \begin{subtable}[t]{0.48\textwidth}
        \centering
        \captionsetup{justification=centering}
        \caption{Entropy-stable, matrix-matrix \\
        \phantom{(d)} $\varepsilon=0.625 \times 5^{-s}$.}
        \begin{tabular}{rccc}
        \hline
DOF & $p=2$ & $p=3$ & $p=4$  \\ \hline
$30^2$ & 15.0 & 15.0 & {\color{BrickRed}2.30} \\
$60^2$ & 15.0 & {\color{BrickRed}4.26} & {\color{BrickRed}3.05} \\
$120^2$ & 15.0 & {\color{BrickRed}4.35} & {\color{BrickRed}3.97} \\
$240^2$ & 15.0 & {\color{BrickRed}8.21} & {\color{BrickRed}4.25} \\
$480^2$ & 15.0 & {\color{BrickRed}5.05} & {\color{BrickRed}3.89} \\
\hline
        \end{tabular}
    \end{subtable}

\vskip\baselineskip 

\begin{subtable}[t]{0.48\textwidth}
        \centering
        \caption{Central, scalar \\
        \phantom{(c)} $\varepsilon=3.125 \times 5^{-s}$.}
        \begin{tabular}{rccc}
        \hline
DOF & $p=2$ & $p=3$ & $p=4$  \\ \hline
$30^2$ & 15.0 & 15.0 & {\color{BrickRed}3.66} \\
$60^2$ & 15.0 & 15.0 & {\color{BrickRed}3.99} \\
$120^2$ & 15.0 & 15.0 & {\color{BrickRed}5.57} \\
$240^2$ & 15.0 & 15.0 & {\color{BrickRed}4.55} \\
$480^2$ & {\color{BrickRed}4.86} & {\color{BrickRed}5.96} & {\color{BrickRed}4.55} \\
\hline
        \end{tabular}
    \end{subtable}
    \hfill
    \begin{subtable}[t]{0.48\textwidth}
        \centering
        \captionsetup{justification=centering}
        \caption{Entropy-stable, scalar-matrix \\
        \phantom{(d)} $\varepsilon=3.125 \times 5^{-s}$.}
        \begin{tabular}{rccc}
        \hline
DOF & $p=2$ & $p=3$ & $p=4$  \\ \hline
$30^2$ & 15.0 & 15.0 & 15.0 \\
$60^2$ & 15.0 & 15.0 & 15.0 \\
$120^2$ & 15.0 & 15.0 & {\color{BrickRed}4.62} \\
$240^2$ & 15.0 & 15.0 & 15.0 \\
$480^2$ & 15.0 & 15.0 & 15.0 \\
\hline
        \end{tabular}
    \end{subtable}

\vskip\baselineskip 

\begin{subtable}[t]{0.48\textwidth}
        \centering
        \caption{Central, scalar \\
        \phantom{(c)} $\varepsilon=0.625 \times 5^{-s}$.}
        \begin{tabular}{rccc}
        \hline
DOF & $p=2$ & $p=3$ & $p=4$  \\ \hline
$30^2$ & 15.0 & {\color{BrickRed}4.75} & {\color{BrickRed}1.93} \\
$60^2$ & 15.0 & {\color{BrickRed}4.01} & {\color{BrickRed}1.90} \\
$120^2$ & 15.0 & {\color{BrickRed}5.99} & {\color{BrickRed}3.05} \\
$240^2$ & {\color{BrickRed}6.00} & {\color{BrickRed}4.56} & {\color{BrickRed}3.08} \\
$480^2$ & {\color{BrickRed}4.24} & {\color{BrickRed}4.46} & {\color{BrickRed}3.58} \\
\hline
        \end{tabular}
    \end{subtable}
    \hfill
    \begin{subtable}[t]{0.48\textwidth}
        \centering
        \captionsetup{justification=centering}
        \caption{Entropy-stable, scalar-matrix \\
        \phantom{(d)} $\varepsilon=0.625 \times 5^{-s}$.}
        \begin{tabular}{rccc}
        \hline
DOF & $p=2$ & $p=3$ & $p=4$  \\ \hline
$30^2$ & 15.0 & 15.0 & {\color{BrickRed}2.68} \\
$60^2$ & 15.0 & 15.0 & {\color{BrickRed}3.97} \\
$120^2$ & 15.0 & 15.0 & {\color{BrickRed}4.26} \\
$240^2$ & 15.0 & 15.0 & {\color{BrickRed}4.25} \\
$480^2$ & 15.0 & 15.0 & {\color{BrickRed}5.50} \\
\hline
        \end{tabular}
    \end{subtable}

\label{tab:KelvinHelmholtz_hgt}
\end{table}


\begin{table}[t]
\centering
\caption{Crash times (up to $t_f = 15$) for the Kelvin-Helmholtz instability described in~\S \ref{sec:KelvinHelmholtz} using Mattsson operators with $s=p+1$ and a single block.}
\renewcommand{\arraystretch}{1.2} 

\begin{subtable}[t]{0.48\textwidth}
        \centering
        \caption{Central, $\varepsilon=0$.}
        \begin{tabular}{rccc}
        \hline
DOF & $p=2$ & $p=3$ & $p=4$  \\ \hline
$30^2$ & {\color{BrickRed}1.75} & {\color{BrickRed}1.50} & {\color{BrickRed}1.47} \\
$60^2$ & {\color{BrickRed}1.24} & {\color{BrickRed}1.17} & {\color{BrickRed}1.17} \\
$120^2$ & {\color{BrickRed}1.40} & {\color{BrickRed}1.43} & {\color{BrickRed}1.45} \\
$240^2$ & {\color{BrickRed}1.88} & {\color{BrickRed}1.92} & {\color{BrickRed}1.93} \\
$480^2$ & {\color{BrickRed}2.29} & {\color{BrickRed}2.83} & {\color{BrickRed}2.73} \\
\hline
        \end{tabular}
    \end{subtable}
    \hfill
    \begin{subtable}[t]{0.48\textwidth}
        \centering
        \caption{Entropy-stable, $\varepsilon=0$.}
        \begin{tabular}{rccc}
        \hline
DOF & $p=2$ & $p=3$ & $p=4$  \\ \hline
$30^2$ & {\color{BrickRed}4.26} & {\color{BrickRed}3.43} & {\color{BrickRed}3.31} \\
$60^2$ & {\color{BrickRed}3.31} & {\color{BrickRed}3.15} & {\color{BrickRed}2.96} \\
$120^2$ & {\color{BrickRed}3.23} & {\color{BrickRed}3.21} & {\color{BrickRed}3.28} \\
$240^2$ & {\color{BrickRed}3.53} & {\color{BrickRed}3.69} & {\color{BrickRed}3.72} \\
$480^2$ & {\color{BrickRed}3.68} & {\color{BrickRed}3.74} & {\color{BrickRed}3.68} \\
\hline
        \end{tabular}
    \end{subtable}

\vskip\baselineskip 

\begin{subtable}[t]{0.48\textwidth}
        \centering
        \caption{Central, matrix \\
        \phantom{(c)} $\varepsilon=3.125 \times 5^{-s}$.}
        \begin{tabular}{rccc}
        \hline
DOF & $p=2$ & $p=3$ & $p=4$  \\ \hline
$30^2$ & 15.0 & {\color{BrickRed}2.06} & {\color{BrickRed}2.15} \\
$60^2$ & {\color{BrickRed}4.57} & {\color{BrickRed}4.13} & {\color{BrickRed}3.91} \\
$120^2$ & {\color{BrickRed}5.13} & {\color{BrickRed}4.20} & {\color{BrickRed}3.73} \\
$240^2$ & {\color{BrickRed}4.84} & {\color{BrickRed}3.59} & {\color{BrickRed}3.57} \\
$480^2$ & {\color{BrickRed}3.76} & {\color{BrickRed}3.71} & {\color{BrickRed}3.51} \\
\hline
        \end{tabular}
    \end{subtable}
    \hfill
    \begin{subtable}[t]{0.48\textwidth}
        \centering
        \captionsetup{justification=centering}
        \caption{Entropy-stable, matrix-matrix \\
        \phantom{(d)} $\varepsilon=3.125 \times 5^{-s}$.}
        \begin{tabular}{rccc}
        \hline
DOF & $p=2$ & $p=3$ & $p=4$  \\ \hline
$30^2$ & 15.0 & 15.0 & 15.0 \\
$60^2$ & 15.0 & 15.0 & 15.0 \\
$120^2$ & 15.0 & 15.0 & 15.0 \\
$240^2$ & 15.0 & 15.0 & 15.0 \\
$480^2$ & 15.0 & 15.0 & 15.0 \\
\hline
        \end{tabular}
    \end{subtable}

\vskip\baselineskip 

\begin{subtable}[t]{0.48\textwidth}
        \centering
        \caption{Central, matrix \\
        \phantom{(c)} $\varepsilon=0.625 \times 5^{-s}$.}
        \begin{tabular}{rccc}
        \hline
DOF & $p=2$ & $p=3$ & $p=4$  \\ \hline
$30^2$ & {\color{BrickRed}2.76} & {\color{BrickRed}1.63} & {\color{BrickRed}1.54} \\
$60^2$ & {\color{BrickRed}1.55} & {\color{BrickRed}1.33} & {\color{BrickRed}1.30} \\
$120^2$ & {\color{BrickRed}3.68} & {\color{BrickRed}3.64} & {\color{BrickRed}2.15} \\
$240^2$ & {\color{BrickRed}3.53} & {\color{BrickRed}3.53} & {\color{BrickRed}3.56} \\
$480^2$ & {\color{BrickRed}3.37} & {\color{BrickRed}3.38} & {\color{BrickRed}3.50} \\
\hline
        \end{tabular}
    \end{subtable}
    \hfill
    \begin{subtable}[t]{0.48\textwidth}
        \centering
        \captionsetup{justification=centering}
        \caption{Entropy-stable, matrix-matrix \\
        \phantom{(d)} $\varepsilon=0.625 \times 5^{-s}$.}
        \begin{tabular}{rccc}
        \hline
DOF & $p=2$ & $p=3$ & $p=4$  \\ \hline
$30^2$ & 15.0 & 15.0 & 15.0 \\
$60^2$ & 15.0 & 15.0 & 15.0 \\
$120^2$ & 15.0 & 15.0 & 15.0 \\
$240^2$ & 15.0 & 15.0 & 15.0 \\
$480^2$ & 15.0 & 15.0 & 15.0 \\
\hline
        \end{tabular}
    \end{subtable}

\vskip\baselineskip 

\begin{subtable}[t]{0.48\textwidth}
        \centering
        \caption{Central, scalar \\
        \phantom{(c)} $\varepsilon=3.125 \times 5^{-s}$.}
        \begin{tabular}{rccc}
        \hline
DOF & $p=2$ & $p=3$ & $p=4$  \\ \hline
$30^2$ & 15.0 & 15.0 & 15.0 \\
$60^2$ & 15.0 & 15.0 & {\color{BrickRed}4.42} \\
$120^2$ & 15.0 & 15.0 & 15.0 \\
$240^2$ & 150.0 & 15.0 & {\color{BrickRed}4.54} \\
$480^2$ & {\color{BrickRed}4.86} & {\color{BrickRed}5.93} & {\color{BrickRed}4.03} \\
\hline
        \end{tabular}
    \end{subtable}
    \hfill
    \begin{subtable}[t]{0.48\textwidth}
        \centering
        \captionsetup{justification=centering}
        \caption{Entropy-stable, scalar-matrix \\
        \phantom{(d)} $\varepsilon=3.125 \times 5^{-s}$.}
        \begin{tabular}{rccc}
        \hline
DOF & $p=2$ & $p=3$ & $p=4$  \\ \hline
$30^2$ & 15.0 & 15.0 & 15.0 \\
$60^2$ & 15.0 & 15.0 & 15.0 \\
$120^2$ & 15.0 & 15.0 & 15.0 \\
$240^2$ & 15.0 & 15.0 & 15.0 \\
$480^2$ & 15.0 & 15.0 & 15.0 \\
\hline
        \end{tabular}
    \end{subtable}

\vskip\baselineskip 

\begin{subtable}[t]{0.48\textwidth}
        \centering
        \caption{Central, scalar \\
        \phantom{(c)} $\varepsilon=0.625 \times 5^{-s}$.}
        \begin{tabular}{rccc}
        \hline
DOF & $p=2$ & $p=3$ & $p=4$  \\ \hline
$30^2$ & 15.0 & 15.0 & {\color{BrickRed}5.56} \\
$60^2$ & 15.0 & 15.0 & {\color{BrickRed}8.17} \\
$120^2$ & 15.0 & {\color{BrickRed}6.33} & {\color{BrickRed}4.22} \\
$240^2$ & {\color{BrickRed}5.99} & {\color{BrickRed}4.55} & {\color{BrickRed}3.69} \\
$480^2$ & {\color{BrickRed}4.25} & {\color{BrickRed}4.03} & {\color{BrickRed}3.65} \\
\hline
        \end{tabular}
    \end{subtable}
    \hfill
    \begin{subtable}[t]{0.48\textwidth}
        \centering
        \captionsetup{justification=centering}
        \caption{Entropy-stable, scalar-matrix \\
        \phantom{(d)} $\varepsilon=0.625 \times 5^{-s}$.}
        \begin{tabular}{rccc}
        \hline
DOF & $p=2$ & $p=3$ & $p=4$  \\ \hline
$30^2$ & 15.0 & 15.0 & 15.0 \\
$60^2$ & 15.0 & 15.0 & 15.0 \\
$120^2$ & 15.0 & 15.0 & 15.0 \\
$240^2$ & 15.0 & 15.0 & 15.0 \\
$480^2$ & 15.0 & 15.0 & 15.0 \\
\hline
        \end{tabular}
    \end{subtable}

\label{tab:KelvinHelmholtz_matt}
\end{table}


\begin{table}[!t]
\centering
\caption{Crash times (up to $t_f = 15$) for the Kelvin-Helmholtz instability described in~\S \ref{sec:KelvinHelmholtz} using LGL operators with $s=p$ and $K^2$ elements. The dissipation coefficients are $\varepsilon_1 = [0.01,0.004,0.002,0.0008,0.0004,0.0002]$ for $p=[3\text{--}8]$, respectively, and $\varepsilon_{1/5}$ as one fifth of these values. Results with matrix and scalar (for central schemes), or matrix-matrix and scalar-matrix (for entropy-stable schemes) volume dissipation are shown left/right, respectively.
}
\renewcommand{\arraystretch}{1.2} 

\begin{subtable}{\textwidth}
\centering
\captionsetup{justification=centering}
\caption{USE-LGL with Drikakis–Tsangaris flux splitting \cite{Drikakis}.}
\begin{tabular}{cccccccc}
\hline
$K$ & $\varepsilon$ & $p=3$ & $p=4$ & $p=5$ & $p=6$ & $p=7$ & $p=8$ \\ \hline
4 & $\varepsilon_1$ & 15.0 & {\color{BrickRed}2.63} & {\color{BrickRed}1.71} & {\color{BrickRed}2.60} & {\color{BrickRed}2.11} & {\color{BrickRed}1.86}
\\
8 & $\varepsilon_1$ & 15.0 & 15.0 & {\color{BrickRed}2.49} & {\color{BrickRed}2.25}& {\color{BrickRed}2.25} & {\color{BrickRed}1.87}
\\
16 & $\varepsilon_1$ & {\color{BrickRed}4.69} & {\color{BrickRed}3.94} & {\color{BrickRed}2.45} & {\color{BrickRed}2.52} & {\color{BrickRed}2.19} & {\color{BrickRed}2.23}
\\
32 & $\varepsilon_1$ & {\color{BrickRed}3.18} & {\color{BrickRed}3.23} & {\color{BrickRed}2.34} & {\color{BrickRed}2.37} & {\color{BrickRed}2.41} & {\color{BrickRed}2.17}
\\ \hline
4 & $\varepsilon_{1/5}$ & 15.0 & {\color{BrickRed}2.93} & {\color{BrickRed}3.63} & {\color{BrickRed}4.23} & {\color{BrickRed}1.91} & {\color{BrickRed}2.17}
\\
8 & $\varepsilon_{1/5}$ & {\color{BrickRed}1.82} & {\color{BrickRed}3.48} & {\color{BrickRed}4.22} & {\color{BrickRed}1.54} & {\color{BrickRed}2.88} & {\color{BrickRed}3.69}
\\
16 & $\varepsilon_{1/5}$ & {\color{BrickRed}4.31} & {\color{BrickRed}3.46} & {\color{BrickRed}4.05} & {\color{BrickRed}3.45} & {\color{BrickRed}3.58} & {\color{BrickRed}3.14}
\\
32 & $\varepsilon_{1/5}$ & {\color{BrickRed}3.42} & {\color{BrickRed}3.58} & {\color{BrickRed}3.62} & {\color{BrickRed}3.36} & {\color{BrickRed}3.49} & {\color{BrickRed}3.20}
\\ \hline
\end{tabular}
\end{subtable}

\vskip\baselineskip 

\begin{subtable}{\textwidth}
\centering
\captionsetup{justification=centering}
\caption{Central with matrix/scalar conservative dissipation.}
\begin{tabular}{cccccccc}
\hline
$K$ & $\varepsilon$ & $p=3$ & $p=4$ & $p=5$ & $p=6$ & $p=7$ & $p=8$ \\ \hline
4 & 0 & {\color{BrickRed}1.72} & {\color{BrickRed}1.46} & {\color{BrickRed}2.81} & {\color{BrickRed}2.57} & {\color{BrickRed}1.57} & {\color{BrickRed}1.47}
\\
8 & 0 & {\color{BrickRed}2.89} & {\color{BrickRed}2.94} & {\color{BrickRed}1.27} & {\color{BrickRed}1.65} & {\color{BrickRed}1.65} & {\color{BrickRed}1.26}
\\
16 & 0 & {\color{BrickRed}1.59} & {\color{BrickRed}1.50} & {\color{BrickRed}2.07} & {\color{BrickRed}1.69} & {\color{BrickRed}2.97} & {\color{BrickRed}1.71}
\\
32 & 0 & {\color{BrickRed}1.64} & {\color{BrickRed}3.49} & {\color{BrickRed}2.90} & {\color{BrickRed}3.19} & {\color{BrickRed}2.86} & {\color{BrickRed}3.22}
\\ \hline
4 & $\varepsilon_1$ & {\color{BrickRed}13.58}/{\color{BrickRed}6.37} & {\color{BrickRed}2.86}/{\color{BrickRed}2.83} & {\color{BrickRed}2.87}/{\color{BrickRed}3.28} & {\color{BrickRed}2.98}/{\color{BrickRed}2.75} & {\color{BrickRed}1.81}/{\color{BrickRed}1.64} & {\color{BrickRed}2.08}/{\color{BrickRed}1.88}
\\
8 & $\varepsilon_1$ & {\color{BrickRed}3.20}/{\color{BrickRed}1.50} & {\color{BrickRed}2.98}/{\color{BrickRed}2.11} & {\color{BrickRed}1.36}/{\color{BrickRed}2.07} & {\color{BrickRed}2.39}/{\color{BrickRed}1.71} & {\color{BrickRed}1.65}/{\color{BrickRed}1.70} & {\color{BrickRed}1.48}/{\color{BrickRed}2.07}
\\
16 & $\varepsilon_1$ & {\color{BrickRed}1.64}/{\color{BrickRed}2.24} & {\color{BrickRed}2.36}/{\color{BrickRed}3.47} & {\color{BrickRed}2.09}/{\color{BrickRed}3.38} & {\color{BrickRed}1.93}/{\color{BrickRed}2.98} & {\color{BrickRed}3.32}/{\color{BrickRed}3.53} & {\color{BrickRed}2.95}/{\color{BrickRed}2.33}
\\
32 & $\varepsilon_1$ & {\color{BrickRed}1.90}/{\color{BrickRed}2.06} & {\color{BrickRed}3.65}/{\color{BrickRed}3.17} & {\color{BrickRed}3.55}/{\color{BrickRed}3.01} & {\color{BrickRed}3.56}/{\color{BrickRed}3.03} & {\color{BrickRed}3.47}/{\color{BrickRed}3.08} & {\color{BrickRed}3.23}/{\color{BrickRed}3.13}
\\ \hline
4 & $\varepsilon_{1/5}$ & {\color{BrickRed}1.85}/15.0 & {\color{BrickRed}1.52}/{\color{BrickRed}2.96} & {\color{BrickRed}2.74}/15.0 & {\color{BrickRed}2.59}/{\color{BrickRed}3.44} & {\color{BrickRed}1.63}/{\color{BrickRed}2.14} & {\color{BrickRed}1.48}/{\color{BrickRed}1.80}
\\
8 & $\varepsilon_{1/5}$ & {\color{BrickRed}2.92}/{\color{BrickRed}1.93} & {\color{BrickRed}2.95}/{\color{BrickRed}4.98} & {\color{BrickRed}1.27}/{\color{BrickRed}3.51} & {\color{BrickRed}1.76}/{\color{BrickRed}1.72} & {\color{BrickRed}1.64}/{\color{BrickRed}2.87} & {\color{BrickRed}1.28}/{\color{BrickRed}2.33}
\\
16 & $\varepsilon_{1/5}$ & {\color{BrickRed}1.64}/{\color{BrickRed}2.37} & {\color{BrickRed}1.54}/{\color{BrickRed}4.08} & {\color{BrickRed}2.07}/{\color{BrickRed}3.37} & {\color{BrickRed}1.72}/{\color{BrickRed}3.02} & {\color{BrickRed}3.01}/{\color{BrickRed}3.31} & {\color{BrickRed}2.94}/{\color{BrickRed}2.99}
\\
32 & $\varepsilon_{1/5}$ & {\color{BrickRed}1.74}/{\color{BrickRed}3.20} & {\color{BrickRed}3.60}/{\color{BrickRed}3.24} & {\color{BrickRed}2.91}/{\color{BrickRed}3.57} & {\color{BrickRed}3.35}/{\color{BrickRed}3.08} & {\color{BrickRed}3.18}/{\color{BrickRed}3.14} & {\color{BrickRed}3.22}/{\color{BrickRed}3.17}
\\ \hline
\end{tabular}
\end{subtable}

\vskip\baselineskip 

\begin{subtable}{\textwidth}
\centering
\captionsetup{justification=centering}
\caption{Entropy-stable with matrix-matrix/scalar-matrix entropy dissipation.}
\begin{tabular}{cccccccc}
\hline
$K$ & $\varepsilon$ & $p=3$ & $p=4$ & $p=5$ & $p=6$ & $p=7$ & $p=8$ \\ \hline
4 & 0 & 15.0 & 15.0 & 15.0 & 15.0 & 15.0 & 15.0
\\
8 & 0 & 15.0 & 15.0 & {\color{BrickRed}5.79} & {\color{BrickRed}4.20} & 15.0 & {\color{BrickRed}4.25}
\\
16 & 0 & 15.0 & 15.0 & {\color{BrickRed}5.79} & 15.0 & {\color{BrickRed}5.01} & {\color{BrickRed}4.95}
\\
32 & 0 & {\color{BrickRed}3.52} & 15.0 & {\color{BrickRed}4.80} & 15.0 & {\color{BrickRed}5.95} & {\color{BrickRed}4.44}
\\ \hline
4 & $\varepsilon_1$ & 15.0/15.0 & 15.0/15.0 & 15.0/15.0 & 15.0/15.0 & 15.0/15.0 & 15.0/15.0
\\
8 & $\varepsilon_1$ & 15.0/15.0 & 15.0/15.0 & 15.0/15.0 & 15.0/15.0 & 15.0/15.0 & 15.0/15.0
\\
16 & $\varepsilon_1$ & 15.0/15.0 & 15.0/15.0 & 15.0/15.0 & 15.0/15.0 & 15.0/15.0 & 15.0/15.0
\\
32 & $\varepsilon_1$ & 15.0/15.0 & 15.0/15.0 & 15.0/15.0 & 15.0/15.0 & 15.0/15.0 & 15.0/15.0
\\ \hline
4 & $\varepsilon_{1/5}$ & 15.0/15.0 & 15.0/15.0 & 15.0/15.0 & 15.0/15.0 & 15.0/15.0 & 15.0/15.0
\\
8 & $\varepsilon_{1/5}$ & 15.0/15.0 & 15.0/15.0 & 15.0/15.0 & 15.0/15.0 & 15.0/15.0 & 15.0/15.0
\\
16 & $\varepsilon_{1/5}$ & 15.0/15.0 & 15.0/15.0 & 15.0/15.0 & 15.0/15.0 & 15.0/15.0 & 15.0/15.0
\\
32 & $\varepsilon_{1/5}$ & 15.0/15.0 & 15.0/15.0 & 15.0/15.0 & 15.0/15.0 & 15.0/15.0 & 15.0/15.0
\\ \hline
\end{tabular}
\end{subtable}

\label{tab:KelvinHelmholtz_lgl}
\end{table}


\begin{table}[!t]
\centering
\caption{Crash times (up to $t_f = 15$) for the Kelvin-Helmholtz instability described in~\S \ref{sec:KelvinHelmholtz} using LG operators with $s=p$ and $K^2$ elements. The dissipation coefficients are $\varepsilon_1 = [0.01,0.004,0.002,0.0008,0.0004,0.0002]$ for $p=[3\text{--}8]$, respectively, and $\varepsilon_{1/5}$ as one fifth of these values. Results with matrix and scalar (for central schemes), or matrix-matrix and scalar-matrix (for entropy-stable schemes) volume dissipation are shown left/right, respectively.
}
\renewcommand{\arraystretch}{1.2} 

\begin{subtable}{\textwidth}
\centering
\captionsetup{justification=centering}
\caption{USE-LG with Drikakis–Tsangaris flux splitting \cite{Drikakis}.}
\begin{tabular}{cccccccc}
\hline
$K$ & $\varepsilon$ & $p=3$ & $p=4$ & $p=5$ & $p=6$ & $p=7$ & $p=8$ \\ \hline
4 & $\varepsilon_1$ & 15.0 & 15.0 & {\color{BrickRed}1.82} & {\color{BrickRed}2.09} & {\color{BrickRed}1.91} & {\color{BrickRed}1.92}
\\
8 & $\varepsilon_1$ & 15.0 & {\color{BrickRed}2.15} & {\color{BrickRed}2.81} & {\color{BrickRed}2.33}& {\color{BrickRed}1.74} & {\color{BrickRed}2.80}
\\
16 & $\varepsilon_1$ & 15.0 & {\color{BrickRed}3.56} & {\color{BrickRed}3.21} & {\color{BrickRed}3.08} & {\color{BrickRed}2.55} & {\color{BrickRed}2.95}
\\
32 & $\varepsilon_1$ & {\color{BrickRed}3.61} & {\color{BrickRed}3.63} & {\color{BrickRed}3.14} & {\color{BrickRed}2.60} & {\color{BrickRed}2.47} & {\color{BrickRed}2.31}
\\ \hline
4 & $\varepsilon_{1/5}$ & 15.0 & {\color{BrickRed}2.19} & {\color{BrickRed}1.90} & {\color{BrickRed}1.66} & {\color{BrickRed}2.72} & {\color{BrickRed}2.22}
\\
8 & $\varepsilon_{1/5}$ & 15.0 & {\color{BrickRed}2.94} & {\color{BrickRed}3.95} & {\color{BrickRed}3.77} & {\color{BrickRed}1.92} & {\color{BrickRed}2.89}
\\
16 & $\varepsilon_{1/5}$ & {\color{BrickRed}13.25} & {\color{BrickRed}3.82} & {\color{BrickRed}3.18} & {\color{BrickRed}3.08} & {\color{BrickRed}3.48} & {\color{BrickRed}3.09}
\\
32 & $\varepsilon_{1/5}$ & {\color{BrickRed}4.64} & {\color{BrickRed}3.97} & {\color{BrickRed}3.72} & {\color{BrickRed}3.20} & {\color{BrickRed}3.20} & {\color{BrickRed}3.27}
\\ \hline
\end{tabular}
\end{subtable}

\vskip\baselineskip 

\begin{subtable}{\textwidth}
\centering
\captionsetup{justification=centering}
\caption{Central with matrix/scalar conservative dissipation.}
\begin{tabular}{cccccccc}
\hline
$K$ & $\varepsilon$ & $p=3$ & $p=4$ & $p=5$ & $p=6$ & $p=7$ & $p=8$ \\ \hline
4 & 0 & {\color{BrickRed}2.23} & {\color{BrickRed}2.43} & {\color{BrickRed}1.01} & {\color{BrickRed}1.13} & {\color{BrickRed}2.04} & {\color{BrickRed}1.65}
\\
8 & 0 & {\color{BrickRed}2.58} & {\color{BrickRed}1.09} & {\color{BrickRed}1.64} & {\color{BrickRed}1.70} & {\color{BrickRed}1.54} & {\color{BrickRed}2.07}
\\
16 & 0 & {\color{BrickRed}2.13} & {\color{BrickRed}1.97} & {\color{BrickRed}2.14} & {\color{BrickRed}2.32} & {\color{BrickRed}2.44} & {\color{BrickRed}2.90}
\\
32 & 0 & {\color{BrickRed}2.05} & {\color{BrickRed}3.17} & {\color{BrickRed}2.99} & {\color{BrickRed}3.11} & {\color{BrickRed}3.15} & {\color{BrickRed}3.19}
\\ \hline
4 & $\varepsilon_1$ & 15.0/15.0 & {\color{BrickRed}2.06}/{\color{BrickRed}1.76} & {\color{BrickRed}0.96}/{\color{BrickRed}1.06} & {\color{BrickRed}1.03}/{\color{BrickRed}1.23} & {\color{BrickRed}2.13}/{\color{BrickRed}1.68} & {\color{BrickRed}2.42}/{\color{BrickRed}2.44}
\\
8 & $\varepsilon_1$ & {\color{BrickRed}2.15}/{\color{BrickRed}2.03} & {\color{BrickRed}1.10}/{\color{BrickRed}2.39} & {\color{BrickRed}3.96}/{\color{BrickRed}2.86} & {\color{BrickRed}3.08}/{\color{BrickRed}1.81} & {\color{BrickRed}1.64}/{\color{BrickRed}1.76} & {\color{BrickRed}2.11}/{\color{BrickRed}2.11}
\\
16 & $\varepsilon_1$ & {\color{BrickRed}3.89}/{\color{BrickRed}3.17} & {\color{BrickRed}3.85}/{\color{BrickRed}2.60} & {\color{BrickRed}3.07}/{\color{BrickRed}3.39} & {\color{BrickRed}3.57}/{\color{BrickRed}3.54} & {\color{BrickRed}3.24}/{\color{BrickRed}3.21} & {\color{BrickRed}3.35}/{\color{BrickRed}3.58}
\\
32 & $\varepsilon_1$ & {\color{BrickRed}4.23}/{\color{BrickRed}2.01} & {\color{BrickRed}3.93}/{\color{BrickRed}3.99} & {\color{BrickRed}3.53}/{\color{BrickRed}3.57} & {\color{BrickRed}3.50}/{\color{BrickRed}3.05} & {\color{BrickRed}3.23}/{\color{BrickRed}3.11} & {\color{BrickRed}3.21}/{\color{BrickRed}3.15}
\\ \hline
4 & $\varepsilon_{1/5}$ & {\color{BrickRed}2.49}/15.0 & {\color{BrickRed}2.16}/{\color{BrickRed}2.21} & {\color{BrickRed}0.98}/{\color{BrickRed}1.08} & {\color{BrickRed}1.08}/{\color{BrickRed}1.46} & {\color{BrickRed}2.10}/{\color{BrickRed}2.08} & {\color{BrickRed}1.85}/{\color{BrickRed}2.66}
\\
8 & $\varepsilon_{1/5}$ & {\color{BrickRed}2.55}/{\color{BrickRed}2.44} & {\color{BrickRed}1.12}/{\color{BrickRed}1.38} & {\color{BrickRed}1.65}/{\color{BrickRed}2.75} & {\color{BrickRed}1.71}/{\color{BrickRed}1.93} & {\color{BrickRed}1.62}/{\color{BrickRed}1.73} & {\color{BrickRed}2.10}/{\color{BrickRed}3.06}
\\
16 & $\varepsilon_{1/5}$ & {\color{BrickRed}3.66}/{\color{BrickRed}3.33} & {\color{BrickRed}1.92}/{\color{BrickRed}2.53} & {\color{BrickRed}2.14}/{\color{BrickRed}3.92} & {\color{BrickRed}2.30}/{\color{BrickRed}3.67} & {\color{BrickRed}3.10}/{\color{BrickRed}3.65} & {\color{BrickRed}2.98}/{\color{BrickRed}3.38}
\\
32 & $\varepsilon_{1/5}$ & {\color{BrickRed}2.04}/{\color{BrickRed}4.36} & {\color{BrickRed}3.61}/{\color{BrickRed}3.72} & {\color{BrickRed}3.19}/{\color{BrickRed}3.56} & {\color{BrickRed}3.11}/{\color{BrickRed}3.49} & {\color{BrickRed}3.14}/{\color{BrickRed}3.18} & {\color{BrickRed}3.19}/{\color{BrickRed}3.15}
\\ \hline
\end{tabular}
\end{subtable}

\vskip\baselineskip 

\begin{subtable}{\textwidth}
\centering
\captionsetup{justification=centering}
\caption{Entropy-stable with matrix-matrix/scalar-matrix entropy dissipation.}
\begin{tabular}{cccccccc}
\hline
$K$ & $\varepsilon$ & $p=3$ & $p=4$ & $p=5$ & $p=6$ & $p=7$ & $p=8$ \\ \hline
4 & 0 & {\color{BrickRed}8.84} & {\color{BrickRed}2.40} & {\color{BrickRed}2.16} & {\color{BrickRed}3.03} & {\color{BrickRed}2.76} & {\color{BrickRed}1.86}
\\
8 & 0 & {\color{BrickRed}2.38} & {\color{BrickRed}2.71} & {\color{BrickRed}4.03} & {\color{BrickRed}2.00} & {\color{BrickRed}2.32} & {\color{BrickRed}2.09}
\\
16 & 0 & {\color{BrickRed}2.85} & {\color{BrickRed}3.61} & {\color{BrickRed}3.48} & {\color{BrickRed}3.08} & {\color{BrickRed}3.20} & {\color{BrickRed}3.03}
\\
32 & 0 & {\color{BrickRed}3.52} & {\color{BrickRed}3.57} & {\color{BrickRed}3.14} & {\color{BrickRed}3.12} & {\color{BrickRed}3.15} & {\color{BrickRed}3.23}
\\ \hline
4 & $\varepsilon_1$ & 15.0/15.0 & 15.0/15.0 & 15.0/15.0 & 15.0/15.0 & 15.0/15.0 & {\color{BrickRed}2.77}/{\color{BrickRed}7.62}
\\
8 & $\varepsilon_1$ & 15.0/15.0 & 15.0/15.0 & 15.0/15.0 & {\color{BrickRed}4.49}/15.0 & 15.0/{\color{BrickRed}10.97} & {\color{BrickRed}2.56}/{\color{BrickRed}3.64}
\\
16 & $\varepsilon_1$ & 15.0/15.0 & 15.0/15.0 & 15.0/{\color{BrickRed}10.70} & {\color{BrickRed}4.62}/{\color{BrickRed}3.80} & {\color{BrickRed}4.91}/{\color{BrickRed}4.19} & {\color{BrickRed}3.84}/{\color{BrickRed}3.92}
\\
32 & $\varepsilon_1$ & 15.0/15.0 & 15.0/{\color{BrickRed}5.41} & 15.0/{\color{BrickRed}4.78} & {\color{BrickRed}4.88}/{\color{BrickRed}4.69} & {\color{BrickRed}4.67}/{\color{BrickRed}4.52} & {\color{BrickRed}3.74}/{\color{BrickRed}3.63}
\\ \hline
4 & $\varepsilon_{1/5}$ & 15.0/15.0 & 15.0/15.0 & {\color{BrickRed}2.42}/15.0 & {\color{BrickRed}2.70}/15.0 & 15.0/15.0 & {\color{BrickRed}3.90}/15.0
\\
8 & $\varepsilon_{1/5}$ & 15.0/15.0 & 15.0/15.0 & {\color{BrickRed}4.22}/15.0 & {\color{BrickRed}4.21}/{\color{BrickRed}4.02} & {\color{BrickRed}3.69}/{\color{BrickRed}4.07} & {\color{BrickRed}3.87}/{\color{BrickRed}3.99}
\\
16 & $\varepsilon_{1/5}$ & 15.0/15.0 & 15.0/15.0 & {\color{BrickRed}4.09}/{\color{BrickRed}4.14} & {\color{BrickRed}4.36}/15.0 & {\color{BrickRed}5.89}/{\color{BrickRed}4.15} & {\color{BrickRed}4.20}/{\color{BrickRed}4.11}
\\
32 & $\varepsilon_{1/5}$ & 15.0/15.0 & {\color{BrickRed}3.91}/15.0 & {\color{BrickRed}5.08}/15.0 & {\color{BrickRed}4.73}/{\color{BrickRed}5.18} & {\color{BrickRed}4.08}/{\color{BrickRed}5.84} & {\color{BrickRed}4.89}/{\color{BrickRed}5.04}
\\ \hline
\end{tabular}
\end{subtable}

\label{tab:KelvinHelmholtz_lg}
\end{table}

\FloatBarrier

\bibliographystyle{spmpsci}      
\bibliography{references.bib}

%
%

\endgroup
\end{document}